\patchcmd{\@makechapterhead}{50\p@}{\chapheadtopskip}{}{}
\patchcmd{\@makeschapterhead}{50\p@}{\chapheadtopskip}{}{}
\newlength{\chapheadtopskip}\setlength{\chapheadtopskip}{-2pt}
\DeclareMathAlphabet{\mathpzc}{OT1}{pzc}{m}{it}
\newcommand{\mylabel}[2]{#2\def\@currentlabel{#2}\label{#1}}
\newtheorem{theorem}{Theorem}[section]
\newtheorem{lemma}[theorem]{Lemma}
\newtheorem{defn}[theorem]{Definition}
\newtheorem{cor}[theorem]{Corollary}
\newtheorem{Claim}[theorem]{Claim}
\newtheorem{subclaim}{Subclaim}[theorem]
\newcounter{claimlevel}[theorem]
\NewDocumentEnvironment{claim}{O{=}}
 {
  \str_case:nn { #1 }
   {
    {=}  { }
    {+}  { \stepcounter{claimlevel} }
    {-}  { \addtocounter{claimlevel}{-1} }
   }
  \begin{ Claim \int_to_Roman:n { \value{claimlevel} } }
 }
 {
  \end{ Claim \int_to_Roman:n { \value{claimlevel} } }
 }
\newenvironment{claimproof}[1]{\par\noindent\underline{Proof:}\space#1}{\hspace{1mm}$\blacksquare$}
\newlength\FHoffset
\newlength\FHright
 \newtheoremstyle{TheoremNum}
        {\topsep}{\topsep}              
        {\itshape}                      
        {}                              
        {\bfseries}                     
        {.}                             
        { }                             
        {\thmname{#1}\thmnote{ \bfseries #3}}
    \theoremstyle{TheoremNum}
    \newtheorem{thmn}{Theorem}
\newtheoremstyle{PropNum}
        {\topsep}{\topsep}              
        {\itshape}                      
        {}                              
        {\bfseries}                     
        {.}                             
        { }                             
        {\thmname{#1}\thmnote{ \bfseries #3}}
    \theoremstyle{PropNum}
\newtheoremstyle{LemmaNum}
        {\topsep}{\topsep}              
        {\itshape}                      
        {}                              
        {\bfseries}                     
        {.}                             
        { }                             
        {\thmname{#1}\thmnote{ \bfseries #3}}
    \theoremstyle{LemmaNum}
\renewcommand\subitem{\@idxitem\nobreak\hspace*{20\p@}}
\renewcommand\subsubitem{\@idxitem\nobreak\hspace*{20\p@}}
\date{}
\title{Extensions of Thomassen's Theorem to Paths of Length At Most Four: Part III}
\author{Joshua Nevin}
\begin{document}
\maketitle

\begin{center}\textbf{Abstract}\end{center} Let $G$ be a planar embedding with list-assignment $L$ and outer cycle $C$, and let $P$ be a path of length at most four on $C$, where each vertex of $G\setminus C$ has a list of size at least five and each vertex of $C\setminus P$ has a list of size at least three. This is the final paper in a sequence of three papers in which we prove some results about partial $L$-colorings $\phi$ of $C$ with the property that any extension of $\phi$ to an $L$-coloring of $\textnormal{dom}(\phi)\cup V(P)$ extends to $L$-color all of $G$, and, in particular, some useful results about the special case in which $\textnormal{dom}(\phi)$ consists only of the endpoints of $P$. We also prove some results about the other special case in which $\phi$ is allowed to color some vertices of $C\setminus\mathring{P}$ but we avoid taking too many colors away from the leftover vertices of $\mathring{P}\setminus\textnormal{dom}(\phi)$. We use these results in a later sequence of papers to prove some results about list-colorings of high-representativity embeddings on surfaces. 

\section{Introduction}\label{IntroMotivSec}

Given a graph $G$, a \emph{list-assignment} for $G$ is a family of sets $\{L(v): v\in V(G)\}$ indexed by the vertices of $G$, such that $L(v)$ is a finite subset of $\mathbb{N}$ for each $v\in V(G)$. The elements of $L(v)$ are called \emph{colors}. A function $\phi:V(G)\rightarrow\bigcup_{v\in V(G)}L(v)$ is called an \emph{$L$-coloring of} $G$ if $\phi(v)\in L(v)$ for each $v\in V(G)$, and, for each pair of vertices $x,y\in V(G)$ such that $xy\in E(G)$, we have $\phi(x)\neq\phi(y)$. 

Given a set $S\subseteq V(G)$ and a function $\phi: S\rightarrow\bigcup_{v\in S}L(v)$, we call $\phi$ an \emph{ $L$-coloring of $S$} if $\phi(v)\in L(v)$ for each $v\in S$ and $\phi$ is an $L$-coloring of the induced graph $G[S]$. A \emph{partial} $L$-coloring of $G$ is a function of the form $\phi:S\rightarrow\bigcup_{v\in S}L(v)$, where $S$ is a subset of $V(G)$ and $\phi$ is an $L$-coloring of $S$. Likewise, given a set $S\subseteq V(G)$, a \emph{partial $L$-coloring} of $S$ is a function $\phi:S'\rightarrow\bigcup_{v\in S'}L(v)$, where $S'\subseteq S$ and $\phi$ is an $L$-coloring of $S'$. Given an integer $k\geq 1$, a graph $G$ is called \emph{$k$-choosable} if, for every list-assignment $L$ for $G$ such that $|L(v)|\geq k$ for all $v\in V(G)$, $G$ is $L$-colorable.

This paper is the third in a sequence of three papers. The motivation for this sequence of papers is outlined in the introduction of Paper I (\cite{JNHolepunchPaperICitation}). We use the main result of this sequence of papers in a later sequence of papers to prove a result about high-representativty embeddings on surfaces with distant precolored components.  We now recall some notation from Paper I. Given a graph $G$ with list-assignment $L$, we very frequently analyze the situation where we begin with a partial $L$-coloring $\phi$ of a subgraph of $G$, and then delete some or all of the vertices of $\textnormal{dom}(\phi)$ and remove the colors of the deleted vertices from the lists of their neighbors in $G\setminus\textnormal{dom}(\phi)$. We thus make the following definition. 

\begin{defn}\emph{Let $G$ be a graph, let $\phi$ be a partial $L$-coloring of $G$, and let $S\subseteq V(G)$. We define a list-assignment $L^S_{\phi}$ for $G\setminus (\textnormal{dom}(\phi)\setminus S)$ as follows.}
$$L^S_{\phi}(v):=\begin{cases} \{\phi(v)\}\ \textnormal{if}\ v\in\textnormal{dom}(\phi)\cap S\\ L(v)\setminus\{\phi(w): w\in N(v)\cap (\textnormal{dom}(\phi)\setminus S)\}\ \textnormal{if}\ v\in V(G)\setminus \textnormal{dom}(\phi) \end{cases}$$ \end{defn}

If $S=\varnothing$, then $L^{\varnothing}_{\phi}$ is a list-assignment for $G\setminus\textnormal{dom}(\phi)$ in which the colors of the vertices in $\textnormal{dom}(\phi)$ have been deleted from the lists of their neighbors in $G\setminus\textnormal{dom}(\phi)$. The situation where $S=\varnothing$ arises so frequently that, in this case, we simply drop the superscript and let $L_{\phi}$ denote the list-assignment $L^{\varnothing}_{\phi}$ for $G\setminus\textnormal{dom}(\phi)$. In some cases, we specify a subgraph $H$ of $G$ rather than a vertex-set $S$. In this case, to avoid clutter, we write $L^H_{\phi}$ to mean $L^{V(H)}_{\phi}$. Our main result for this paper, which we prove in the final section of this paper, is Theorem \ref{MainHolepunchPaperResulThm} below.

\begin{theorem}\label{MainHolepunchPaperResulThm} (Holepunch Theorem) Let $G$ be a planar embedding with outer cycle $C$. Let $P:=p_0q_0zq_1p_1$ be a subpath of $C$ whose three internal vertices have no common neighbor in $C\setminus P$, and let $L$ be a list-assignment for $V(G)$ such that the following hold.
\begin{enumerate}[label=\arabic*)] 
\itemsep-0.1em
\item $|L(p_0)|+|L(p_1)|\geq 4$ and each of $L(p_0)$ and $L(p_1)$ is nonempty; AND
\item For each $v\in V(C\setminus P)$, $|L(v)|\geq 3$; AND
\item For each $v\in\{q_0, z, q_1\}\cup V(G\setminus C)$, $|L(v)|\geq 5$.
\end{enumerate}
Then there is a partial $L$-coloring $\phi$ of $V(C)\setminus\{q_0, q_1\}$, where $p_0, p_1, z\in\textnormal{dom}(\phi)$, such that each of $q_0, q_1$ has an $L_{\phi}$-list of size at least three, and furthermore, any extension of $\phi$ to an $L$-coloring of $\textnormal{dom}(\phi)\cup\{q_0, q_1\}$ also extends to $L$-color all of $G$. 
 \end{theorem}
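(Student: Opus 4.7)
The plan is to prove Theorem \ref{MainHolepunchPaperResulThm} by induction on $|V(G)|$, leveraging the results from Papers I and II together with the intermediate lemmas established earlier in this paper. The base case is $G=C$: here the desired partial coloring $\phi$ can be written down directly by first choosing any $\phi(z)\in L(z)$, then $\phi(p_0)\in L(p_0)$ and $\phi(p_1)\in L(p_1)$, and finally greedily $L_\phi$-coloring the vertices of $C\setminus P$ by working from $p_0$ around to $p_1$. Since each vertex of $C\setminus P$ has a list of size at least three, and each is initially adjacent to at most two precolored vertices, the greedy step succeeds and leaves $L_\phi$-lists of size at least three at $q_0$ and $q_1$.

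For the inductive step, I would carry out a Thomassen-style case analysis. First, if $C$ has a chord $e=uv$, then $e$ splits $G$ into two subgraphs $G_1,G_2$ sharing the edge $uv$; depending on whether $e$ is incident to a vertex of $P$ and on which side of $e$ contains $P$, one can apply either the inductive hypothesis or a suitable result from Papers I/II to one side so as to produce a coloring at $\{u,v\}$, then apply a second such result to the other side with $\{u,v\}$ precolored, finally pasting the two partial colorings together. Second, if $C$ is chord-free but $G$ contains some further structural obstruction (a separating triangle meeting $C$, or a low-degree vertex in $G\setminus C$ with a short neighborhood on $C$), split or reduce accordingly using the appropriate lemma from earlier in the paper. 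Third, in the generic case where $G$ has no chord and no such obstruction, focus on $z$: since $|L(z)|\geq 5$, carefully choose $\phi(z)\in L(z)$, delete $z$, and promote its interior neighbors onto the new outer cycle with lists of size at least three (the standard Thomassen move). Apply induction to the resulting smaller instance.

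The main obstacle will be coordinating the list reductions so that, simultaneously, each of $L_\phi(q_0)$ and $L_\phi(q_1)$ retains at least three colors \emph{and} every extension of $\phi$ to $\{q_0,q_1\}$ extends to all of $G$. This double requirement is delicate because coloring $z$, or a vertex of $C\setminus P$ adjacent to both $q_0$ and $q_1$, can reduce both lists at once. The hypothesis that $q_0,z,q_1$ have no common neighbor in $C\setminus P$ is precisely what blocks the worst configuration, in which a single vertex would interact with all three middle vertices of $P$ simultaneously. Combined with a case analysis on which pairs of $\{q_0,z,q_1\}$ share a neighbor in $C\setminus P$, and combined with the earlier intermediate results of this paper that handle partial colorings whose domain consists only of the endpoints of $P$, this hypothesis should let one dispose of the most constrained cases and close the induction.
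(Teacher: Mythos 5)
Your outline has a genuine gap at the heart of the inductive step. The ``standard Thomassen move'' of choosing $\phi(z)$ and deleting $z$ produces a smaller graph $G'$ whose outer cycle passes through the interior neighbors of $z$, and in $G'$ the path $P$ no longer exists: $q_0$ and $q_1$ are no longer joined through a common neighbor on the outer face by a single vertex with a big list. The inductive hypothesis of Theorem~\ref{MainHolepunchPaperResulThm} therefore cannot be applied to $G'$ — there is no 4-path $P'$ satisfying hypotheses 1)--3), and $q_0,q_1$ have lost a color and now carry lists of size only $\geq 4$. You would instead need one of the 3-path black boxes from Paper~I (Theorem~\ref{ThmFirstLink3PathForUseInHolepunch} or Theorem~\ref{3ChordVersionMainThm1}), but then you have to certify that whatever partial coloring those produce leaves the \emph{two} vertices $q_0$ and $q_1$ with 3-lists and is simultaneously $(P,G)$-sufficient, which those results do not give directly. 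The paper's actual argument never deletes $z$: it takes a vertex-minimal counterexample, normalizes it so every non-outer face is a triangle, and then systematically destroys it with a long sequence of structural claims (no chord of $P$ except possibly $q_0q_1$; then $q_0q_1\notin E(G)$; then no chord of $C$ incident to $z$ at all; then $v_0,v_1$ have no common neighbor in $G\setminus C$; and finally a configuration analysis on $G_\pentagon$), applying the black boxes from Papers~I--II to carefully chosen subgraphs at each step.

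Your chord-splitting paragraph is closer in spirit to what the paper does, but it still underestimates the difficulty of the second requirement — that $\phi$ be $(P,G)$-sufficient \emph{and} leave 3-lists at $q_0,q_1$. When you paste colorings of $G_1$ and $G_2$ across a chord $uv$, you have no general mechanism for ensuring that the glued coloring is $(P,G)$-sufficient; this is exactly why the paper leans on $\textnormal{End}(\cdot)$ and $\textnormal{Crown}(\cdot)$ machinery from Papers~I--II and the gluing lemma (Corollary~\ref{GlueAugFromKHCor}). Similarly, the ``separating triangle or low-degree obstruction'' branch conflates several distinct configurations (a common neighbor of $q_0,q_1$ in $C-z$; the edge $q_0q_1$; a vertex $w$ of $G\setminus C$ adjacent to $z$ and $q_i$; common neighbor of $v_0,v_1$) that in the paper each require their own multi-page argument and are not handled by a single deletion. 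Correctly identifying that the ``no common neighbor of $q_0,z,q_1$ in $C\setminus P$'' hypothesis kills the canonical bad case (Figure~\ref{DropConditionHolepunchCounterFigM}) is a good observation, but it does not by itself close the other cases; the bulk of the proof is precisely the case analysis you defer.
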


This paper consists entirely of the proof of Theorem \ref{MainHolepunchPaperResulThm}. In order to prove this, we need some intermediate facts about paths of lengths 2, 3, and 4 in facial cycles of planar graphs, which we proved in Papers I-II and use as black boxes in this paper. In Sections \ref{NotationFromPISec}-\ref{PIBlackBoxSec}, we recall the notation that we introduced in Paper I and the statements of the results that we proved in Papers I. In Section \ref{PIITwoBlackBoxSec}, we recall the two results we proved in Paper II. It is not necessary to have read either of Papers I or II in order to read this paper (more generally, any of the three papers can be read independently of the other two). All of the notation introduced in those papers is re-introduced below, and we restate the results from Papers I-II in the same language. 

\section{Background}\label{BackgroundSect}

In 1994, Thomassen demonstrated in \cite{AllPlanar5ThomPap} that all planar graphs are 5-choosable, settling a problem that had been posed in the 1970's. Actually, Thomassen proved something stronger. 

\begin{theorem}\label{thomassen5ChooseThm}
Let $G$ be a planar graph with facial cycle $C$ and let $xy\in E(C)$. Let $L$ be a list assignment for $G$, where  vertex of $G\setminus C$ has a list of size at least five and each vertex of $V(C)\setminus\{x,y\}$ has a list of size at least three, where $xy$ is $L$-colorable. Then $G$ is $L$-colorable.
\end{theorem}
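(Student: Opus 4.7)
The plan is to proceed by strong induction on $|V(G)|$, following Thomassen's original argument. First I would reduce to the case where $|L(x)| = |L(y)| = 1$: since $xy$ is $L$-colorable, fix distinct colors $a \in L(x)$ and $b \in L(y)$ and replace $L(x), L(y)$ by $\{a\}, \{b\}$. Next, by adding edges inside bounded faces (respecting the planar embedding), one may assume $G$ is a near-triangulation, since any $L$-coloring of an augmented graph is in particular an $L$-coloring of $G$. The base case $|V(G)|=2$ is the hypothesis itself. One may also assume $C$ has at least three vertices, as otherwise the cycle hypothesis is vacuous.

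Case 1 (a chord exists): Suppose $uv$ is a chord of $C$, i.e., an edge of $G$ with both ends on $C$ but not on $C$ itself. Then $uv$ splits $G$ into two subgraphs $G_1, G_2$ sharing only $u,v$, where the piece $G_1$ contains the edge $xy$ (the two pieces correspond to the two arcs of $C$ between $u$ and $v$). Apply the induction hypothesis to $G_1$ with outer cycle $C_1$ and distinguished edge $xy$; this produces an $L$-coloring of $G_1$, which in particular assigns colors to $u$ and $v$. Now apply the induction hypothesis to $G_2$ with outer cycle $C_2$ and the edge $uv$ (now $L$-colored with the two distinct colors assigned in $G_1$) playing the role of $xy$. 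The two colorings agree on $uv$ and combine to $L$-color $G$.

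Case 2 (no chord): Let $v_1$ be the neighbor of $x$ on $C$ other than $y$, and let $v_2$ be the next vertex of $C$ after $v_1$. Because $G$ is a near-triangulation with no chord, the neighbors of $v_1$ appear in cyclic order as $x, u_1, u_2, \ldots, u_s, v_2$, with each $u_i \in V(G) \setminus V(C)$ and each triangle $v_1 u_i u_{i+1}$ (and $v_1 x u_1$ and $v_1 u_s v_2$) a bounded face. Since $|L(v_1)| \geq 3$ and $|L(x)|=1$, pick two colors $\alpha,\beta \in L(v_1) \setminus L(x)$. Form $G' := G \setminus v_1$ with list-assignment $L'$ agreeing with $L$ except that $L'(u_i) := L(u_i) \setminus \{\alpha,\beta\}$, so $|L'(u_i)| \geq 3$. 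The outer cycle of $G'$ is $x, y, v_{|C|-1}, \ldots, v_2, u_s, \ldots, u_1$; its vertices other than $x,y$ all have $L'$-lists of size at least $3$, while interior vertices retain lists of size at least $5$, and $xy$ is $L'$-colorable by choice of $a,b$. By induction, $G'$ is $L'$-colorable. In the resulting coloring, no $u_i$ uses $\alpha$ or $\beta$, and $v_2$ uses at most one of them; since the color of $x$ is also different from $\alpha,\beta$, at least one of $\alpha,\beta$ remains available at $v_1$, yielding an $L$-coloring of $G$.

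The main obstacle is Case 2: one must verify that after deleting $v_1$ the new outer face is exactly the concatenation of $C - v_1$ with the path $u_1 u_2 \cdots u_s$, that each $u_i$ genuinely had list of size at least $5$ in $G$ (so that removing $\alpha,\beta$ leaves a $3$-list), and that no $u_i$ coincides with a vertex of $C$ (which would create an effective chord). All three facts rely on the no-chord assumption together with the near-triangulation reduction, which is why that normalization is performed at the outset.
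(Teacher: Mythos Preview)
The paper does not prove this theorem; it is stated in the Background section and attributed to Thomassen's 1994 paper \cite{AllPlanar5ThomPap} as a known result, used throughout as a black box. Your proposal correctly reproduces Thomassen's original inductive argument (near-triangulation reduction, chord split, and the two-color reservation at the neighbor of a precolored vertex), so it is a valid proof of the cited statement even though there is no in-paper proof to compare against.
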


Theorem \ref{thomassen5ChooseThm} has the following two useful corollaries, which we use frequently. 

\begin{cor}\label{CycleLen4CorToThom} Let $G$ be a planar graph with outer cycle $C$ and let $L$ be a list-assignment for $G$ where each vertex of $G\setminus C$ has a list of size at least five.  If $|V(C)|\leq 4$ then any $L$-coloring of $V(C)$ extends to an $L$-coloring of $G$. \end{cor}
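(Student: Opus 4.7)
The plan is to reduce to Thomassen's theorem (Theorem \ref{thomassen5ChooseThm}) by folding the precoloring of all but one edge of $C$ into the list-assignment. Let $\phi$ be the given $L$-coloring of $V(C)$ and fix an edge $xy \in E(C)$; set $S := V(C) \setminus \{x, y\}$, so $|S| \leq 2$ since $|V(C)| \leq 4$. I define a list-assignment $L'$ on $G \setminus S$ by setting $L'(x) := \{\phi(x)\}$, $L'(y) := \{\phi(y)\}$, and $L'(w) := L(w) \setminus \{\phi(u) : u \in S \cap N(w)\}$ for $w \in V(G) \setminus V(C)$. Since each such $w$ has $|L(w)| \geq 5$ and at most $|S| \leq 2$ neighbors in $S$, we have $|L'(w)| \geq 3$ in all cases.

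Next, I analyze $G' := G \setminus S$ as a plane graph under the embedding inherited from $G$. The outer face of $G'$ is bounded by a cycle $C'$ consisting of the edge $xy$ together with the former interior neighbors of $S$, traversed in their natural cyclic order inherited from the embedding. Any vertex $w \in V(G) \setminus V(C)$ that is adjacent to some $u \in S$ must lie on $C'$, because the faces of $G$ incident to $u$ merge into the new outer face of $G'$ upon the deletion of $u$; conversely, any vertex strictly interior to $C'$ in $G'$ is untouched by $L'$ and so retains its original list of size $\geq 5$. Thus the hypotheses of Thomassen's theorem (Theorem \ref{thomassen5ChooseThm}) are met on $G'$ with outer cycle $C'$, list-assignment $L'$, and the $L'$-colorable edge $xy$. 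Applying it yields an $L'$-coloring $\psi$ of $G'$ which, by the singleton choice of $L'(x)$ and $L'(y)$, is forced to satisfy $\psi(x) = \phi(x)$ and $\psi(y) = \phi(y)$.

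I then extend $\psi$ to an $L$-coloring of $G$ by setting $\psi(u) := \phi(u)$ for each $u \in S$. This is proper: edges inside $V(C)$ are respected because $\phi$ is itself a proper $L$-coloring of $V(C)$, and each edge from $u \in S$ to $w \in V(G) \setminus V(C)$ is respected because $\psi(w) \in L'(w)$ explicitly excludes $\phi(u)$.

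The main obstacle is the step in which I treat the outer face of $G \setminus S$ as a single facial cycle, which may fail when $G$ is not $2$-connected or when the removal of $S$ causes the new outer boundary to revisit a vertex. This is handled by first triangulating the interior faces of $G$ (an operation that only adds edges and thus only restricts colorability), reducing to the near-triangulation case in which the deletion of $S$ reliably leaves a facial cycle as the outer face; any disconnected pieces that result can be processed block-by-block, with the argument above applied to each block separately.
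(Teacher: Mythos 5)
Your core strategy---delete $S = V(C)\setminus\{x,y\}$, absorb the colours of $S$ into the lists of the deleted vertices' neighbours, and apply Theorem~\ref{thomassen5ChooseThm} to the residual graph with precoloured edge $xy$---is the right one, and the list-size bookkeeping in your first two paragraphs is correct. The gap is in your final paragraph. It is not true that triangulating the internal faces forces $G\setminus S$ to have a facial outer cycle. Take $C=p_1p_2p_3p_4$, an interior vertex $u$ adjacent to all four $p_i$, and a second interior vertex $w$ adjacent exactly to $p_3$, $p_4$ and $u$: every bounded face is a triangle and $C$ has no chord, yet after deleting $S=\{p_3,p_4\}$ the vertex $w$ is a pendant, so $G\setminus S$ is not $2$-connected and its outer boundary walk visits $u$ twice. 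Moreover, calling the leftover pieces ``disconnected'' misstates the obstruction: once the interior is triangulated, $G$ is $2$-connected, so $G\setminus S$ remains connected for $|S|\le 2$; what can appear is a cut vertex.

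The block decomposition you gesture at is where the real content of the argument lies, and it must be carried out. Traverse the block tree of $G\setminus S$ outward from the block containing $xy$. Each non-root block $B$ shares a single cut vertex $c$ with the already-coloured part, and---because $x$ and $y$ lie on the outer boundary of $G\setminus S$---the already-coloured part sits in the \emph{unbounded} face of $B$ in the inherited embedding, whose boundary cycle $C_B$ contains $c$. Precolour a neighbour $c'$ of $c$ on $C_B$ from $L'(c')\setminus\{\phi(c)\}$ and apply Theorem~\ref{thomassen5ChooseThm} to $B$ with outer cycle $C_B$. The hypotheses hold: each vertex of $C_B\setminus\{c,c'\}$ is either on the outer boundary of $G\setminus S$ (a former interior vertex of $G$ adjacent to at most the two vertices of $S$, so $L'$-list of size $\ge 3$) or interior to $G\setminus S$ ($L'$-list of size $\ge 5$), and each interior vertex of $B$ is interior to $G\setminus S$. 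You should also record that triangulation must not introduce an edge between two vertices of $C$ that received equal colours under $\phi$; this is always arrangeable, for instance by inserting a $5$-listed vertex inside each non-triangular bounded face joined to all vertices on its boundary. Without these details, the argument as written is not complete.
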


\begin{cor}\label{2ListsNextToPrecEdgeCor}
Let $G$ be a planar graph with facial cycle $C$ and $p_0p_1\in E(C)$ and, for each $i=0,1$, let $u_i$ be the unique neighbor of $p_i$ on the path $C\setminus\{p_0, p_1\}$. Let $L$ be a list assignment for $G$, where each vertex of $G\setminus C$ has a list of size at least five and each vertex of $C\setminus\{p_0, p_1, u_0, u_1\}$ has a list of size at least three.  Let $\phi$ be an $L$-coloring of $p_0p_1$, where $|L(u_i)\setminus\{\phi(p_i)\}|\geq 2$ for each $i=0,1$. Then $\phi$ extends to an $L$-coloring of $G$. \end{cor}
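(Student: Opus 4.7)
The plan is to reduce this corollary to Thomassen's Theorem \ref{thomassen5ChooseThm} through a small list-augmentation at $u_0$ and $u_1$. The key observation is that since $u_i$ is adjacent to $p_i$ in $C$, every valid extension of $\phi$ must assign $u_i$ a color different from $\phi(p_i)$; so adjoining $\phi(p_i)$ back into $L(u_i)$ is harmless, yet it boosts the list size to the threshold required by Thomassen's theorem.

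Concretely, I would define a new list-assignment $L^*$ by setting $L^*(p_i) := \{\phi(p_i)\}$ and $L^*(u_i) := L(u_i) \cup \{\phi(p_i)\}$ for each $i = 0, 1$, and $L^*(v) := L(v)$ for every other vertex $v$. The hypothesis $|L(u_i) \setminus \{\phi(p_i)\}| \geq 2$ forces $|L^*(u_i)| \geq 3$, regardless of whether $\phi(p_i)$ already lies in $L(u_i)$; this conclusion is also unaffected in the degenerate cases where $|V(C)| \in \{3, 4\}$ and the vertices $u_0, u_1$ may coincide or be adjacent. The remaining list-size requirements of Theorem \ref{thomassen5ChooseThm} are inherited directly from the hypotheses of the corollary, since $L^* = L$ on $V(C) \setminus \{p_0, p_1, u_0, u_1\}$ and on $V(G) \setminus V(C)$. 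The edge $p_0p_1$ is $L^*$-colorable via $\phi$ itself. An application of Theorem \ref{thomassen5ChooseThm} to $G$ with facial cycle $C$, precolored edge $p_0p_1$, and list-assignment $L^*$ then yields an $L^*$-coloring $\phi^*$ of $G$.

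It remains to verify that $\phi^*$ is genuinely an $L$-coloring extending $\phi$. Since $L^*(p_i) = \{\phi(p_i)\}$, we automatically get $\phi^*(p_i) = \phi(p_i)$, so $\phi^*$ extends $\phi$. For every vertex $v \notin \{u_0, u_1\}$ we have $L^*(v) \subseteq L(v)$, hence $\phi^*(v) \in L(v)$. Finally, at each $u_i$ the adjacency $u_ip_i \in E(G)$ forces $\phi^*(u_i) \neq \phi(p_i)$, which places $\phi^*(u_i) \in L^*(u_i) \setminus \{\phi(p_i)\} = L(u_i) \setminus \{\phi(p_i)\} \subseteq L(u_i)$, as required. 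The only substantive step in the argument is the list augmentation and the verification that $|L^*(u_i)| \geq 3$; this is the single place where the hypothesis $|L(u_i) \setminus \{\phi(p_i)\}| \geq 2$ is used, and the rest reduces to routine bookkeeping around Thomassen's theorem.
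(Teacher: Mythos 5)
Your proof is correct, and it is the natural way to derive this corollary from Theorem \ref{thomassen5ChooseThm}: since a proper coloring of $G$ can never place $\phi(p_i)$ on $u_i$ anyway, adjoining $\phi(p_i)$ to $L(u_i)$ changes nothing about the set of extensions while lifting the list size to Thomassen's threshold. The one place where the writing is slightly imprecise is the equality $L^*(u_i)\setminus\{\phi(p_i)\}=L(u_i)\setminus\{\phi(p_i)\}$ in the degenerate case $u_0=u_1$, where $L^*$ of the common vertex must be read as $L(u_0)\cup\{\phi(p_0),\phi(p_1)\}$; but as you note, adjacency to both $p_0$ and $p_1$ then excludes both added colors, so the conclusion $\phi^*(u_0)\in L(u_0)$ still holds. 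The paper states this corollary without proof, so there is no authorial argument to compare against, but this is the argument a reader would supply.
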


Because of Corollary \ref{CycleLen4CorToThom}, planar embeddings which have no separating cycles of length 3 or 4 play a special role in our analysis, so we give them a name.

\begin{defn} \emph{Given a planar graph $G$, we say that $G$ is \emph{short-separation-free} if, for any cycle $F\subseteq G$ with $|V(F)|\leq 4$, either $V(\textnormal{Int}_G(F))=V(F)$ or $V(\textnormal{Ext}_G(F))=V(F)$.} \end{defn}

Lastly, we rely on the following very simple result, which is a useful consequence of Theorem 7 from \cite{lKthForGoBoHm6} which characterizes the obstructions to extending a precoloring of a cycle of length at most six in a planar graph. 

\begin{theorem}\label{BohmePaper5CycleCorList} Let $G$ be a short-separation-free graph with facial cycle $C$. Let $L$ be a list-assignment for $G$, where $|L(v)|\geq 5$ for all $v\in V(G\setminus C)$. Suppose that $|V(C)|\leq 6$ and $V(C)$ is $L$-colorable, but $G$ is not $L$-colorable. Then $5\leq |V(C)|\leq 6$, and the following hold.
\begin{enumerate}[label=\arabic*)]
\itemsep-0.1em
\item If $|V(C)|=5$, then $G\setminus C$ consists of a lone vertex which is adjacent to all five vertices of $C$; AND
\item If $|V(C)|=6$, then $G\setminus C$ consists of at most three vertices, each of which has at least three neighbors in $G\setminus C$. Furthermore, $G\setminus C$ consists of one of the following.
\begin{enumerate}[label=\roman*)]
\itemsep-0.1em
\item A lone vertex adjacent to at least five vertices of $C$; OR
\item An edge $x_1x_2$ such that, for each $i=1,2$, $G[N(x_i)\cap V(C)]$ is a path of length three; OR
\item A triangle $x_1x_2x_3$ such that, for each $i=1,2,3$, $G[N(x_i)\cap V(C)]$ is a path of length two.
\end{enumerate}
\end{enumerate}
\end{theorem}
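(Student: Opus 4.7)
My plan is to reduce directly to Theorem 7 of \cite{lKthForGoBoHm6}, which catalogues the obstructions to extending a precoloring of a cycle of length at most six in a planar graph, and then use short-separation-freeness to prune that catalogue down to the three stated configurations.

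First I would show $|V(C)| \geq 5$. If $|V(C)| \leq 4$, then by Corollary \ref{CycleLen4CorToThom} any $L$-coloring of $V(C)$ extends to an $L$-coloring of $G$; since $V(C)$ is $L$-colorable by hypothesis, this contradicts the non-$L$-colorability of $G$. Hence $5 \leq |V(C)| \leq 6$, giving the first conclusion.

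Next I would apply the cited classification to $G$ with $C$ precolored. For each configuration it produces, I would check that either it matches one of the listed outcomes or else it contains a separating cycle of length at most four and is therefore excluded by the short-separation-free hypothesis. The five-cycle case is quick: the only surviving obstruction is a single interior vertex $w$ with all five boundary vertices as neighbors, since any second interior vertex $u$ would yield a separating 3- or 4-cycle through $w$ together with two consecutive boundary neighbors of $w$ flanking $u$. For the six-cycle case, the three stated outcomes correspond respectively to a lone high-degree interior vertex, an adjacent pair $x_1x_2$ each of which spans a path of length three on $C$, and a triangle $x_1x_2x_3$ each vertex of which spans a path of length two on $C$. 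Alternative obstructions---non-adjacent pairs of interior vertices, paths on three interior vertices, configurations with a chord in $G\setminus C$, or configurations that effectively identify non-adjacent boundary vertices---can each be shown to force a separating cycle of length three or four somewhere in the embedding, and so are ruled out. The degree conditions on the $x_i$'s follow by a standard deletion-and-reduction argument: if some $x_i$ had fewer boundary neighbors than required, I would delete $x_i$, update the lists of its boundary neighbors, and invoke Theorem \ref{thomassen5ChooseThm} or Corollary \ref{CycleLen4CorToThom} on the resulting graph to extend the precoloring, contradicting non-extendability.

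The main obstacle is the case analysis in the $|V(C)| = 6$ setting: the cited list contains several configurations that must each be inspected, and for each non-matching configuration one must identify the specific separating 3- or 4-cycle that short-separation-freeness forbids. This is bookkeeping rather than a conceptual difficulty, but it requires careful tracking of the cyclic order of the interior vertices around $C$ and of the few chords present in each configuration, in order to guarantee that the short separating cycle produced really does have vertices on both of its sides.
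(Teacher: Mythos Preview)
Your approach is exactly what the paper does, but note that the paper does not actually carry out a proof of this statement: it is presented in the background section as a black box, introduced with the sentence ``Lastly, we rely on the following very simple result, which is a useful consequence of Theorem 7 from \cite{lKthForGoBoHm6}.'' Your plan---invoke Corollary \ref{CycleLen4CorToThom} to force $|V(C)|\geq 5$, then read off the configurations from the cited classification and use short-separation-freeness to discard those containing a separating triangle or $4$-cycle---is precisely the derivation the paper is alluding to but omits.
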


\section{Notation from Paper I}\label{NotationFromPISec}

In this section, we restate some terminology introduced in Paper I. We first have the following definition, which is our main object of study both for the results of Papers I-II and the result of this paper. 

\begin{defn} \emph{A \emph{rainbow} is a tuple $(G, C, P, L)$, where $G$ is a planar graph with outer cycle $C$, $P$ is a path on $C$ of length at least two, and $L$ is a list-assignment for $V(G)$ such that $|L(v)|\geq 3$ for each $v\in V(C\setminus P)$ and $|L(v)|\geq 5$ for each $v\in V(G\setminus C)$. We say that a rainbow is \emph{end-linked} if, letting $p ,p^*$ be the endpoints of $P$, each of $L(p)$ and $L(p^*)$ is nonempty and $|L(p)|+|L(p^*)|\geq 4$. }
 \end{defn}

We also recall the following definitions. 

\begin{defn} \emph{Given a graph $G$ with list-assignment $L$, a subgraph $H$ of $G$, and a partial $L$-coloring $\phi$ of $G$, we say that $\phi$ is \emph{$(H,G)$-sufficient} if any extension of $\phi$ to an $L$-coloring of $\textnormal{dom}(\phi)\cup V(H)$ extends to $L$-color all of $G$.} \end{defn}

\begin{defn}\label{GeneralAugCrownNotForLink} \emph{Let $G$ be a planar graph with outer cycle $C$, let $L$ be a list-assignment for $G$, and let $P$ be a path in $C$ with $|V(P)|\geq 3$. Let $pq$ and $p'q'$ be the terminal edges of $P$, where $p, p'$ are the endpoints of $P$. We let $\textnormal{Crown}_{L}(P, G)$ be the set of partial $L$-colorings $\phi$ of $V(C)\setminus\{q, q'\}$ such that}
\begin{enumerate}[label=\arabic*)] 
\itemsep-0.1em
\item $V(P)\setminus\{q, q'\}\subseteq\textnormal{dom}(\phi)$ and, for each $x\in\{q, q'\}$, $|L_{\phi}(x)|\geq |L(x)|-2$; AND
\item\emph{$\phi$ is $(P, G)$-sufficient}
\end{enumerate}
 \end{defn}

With the definitions above in hand, we have the following compact restatement of Theorem \ref{MainHolepunchPaperResulThm}.

\begin{thmn}[\ref{MainHolepunchPaperResulThm}] Let $(G, C, P, L)$ be an end-linked rainbow, where $P$ is a path of length four whose three internal vertices have no common neighbor in $C\setminus P$. Suppose further that each internal vertex of $P$ has an $L$-list of size at least five. Then $\textnormal{Crown}_L(P, G)\neq\varnothing$. \end{thmn} 

We also recall the following notation from Paper I and the standard notation of Definition \ref{StandQD}.

\begin{defn} \emph{Given a planar graph $G$ with outer face $C$ and an $H\subseteq G$, we let $C^H$ denote the outer face of $H$.} \end{defn}

\begin{defn}\label{StandQD} \emph{Given a path $Q$ in a graph $G$, we let $\mathring{Q}$ denote the subpath of $Q$ consisting of the internal vertices of $Q$. In particular, if $|E(Q)|\leq 2$, then $\mathring{Q}=\varnothing$. Furthermore, for any $x,y\in V(Q)$, we let $xQy$ denote the unique subpath of $Q$ with endpoints $x$ and $y$. } \end{defn}

As we frequently deal with colorings of paths, we also use the following natural notation. 

\begin{defn} \emph{Let $G$ be a graph with list-assignment $L$. Given an integer $n\geq 1$, a path $P:=p_1\ldots p_n$, and a partial $L$-coloring $\phi$ of $G$ with $V(P)\subseteq\textnormal{dom}(\phi)$, we denote the $L$-coloring $\phi|_P$ of $P$ as $(\phi(p_1), \phi(p_2),\ldots, \phi(p_n))$.} \end{defn}

\begin{defn}\label{DefnForColorSetsonVertex} \emph{Let $G$ be a graph with list-assignment $L$. Given a set $\mathcal{F}$ of partial $L$-colorings of $G$ and a vertex $x\in V(G)$ with $x\in\textnormal{dom}(\phi)$ for each $\phi\in\mathcal{F}$, we define $\textnormal{Col}(\mathcal{F}\mid x)=\{\phi(x): \phi\in\mathcal{F}\}$.} \end{defn}

\section{Black Boxes from Paper I}\label{PIBlackBoxSec}

The first black box from Paper I that we have is a result about broken wheels.

\begin{defn}\emph{A \emph{broken wheel} is a graph $G$ with a vertex $p\in V(G)$ such that $G-p$ is a path $q_1\ldots q_n$ with $n\geq 2$, where $N(p)=\{q_1, \ldots, q_n\}$. The subpath $q_1pq_n$ of $G$ is called the \emph{principal path} of $G$.} \end{defn}

Note that, if $|V(G)|\leq 4$, then the above definition does not uniquely specify the principal path, although in practice, whenever we deal with broken wheels, we specify the principal path beforehand so that there is no ambiguity. 

\begin{defn} \emph{Let $G$ be a graph and let $L$ be a list-assignment for $V(G)$. Let $P:=p_1p_2p_3$ be a path of length two in $G$. For each $(c, c')\in L(p_1)\times L(p_3)$, we let $\Lambda_{G,L}^P(c, \bullet, c')$ be the set of $d\in L(p_2)$ such that there is an $L$-coloring of $G$ which uses $c,d,c'$ on the respective vertices $p_1, p_2, p_3$. Likewise,  given a pair $(c, c')$ in either $L(p_1)\times L(p_2)$ or $L(p_2)\times L(p_3)$ respectively, we define the sets $\Lambda_{G,L}^P(c, c', \bullet)$  and $\Lambda_{G,L}^P(\bullet, c, c')$  analogously.}  \end{defn}

In the setting above, we have $\Lambda_{G,L}^P(c, c, \bullet)=\varnothing$ for any $c\in L(p_1)\cap L(p_2)$. Likewise, $\Lambda_{G,L}(\bullet, d, d)=\varnothing$ for any $d\in L(p_2)\cap L(p_3)$. The use of the notation above always requires us to specify an ordering of the vertices of a given 2-path. That is, whenever we write $\Lambda_{G,L}^P(\cdot, \cdot, \cdot)$, where two of the coordinates are colors of two of the vertices of $P$ and one is a bullet denoting the remaining uncolored vertex of $P$, we have specified beforehand which vertices the first, second, and third coordinates correspond to. Sometimes we make this explicit by writing $\Lambda^{p_1p_2p_3}_{G,L}(\cdot, \cdot, \cdot)$. Whenever any of $P, G, L$ are clear from the context, we drop the respective super- or subscripts from the notation above. 

\begin{defn}\label{GUniversalDefinition} \emph{Let $G$ be a graph and let $P:= p_1p_2p_3$ be a subpath of $G$ of length two. Let $L$ be a list-assignment for $V(G)$. Given an $a\in L(p_3)$,}
\begin{enumerate}[label=\emph{\alph*)}] 
\itemsep-0.1em
\item\emph{we say that $a$ is \emph{$(G, P)$-universal} if, for each $b\in L(p_2)\setminus\{a\}$, we have $\Lambda_G^P(\bullet, b, a)=L(p_1)\setminus\{b\}$.}
\item\emph{We say that $a$ is \emph{almost $(G,P)$-universal}, if, for each $b\in L(p_2)\setminus\{a\}$, we have $|\Lambda_G^P(\bullet, b, a)|\geq |L(p_1)|-1$.}
\end{enumerate}
 \end{defn}

In the setting above, if $a$ is $(G,P)$-universal, then it is clearly also almost $(G,P)$-universal, and, if $p_1p_3\in E(G)$ and $L(p_3)\subseteq L(p_1)$, then there is no $(G, P)$-universal color in $L(p_3)$. That is, $a$ being a $(G, P)$-universal color of $L(p_3)$ is a stronger property than the property that any $L$-coloring of $V(P)$ using $a$ on $p_3$ extends to an $L$-coloring of $G$, unless either $a\not\in L(p_1)$ or $p_1p_3\not\in E(G)$. If the 2-path $P$ is clear from the context, then, given an $a\in L(p_3)$, we just say that $a$ is $G$-universal.  Our first and second black boxes from Paper I are the following two results.

\begin{theorem}\label{BWheelMainRevListThm2} Let $G$ be a broken wheel with principal path $P=pp'p''$ and let $L$ be a list-assignment for $V(G)$ in which each vertex of $V(G)\setminus\{p, p'\}$ has a list of size at least three. Let $G-p'=pu_1\ldots u_tp''$ for some $t\geq 0$. 
\begin{enumerate}[label=\arabic*)]
\item Let $\phi_0, \phi_1$ be a pair of distinct $L$-colorings of $pp'$. For each $i=0,1$, let $S_i:=\Lambda_G(\phi_i(p), \phi_i(p'), \bullet)$, and suppose that $|S_0|=|S_1|=1$. Then the following hold. 
\begin{enumerate}[label=\alph*)]
\itemsep-0.1em
\item If $\phi_0(p)=\phi_1(p)$ and $S_0=S_1$, then $|E(G-p')|$ is even; AND
\item If $\phi_0(p)=\phi_1(p)$ and $S_0\neq S_1$, then $|E(G-p')|$ is odd and, for each $i=0,1$, $S_i=\{\phi_{1-i}(p')\}$; AND
\item If $\phi_0(p)\neq\phi_1(p)$ and $S_0=S_1$, then $|E(G-p')|$ is odd and $(\phi_0(p), \phi_0(p'))=(\phi_1(p'), \phi_1(p))$
\end{enumerate}
\item Let $\mathcal{F}$ be a family of $L$-colorings of $pp'$ and let $q\in\{p, p'\}$. Suppose that $|\mathcal{F}|\geq 3$ and $\mathcal{F}$ is constant on $q$. Then there exists a $\phi\in\mathcal{F}$ such that $|\Lambda_G(\phi(p), \phi(p'), \bullet)|\geq 2$ and in particular, if $G$ is not a triangle, then $L(p'')\setminus\{\phi(p')\}\subseteq\Lambda_G(\phi(p), \phi(p'), \bullet)$.
\item If $|V(G)|>4$ and there is an $a\in L(p)$ with $L(u_1)\setminus\{a\}\not\subseteq L(u_2)$, then $a$ is $G$-universal. 
\item If $|V(G)|\geq 4$, then, letting $x$ be the unique vertex of distance two from $p$ on the path $G-p'$, the following holds: For any $a\in L(p)$ with $L(u_1)\setminus\{a\}\not\subseteq L(x)$, $a$ is almost $G$-universal. 
\end{enumerate}
 \end{theorem}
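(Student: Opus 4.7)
The approach is a joint induction on $t$, where $t+3 = |V(G)|$. The base case $t = 0$ is a triangle $pp'p''$, in which all four parts of the theorem reduce to elementary checks on the three-element set $L(p'')$: for any $L$-coloring $\phi$ of $pp'$, $S = L(p'')\setminus\{\phi(p),\phi(p')\}$, and $|S|=1$ forces $|L(p'')|=3$ with both $\phi(p),\phi(p')\in L(p'')$; Parts 1(a)--(c), 2, 3, and 4 then follow by inspecting the remaining triangle configurations.

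For the inductive step of Part 1, I let $G' := G - p$, a broken wheel with principal path $u_1 p' p''$ and $|E(G'-p')|=t$, one less than $|E(G-p')|$. Given $\phi_i$ with $|S_i|=1$, the fiber $F_i := L(u_1)\setminus\{\phi_i(p),\phi_i(p')\}$ is nonempty, and $S_i = \bigcup_{e \in F_i} \Lambda_{G'}(e,\phi_i(p'),\bullet)$ forces $\Lambda_{G'}(e,\phi_i(p'),\bullet)=S_i$ for every $e \in F_i$. I then apply the inductive hypothesis to $G'$ with a pair of colorings of $u_1p'$ drawn from $F_0 \times F_1$, selecting whether to match or differ on $u_1$ according to which subcase of Part 1 is in play for $G$, and tracking the parity flip between $G$ and $G'$. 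The technical subtlety is that in some configurations (in particular, Case (b) for $G$), the natural choice of $\psi_0, \psi_1$ falls into the ``different on new $p$, different $S$'' configuration that Part 1 leaves unconstrained; in such cases, the rigidity forced by $F_0 \cap F_1 = \varnothing$ pins $L(u_1) = \{\phi_i(p),\phi_0(p'),\phi_1(p')\}$ and supplies a supplementary argument recovering both the parity and the structural identifications ($S_i=\{\phi_{1-i}(p')\}$ or the swap).

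Part 2 then follows from Part 1 by a pigeonhole argument. If all three $\phi\in\mathcal{F}$ had $|\Lambda|\leq 1$, each would have $|\Lambda|=1$ (since $|\Lambda|\geq 1$ always holds by propagation), and applying Part 1 to the three pairs would impose contradictory parity requirements on $|E(G-p')|$: subcase (c) is unavailable because all three colorings agree on $q$, and subcases (a) and (b) force opposite parities, impossible across all $\binom{3}{2}$ pairs. The ``in particular'' clause for $G$ not a triangle follows because $t\geq 1$ leaves enough room along the outer path to realize any target color on $p''$ outside $\{\phi(p')\}$ via backward propagation once at least two options exist at $u_t$.

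For Parts 3 and 4, the plan is a direct propagation argument. To show that $a\in L(p)$ is $G$-universal (respectively almost so), fix any $b\in L(p')\setminus\{a\}$ and $c\in L(p'')\setminus\{b\}$, pick the witness $\alpha\in L(u_1)\setminus\{a\}$ with $\alpha\notin L(u_2)$ (respectively $\alpha\notin L(x)$), and, when $\alpha\neq b$, color $u_1=\alpha$ to decouple $u_2$ (respectively $x$) from $u_1$. The residual path then carries lists of size at least two after deletion of the hub color $b$, and a backward Thomassen-style list-coloring starting from $p''=c$ completes the extension to all of $G$. The main obstacle is the boundary case $\alpha=b$, where one must instead pick $\beta\in L(u_1)\setminus\{a,b\}$ and concede some freedom at $u_2$; this is precisely where Part 4 settles for almost-universality, while Part 3 leverages the stronger hypothesis $|V(G)|>4$ to recover full universality by propagating two steps out. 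The short-path cases $t=1$ in Part 4 and $t=2$ in Part 3 are verified by direct inspection.
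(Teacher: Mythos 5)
Your inductive scaffold for Part 1 (peel $p$ and pass to $G' := G - p$) is reasonable, and the fiber decomposition $S_i = \bigcup_{e \in F_i}\Lambda_{G'}(e, \phi_i(p'), \bullet)$ is the right observation. However, the phrase ``selecting whether to match or differ on $u_1$'' is a misdescription: once $|S_i|=1$, the inductive hypothesis (case (c) applied to two distinct elements of $F_i$, as colorings of $u_1p'$ in $G'$) forces each $F_i$ to be a singleton $\{\alpha_i\}$, hence $|L(u_1)|=3$, and when $\phi_0(p)=\phi_1(p)$ this pins $\alpha_0=\phi_1(p')$ and $\alpha_1=\phi_0(p')$. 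So the pair $(\alpha_0,\phi_0(p'))$, $(\alpha_1,\phi_1(p'))$ is forced and always disagrees on $u_1$. In case (a), $S_0=S_1$ so case (c) of the IH for $G'$ closes the argument. In case (b), $S_0\neq S_1$, which is precisely the configuration Part 1 leaves unconstrained, and the inductive hypothesis says nothing. You flag this as needing a ``supplementary argument'' but do not supply it, and it is the crux of the step. One way to close it is to peel a second time: in $G''=G-\{p,u_1\}$, both propagations assign the \emph{same} color to $u_2$ (since $L(u_2)\setminus\{\phi_0(p'),\alpha_0\}=L(u_2)\setminus\{\phi_1(p'),\alpha_1\}$), so the induced pair agrees on the new $p$-coordinate and case (b) of the IH for $G''$ delivers both the parity and $S_i=\{\phi_{1-i}(p')\}$. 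But that is a genuinely new idea, not present in your sketch.

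Part 2 as written is also wrong. The pigeonhole rests on ``subcase (c) is unavailable because all three colorings agree on $q$,'' which only holds for $q=p$. When $q=p'$, every pair \emph{disagrees} on $p$, so (a) and (b) never apply; only case (c) is in play, and its conclusion is a parity-free swap identity that at most lets you rule out $S_i=S_j$ (the swap would force $\phi_i(p)=\phi_i(p')$). Even when $q=p$, if all three singleton $\Lambda$-sets happen to coincide, every pair falls into case (a) with a single consistent parity and there is no contradiction from parity alone. What actually finishes Part 2 is the same singleton-fiber rigidity used in Part 1: $|\Lambda|=1$ forces $|L(u_1)|=3$ with both $\phi(p)$ and $\phi(p')$ in $L(u_1)$, so three colorings agreeing on one coordinate but pairwise distinct on the other would stuff four distinct colors into the three-element list $L(u_1)$. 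Once some $\phi\in\mathcal{F}$ has $|L(u_1)\setminus\{\phi(p),\phi(p')\}|\geq 2$, the two-choice propagation you describe does give the ``in particular'' clause for $G$ not a triangle, and your Parts 3 and 4, including the boundary case where the distinguished color of $L(u_1)$ coincides with the hub color, are essentially sound.
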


\begin{lemma}\label{PartialPathColoringExtCL0}
Let $(G, C, P, L)$ be a rainbow and let $\phi$ be a partial $L$-coloring of $V(P)$ which includes the endpoints of $P$ in its domain and does not extend to an $L$-coloring of $G$. Then at least one of the following holds.
\begin{enumerate}[label=\arabic*)]
\itemsep-0.1em
\item There is a chord of $C$ with one endpoint in $\textnormal{dom}(\phi)$ and the other endpoint in $C\setminus P$; OR
\item There is a $v\in V(G\setminus C)\cup (V(\mathring{P})\setminus\textnormal{dom}(\phi))$ with $|L_{\phi}(v)|\leq 2$.
\end{enumerate} 
\end{lemma}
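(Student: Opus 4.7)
I prove the contrapositive: assuming that neither (1) nor (2) holds, I show that $\phi$ extends to an $L$-coloring of $G$. Let $U := V(\mathring{P}) \setminus \textnormal{dom}(\phi)$. The failure of (2) gives $|L_{\phi}(v)| \geq 3$ for every $v \in V(G \setminus C) \cup U$, and the failure of (1) gives that no chord of $C$ has one endpoint in $\textnormal{dom}(\phi)$ and the other in $V(C \setminus P)$.

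The strategy is to first extend $\phi$ across the uncolored internal vertices of $P$, and then to extend to the rest of $G$. Since $V(P) \subseteq V(C)$ lies on the outer face of $G$, the induced subgraph $G[V(P)]$ is outerplanar, and so is $G[U]$; in particular $G[U]$ is $2$-degenerate, hence $3$-list-colorable. The $L_{\phi}$-lists of size at least three on $U$ then permit us to $L_{\phi}$-color $G[U]$, which together with $\phi$ yields an $L$-coloring $\psi$ of $V(P)$. To extend $\psi$ to $V(G) \setminus V(P)$, I would apply Corollary \ref{2ListsNextToPrecEdgeCor} (and, where needed, Thomassen's Theorem \ref{thomassen5ChooseThm}) to a precolored $C$-edge chosen inside $P$, which is possible because $V(P)$ contains consecutive vertices of $C$. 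The failure of (1) ensures that the two $C$-neighbors of $p$ and $p^*$ in $V(C \setminus P)$ retain lists of size at least two under $L_{\psi}$, and that every other vertex of $V(C \setminus P)$ keeps a full list of size at least three for lack of any chord-neighbor in $\textnormal{dom}(\phi)$.

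The main obstacle is coordinating these two stages. Coloring $U$ in the first stage can shrink lists on neighbors in $V(G \setminus C)$, and a priori this could push such lists below the size-five threshold demanded by Thomassen's theorem. I therefore anticipate that the detailed argument either performs the two stages together inside a single induction on $|V(G)|$ (case-splitting on the chord structure of $C$ in the spirit of Thomassen's proof of $5$-choosability), or else makes a careful, vertex-by-vertex choice of $\psi$ on $U$ that leaves sufficient freedom on the neighboring vertices of $V(G \setminus C)$ for the extension to succeed. This coordination is the technical heart of the proof, and it is where the specific hypotheses (1) and (2) should do almost all of the work, preventing any local configuration that would block an inductive reduction.
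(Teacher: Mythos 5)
This lemma is imported here as a black box from Paper~I, so the present paper contains no proof to compare against; the assessment below concerns the internal soundness of your proposal.

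Your contrapositive setup is right, and the observation that $G[U]$ is outerplanar, hence $2$-degenerate and therefore $L_{\phi}$-colorable from lists of size at least three, is correct. The gap is in stage~2, and it begins with a false claim. You assert that, after coloring $U$ to produce $\psi$, the failure of~1) guarantees that the two $C$-neighbors of the endpoints retain $L_{\psi}$-lists of size at least two and every other vertex of $V(C\setminus P)$ retains its full list. But condition~1) only forbids chords of $C$ between $\textnormal{dom}(\phi)$ and $V(C\setminus P)$; it says nothing about chords between $U=V(\mathring{P})\setminus\textnormal{dom}(\phi)$ and $V(C\setminus P)$, and once $U$ is colored those chords remove colors. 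For instance, with $P=p_0p_1p_2p_3p_4$, $\textnormal{dom}(\phi)=\{p_0,p_4\}$, and $C\setminus\mathring{P}=p_0wp_4$, the vertex $w$ may be adjacent to all of $p_1,p_2,p_3$ without any chord touching $\textnormal{dom}(\phi)$, yet an arbitrary choice of $\psi$ on $\{p_1,p_2,p_3\}$ can empty $L_{\psi}(w)$. You flag the analogous coordination problem only for $V(G\setminus C)$; the same problem on $V(C\setminus P)$ itself is at least as serious and is misstated in the proposal as a non-issue.

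Because of this, the two-stage plan cannot be executed as written: there is in general no way to first pick $\psi$ on $U$ and then invoke Theorem~\ref{thomassen5ChooseThm} or Corollary~\ref{2ListsNextToPrecEdgeCor}, since the hypotheses of those results can be destroyed both on $V(C\setminus P)$ and on the exposed vertices of $V(G\setminus C)$. The ``careful vertex-by-vertex choice'' you mention is exactly what would have to be proved to exist, and the proposal offers no mechanism for it. What is required is the alternative you gesture at but do not carry out: a single induction on $|V(G)|$ in the spirit of Thomassen's proof, splitting on chords of $C$ and on near-boundary vertices, rather than committing to a coloring of $U$ before the rest of $G$ is under control. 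As it stands this is a plan with an unresolved hole, not a proof.
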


Our third black box from Paper I is the following result and its corollary.  

\begin{theorem}\label{EitherBWheelOrAtMostOneColThm} Let $(G, C, P, L)$ be a rainbow, where $P=p_1p_2p_3$ is a 2-path. Suppose further that $G$ is short-separation-free and every chord of $C$ has $p_2$ as an endpoint. Then,
\begin{enumerate}[label=\arabic*)]
\itemsep-0.1em
\item either $G$ is a broken wheel with principal path $P$, or there is at most one $L$-coloring of $V(P)$ which does not extend to an $L$-coloring of $G$; AND
\item If $\phi$ is an $L$-coloring of $V(P)$ which does not extend to an $L$-coloring of $G$, then, for each $p\in\{p_1, p_3\}$, letting $u$ be the unique neighbor of $p$ on the path $C-p_2$, we have $L(u)\setminus\{\phi(p)\}|=2$.
\end{enumerate}
\end{theorem}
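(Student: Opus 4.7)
The plan is to prove both conclusions simultaneously by induction on $|V(G)|$. The base case $|V(C)| \leq 4$ is handled by Corollary \ref{CycleLen4CorToThom}: every $L$-coloring of $V(P)$ extends to $G$, so both conclusions hold trivially (when $|V(C)| = 3$, $G$ is a triangle, which is itself a broken wheel with principal path $P$). So I assume $|V(C)| \geq 5$ and write $C - p_2 = p_1 u_1 \ldots u_{k-1} p_3$ with $k \geq 3$. For any failing $L$-coloring $\phi$ of $V(P)$, I apply Lemma \ref{PartialPathColoringExtCL0}: since every chord of $C$ is incident to $p_2$ and $V(\mathring{P}) \setminus \textnormal{dom}(\phi) = \varnothing$, either
\begin{enumerate}[label=(\Alph*)]
\itemsep-0.1em
\item there is a chord $p_2 u_i$ for some $i \in \{1, \ldots, k-1\}$; or
\item there is $v \in V(G \setminus C)$ with $|L_{\phi}(v)| \leq 2$.
\end{enumerate}

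In case (B), a short arithmetic check using $|L(v)| \geq 5$ and $|N(v) \cap V(P)| \leq 3$ forces $v$ to be adjacent to all three of $p_1, p_2, p_3$ with $|L(v)| = 5$ and $\phi(p_1) \neq \phi(p_3)$; the short-separation-free hypothesis applied to the triangles $p_1 p_2 v$ and $p_2 p_3 v$, whose exteriors contain the rest of $C$, forces $N(p_2) = \{p_1, p_3, v\}$, so $p_2$ has degree $3$. I then pass to $G' := G - p_2$, whose outer cycle is $C' := p_1 v p_3 u_{k-1} \ldots u_1 p_1$ and which contains the 2-path $P' := p_1 v p_3$ (with $v$ having reduced list $L'(v) := L(v) \setminus \{\phi(p_2)\}$ of size $4$); the theorem's hypotheses transfer to $(G', C', P', L')$, since $G'$ is still SSF and any $C'$-chord not incident to $v$ would have been a $C$-chord not incident to $p_2$. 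Induction yields the conclusions for $G$ after translating failing colorings across $p_2 \leftrightarrow v$.

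In case (A), the chord $p_2 u_i$ splits $G$ into two SSF subgraphs $G_1 \ni p_1$ and $G_2 \ni p_3$ with outer cycles $C_1, C_2$ and precolored 2-paths $P_1 := p_1 p_2 u_i$ and $P_2 := u_i p_2 p_3$; each $(G_j, C_j, P_j, L)$ satisfies the theorem's hypotheses. The failure of $\phi$ to extend means that for every $c \in L(u_i) \setminus \{\phi(p_2)\}$, either $(\phi(p_1), \phi(p_2), c)$ fails on $G_1$ or $(c, \phi(p_2), \phi(p_3))$ fails on $G_2$. The induction hypothesis applied to each side gives either: that side is a broken wheel with its principal path (i.e., $p_2$ is adjacent to every vertex of $C_j - p_2$), or at most one failing $c$ with the sharp Part (2) list-size constraints on the neighbors of the endpoints of $P_j$. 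Crucially, if both $G_1, G_2$ are broken wheels with principal paths $P_1, P_2$ respectively, then stitching through the shared spoke $p_2 u_i$ makes $G$ itself a broken wheel with principal path $P$; so otherwise at least one side, say $G_1$, is not a broken wheel, whence by induction at most one choice of $c$ is bad for $G_1$ and the Part (2) constraints there propagate to give $|L(u_1) \setminus \{\phi(p_1)\}| = 2$. A symmetric argument on $G_2$ gives the analogous constraint at $u_{k-1}$, and a careful count (using $|L(u_i)| \geq 3$, so at least two values of $c$ are available) shows that at most one $\phi$ can fail overall.

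The main difficulty is the case analysis in (A): pinning down the various subcases (for example, $i = 1$ where $u_i$ coincides with the neighbor $u_1$ of $p_1$, or $|V(C_j)| = 3$ where one side is trivially a broken wheel), and correctly propagating both the at-most-one-failing-coloring conclusion and the Part (2) list-size constraints through the recursion. This bookkeeping, particularly handling how the constraints on the shared "interior" vertex $u_i$ combine with those on the boundary vertices $u_1$ and $u_{k-1}$, is the technical heart of the argument.
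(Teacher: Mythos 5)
The paper cites Theorem \ref{EitherBWheelOrAtMostOneColThm} as a black box from Paper I, so there is no proof in this document to compare against; I'll assess your proposal on its own.

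Your skeleton (induction on $|V(G)|$, dichotomy via Lemma \ref{PartialPathColoringExtCL0} between a chord $p_2u_i$ and an interior obstruction $v$ with $|L_\phi(v)|\leq 2$) is natural, and your setup in both branches --- the SSF reasoning forcing $N(p_2)=\{p_1,p_3,v\}$ in (B), the stitching observation that two broken wheels through $p_2u_i$ give a broken wheel in (A), and the counting argument that forces a contradiction when neither piece is a broken wheel --- is correct. But the argument has real holes in exactly the regime where the theorem is delicate, namely when one of the recursive pieces \emph{is} a broken wheel.

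In case (B), you reduce to $(G', C', P', L')$ with $G'=G-p_2$ and $P'=p_1vp_3$ and then say ``induction yields the conclusions for $G$ after translating failing colorings across $p_2\leftrightarrow v$.'' That translation does work when $G'$ is not a broken wheel: then the inductive hypothesis bounds the failing $P'$-colorings of $G'$ by one, while failure of $\phi$ on $G$ requires at least the two colors of $L_\phi(v)$ to all fail on $G'$, a contradiction. But when $G'$ \emph{is} a broken wheel with principal path $P'$ (so $v$ is adjacent to the whole of $C-p_2$ and $G$ is a wheel with hub $v$), the inductive hypothesis merely returns the "broken wheel" alternative, which places no bound at all on failing $P'$-colorings. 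And $G$ itself is not a broken wheel with principal path $P$, since the hub $v$ is not on $C-p_2$; so Conclusion 1) for $G$ demands at most one failing $\phi$, and you have given no argument for that. This subcase needs a genuinely separate analysis of the wheel via the broken-wheel toolkit (Theorem \ref{BWheelMainRevListThm2}), which you never invoke.

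The same structural issue recurs in case (A). You correctly observe that WLOG $G_1$ is not a broken wheel, so at most one $P_1$-coloring fails on $G_1$; but you then appeal to ``a symmetric argument on $G_2$,'' and that is not available when $G_2$ is a broken wheel with principal path $P_2$. In that subcase, $G_2$ can have many failing $P_2$-colorings, and the count ``at most one bad $c$ on each side, but $|L(u_i)\setminus\{\phi(p_2)\}|\geq 2$'' collapses. This is where the quantitative facts in Theorem \ref{BWheelMainRevListThm2} (e.g. parts 1) and 2) controlling which colorings of an edge of the principal path have $|\Lambda|=1$) must be brought in, and the argument is substantively different from the two-non-broken-wheel case; it is not just bookkeeping.

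Finally, Conclusion 2) (the exact size $|L(u)\setminus\{\phi(p)\}|=2$ at the two rim neighbors) is asserted to ``propagate'' but is not actually established anywhere --- not in the base case, not across the decomposition in (A), and not in (B), where the argument at $u_1$ and $u_{k-1}$ would again have to route through the broken-wheel structure of $G'$. As written, the proposal proves the easy halves of both conclusions and leaves the broken-wheel subcases, which are the content of the theorem, open.
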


\begin{cor}\label{CorMainEitherBWheelAtM1ColCor} Let $(G, C, P, L)$ be a rainbow, where $P:=p_1p_2p_3$ is a 2-path. Suppose further that $G$ is short-separation-free and every chord of $C$ has $p_2$ as an endpoint. Then, 
\begin{enumerate}[label=\arabic*)]
\itemsep-0.1em
\item if $|V(C)|>4$ and $|L(p_3)|\geq 2$, then, letting $xyp_3$ be the unique 2-path of $C-p_2$ with endpoint $p_3$, either there is a $G$-universal $a\in L(p_3)$ or $G$ is a broken wheel with principal path $P$ such that $L(p_3)\subseteq L(x)\cap L(y)$; AND
\item  If $|L(p_3)|\geq 3$ and either $|V(C)|>3$ or $G=C$, then
\begin{enumerate}[label=\roman*)]
\itemsep-0.1em
\item Either $G$ is a broken wheel with principal path $P$, or $|\Lambda_G^P(\phi(p_1), \phi(p_2), \bullet)|>1$ for any $L$-coloring $\phi$ of $p_1p_2$; AND
\item For each $a\in L(p_1)$, there are at most two $b\in L(p_2)\setminus\{a\}$ such that $|\Lambda_G^P(a, b, \bullet)|=1$; AND
\end{enumerate}
\item If $\phi$ is an $L$-coloring of $\{p_1, p_3\}$ and $S$ is a subset of $L_{\phi}(p_2)$ with $|S|\geq 2$ and $S\cap\Lambda_G^P(\phi(p_1), \bullet, \phi(p_3))=\varnothing$, then
\begin{enumerate}[label=\roman*)]
\itemsep-0.1em
\item $|S|=2$ and $G$ is a broken wheel with principal path $P$, and $G-p_2$ is a path of odd length; AND
\item For any $L$-coloring $\psi$ of $V(P)$ which does not extend to an $L$-coloring of $G$, either $\psi(p_1)=\psi(p_3)=s$ for some $s\in S$, or $\psi$ restricts to the same coloring of $\{p_1, p_3\}$ as $\phi$. 
\end{enumerate}
\end{enumerate} \end{cor}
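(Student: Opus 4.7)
The proof splits into the three clauses of the corollary, with the central tool being the dichotomy of Theorem \ref{EitherBWheelOrAtMostOneColThm}(1): either $G$ is a broken wheel with principal path $P$, in which case the structural results of Theorem \ref{BWheelMainRevListThm2} apply, or there is at most one $L$-coloring $\psi^*$ of $V(P)$ that fails to extend to $G$. Throughout, when $|V(C)|>3$, the chord hypothesis rules out $p_1p_3\in E(G)$ (any such edge would be a chord not through $p_2$), so every triple $(c,b,a)\in L(p_1)\times L(p_2)\times L(p_3)$ with $c\neq b$ and $b\neq a$ yields a valid $L$-coloring of $V(P)$.

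For part (1), in the non-broken-wheel case any $a\in L(p_3)\setminus\{\psi^*(p_3)\}$ is $(G,P)$-universal, and $|L(p_3)|\geq 2$ secures such an $a$. In the broken-wheel case, write $G-p_2=p_1u_1\cdots u_tp_3$ with $u_t=y$, $u_{t-1}=x$, and $t\geq 2$, and apply Theorem \ref{BWheelMainRevListThm2}(3) oriented toward $p_3$: if some $a\in L(p_3)$ satisfies $L(y)\setminus\{a\}\not\subseteq L(x)$ then that $a$ is $(G,P)$-universal; otherwise, using $|L(p_3)|\geq 2$, we deduce $L(y)\subseteq L(x)$. A direct greedy coloring of $p_1u_1\cdots u_t$ (each interior list being of size $\geq 2$ after excluding $\phi(p_2)$) shows that any $a\in L(p_3)\setminus L(y)$ is itself $(G,P)$-universal, since the constraint $u_t\neq a$ at $u_t=y$ is vacuous when $a\notin L(u_t)$. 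Hence absence of a $(G,P)$-universal color in $L(p_3)$ forces $L(p_3)\subseteq L(y)\subseteq L(x)$.

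For part (2), the case $G=C$ with $|V(C)|=3$ makes $G$ the triangle, a broken wheel with principal path $P$; the case $|V(C)|=4$ follows from Corollary \ref{CycleLen4CorToThom} (which extends every $L$-coloring of $V(C)$), immediately giving $|\Lambda_G^P|\geq|L(p_3)|-1\geq 2$. When $|V(C)|>4$ and $G$ is not a broken wheel, part (1) supplies at least $|L(p_3)|-1\geq 2$ many $(G,P)$-universal colors in $L(p_3)$, at most one of which equals $\phi(p_2)$, so at least one lies in $\Lambda_G^P(\phi(p_1),\phi(p_2),\bullet)$; combining this with the sharpened list-size information of Theorem \ref{EitherBWheelOrAtMostOneColThm}(2) in the residual edge case $|L(p_3)|=3$, $\phi(p_2)\in L(p_3)$ (where $\psi^*(p_3)\in L(y)$ and $|L(y)|=3$) yields $|\Lambda_G^P|>1$, proving (2)(i). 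Part (2)(ii) follows from (2)(i) in the non-broken-wheel case; in the broken-wheel case, if three distinct $b_1,b_2,b_3\in L(p_2)\setminus\{a\}$ each satisfied $|\Lambda_G^P(a,b_i,\bullet)|=1$, then Theorem \ref{BWheelMainRevListThm2}(2) applied to $\mathcal{F}=\{(a,b_i):i=1,2,3\}$ (constant on $p_1$) would produce some $\phi\in\mathcal{F}$ with $|\Lambda_G^P(a,\phi(p_2),\bullet)|\geq 2$, a contradiction.

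For part (3), the $|S|\geq 2$ distinct bad colorings $(\phi(p_1),b,\phi(p_3))$ with $b\in S$ immediately force $G$ to be a broken wheel with principal path $P$ via Theorem \ref{EitherBWheelOrAtMostOneColThm}(1). To obtain $|S|=2$: if $|S|\geq 3$, note that $G$ cannot be the triangle (else $L_\phi(p_2)$ contains no bad $b$), so Theorem \ref{BWheelMainRevListThm2}(2) applied to $\{(\phi(p_1),b):b\in S\}$ supplies some $b\in S$ with $L(p_3)\setminus\{b\}\subseteq\Lambda_G^P(\phi(p_1),b,\bullet)$; in particular $\phi(p_3)\in\Lambda_G^P(\phi(p_1),b,\bullet)$, contradicting $b\in S$. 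Writing $S=\{b_0,b_1\}$ and $\phi_i=(\phi(p_1),b_i)$, we then verify $|S_i|=|\Lambda_G^P(\phi(p_1),b_i,\bullet)|=1$ by combining (2)(ii) with the rigid path-coloring structure forced by the simultaneous failure for $i=0,1$; apply Theorem \ref{BWheelMainRevListThm2}(1) to the pair $\phi_0,\phi_1$. Subcase (a) is eliminated because $S_0=S_1\not\ni\phi(p_3)$ together with the broken-wheel path structure exhibits an extending third choice of $d$; hence subcase (b) holds, giving $|E(G-p_2)|$ odd and $S_i=\{b_{1-i}\}$. For (3)(ii), any other bad $L$-coloring $\psi$ of $V(P)$ compared with $\phi_0$ or $\phi_1$ via Theorem \ref{BWheelMainRevListThm2}(1)(a),(b),(c), together with the color bookkeeping at $p_1,p_3$, forces either $\psi|_{\{p_1,p_3\}}=\phi|_{\{p_1,p_3\}}$ or $\psi(p_1)=\psi(p_3)\in S$. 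The principal technical obstacle is this final subcase analysis in part (3), which requires both the verification $|S_i|=1$ and the correct identification of the parity subcase; a secondary obstacle is the boundary situation in part (2)(i), which demands careful use of Theorem \ref{EitherBWheelOrAtMostOneColThm}(2).
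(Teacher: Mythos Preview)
This corollary is not proved in the present paper: it is imported wholesale as a black box from Paper~I (see the sentence ``Our third black box from Paper~I is the following result and its corollary''). So there is no proof here to compare against; what you have written is an independent reconstruction.

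Your handling of part~(1) is sound, and the broken-wheel/non-broken-wheel split via Theorem~\ref{EitherBWheelOrAtMostOneColThm}(1) is the right organizing principle throughout. However, there are two genuine gaps.

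In part~(2)(i), the boundary case is not closed. In the non-broken-wheel case with a unique bad coloring $\psi^*$, if $\phi$ happens to agree with $\psi^*$ on $p_1,p_2$, then $\Lambda_G^P(\phi(p_1),\phi(p_2),\bullet)=L(p_3)\setminus\{\phi(p_2),\psi^*(p_3)\}$, which for $|L(p_3)|=3$ and $\phi(p_2)\in L(p_3)$ has size exactly one. You invoke Theorem~\ref{EitherBWheelOrAtMostOneColThm}(2) (which gives $\psi^*(p_3)\in L(y)$ and $|L(y)|=3$), but it is not clear how that information forces a second extendable color at $p_3$. As written, the sentence ``yields $|\Lambda_G^P|>1$'' is an assertion, not an argument; this step needs either a different tool or a structural reason why this configuration cannot occur.

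In part~(3), the claim that $|S_i|=|\Lambda_G^P(\phi(p_1),b_i,\bullet)|=1$ is asserted rather than proved: knowing $\phi(p_3)\notin\Lambda_G^P(\phi(p_1),b_i,\bullet)$ does not by itself bound this set to a single color. Without $|S_i|=1$ you cannot invoke Theorem~\ref{BWheelMainRevListThm2}(1)(a)--(c), so both the odd-length conclusion in (3)(i) and the dichotomy in (3)(ii) are currently unsupported. A direct argument here would need to analyze the forced list structure along the path $G-p_2$ arising from the simultaneous failure for $b_0$ and $b_1$ (each $u_j$ must have $b_0,b_1\in L(u_j)$, etc.), and then extract parity; you gesture at this but do not carry it out.
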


We now recall the following notation from Paper I. 

\begin{defn}\label{EndNotationColor} \emph{Let $G$ be a graph and let $L$ be a list-assignment for $G$. Let $P$ be a path in $G$ with $|V(P)|\geq 3$, let $H$ be a subgraph of $G$ and let $\{p, p'\}$ be the endpoints of $P$. We let $\textnormal{End}_L(H, P, G)$ be the set of $L$-colorings $\phi$ of $\{p, p'\}\cup V(H)$ such that $\phi$ is $(P,G)$-sufficient.} \end{defn}

We usually drop the subscript $L$ in the case where it is clear from the context. Furthemore, if $H=\varnothing$, then we just write $\textnormal{End}_L(P,G)$.That is, $\textnormal{End}_L(P, G)$ is the set of $L$-colorings $\phi$ of the endpoints of $P$ such that any extension of $\phi$ to an $L$-coloring of $V(P)$ extends to $L$-color all of $G$. Our fourth black box from Paper I is the following result, together with its corollary. 

\begin{theorem}\label{SumTo4For2PathColorEnds} Let $(G, C, P, L)$ be an end-linked rainbow, where $P$ is a 2-path. Then $\textnormal{End}_{L}(P,G)\neq\varnothing$.  \end{theorem}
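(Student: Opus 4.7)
The plan is to proceed by induction on $|V(G)|$. The trivial case $G=P$ is handled directly, as the end-linked condition $|L(p_1)|+|L(p_3)|\ge 4$ (with both lists nonempty) guarantees an $L$-coloring of $\{p_1,p_3\}$, and such a coloring is trivially $(P,G)$-sufficient.

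For the inductive step, I would first reduce to the case where $G$ is short-separation-free and every chord of $C$ is incident to $p_2$. If $F$ is a separating cycle with $|V(F)|\le 4$, split $G$ along $F$, apply the inductive hypothesis to the side containing $P$, and invoke Corollary \ref{CycleLen4CorToThom} to carry any coloring of $V(F)$ across to the other side. If $vw$ is a chord of $C$ with neither endpoint equal to $p_2$, split $G$ along $vw$; the piece containing $P$ inherits an end-linked rainbow structure since $v,w$ have the requisite list sizes, so induction applies, and Thomassen's Theorem \ref{thomassen5ChooseThm} extends the resulting coloring across $vw$ to the other piece.

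With these reductions, Theorem \ref{EitherBWheelOrAtMostOneColThm} splits the remaining analysis into two cases. In the easier case, at most one $L$-coloring $\psi^*$ of $V(P)$ fails to extend to an $L$-coloring of $G$. The number of proper $\phi:\{p_1,p_3\}\to L(p_1)\times L(p_3)$ is at least $2$ by the end-linked condition (even when $p_1p_3\in E(G)$ forces $\phi(p_1)\ne\phi(p_3)$), so I can pick $\phi$ with $(\phi(p_1),\phi(p_3))\ne(\psi^*(p_1),\psi^*(p_3))$, ensuring that every extension of $\phi$ to $V(P)$ avoids $\psi^*$ and hence extends to $G$.

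The main obstacle is the remaining case, where $G$ is a broken wheel with principal path $P$; write $G-p_2=p_1u_1\ldots u_tp_3$. The subcase $t=0$ is immediate. For $t\ge 1$ we have $p_1p_3\notin E(G)$, and the goal becomes choosing $\phi$ with $L(p_2)\setminus\{\phi(p_1),\phi(p_3)\}\subseteq\Lambda_G^P(\phi(p_1),\bullet,\phi(p_3))$. When $|V(C)|\ge 5$, I would apply Corollary \ref{CorMainEitherBWheelAtM1ColCor} part 1 to the endpoint with the larger list (say $p_3$ with $|L(p_3)|\ge 2$): either there is a $G$-universal color $a\in L(p_3)$, in which case $\phi(p_3)=a$ together with any $\phi(p_1)\in L(p_1)$ works directly by universality, or $L(p_3)\subseteq L(u_{t-1})\cap L(u_t)$, in which case combining the analogous structural constraint at the $p_1$ end with the universal and almost-universal color criteria of Theorem \ref{BWheelMainRevListThm2} parts 3 and 4 pins down the list structure tightly enough to construct $\phi$ directly. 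The small case $|V(C)|=4$ (i.e.\ $t=1$) is handled by a direct argument: the only coloring of $V(P)$ that can fail to extend uses exactly the three colors of $L(u_1)$ (when $|L(u_1)|=3$), and the end-linked hypothesis provides enough freedom to choose $(\phi(p_1),\phi(p_3))$ so that this bad color does not appear among the available extensions at $p_2$.
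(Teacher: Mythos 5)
This statement is quoted by the present paper as a black box from Paper~I (see Section~\ref{PIBlackBoxSec}), so there is no proof here to compare against directly; I can only assess your argument on its own terms. Your high-level plan is the natural one and large parts of it are sound: the base case, the reduction to a short-separation-free graph in which every chord of $C$ passes through $p_2$, and the invocation of Theorem~\ref{EitherBWheelOrAtMostOneColThm} to split into the ``at most one bad coloring'' case and the broken-wheel case are all correct, and in the first of these two cases the counting you give (at least two, in fact at least three, proper $L$-colorings of $\{p_1,p_3\}$ once $p_1p_3\notin E(G)$) does finish the job.

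The genuine gap is the broken-wheel case with $t\ge 2$ and no $G$-universal color. You assert that the containments coming from Corollary~\ref{CorMainEitherBWheelAtM1ColCor}~(1) and Theorem~\ref{BWheelMainRevListThm2}~(3)--(4) ``pin down the list structure tightly enough to construct $\phi$ directly,'' but they do not. Those statements only constrain $L(u_1),L(u_2),L(u_{t-1}),L(u_t)$: from $p_3$ one gets $L(p_3)\subseteq L(u_{t-1})\cap L(u_t)$ and (intersecting part~(3) over two colors of $L(p_3)$) $L(u_{t-1})=L(u_t)$, and the analogue at $p_1$, \emph{if} $|L(p_1)|\ge 2$, gives $L(u_1)=L(u_2)\supseteq L(p_1)$. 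Nothing constrains $L(u_3),\ldots,L(u_{t-2})$, so for $t\ge 4$ (and already for $t\ge 3$ when $|L(p_1)|=1$) the rim lists are not determined, and the ``direct construction'' you gesture at is not supplied. Moreover when $|L(p_1)|=1$, Corollary~\ref{CorMainEitherBWheelAtM1ColCor}~(1) cannot be applied at the $p_1$ end at all, and the almost-universal criterion (part~(4)) gives no leverage since it only guarantees $|\Lambda_G(\bullet,b,a)|\ge |L(p_1)|-1=0$. What actually drives this case is an analysis of the pairs $(a,c)$ for which $|\Lambda_G^P(a,c,\bullet)|=1$ -- i.e.\ parts~(1) and~(2) of Theorem~\ref{BWheelMainRevListThm2}, which track the parity of $|E(G-p_2)|$ and force any two such ``tight'' colorings of $p_1p_2$ with the same color on $p_1$ to be related in a very specific way -- together with the fact (Theorem~\ref{EitherBWheelOrAtMostOneColThm}~(2)) that any non-extending coloring $\phi$ of $V(P)$ must satisfy $|L(u_1)\setminus\{\phi(p_1)\}|=2$ and $|L(u_t)\setminus\{\phi(p_3)\}|=2$; these let one bound, for a fixed choice on one endpoint, the set of colors on the other endpoint that can ever be forced into a bad triple, and the end-linked inequality $|L(p_1)|+|L(p_3)|\ge 4$ then guarantees a safe choice. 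As written, your sketch cites the wrong parts of the broken-wheel lemma and omits the parity argument, so the hard case is not actually proved.
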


\begin{cor}\label{GlueAugFromKHCor} Let $(G, C, P, L)$ be an end-linked rainbow and $pq$ be a terminal edge of $P$, where $p$ is an endpoint of $P$, and let $p'$ be the other endpoint of $P$. Let $qx$ be an edge with $x\not\in V(C-p')$. Let $H, K$ be subgraphs of $G$ bounded by respective outer faces $x(C\setminus\mathring{P})p'Pq$ and $p(C\setminus\mathring{P})xq$. Let $\mathcal{F}$ be a nonempty family of partial $L$-colorings of $H$, where each element of $\mathcal{F}$ has $x$ in its domain. Suppose further that either $x=p$ or $|\textnormal{Col}(\mathcal{F}\mid x)|\geq |L(p')|$. Then there is an $L$-coloring $\phi$ of $\{p, x\}$ and a $\psi\in\mathcal{F}$ such that $\phi(x)=\psi(x)$ and $\phi$ is $(q, K)$-sufficient. \end{cor}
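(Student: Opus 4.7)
The plan is to reduce Corollary \ref{GlueAugFromKHCor} to an application of Theorem \ref{SumTo4For2PathColorEnds} on the subgraph $K$ with respect to the 2-path $P' := pqx$, after restricting the list at $x$ to $\textnormal{Col}(\mathcal{F}\mid x)$. First I would dispose of the degenerate case $x = p$: here the edge $qx$ coincides with $pq$, so $K$ reduces to the edge $pq$ itself, and for any $\psi \in \mathcal{F}$ the coloring $\phi$ of $\{p\}$ defined by $\phi(p) := \psi(p)$ is vacuously $(q, K)$-sufficient.

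In the main case $x \neq p$, the first observation is that $P' = pqx$ is a 2-path contained in $C^K$, since the outer face of $K$ traces $p$ along $C\setminus\mathring{P}$ to $x$ and then uses the edges $xq$ and $qp$. I would define a list-assignment $L'$ on $V(K)$ that agrees with $L$ everywhere except at $x$, where I set $L'(x) := \textnormal{Col}(\mathcal{F}\mid x)$. Since $L'$ and $L$ differ only on $x \in V(P')$, the list-size conditions on $V(C^K\setminus P')$ and $V(K\setminus C^K)$ are inherited directly from $(G, C, P, L)$. Nonemptiness of $L'(x)$ follows from $\mathcal{F}\neq\varnothing$, and combining the hypothesis $|\textnormal{Col}(\mathcal{F}\mid x)|\geq |L(p')|$ with end-linkedness of $(G,C,P,L)$ yields $|L'(p)|+|L'(x)|\geq |L(p)|+|L(p')|\geq 4$, so $(K, C^K, P', L')$ is an end-linked rainbow.

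Applying Theorem \ref{SumTo4For2PathColorEnds} then produces an $L'$-coloring $\phi \in \textnormal{End}_{L'}(P', K)$ of $\{p, x\}$. Since $\phi(x)\in\textnormal{Col}(\mathcal{F}\mid x)$, I can pick $\psi\in\mathcal{F}$ with $\psi(x)=\phi(x)$. To conclude, I would note that any $L$-coloring of $\{p,q,x\}$ extending $\phi$ is also an $L'$-coloring (as $\phi(x)\in L'(x)$ and $L'$ coincides with $L$ on $\{p, q\}$), so by the defining property of $\textnormal{End}_{L'}(P', K)$ it extends to an $L'$-coloring, hence an $L$-coloring, of $K$. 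There is no substantial obstacle — the whole argument is a routine reduction to the black box, and the only moving part is the end-linked verification, which is precisely why the hypothesis $|\textnormal{Col}(\mathcal{F}\mid x)|\geq|L(p')|$ appears in the statement.
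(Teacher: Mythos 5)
Your proof is correct, and it takes what is surely the intended route: restrict the list at $x$ to $\textnormal{Col}(\mathcal{F}\mid x)$, observe that $(K, C^K, pqx, L')$ is still an end-linked rainbow (the hypothesis $|\textnormal{Col}(\mathcal{F}\mid x)|\geq|L(p')|$ is exactly what keeps the terminal-list sum at four), and invoke Theorem \ref{SumTo4For2PathColorEnds}. The verification that the rainbow conditions transfer to $K$ is sound: the vertices of $C^K\setminus\{p,q,x\}$ all lie on $C\setminus P$ and so have lists of size three, and $V(K\setminus C^K)\subseteq V(G\setminus C)$ because the interior of $C^K$ sits inside the interior of $C$. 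The degenerate case $x=p$ is also handled correctly, since then $K$ collapses to the edge $pq$ and $(q,K)$-sufficiency is vacuous. The only thing I would tighten is the wording around the rainbow verification: you say the list conditions are "inherited directly," but it is worth stating explicitly, as you essentially do in passing, that $L'(x)=\textnormal{Col}(\mathcal{F}\mid x)\subseteq L(x)$ and that this set is nonempty since $\mathcal{F}\neq\varnothing$, because those two points are what license both the application of Theorem \ref{SumTo4For2PathColorEnds} and the final step that an $L'$-coloring of $K$ is an $L$-coloring of $K$.
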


Our fifth and sixth black boxes from Paper I are the following results. 

\begin{theorem}\label{ThmFirstLink3PathForUseInHolepunch} Let $(G, C, P, L)$ be an end-linked rainbow, where $P:=p_1p_2p_3p_4$ is a 3-path. Then
\begin{enumerate}[label=\arabic*)] 
\itemsep-0.1em
\item $\textnormal{Crown}_L(P, G)\neq\varnothing$. Actually, something stronger holds. There is a subgraph $H$ of $G$ with $V(H)\subseteq V(C\setminus P)\cap N(p_1)\cup N(p_4))$, where $|V(H)\cap N(p)|\leq 1$ for each endpoint $p$ of $P$ and $\textnormal{End}_L(H,P,G)\neq\varnothing$; AND
\item If there is no chord of $C$ incident to a vertex of $\mathring{P}$, then $\textnormal{End}_{L}(P, G)\neq\varnothing$. 
\end{enumerate}
 \end{theorem}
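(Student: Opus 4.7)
I would prove parts (1) and (2) simultaneously by induction on $|V(G)|$, with Theorem \ref{SumTo4For2PathColorEnds} and its gluing form Corollary \ref{GlueAugFromKHCor} as the principal reductions from the 3-path setting to the 2-path one. The base case is $|V(C)|=4$, so $C=p_1p_2p_3p_4$ with $p_1p_4\in E(G)$; Corollary \ref{CycleLen4CorToThom} makes any $L$-coloring of $V(C)$ extendable, and the end-linked hypothesis furnishes a coloring of $\{p_1,p_4\}$ that witnesses both parts with $H=\varnothing$.

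For the inductive step I would first eliminate chords of $C$. If $C$ has a chord $e$ incident to $\mathring P$, say $e=p_2v$ with $v\in V(C)\setminus\{p_1,p_3\}$, then $e$ splits $G$ into two planar subgraphs $G_1,G_2$ meeting only in $e$: $G_1$ carries a 2-path $p_1p_2v$ on its outer face, and $G_2$ carries a 3-path $vp_2p_3p_4$. I would apply Theorem \ref{SumTo4For2PathColorEnds} to $G_1$ and the induction hypothesis (part (1)) to $G_2$, then combine the two colorings using Corollary \ref{GlueAugFromKHCor}, with Lemma \ref{PartialPathColoringExtCL0} certifying that the merged coloring is $(P,G)$-sufficient. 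The symmetric case of a chord at $p_3$ is analogous. Chords incident only to $\{p_1,p_4\}\cup V(C\setminus P)$ can be split off similarly, leaving a smaller 3-path rainbow to which induction applies, with the extra vertices of $C\setminus P$ colored separately by the 2-path theorem.

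The remaining case is that $C$ has no chord, which covers the hypothesis of part (2). Here any extension $\psi$ of a candidate $\phi$ to $V(P)$ that fails to $L$-extend to $G$ must, by Lemma \ref{PartialPathColoringExtCL0}, produce a vertex $v\in V(G\setminus C)$ with $|L_\psi(v)|\le 2$. Since $|L(v)|\ge 5$, such a $v$ has at least three neighbors in $V(P)$, and at least three of $\psi(p_1),\psi(p_2),\psi(p_3),\psi(p_4)$ must lie in $L(v)$. The plan for part (2) is a pigeonhole count: exploit the end-linked budget $|L(p_1)|+|L(p_4)|\ge 4$ together with structural bounds on how many such $v$ can be simultaneously pinned by a single choice of $(\phi(p_1),\phi(p_4))$, to select a good $\phi$. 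Corollary \ref{CorMainEitherBWheelAtM1ColCor} is then used to rule out broken-wheel configurations near each endpoint that would otherwise narrow the room for a successful selection, and gives part (1) with $H=\varnothing$ in this regime a fortiori.

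The main obstacle will be part (1)'s strengthening that $H\subseteq N(p_1)\cup N(p_4)$ with $|V(H)\cap N(p_i)|\le 1$: in the chord-splitting step, the inductive call on $G_2$ returns its own $H_2$ near $\{p_3,p_4\}$, and the 2-path call on $G_1$ may contribute a single vertex (namely $v$) near $p_1$, and we must ensure these combine into a set of the required form without doubling up near any endpoint. This forces a careful selection among the possibly several colorings delivered by Theorem \ref{SumTo4For2PathColorEnds} on the 2-path side. A secondary obstacle is the counting in the no-chord case when $|L(p_1)|=1$ or $|L(p_4)|=1$, where the pigeonhole budget is tight and the residual extremal configurations may need to be handled directly, using Corollary \ref{CorMainEitherBWheelAtM1ColCor} and, in the densest subcases, Theorem \ref{BohmePaper5CycleCorList}.
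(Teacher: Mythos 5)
This theorem is stated in Section \ref{PIBlackBoxSec} under the heading ``Black Boxes from Paper I'' and is invoked in the present manuscript without proof; the actual proof lives in the cited companion Paper I and does not appear anywhere in this paper. There is therefore no proof here against which your sketch can be compared, and I can only judge it on its own terms and against the toolkit this paper supplies.

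The framework you propose---induction on $|V(G)|$, splitting along a chord of $C$ incident to $\mathring P$ into a 2-path side handled by Theorem \ref{SumTo4For2PathColorEnds} and a smaller 3-path side handled by the inductive hypothesis, welding the pieces with Corollary \ref{GlueAugFromKHCor}, and handling the chordless regime via Lemma \ref{PartialPathColoringExtCL0} and Corollary \ref{CorMainEitherBWheelAtM1ColCor}---is built from exactly the ingredients the paper treats as standard apparatus, and the base case $|V(C)|=4$ via Corollary \ref{CycleLen4CorToThom} is correct. The gluing step, however, has two real unresolved issues. First, Corollary \ref{GlueAugFromKHCor} requires a family $\mathcal F$ of colorings of the larger side with $|\textnormal{Col}(\mathcal F\mid x)|\geq |L(p')|$ distinct colors at the joint vertex $x$, but part 1) of your inductive hypothesis only yields a single element of $\textnormal{End}_L(H_2,P_2,G_2)$. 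Producing the diversity requires iterating the inductive hypothesis over progressively restricted lists at the chord endpoint $v$, and the end-linked budget $|L(v)|+|L(p_4)|\geq 4$ makes this delicate; your sketch elides it entirely. (That some such iteration is intended is visible in how the paper later uses the theorem, e.g.\ in Subclaim \ref{NeithK0K1EdgeSubV0V1}, where $|L(p_0)|$ elements of $\textnormal{Crown}(P',G')$ with pairwise distinct colors on $v_1$ are extracted from it.) Second, the inductive call on $G_2$ can return an $H_2$ containing a vertex adjacent to the chord endpoint $v$; such a vertex need not lie in $N(p_1)\cup N(p_4)$, so it cannot be placed in the output subgraph $H$ demanded by part 1), and simply dropping it from the domain of the returned coloring is not obviously harmless for $(P,G)$-sufficiency. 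You flag this as ``the main obstacle'' but give no mechanism for overcoming it, so as written part 1) is not established.

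For part 2), the description (pigeonhole over obstruction vertices pinned by the endpoint colors) is too thin to assess: Lemma \ref{PartialPathColoringExtCL0} is the right diagnostic and Corollary \ref{CorMainEitherBWheelAtM1ColCor} the right control on broken-wheel obstructions, but the substance of the argument is precisely the accounting you defer---how many extensions a single bad vertex can pin, how many bad vertices can coexist given planarity, and what happens when $|L(p_1)|=1$---and none of it is carried out.
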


\begin{theorem}\label{3ChordVersionMainThm1} Let $(G, C, P, L)$ be an end-linked rainbow, where $P:=p_1p_2p_3p_4$ is a 3-path, and the following additional conditions are satisfied.
\begin{enumerate}[label=\arabic*)]
\itemsep-0.1em
\item $|L(p_1)|\geq 1$ and $|L(p_4)|\geq 3$; AND
\item $N(p_3)\cap V(C)=\{p_2, p_4\}$. 
\end{enumerate} 
Then $\textnormal{End}_{L}(P, G)\neq\varnothing$. \end{theorem}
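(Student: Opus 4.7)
The plan is to proceed by induction on $|V(G)|$, with standard reductions (via Corollary \ref{CycleLen4CorToThom} and Theorem \ref{BohmePaper5CycleCorList}) bringing us to the case where $G$ is $2$-connected and short-separation-free. The first dichotomy is whether $p_2$ admits a chord in $C$. If not, then by the hypothesis $N(p_3)\cap V(C)=\{p_2,p_4\}$, no internal vertex of $P$ is incident to a chord of $C$, and Theorem \ref{ThmFirstLink3PathForUseInHolepunch}(2) yields the conclusion at once.

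Suppose $p_2$ does have a chord. We let $y$ be the chord-neighbor of $p_2$ closest to $p_4$ along the arc of $C$ avoiding $p_3$, and split $G$ along $p_2y$ into subgraphs $K$ (containing $p_3,p_4$) and $H$ (containing $p_1$). In the degenerate case $y=p_4$, short-separation-freeness forces $K$ to be the triangle $p_2p_3p_4$, so $p_3$ has degree two in $G$; here Theorem \ref{SumTo4For2PathColorEnds} applied to $H$ with the 2-path $p_1p_2p_4$ supplies a coloring of $\{p_1,p_4\}$ easily verified to be $(P,G)$-sufficient, since the coloring of $p_3$ only needs to avoid those of $p_2,p_4$.

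In the principal subcase $y\neq p_4$ we have $|L(y)|\geq 3$. We first apply Theorem \ref{SumTo4For2PathColorEnds} to $H$ with the 2-path $p_1p_2y$ to obtain $\tau\in\textnormal{End}_L(p_1p_2y,H)$ with $\tau(p_1)=a$ and $\tau(y)=y_1$. Next, we invoke the induction hypothesis on $K$ with the 3-path $Q:=yp_2p_3p_4$ and the modified list-assignment $L'$ given by $L'(y):=\{y_1\}$ and $L'=L$ elsewhere; in $K$ the internal vertices $p_2,p_3$ of $Q$ have no chord of $C^K$ (by the choice of $y$ and the hypothesis on $p_3$), and end-linkedness is preserved since $|L'(y)|+|L'(p_4)|\geq 4$. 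This yields $\sigma\in\textnormal{End}_{L'}(Q,K)$ with $\sigma(y)=y_1$ and $\sigma(p_4)=d$. The candidate coloring of $\{p_1,p_4\}$ with $\phi(p_1)=a$ and $\phi(p_4)=d$ is then $(P,G)$-sufficient on all extensions with $\phi(p_2)\neq y_1$: setting $\phi(y)=y_1$, the coloring extends through $H$ by $\tau$ and through $K$ by $\sigma$.

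The main obstacle is the residual case where the extension forces $\phi(p_2)=y_1$, precluding $\phi(y)=y_1$. Handling this requires either refining the choice of $\tau$ so that $y_1\notin L(p_2)\setminus\{a\}$ (exploiting the end-linked condition $|L(p_1)|+|L(p_4)|\geq 4$ to afford flexibility in selecting $a$) or exhibiting an alternate color $y^*\in L(y)\setminus\{y_1\}$ for which $(y^*,d)\in\textnormal{End}_L(Q,K)$ and $(a,y_1,y^*)$ extends through $H$. The latter demands a finer structural analysis of $K$ via Corollary \ref{CorMainEitherBWheelAtM1ColCor}, distinguishing whether $K$ is essentially a broken wheel or admits multiple good end-colorings of $\{y,p_4\}$. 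This reconciliation of $H$- and $K$-side colorings is the technical heart of the argument.
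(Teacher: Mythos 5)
Note first that this paper does not contain a proof of Theorem \ref{3ChordVersionMainThm1}: it is stated in Section \ref{PIBlackBoxSec} as a black box imported from Paper I, so there is no in-document proof against which to compare your argument. Assessing your proposal on its own terms:

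Your chord-free case via Theorem \ref{ThmFirstLink3PathForUseInHolepunch}(2), and the degenerate case $y=p_4$ (where short-separation-freeness collapses $K$ to the triangle $p_2p_3p_4$ and $N(p_3)=\{p_2,p_4\}$), are both sound. The genuine gap is exactly where you flag it, in the main case $y\neq p_4$, and it is not a loose end that a few lines will close. You build a single $\tau\in\textnormal{End}_L(p_1p_2y,H)$ that certifies extensions of the $H$-side only when $y$ is colored $y_1$, and a single $\sigma\in\textnormal{End}_{L'}(Q,K)$ with $L'(y)=\{y_1\}$ that likewise certifies extensions of the $K$-side only when $y$ receives $y_1$. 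An adversarial extension of your candidate $\phi$ to $V(P)$ may set $\psi(p_2)=y_1$; the edge $p_2y$ then excludes $y_1$ on $y$, and once $y$ is forced to some $y^\ast\neq y_1$ neither $\tau$ nor $\sigma$ gives any information about whether the two restricted colorings extend to $H$ and $K$. Your two suggested remedies are real work that the sketch does not do: (a) choosing $\tau$ so that $y_1\notin L(p_2)\setminus\{a\}$ is not available in general, since when $|L(p_1)|=1$ there is no freedom in $a$ and Theorem \ref{SumTo4For2PathColorEnds} provides no control over the color $\tau$ assigns to $y$; and (b) producing an alternate $y^\ast$ with $(a,y_1,y^\ast)$ extending through $H$ \emph{and} a corresponding $\textnormal{End}$-coloring of $K$ using $y^\ast$ on $y$ but still $d$ on $p_4$ needs a coordinated structural analysis of both $H$ and $K$ (plausibly via Theorem \ref{BWheelMainRevListThm2}, Corollary \ref{CorMainEitherBWheelAtM1ColCor}, and Theorem \ref{CornerColoringMainRes}) that you gesture at but do not carry out. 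Until one of those is actually executed, the argument does not establish the theorem.
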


\section{Black Boxes from Paper II}\label{PIITwoBlackBoxSec}

Paper II consists of two results, which we state below here as black boxes. Our first black box from Paper II is the following. 

\begin{theorem}\label{CornerColoringMainRes} Let $(G, C, P, L)$ be an end-linked rainbow, where $p_1p_2p_3p_4$ be a subpath of $C$ of length three. Suppose further that $|L(p_3)|\geq 5$. Then
\begin{enumerate}[label=\arabic*)] 
\item there is an $L$-coloring $\psi$ of $\{p_1, p_4\}$ which extends to $|L_{\psi}(p_3)|-2$ elements of $\textnormal{End}_{L}(p_3, P, G)$; AND
\item If $p_3$ is incident to no chord of $C$, then there is an $L$-coloring $\psi$ of $\{p_1, p_4\}$ which extends to $|L_{\psi}(p_3)|-1$ elements of $\textnormal{End}_{L}(p_3, P, G)$.
\end{enumerate}
\end{theorem}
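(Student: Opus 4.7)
The plan is to deduce both parts of the theorem from Theorem~\ref{ThmFirstLink3PathForUseInHolepunch}(1), combined with 2-path analysis via Corollary~\ref{CorMainEitherBWheelAtM1ColCor}. First apply Theorem~\ref{ThmFirstLink3PathForUseInHolepunch}(1) to $P$ to obtain a subgraph $H$ of $G$ with $V(H)\subseteq V(C\setminus P)\cap (N(p_1)\cup N(p_4))$, $|V(H)\cap N(p_i)|\le 1$ for each $i\in\{1,4\}$, and an $L$-coloring $\phi\in\textnormal{End}_L(H,P,G)$. The candidate coloring will be $\psi:=\phi|_{\{p_1,p_4\}}$, and the task reduces to counting those $c\in L_\psi(p_3)$ for which $\psi\cup\{p_3\mapsto c\}$ lies in $\textnormal{End}_L(p_3,P,G)$.

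Such a $c$ is good precisely when, for every $d\in L_\psi(p_2)\setminus\{c\}$, the coloring $(\psi(p_1),d,c,\psi(p_4))$ of $V(P)$ extends to an $L$-coloring of $G$. Since $\phi\in\textnormal{End}_L(H,P,G)$, this is automatic whenever $c\in L_\phi(p_3)$ and $d\in L_\phi(p_2)$. Thus the bad colors of $p_3$ come from at most two sources: colors in $L_\psi(p_3)\setminus L_\phi(p_3)$, which number at most $|V(H)\cap N(p_3)|$; and colors $c\in L_\phi(p_3)$ for which some $d$ in $D:=\{\phi(y):y\in V(H)\cap N(p_2)\}$ (with $|D|\le 2$) makes $(\psi(p_1),d,c,\psi(p_4))$ fail to extend to $G$.

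For each $d\in D$, I would bound the resulting bad $c$'s by applying Corollary~\ref{CorMainEitherBWheelAtM1ColCor}(3) to the 2-path $p_2p_3p_4$ inside $G$ after precoloring $p_1$ with $\psi(p_1)$ and $p_2$ with $d$; this caps the count by $1$ per $d$, absent a broken-wheel obstruction which is ruled out by standard chord-splitting and short-separation reductions together with Theorem~\ref{EitherBWheelOrAtMostOneColThm}. Combining, part (1) follows once the two sources together contribute at most $2$. For part (2), the no-chord hypothesis on $p_3$ forces $V(H)\cap N(p_3)=\varnothing$, eliminating the first source entirely; a more delicate analysis further caps the second source at $1$ total, by showing that bad $c$'s arising from distinct $d\in D$ either coincide or force a forbidden broken-wheel degeneracy via Theorem~\ref{BWheelMainRevListThm2}.

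The main obstacle is controlling overcounting between the two sources in part (1), and in particular sharpening the bound to $1$ in part (2). In the worst case, up to two chords from $V(H)$ to $p_3$ may coexist with two chords from $V(H)$ to $p_2$, and a careful case analysis based on the position of $V(H)$ relative to $\mathring{P}$, combined with supplementary induction on $|V(G)|$ using chord-split and short-separation reductions, will be needed to avoid overcounting. The sharper bound in part (2) is the most delicate point: since $V(H)\cap N(p_3)=\varnothing$ is guaranteed there, the crux is showing that the 2-path analysis on $p_2p_3p_4$ yields at most one bad $c$ across \emph{all} of $D$ simultaneously, rather than one per element of $D$.
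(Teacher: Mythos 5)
This theorem is not proved in the present paper; it is imported verbatim as a black box from Paper~II (see Section~\ref{PIITwoBlackBoxSec}), so there is no in-paper proof against which to compare your approach.

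Judged on its own terms, your plan has a genuine gap, and it is essentially the one you flag at the end but never close. Fixing $\phi\in\textnormal{End}_L(H,P,G)$ from Theorem~\ref{ThmFirstLink3PathForUseInHolepunch}(1) and setting $\psi:=\phi|_{\{p_1,p_4\}}$, you bound the bad colors of $p_3$ by two disjoint sources: at most $|V(H)\cap N(p_3)|\le 2$ colors of $L_\psi(p_3)\setminus L_\phi(p_3)$, and, for each of the up to two colors $d\in D$, some bad $c\in L_\phi(p_3)$. Even granting your hoped-for ``$1$ per $d$'' estimate, this gives a worst-case bound of four, twice what part~(1) permits; the ``careful case analysis'' needed to show the two sources jointly contribute at most two is asserted, not carried out, and it is not at all clear that the single coloring $\phi$ handed to you by the black box carries enough structure to support it. The ``$1$ per $d$'' step is itself unsupported: Corollary~\ref{CorMainEitherBWheelAtM1ColCor}(3) caps the failing set at size $2$, not $1$, and only under a short-separation-free hypothesis with every chord through the middle vertex; applying it to the residual configuration after precoloring $p_1$ and $p_2$ requires verifying those hypotheses for the relevant subgraph, which you defer to ``standard chord-splitting and short-separation reductions'' without detail, and the broken-wheel branch that the corollary isolates is exactly where the bound is tight, so it cannot simply be waved away. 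Part~(2) requires the second source to contribute at most one bad color in total across both elements of $D$, and you offer no mechanism for why bad $c$'s arising from distinct $d$'s must coincide or be impossible; the appeal to Theorem~\ref{BWheelMainRevListThm2} is a pointer, not an argument.
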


Ou second and final black box from Paper II is the following.

\begin{lemma}\label{EndLinked4PathBoxLemmaState} Let $(G, C, P, L)$ be an end-linked rainbow, where $P=p_0q_0wq_1p_1$ is a path of length four and $G$ is a short-separation-free graph in which every face of $G$, except for $C$, is bounded by a triangle. Suppose that $|L(w)|\geq 5$ and no chord of $C$ is incident to $w$. Then, for each $i\in\{0,1\}$, there is a partial $L$-coloring $\phi$ of $V(C\setminus\mathring{P})$ such that
\begin{enumerate}[label=\arabic*)] 
\itemsep-0.1em
\item  $\{p_0, p_1\}\subseteq\textnormal{dom}(\phi)\subseteq N(q_0)\cup N(q_1)$, and furthermore $N(q_i)\cap\textnormal{dom}(\phi)\subseteq\{p_0, p_1\}$; AND
\item For any two (not necessarily distinct) extensions of $\phi$ to $L$-colorings $\psi, \psi'$ of $\textnormal{dom}(\phi)\cup\{q_0, q_1\}$, at least one of the following holds
\begin{enumerate}[label=\roman*)]
\itemsep-0.1em
\item For some $c\in L(w)$, each of $\psi, \psi'$ extends to an $L$-coloring of $G$ using $c$ on $w$; OR
\item There is at an element of $\{\psi, \psi'\}$ which extends to two $L$-colorings of $G$ using different colors on $w$; OR
\item There is an element of $\{\psi, \psi'\}$ which does not extend to an $L$-coloring of $G$. 
\end{enumerate}
\end{enumerate}
\end{lemma}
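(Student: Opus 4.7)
The plan is to construct $\phi$ by combining an end-linked coloring on the $p_0$-side with a corner-coloring adjustment on the $p_1$-side, and then to verify the forced-color property via structural analysis of the path that runs around $w$. By the symmetry of the hypotheses under swapping $(p_0,q_0)\leftrightarrow(p_1,q_1)$, we fix $i=0$; thus $\phi$ must satisfy $N(q_0)\cap\textnormal{dom}(\phi)\subseteq\{p_0,p_1\}$, while we are free to add colored vertices only among $N(q_1)\cap V(C\setminus P)$.

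Step 1 (Structure at $w$). Because every non-outer face is a triangle and $w$ is chord-free on $C$, the internal neighbors of $w$ form an induced path $x_1\cdots x_s$ ($s\ge 0$) with $q_0x_1,x_sq_1\in E(G)$ (or $q_0q_1\in E(G)$ if $s=0$), and each $x_j$ satisfies $|L(x_j)|\ge 5$. If $s=0$, any end-linked $L$-coloring of $\{p_0,p_1\}$ works as $\phi$: every extension $\psi$ has $T(\psi):=\{c\in L(w)\setminus\{\psi(q_0),\psi(q_1)\}:\psi\cup\{w\to c\}\text{ extends to }L\text{-color }G\}$ containing all of $L(w)\setminus\{\psi(q_0),\psi(q_1)\}$, hence $|T(\psi)|\ge 3$ and condition (ii) holds universally. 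This disposes of the degenerate case.

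Step 2 (Constructing $\phi$ when $s\ge 1$). Assign $\phi(p_0)$ from $L(p_0)$ using end-linkedness so that $\phi(p_0)$ is compatible with the left-side structure. For the $q_1$-side I use Corollary~\ref{GlueAugFromKHCor} applied to the terminal edge $q_1p_1$ (with $q=q_1$ and the 2-path $wq_1p_1$ inside the right-hand region $R$ of $G$ cut off by the closed curve $w q_1 p_1 \cdots w$), taking $\mathcal{F}$ to be the nonempty family of $L$-colorings of the endpoints of $wq_1p_1$ supplied by Theorem~\ref{SumTo4For2PathColorEnds}. This produces an $L$-coloring of $\{p_1\}$ together with at most one chord-neighbor $y\in N(q_1)\cap V(C\setminus P)$ that is $(q_1,R)$-sufficient. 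I set $\phi$ to be the union of $\phi(p_0)$, $\phi(p_1)$, and $\phi(y)$ if used. Since the only additions beyond $\{p_0,p_1\}$ lie in $N(q_1)\cap V(C\setminus P)$, the restriction $N(q_0)\cap\textnormal{dom}(\phi)\subseteq\{p_0,p_1\}$ is preserved.

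Step 3 (Verification of condition (2)). For any extension $\psi$ of $\phi$, $c\in T(\psi)$ iff the precoloring $\psi\cup\{w\to c\}$ extends on $G-w$. By Step 2 the obstruction coming from $R$ vanishes whenever $\psi(q_1)$ is admissible, so the sole remaining obstruction lies in the left-hand region, specifically in extending the precoloring across the path $q_0x_1\cdots x_sq_1$ (with lists $L(x_j)\setminus\{c\}$, each of size $\ge 4$) together with the $p_0$-to-$q_0$ arc of $C$. By Lemma~\ref{PartialPathColoringExtCL0} and Corollary~\ref{CorMainEitherBWheelAtM1ColCor} applied along consecutive 2-subpaths $x_{j-1}x_jx_{j+1}$ of this configuration, the set of $c$ excluded from $T(\psi)$ consists of at most two colors; moreover the exclusion pattern is determined by a broken-wheel parity structure that depends only on $\phi(p_0)$ and the pair $(\psi(q_0),\psi(q_1))$. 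Supposing $|T(\psi)|=|T(\psi')|=1$, the parity and exclusion analysis via Theorem~\ref{BWheelMainRevListThm2}(1) forces the unique surviving color of $w$ to be the same for $\psi$ and $\psi'$, yielding (i).

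The main obstacle is Step 3: handling the extension problem along $q_0 x_1\cdots x_s q_1$ when chords of $C$ at $q_0$ are present and their endpoints on $C\setminus P$ are \emph{not} colored by $\phi$ (due to the $i=0$ restriction). We expect to invoke short-separation-freeness heavily to rule out exceptional configurations, and to peel off the chord structure at $q_0$ using Corollary~\ref{CorMainEitherBWheelAtM1ColCor}(3) so that the forced-color analysis reduces to the chord-free case. The triangulation hypothesis is what lets us pass uncovered lists from each $x_j$ to the next without losing more than one color at a time, which is essential for the $|T(\psi)|\le 2$ bound on excluded colors and hence for the uniqueness conclusion.
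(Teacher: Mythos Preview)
This lemma is not proved in the present paper: it is explicitly introduced in Section~\ref{PIITwoBlackBoxSec} as one of the two black boxes from Paper~II (reference \cite{JNHolepunchPaperIICitation}), so there is no proof here to compare your attempt against. Any genuine comparison would require consulting Paper~II.

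That said, evaluated on its own terms your proposal has substantial gaps. Step~2 is underspecified: Corollary~\ref{GlueAugFromKHCor} requires a family $\mathcal{F}$ with $|\textnormal{Col}(\mathcal{F}\mid x)|\geq |L(p')|$ for some specific vertex $x$, and you have not identified what $x$ is, what the subgraphs $H,K$ in that corollary are in your setting, or why the size condition holds. More seriously, Step~3 is where the real content lies and it is essentially asserted rather than argued. The claim that at most two colors of $L(w)$ are excluded from $T(\psi)$ does not follow from Lemma~\ref{PartialPathColoringExtCL0} or Corollary~\ref{CorMainEitherBWheelAtM1ColCor} in any direct way: those results concern obstructions to extending a coloring of a path on the outer cycle, not the number of admissible colors at a single interior vertex $w$ once its neighbors $q_0,q_1$ are colored. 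The extension problem you face is over the entire graph $G\setminus\{w\}$, not just a broken wheel, and your reduction to ``consecutive 2-subpaths $x_{j-1}x_jx_{j+1}$'' ignores the rest of $G$ below the path $x_1\cdots x_s$. Finally, the concluding parity argument via Theorem~\ref{BWheelMainRevListThm2}(1) presupposes that the obstruction is a broken-wheel obstruction, which you have not established; and even granting that, you have not shown why two distinct extensions $\psi,\psi'$ with $|T(\psi)|=|T(\psi')|=1$ must share the same surviving color---the parity clauses of Theorem~\ref{BWheelMainRevListThm2}(1) compare two colorings of the \emph{same} edge, whereas here $\psi$ and $\psi'$ may differ on both $q_0$ and $q_1$.
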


\section{The Proof of Theorem \ref{MainHolepunchPaperResulThm}}\label{4ChordAnalogueFor2ChSec}

In this section, we bring together the results stated above to prove the main result of this paper, Theorem \ref{MainHolepunchPaperResulThm}. We first note that, in the statement of Theorem \ref{MainHolepunchPaperResulThm}, we cannot drop the condition that $q_0, z, q_1$ have no common neighbor in $C\setminus P$. If we drop this condition, then we have the counterexample in Figure \ref{DropConditionHolepunchCounterFigM}. We show now that, with $G$ being the graph in Figure \ref{DropConditionHolepunchCounterFigM} we have $\textnormal{Crown}_L(P, G)=\varnothing$, where $L$ is the list-assignment indicated in red. Let $C$ be the outer cycle of $G$, let $P=C-u_1$, and let $\phi$ be a partial $L$-coloring of $V(C)\setminus\{q_0, q_1\}$, where $p_0, p_1, z\in\textnormal{dom}(\phi)$, and suppose toward a contradiction that $\phi\in\textnormal{Crown}_L(P, G)$.  If $u_1\in\textnormal{dom}(\phi)$, then, since $|L_{\phi}(q_0)|\geq 3$, and $\phi(u_1)\in\{b,c\}$, we have $\phi(z)=d$. But then, no matter which color of $L(p_1)$ we use on $p_1$, we have $|L_{\phi}(q_1)|=2$, contradicting our assumption that $\phi\in\textnormal{Crown}_L(P, G)$. Thus, $u_1\not\in\textnormal{dom}(\phi)$. Since $\{b,c\}\subseteq L(q_0)\cap L(q_1)$ and $q_0q_1\not\in E(G)$, it follows that, no matter what colors $\phi$ uses on $p_1, z$, there is an extension of $\phi$ to an $L$-coloring $\phi'$ of $V(P)$ such that both of $b,c$ appear among the colors used by $\phi'$ on the vertices of $P-p_0$, so $\phi'$ does not extend to an $L$-coloring of $G$. Thus, $\phi\not\in\textnormal{Crown}_{L}(P, G)$, a contradiction. 

\begin{center}
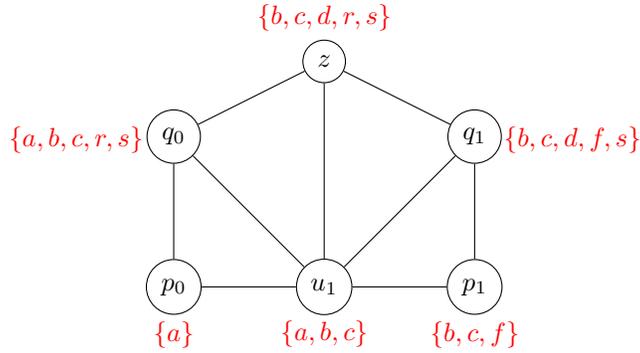
\begin{tikzpicture}
\node[shape=circle,draw=black] [label={[xshift=-0.0cm, yshift=-1.3cm]\textcolor{red}{$\{a\}$}}] (p0) at (0,0) {$p_0$};
\node[shape=circle,draw=black] [label={[xshift=-0.0cm, yshift=-1.3cm]\textcolor{red}{$\{a, b, c\}$}}] (u1) at (2, 0) {$u_1$};
\node[shape=circle,draw=black] [label={[xshift=-0.0cm, yshift=-1.3cm]\textcolor{red}{$\{b, c, f\}$}}] (p1) at (4, 0) {$p_1$};
\node[shape=circle,draw=black] [label={[xshift=-1.3cm, yshift=-0.7cm]\textcolor{red}{$\{a, b, c, r, s\}$}}] (q0) at (0,2) {$q_0$};
\node[shape=circle,draw=black] [label={[xshift=1.3cm, yshift=-0.7cm]\textcolor{red}{$\{b, c, d, f, s\}$}}] (q1) at (4,2) {$q_1$};
\node[shape=circle,draw=black] [label={[xshift=-0.0cm, yshift=0cm]\textcolor{red}{$\{b,c,d,r,s\}$}}] (z) at (2,3) {$z$};

 \draw[-] (p1) to (u1) to (p0) to (q0) to (z) to (q1) to (p1);
\draw[-] (z) to (u1);
\draw[] (q0) to (u1) to (q1);
\end{tikzpicture}\captionof{figure}{Theorem \ref{MainHolepunchPaperResulThm} is false if $q_0, q_1$ are allowed to have a common neighbor in $V(C\setminus\mathring{P})$}\label{DropConditionHolepunchCounterFigM}\end{center}

We now prove Theorem \ref{MainHolepunchPaperResulThm}, which makes up the remainder of this paper. 

\begin{thmn}[\ref{MainHolepunchPaperResulThm}] Let $(G, C, P, L)$ be an end-linked rainbow, where $P$ is a path of length four whose three internal vertices have no common neighbor in $C\setminus P$. Suppose further that each internal vertex of $P$ has an $L$-list of size at least five. Then $\textnormal{Crown}_L(P, G)\neq\varnothing$. \end{thmn} 

\begin{proof} Suppose not and let $G$ be a vertex-minimal counterexample to the claim. Let $C, P, L$ be as above, where $P:=p_0q_0zq_1p_1$. By adding edges to $G$ if necessary, we suppose further that every face of $G$, except possibly $C$, is bounded by a triangle. This is permissible since it is always possible to add edges until this holds without creating a common neighbor to $q_0, z, q_1$ in $V(C\setminus P)$. By removing some colors from some lists if necessary, we suppose that the inequalities of 1)-4) are equalities for each vertex other than $q_0, q_1$. That is, we suppose that $|L(p_0)|+|L(p_1)|=4$ and $|L(z)|=5$ and furthermore, for each $v\in V(C\setminus P)$, $|L(v)|=3$, and, for each $v\in V(G\setminus C)$, $|L(v)|=5$. We now introduce the following notation, which we use all throughout the remainder of the proof of Theorem \ref{MainHolepunchPaperResulThm}.

\begin{defn}\label{Defnu1.utKiTi}
\textcolor{white}{aaaaaaaaaaaaaaaaaa}
\begin{enumerate}[label=\emph{\arabic*)}] 
\itemsep-0.1em
\item \emph{Let $C\setminus\mathring{P}:=p_0u_1\ldots u_tp_1$ for some $t\geq 1$.}
\item\emph{For each $i\in\{0,1\}$, we define the following.}
\begin{enumerate}[label=\emph{\alph*)}] 
\itemsep-0.1em
\item\emph{We let $v_i$ be the unique vertex of $N(q_i)\cap V(C\setminus\mathring{P})$ which is farthest from $p_i$ on the path $C\setminus\mathring{P}$.}
\item\emph{We let $K_i$ be the subgraph of $G$ bounded by outer face $v_i\ldots p_iq_i$. That is, if $v_i=p_i$, then $K_i$ is an edge.}
\item\emph{We let $Q_0$ be the path $p_0q_0v_0$ and $Q_1$ be the path $v_1q_1p_1$. For each $i=0,1$, if $v_i=p_i$, then $K_i=Q_i$, and $Q_i$ is an edge.}
\item\emph{We say that $p_i$ is \emph{predictable} if, letting $y$ be the unique neighbor of $p_i$ on the path $C\setminus\mathring{P}$, $L(p_i)\subseteq L(y)$.}
\end{enumerate}
\end{enumerate} \end{defn}

Note that we have specified opposing orderings of the paths $Q_0$ and $Q_1$. That is, to avoid clutter, whenever we deal with a case in which $K_0$ is not just an edge, and we write $\Lambda_{K_0}(\cdot, \cdot, \cdot)$, where each $\cdot$ is a placeholder for either a color or the symbol $\bullet$, we are suppressing the superscript $p_0q_0v_0$ corresponding to the first, second, and third entries $\Lambda_{K_0}(\cdot, \cdot, \cdot)$ in that order. Likewise, whenever we deal with a case in which $K_1$ is not just an edge, and we write $\Lambda_{K_1}(\cdot, \cdot, \cdot)$, we are suppressing the superscript $v_1q_1p_1$ corresponding to the first, second, and third entries of $\Lambda_{K_1}(\cdot, \cdot, \cdot)$ in that order. In other words, for $i=0$, $p_i$ corresponds to the leftmost entry $\Lambda_{K_i}(\cdot, \cdot, \cdot)$ and, for $i=1$, $p_i$ corresponds to the rightmost entry of $\Lambda_{K_i}(\cdot, \cdot, \cdot)$. This is consistent with our diagrammatic conventions for this paper in which the cycle $C$ is always drawn with the edge $p_0q_0$ to the left of the edge $p_1q_1$.  

We break the proof of Theorem \ref{MainHolepunchPaperResulThm} into fourteen subsections of Section \ref{4ChordAnalogueFor2ChSec}, which are organized as follows.

\begin{enumerate}[label=\arabic*)] 
\item In Subsection \ref{FactGatherOpeningSubSecHolepunchMS}, we gather a few preliminary facts about $G$. In particular, we show that $G$ has no chord of $P$, except possibly $q_0q_1$.
\item In Subsections \ref{RuleOutCommonNbrQ0Q1OnPathFromU1toUtSubHolepunch}-\ref{RuleOutEdgeQ0Q1}, we show that $q_0q_1\not\in E(G)$.
\item In Subsection \ref{PrelimNotatSetup}, we first introduce some additional notation which we use throughout the remainder of the proof of Theorem \ref{MainHolepunchPaperResulThm} and then, in Subsection \ref{SubSecZEndPointP0P1CommNbr}, we show that, for each $i\in\{0,1\}$, $p_i, z$ have no common neighbor in $G\setminus C$ . 
\item In Subsections \ref{CommNbrCChordInterm}-\ref{Part22ChordSubsecR}, we show that, for each $i\in\{0,1\}$ and any $w\in V(G\setminus C)$ which is adjacent to both of $z, q_i$, $z$ has no neighbors in $C\setminus\mathring{P}$, except possibly $v_i$.
\item In Subsections \ref{FirstPrelimForChordsZ}-\ref{MTriangleInterSectAtLeast2X0X1}, we rule out the possibility that there is a chord of $C$ incident to $z$. 
\item In Subsection \ref{RuleOutCommNbrToV0V1InG-C}, we show that $v_0, v_1$ have no common neighbor in $G\setminus C$.
\item In Subsection \ref{CompSubSecTh}, we complete the proof of Theorem \ref{MainHolepunchPaperResulThm}.
\end{enumerate}

Subsections \ref{CommNbrCChordInterm}-\ref{MTriangleInterSectAtLeast2X0X1} make up the majority of the work of the proof of Theorem \ref{MainHolepunchPaperResulThm}. These content of these seven subsections is the proofs of Claims \ref{2ChordIncidentZVI} and \ref{noChordzOrAtLeast2Rule1Out}. For the purposes of the final two subsections of the proof of Theorem \ref{MainHolepunchPaperResulThm}, Subsections \ref{CommNbrCChordInterm}-\ref{MTriangleInterSectAtLeast2X0X1} just consists of these two results as black boxes. Everything else in Subsections \ref{CommNbrCChordInterm}-\ref{MTriangleInterSectAtLeast2X0X1} is an intermediate result which works towards the proofs of Claims \ref{2ChordIncidentZVI} and \ref{noChordzOrAtLeast2Rule1Out}. 

\makeatletter
\renewcommand{\thetheorem}{\thesubsection.\arabic{theorem}}
\@addtoreset{theorem}{subsection}
\makeatother

\subsection{Preliminary restrictions}\label{FactGatherOpeningSubSecHolepunchMS}

 Applying Theorem \ref{thomassen5ChooseThm} and Corollary \ref{CycleLen4CorToThom}, we immediately have the following by the minimality of $|V(G)|$. 

\begin{Claim}\label{MidPathPChordAllIst4Chord} $G$ is short-separation-free, and furthermore, every chord of $C$ is incident to one of $\{q_0, z, q_1\}$. \end{Claim}

Now we have the following. 

\begin{Claim}\label{NoP1P4P2P5inEGCL} There are no induced 4-cycles in $G$, and furthermore, $G$ contains neither of the edges $p_0q_1, p_1q_0$. \end{Claim}

\begin{claimproof} Since $G$ is short-separation-free, it follows from our triangulation conditions that there are no induced 4-cycles in $G$. Suppose toward a contradiction that $p_iq_{1-i}\in E(G)$ for some $i\in\{0,1\}$, say $i=0$. Let $G^*=G\setminus\{q_0, z\}$. Since $G$ is short-separation-free, $G^*$ is bounded by outer cycle $(C\setminus\{q_0, z\})+p_0q_1$. By Theorem \ref{SumTo4For2PathColorEnds}, there is an $L$-coloring $\psi$ of $\{p_0, p_1\}$ which is $(q_1, G^*)$-sufficient. Since $p_0q_1$ separates $q_0$ from $p_1$, we have $|L_{\psi}(q_0)|\geq 4$ and $|L_{\psi}(q_1)|\geq 3$. Possibly $p_0z\in E(G)$, but since $p_0q_1$ separates $z$ from $p_1$, we have $|L_{\psi}(z)|\geq 4$, so $\psi$ extends to an $L$-coloring $\psi'$ of $\{p_0, z, p_1\}$ such that each of $q_0, q_1$ has an $L_{\psi'}$-list of size at least three. Furthermore, $\psi'$ is $(P, G)$-sufficient, contradicting our assumption that $G$ is a counterexample. \end{claimproof}

\begin{Claim}\label{RuleOutEdgesp1p3p3p5CL} $G$ contains neither of the edges $p_0z, zp_1$. \end{Claim}

\begin{claimproof} Suppose toward a contradiction that $G$ contains one of the edges $p_0z, p_1z$, and suppose without loss of generality that $p_0z\in E(G)$. Let $G^*:=G-q_0$. Since $G$ is short-separation-free, $G^*$ is bounded by outer cycle $(C-q_0)+p_0z$. For any $L$-coloring $\psi$ of $\{p_0, p_1\}$, we have $|L_{\psi}(z)|\geq 3$. Since $|L(p_0)|+|L(p_1)|=4$ and each of $L(p_0), L(p_1)$ is nonempty, it follows from Theorem \ref{CornerColoringMainRes} that there is a $\psi\in\textnormal{End}_{L}(z, p_0zq_0p_1, G^*)$. Since $N(q_0)=\{p_0, z\}$, we have $|L_{\psi}(q_0)|\geq 3$. By Claim \ref{NoP1P4P2P5inEGCL}, $p_0q_1\not\in E(G)$. Since $\textnormal{dom}(\psi)=\{p_0, z, p_1\}$, we have $|L_{\psi}(q_1)|\geq 3$ as well. As $N(q_0)=\{p_0, z\}$ and $\psi\in\textnormal{End}_{L}(z, p_0zq_0p_1, G^*)$, it follows that $\psi$ is $(P, G)$-sufficient, contradicting our assumption that $G$ is a counterexample. \end{claimproof}

\begin{Claim}\label{VCoutFaceLenGr5} $V(C)\neq V(P)$. In particular, $t\geq 1$, and $G$ contains no chord of $P$, except possibly $q_0q_1$. \end{Claim}

\begin{claimproof} Suppose toward a contradiction that $V(C)=V(P)$. Thus, $C$ is a cycle of length five. Since $G$ is a counterexample, there is at least one $L$-coloring of $V(P)$ which does not extend to an $L$-coloring of $G$, and it follows from Theorem \ref{BohmePaper5CycleCorList} that there exists a $w\in V(G\setminus C)$ such that $V(G)=\{w\}\cup V(C)$, where $w$ is adjacent to all five vertices of $P$. Since $|L(z)|=5$ and $G$ contains neither of the edges $p_0z, p_1z$, there is an $L$-coloring $\psi$ of $\{p_0, z, p_1\}$ such that $|L_{\psi}(w)|\geq 3$, so $\psi$ is $(P,G)$-sufficient, contradicting our assumption that $G$ is a counterexample. We conclude that $V(C)\neq V(P)$. It follows from Claim \ref{NoP1P4P2P5inEGCL} that $p_0p_1\not\in E(G)$. Combining this with Claim \ref{RuleOutEdgesp1p3p3p5CL}, it follows that $G$ contains no chord of $P$, except possibly $q_0q_1$. \end{claimproof}

\begin{Claim}\label{AnyLColMidEndExtPCL11} For any colors $a_0\in L(p_0)$, $b\in L(z)$, and $a_1\in L(p_1)$, not necessarily distinct, there exists an $L$-coloring $\phi$ of $V(P)$ which does not extend to an $L$-coloring of $G$, where $\phi$ uses $a_0, b, a_1$ on the respective vertices $p_0, z, p_1$. \end{Claim}

\begin{claimproof} By Claim \ref{VCoutFaceLenGr5}, $V(C)\neq V(P)$, and since every chord of $C$ is incident to one of $q_0, z, q_1$, we have $p_0p_1\not\in E(G)$. By Claim \ref{RuleOutEdgesp1p3p3p5CL}, $z$ is not adjacent to either of $p_0, p_1$, so $\{p_0, z, p_1\}$ is an independent subset of $V(G)$. Thus, there is an $L$-coloring $\psi$ of $\{p_0, z, p_1\}$ using $a_0, b, a_1$ on the respective vertices $p_0, z, p_1$. By Claim \ref{NoP1P4P2P5inEGCL}, $p_0q_1, p_1q_0\not\in E(G)$, so each of $q_0, q_1$ has an $L_{\psi}$-list of size at least three. As $G$ is a counterexample, $\psi$ extends to an $L$-coloring $\phi$ of $V(P)$ which does not extend to an $L$-coloring of $G$.  \end{claimproof}

\begin{Claim}\label{KiBWheelSizeG} Let $i\in\{0,1\}$ and let $x$ be the unique vertex which is of distance two from $p_i$ on of the path $C\setminus\mathring{P}$. Then at least one of the following holds.
\begin{enumerate}[label=\arabic*)] 
\itemsep-0.1em
\item $|V(K_i)|\leq 3$; OR
\item $K_i$ is not a broken wheen with principal path $Q_i$; OR
\item $L(p_i)\not\subseteq L(x)$
\end{enumerate}
\end{Claim}

\begin{claimproof} By Claim \ref{VCoutFaceLenGr5}, $C\setminus\mathring{P}$ has length at least two, so $x$ is well-defined. Suppose without loss of generality that $i=0$ and that $K_0$ is a broken wheel with principal path $p_0q_0v_0$, where $|V(K_0)|>3$. We show that $L(p_0)\not\subseteq L(x)$. By assumption, $K_0-q_0$ is a path of length at least two, and since $p_0q_1\not\in E(G)$, we have $t\geq 2$ and $x=u_2$. Suppose toward a contradiction that $L(p_0)\subseteq L(u_2)$. Let $G^*=G\setminus\{p_0, u_1\}$. Since $K_0$ is a broken wheel with principal path $p_0q_0v_0$, $G^*$ is bounded by outer cycle $C^{G^*}=q_0zq_1p_1(u_t\ldots u_2)$, and $C^{G^*}$ contains the 4-path $R:=u_2q_0zq_1p_1$. Furthermore, since $q_0, z, q_1$ have no common neighbor $C\setminus P$, it follows that they have no common neighbor on $C^{G^*}\setminus R$. Since $|L(p_0)|+|L(p_1)|=4$ and $L(p_0)\subseteq L(u_2)$, we have $|L(p_0)\cap L(u_2)|+|L(p_1)|=4$. By the minimality of $G$, there is a partial $L$-coloring $\psi$ of $V(C^{G^*})\setminus\{q_0, q_1\}$ such that the following hold. 
\begin{enumerate}[label=\roman*)] 
\itemsep-0.1em
\item $\psi(u_2)\in L(p_0)\cap L(u_2)$ and $u_2, z, p_1\in\textnormal{dom}(\psi)$; \emph{AND}
\item Each of $q_0, q_1$ has an $L_{\psi}$-list of size at least three, and $\psi$ is $(P, G^*)$-sufficient. 
\end{enumerate}

Since $\psi(u_2)\in L(p_0)$, there is an extension of $\psi$ to an $L$-coloring $\psi'$ of $\textnormal{dom}(\psi)\cup\{p_0\}$, where $\psi'$ uses the same color on $p_0, u_2$. Since $|L_{\psi}(q_0)|\geq 3$ and the color $\psi(u_2)$ is already used by $\psi$ on a neighbor of $q_0$, we have $|L_{\psi'}(q_0)|\geq 3$. Since $|L_{\psi}(q_1)|\geq 3$ and $p_0q_1\not\in E(G)$, we have $|L_{\psi'}(q_1)|\geq 3$. Furthermore, $V(C^{G^*})\subseteq V(C)$, so $\textnormal{dom}(\psi')\subseteq V(C)\setminus\{q_0, q_1\}$, and $p_0, p_1, z\in\textnormal{dom}(\psi')$. Since $G$ is a counterexample, there is an extension of $\psi'$ to an $L$-coloring $\phi$ of $\textnormal{dom}(\psi')\cup\{q_0, q_1\}$, where $\phi$ not extend to an $L$-coloring of $G$. But since $N(p_0)=\{u_1, q_0\}$, $\phi$ extends to an $L$-coloring of $V(G^*)\cup\{p_0\}$ in which $p_0, u_2$ are colored with the same color, so there is a color left over for $u_1$. Thus, $\phi$ extends to an $L$-coloring of $G$, a contradiction.  \end{claimproof}

Recalling Definition \ref{GUniversalDefinition}, Claim \ref{KiBWheelSizeG} above has the following immediate consequence.

\begin{Claim}\label{KiSizeAtMost4UnivColorBlock} Let $i\in\{0,1\}$ with $|L(p_i)|\geq 2$ and $|V(K_i)|>2$. Then both of the following hold.
\begin{enumerate}[label=\arabic*)] 
\itemsep-0.1em
\item If there is no $K_i$-universal color of $L(p_i)$, then $K_i$ is a broken wheel with principal path $Q_i$, where $|V(K_i)|\leq 4$ and $p_i$ is predictable; AND
\item If there is no almost $K_i$-universal color of $L(p_i)$, then $K_i$ is a triangle and $p_i$ is predictable.
\end{enumerate}
 \end{Claim}

\begin{claimproof} Suppose without loss of generality that $i=1$ and that that there is no  $K_1$-universal color of $L(p_1)$. We first show that $K_1$ is a broken wheel with principal path $v_1q_1p_1$. Suppose not. Since $G$ is short-separation-free, $K_1$ is not a triangle, so we have $v_1\neq u_t$. If $v_1=u_{t-1}$, then, since $G$ is short-separation-free and has no chords of $C$ incident to $p_1$, we have $q_1u_t\in E(G)$ and $K_1$ is a broken wheel with principal path $v_1q_1p_1$, contradicting our assumption. Thus, the outer face of $K_1$ has at least five vertices, and, by 1) of Corollary \ref{CorMainEitherBWheelAtM1ColCor}, there is a $K_1$-universal color of $L(p_1)$, contradicting our assumption. Thus, $K_1$ is a broken wheel with principal path $v_1q_1p_1$. Since there is no $K_1$-universal color of $L(p_1)$, it immediately follows that $L(p_1)\subseteq L(u_t)$, so $p_1$ is predictable. We just need to show that $|V(K_1)|\leq 4$. Suppose not. Thus, by Claim \ref{KiBWheelSizeG}, $L(p_1)\not\subseteq L(u_{t-1})$. Since $K_1$ is outerplanar, the outer face of $K_1$ has at least five vertices. Since $|L(p_1)|\geq 2$ by assumption, it follows from 1) of Corollary \ref{CorMainEitherBWheelAtM1ColCor} that there is a $K_1$-universal color of $L(p_1)$, contradicting our assumption. This proves 1) of Claim \ref{KiSizeAtMost4UnivColorBlock}. Now we prove 2). Suppose that there is no almost $K_1$-universal color of $L(p_1)$. Thus, there is no $K_1$-universal color of $L(p_1)$, so, by 1), $K_1$ is a broken wheel with principal path $v_1q_1p_1$ and furthermore, $L(p_1)\subseteq L(u_t)$ and $|V(K_1)|\leq 4$. We just need to show that $|V(K_1)|\neq 4$. Suppose that $|V(K_1)|=4$, so $v_1=u_{t-1}$. By Claim \ref{KiBWheelSizeG}, $L(p_1)\not\subseteq L(u_{t-1})$. Since $|L(p_1)|\geq 2$ and $L(p_1)\subseteq L(u_t)$, there is an $a\in L(p_1)$ such that $L(u_t)\setminus\{a\}\not\subseteq L(u_{t-1})$, contradicting 4) of Theorem \ref{BWheelMainRevListThm2}. \end{claimproof}

\subsection{Ruling out a common neighbor to $q_0, q_1$ in $C-z$}\label{RuleOutCommonNbrQ0Q1OnPathFromU1toUtSubHolepunch}

In Subsection \ref{RuleOutEdgeQ0Q1}, we show that $q_0q_1\not\in E(G)$. To prove this, we first prove in this subsection the intermediate result that $q_0, q_1$ have no common neigbor in $C-z$. We break up the proof of this into two claims.

\begin{Claim}\label{Q0Q1CmNbrK0K1Triangle} If $q_0, q_1$ have a common neighbor $u$ in $C-z$, then the following hold.
\begin{enumerate}[label=\Alph*)] 
\itemsep-0.1em
\item $u\in V(C\setminus P)$ and $u=v_0=v_1$. Furthermore, $q_0q_1\in E(G)$ and $G-z=(K_0+K_1)$; AND
\item For each $i\in\{0,1\}$ with $|L(p_i)|\geq 2$, $K_i$ is a triangle and $L(p_i)\subseteq L(u)$. 
\end{enumerate}
\end{Claim}

\begin{claimproof} By Claim \ref{NoP1P4P2P5inEGCL}, $u$ is not an endpoint of $P$, so $u\in\{u_1, \ldots, u_t\}$, and, by the conditions of Theorem \ref{MainHolepunchPaperResulThm}, $uz\not\in E(G)$. Since there are no induced 4-cycles in $G$, we have $q_0q_1\in E(G)$. In particular, in the notation of Definition \ref{Defnu1.utKiTi}, we have $u=v_0=v_1$ and, since $G$ is short-separation-free, $N(z)=\{q_0, q_1\}$ and $G-z=(K_0\cup K_1)+q_0q_1$. This proves A) of Claim \ref{Q0Q1CmNbrK0K1Triangle}. Before proving B), we prove the following two intermediate results. 

\vspace*{-8mm}
\begin{addmargin}[2em]{0em}
\begin{subclaim}\label{IntermediateSubclaimKiList2Size4} For each $i\in\{0,1\}$ with $|L(p_i)|\geq 2$, $K_i$ is a broken wheel with principal path $Q_i$ and $|V(K_i)|\leq 4$, and furthermore, $p_i$ is predictable. \end{subclaim}

\begin{claimproof} Suppose toward a contradiction that there is an $i\in\{0,1\}$ and a $c\in L(p_i)$ violating Subclaim \ref{IntermediateSubclaimKiList2Size4}, say $i=1$ without loss of generality. By 1) of Claim \ref{KiSizeAtMost4UnivColorBlock}, there is a $K_1$-universal color $c$ of $L(p_1)$.  By Claim \ref{AnyLColMidEndExtPCL11}, since $|L(p_0)|\geq 1$, there is an $L$-coloring $\phi$ of $V(P)$ which does not extend to an $L$-coloring of $G$, where $\phi(p_1)=c$. By Theorem \ref{thomassen5ChooseThm}, there is an $r\in\Lambda_{K_0}(\phi(p_0), \phi(q_0), \bullet)$. Since $c$ is $K_1$-universal, $\Lambda_{K_1}(\bullet, \phi(q_1), c)=L(u)\setminus\{\phi(q_1)\}$, so $r\in \Lambda_{K_1}(\bullet, \phi(q_1), c)$. Since  $N(z)=\{q_0, q_1\}$ and $G-z=(K_0\cup K_1)+q_0q_1$, it follows that $\phi$ extends to an $L$-coloring of $G$, a contradiction. \end{claimproof}\end{addmargin}

\vspace*{-8mm}
\begin{addmargin}[2em]{0em}
\begin{subclaim}\label{EachAugColorP0Q0MissP1} Let $i\in\{0,1\}$ with $|V(K_{1-i})|=4$. Then, for each $\phi\in\textnormal{End}(Q_i, K_i)$, $\phi(u)\not\in L(p_{1-i})$. \end{subclaim}

\begin{claimproof} Suppose without loss of generality that $i=0$ and that $|V(K_1)|=4$. Thus, $u=u_{t-1}$. Since no chord of $C$ has $p_1$ as an endpoint, and $G$ contains no induced 4-cycles, $K_1$ is a broken wheel with principal path $uq_1p_1$. Let $\phi\in\textnormal{End}(p_0q_0u, K_0)$. Suppose toward a contradiction that $\phi(u)\in L(p_1)$. Since $|L_{\phi}(q_0)|\geq 3$, there is a $c\in L(z)$ with $|L_{\phi}(q_0)\setminus\{c\}|\geq 3$. Since $u=u_{t-1}$ and $N(z)=\{q_0, q_1\}$, there is an extension of $\phi$ to an $L$-coloring $\phi'$ of $\{p_0, u, z, p_1\}$, where $\phi'(z)=c$ and $\phi'$ uses the same color on $u, p_1$. By our choice of $c$, we have $|L_{\phi'}(q_0)|\geq 3$. Since $\phi'$ uses the same color on $u$ and $p_1$, we have $|L_{\phi'}(q_1)|\geq 3$ as well, and $\phi'$ also leaves two colors in $u_t$, i.e $|L_{\phi'}(u_t)|\geq 2$. Since $\phi\in\textnormal{End}(Q_0, K_0)$, it follows that $\phi'$ is $(P,G)$-sufficient, contradicting our assumption that $G$ is a counterexample. \end{claimproof}\end{addmargin}

Now we prove B). Suppose toward a contradiction that there is an $i\in\{0,1\}$ with $|L(p_i)|\geq 2$, where $i$ violates B), say $i=1$ without loss of generality. By Subclaim \ref{IntermediateSubclaimKiList2Size4}, we have $L(p_1)\subseteq L(u_t)$, and $K_1$ is a broken wheel with principal path $Q_1$, where $|V(K_1)|\leq 4$. Since B) is violated, we have $u\neq u_t$, so $|V(K_1)|=4$ and $u=u_{t-1}$. In particular, $K_1-q_1$ is 2-path $u_{t-1}u_tp_1$, and $up_1\not\in E(G)$. Furthermore, by Claim \ref{KiBWheelSizeG}, $L(p_1)\not\subseteq L(u_{t-1})$.  

\vspace*{-8mm}
\begin{addmargin}[2em]{0em}
\begin{subclaim}\label{EachP0P12ListSubCLQ0Q1Adj} $|L(p_0)|=|L(p_1)|=2$. \end{subclaim} 

\begin{claimproof} Since $|L(p_1)|\geq 2$ by assumption, we have $1\leq|L(p_0)|\leq 2$. Suppose that Subclaim \ref{EachP0P12ListSubCLQ0Q1Adj} does not hold. Thus, $|L(p_0)|=1$ and $|L(p_1)|=3$. Since $L(p_1)\subseteq L(u_t)$, we have $L(p_1)=L(u_t)$. By Theorem \ref{SumTo4For2PathColorEnds}, there is a $\phi\in\textnormal{End}(p_0q_0u, K_0)$. By Subclaim \ref{EachAugColorP0Q0MissP1}, $\phi(u_{t-1})\not\in L(p_1)$, since $u=u_{t-1}$. Thus, $\phi(u_{t-1})\not\in L(u_t)$.  Now, since $|L(z)|=5$ and $|L_{\phi}(q_1)|\geq 3$, we have $|\{c\in L(z): |L_{\phi}(z)\setminus\{c\}|\geq 3\}|\geq 2$. In particular, we have $|L(p_1)|+|\{c\in L(z): |L_{\phi}(q_0)\setminus\{c\}|\geq 3\}|\geq 5$. Since $|L_{\phi}(q_1)|\geq 4$, there is a $c\in L(z)$ and a $d\in L(p_1)$ with $|L_{\phi}(q_0)\setminus\{c\}|\geq 3$ and $|L_{\phi}(q_1)\setminus\{c, d\}|\geq 3$ as well. Since $N(z)=\{q_0, q_1\}$, $\phi$ extends to an $L$-coloring $\phi'$ of $\{p_0, z, u_{t-1}, p_1\}$, where $\phi'$ uses the colors $c,d$ on the respective vertices $z, p_1$. Thus, each of $q_0, q_1$ has an $L_{\phi'}$-list of size at least three. But since $\phi\in\textnormal{End}(p_0q_0u, K_0)$ and $\phi(u_{t-1})\not\in L(u_t)$, it follows that $\phi'$ is $(P,G)$-sufficient, contradicting our assumption that $G$ is a counterexample. \end{claimproof}\end{addmargin}

Since $|L(p_0)|=2$, it follows from Subclaim \ref{IntermediateSubclaimKiList2Size4} that $L(p_0)\subseteq L(u_1)$ and that $K_0$ is a broken wheel with principal path $p_0q_0u$, where $|V(K_0)|\leq 4$. In particular, $t\geq 2$. 

\vspace*{-8mm}
\begin{addmargin}[2em]{0em}
\begin{subclaim}\label{K0TriangleK1Size4GSize8} $|V(K_0)|=3$. \end{subclaim}

\begin{claimproof} Suppose not. Thus, $u=u_2$ and $t=3$. Since $|V(K_1)|=4$ as well and each of $K_0, K_1$ is a broken wheel, it follows from Claim \ref{KiBWheelSizeG} that $L(p_0)\not\subseteq L(u_2)$ and $L(p_1)\not\subseteq L(u_2)$ as well. As indicated above, we have $L(p_0)\subseteq L(u_1)$ and, since $t=3$, $L(p_1)\subseteq L(u_3)$. Since $|L(p_0)|=|L(p_1)|=2$, there exist colors $a,b$, where $a\in L(p_0)$ and $b\in L(p_1)$, and furthermore, $L(u_1)\setminus\{a\}\subseteq L(u_2)$ and $L(u_3)\setminus\{b\}\not\subseteq L(u_2)$. By Claim \ref{AnyLColMidEndExtPCL11}, there is an $L$-coloring $\phi$ of $V(P)$, where $\phi$ does not extend to an $L$-coloring of $G$ and $\phi$ uses $a,b$ on the respective vertices $p_0, p_1$. Since $\phi$ does not extend to an $L$-coloring of $G$, we have $\Lambda_{K_0}(a, \phi(q_0), \bullet)\cap\Lambda_{K_1}(\bullet, \phi(q_1), b)=\varnothing$. Since $|L(u)|=3$, at least one of the two sets in the intersection above has size less than two. Yet, by 4) of Theorem \ref{BWheelMainRevListThm2}, each of $\Lambda_{K_0}(a, \phi(q_0), \bullet)$ and $\Lambda_{K_1}(\bullet, \phi(q_1), b)$ has size at least two, a contradiction. \end{claimproof}\end{addmargin}

Applying Subclaim \ref{K0TriangleK1Size4GSize8}, we get $t=2$ and $u=u_1$, so $|V(G)|=7$ and we have the graph in Figure \ref{FigureReductionGQ0Q1Nbr}.

\vspace*{-8mm}
\begin{addmargin}[2em]{0em}
\begin{subclaim}\label{SubForEachQ0Q1FromUList} For each $c\in L(u_1)\setminus L(u_2)$ and $i\in\{0,1\}$, we have $|L(q_i)\setminus\{c\}|=4$. \end{subclaim}

\begin{claimproof} Suppose there is an $i\in\{0,1\}$ and a $c\in L(u_{t-1})$ with $|L(q_i)\setminus\{c\}|\geq 5$. Consider the following cases.

\textbf{Case 1:} $i=1$

Since $|L(p_0)\setminus\{c\}|\geq 1$ and $|L(z)|=5$, there is an $L$-coloring $\phi$ of $\{p_0, u_1, z, p_1\}$, where $|L_{\phi}(q_0)|\geq 3$ and $\phi(u_1)=c$. By our choice of $c$, we have $|L_{\phi}(q_0)|\geq 3$ as well. Since $c\not\in L(u_2)$, $\phi$ is $(P,G)$-sufficient, contradicting our assumption that $G$ is a counterexample to Theorem \ref{MainHolepunchPaperResulThm}.

\textbf{Case 2:} $i=0$

In this case, since $|L(z)|=5$, there is a $d\in L(z)$ with $|L(q_1)\setminus\{c, d\}|\geq 4$. Since $|L(p_0)\setminus\{c\}|\geq 1$, there is an $L$-coloring $\psi$ of of $\{p_0, u_1, z, p_1\}$, where $\psi$ uses $c, d$ on the respective vertices $u_1, z$. Thus, $|L_{\psi}(q_1)|\geq 3$, and, by our choice of $c$, we have $|L_{\psi}(q_0)|\geq 3$ as well. As above, since $c\not\in L(u_2)$, $\psi$ is $(P,G)$-sufficient, contradicting our assumption that $G$ is a counterexample.  \end{claimproof}\end{addmargin}

\begin{center}\begin{tikzpicture}
\node[shape=circle,draw=black] (p0) at (0,0) {$p_0$};
\node[shape=circle,draw=black] (u1) at (2, 0) {$u_1$};
\node[shape=circle,draw=black]  (u2) at (4, 0) {$u_2$};
\node[shape=circle,draw=black]  (p1) at (6, 0) {$p_1$};
\node[shape=circle,draw=black] (q0) at (0,2) {$q_0$};
\node[shape=circle,draw=black] (q1) at (6,2) {$q_1$};
\node[shape=circle,draw=black] (z) at (3,3) {$z$};
\node[shape=circle,draw=white] (K0) at (0.7,0.7) {$K_0$};

 \draw[-] (p0) to (q0) to (z) to (q1) to (p1) to (u2) to (u1) to (p0);
\draw[-] (q0) to (q1) to (u2);
\draw[-] (q0) to (u1);
\draw[-] (q1) to (u1);
\end{tikzpicture}\captionof{figure}{}\label{FigureReductionGQ0Q1Nbr}\end{center}

Since $K_0$ is a triangle, each $L$-coloring of the edge $p_0u_1$ lies in $\textnormal{End}(p_0q_0u, K_0)$, and in particular, since $|L(p_0)|=2$, it follows that, for each $c\in L(u_1)$, there is an element of $\textnormal{End}(p_0q_0u, K_0)$ using $c$ on $u$. Thus, by Subclaim \ref{EachAugColorP0Q0MissP1}, since $u=u_1$, we have $L(u)\cap L(p_1)=\varnothing$. We now let $S=L(u_1)\setminus L(u_2)$. Since $L(p_1)\subseteq L(u_2)$, we have $|L(u_1)\cap L(u_2)|\leq 1$, so $|S|\geq 2$. 

\vspace*{-8mm}
\begin{addmargin}[2em]{0em}
\begin{subclaim}\label{SAndL(z)DisjointSubCL} $S\cap L(z)=\varnothing$. \end{subclaim}

\begin{claimproof} Suppose toward a contradiction that there is an $f\in S\cap L(z)$ and let $\phi$ be an arbitrary $L$-coloring of $\{p_0, u_1, z, p_1\}$, where $\phi$ uses $f$ on both of respective vertices $u_1, z$. Such a $\phi$ exists, since $|L(p_0)\setminus\{f\}|\geq 1$, and each of $q_0, q_1$ has an $L_{\phi}$-list of size at least three. Since $f\not\in L(u_2)$, it follows that $\psi$ is $(P,G)$-sufficient, contradicting our assumption that $G$ is a counterexample. \end{claimproof}\end{addmargin}

By Subclaim \ref{SubForEachQ0Q1FromUList}, $S\subseteq L(q_0)\cap L(q_1)$ and $|L(q_0)|=|L(q_1)|=5$. By Subclaim \ref{SAndL(z)DisjointSubCL}, $S\cap L(z)=\varnothing$. Since $|S|\geq 2$, we get $|L(z)\setminus L(q_i)|\geq 2$ for each $i=0,1$. Since $|L(z)\setminus L(q_0)|\geq 2$, we have $|S|+|L(p_1)|+|L(z)\setminus L(q_0)|\geq 6$. Since $\{u_1, p_1, z\}$ is an independent subset of $V(G)$, there is an $L$-coloring $\phi$ of $\{u_1, p_1, z\}$, where $\phi(u_1)\in S$ and $\phi(z)\not\in L(q_0)$, and $|L_{\phi}(q_1)|\geq 3$. Let $\phi'$ be an extension of $\phi$ to an $L$-coloring of $\textnormal{dom}(\phi)\cup\{p_0\}$. As $\phi'(z)\not\in L(q_0)$, each of $q_0$ and $q_1$ has an $L_{\phi'}$-list of size at least three. Since $\phi'(u_1)\not\in L(u_2)$, it follows that $\phi'$ is $(P,G)$-sufficient, contradicting our assumption that $G$ is a counterexample. \end{claimproof}

With Claim \ref{Q0Q1CmNbrK0K1Triangle} in hand, we now complete Subsection \ref{RuleOutCommonNbrQ0Q1OnPathFromU1toUtSubHolepunch}. 

\begin{Claim}\label{NoCommNbrQ0Q1OnPathu1Tout} $q_0$ and $q_1$ have no common neighbor in $C-z$.   \end{Claim}

\begin{claimproof} Suppose toward a contradiction that $q_0, q_1$ have a common neighbor $u$ on $C-z$. Note that $u$ is unique. By A) of Claim \ref{Q0Q1CmNbrK0K1Triangle}, $u\in\{u_1, \ldots, u_t\}$ and $q_0q_1\in E(G)$, and furthermore, $G-z=(K_0\cup K_1)+q_0q_1$. In particular, $N(z)=\{q_0, q_1\}$, and $v_0=v_1=u$. 

\vspace*{-8mm}
\begin{addmargin}[2em]{0em}
\begin{subclaim}\label{ForEachI01AndAugBothEndColorinQi} For each $i\in\{0,1\}$ and $\phi\in\textnormal{End}(Q_i, K_i)$, if $|L(p_{1-i})|\geq 2$, then $L_{\phi}(q_i)|=3$, and, in particular, $|L(q_i)|=5$.  \end{subclaim}

\begin{claimproof} Suppose without loss of generality that $|L(p_1)|\geq 2$ and let $\phi\in\textnormal{End}(p_0q_0u, K_0)$. By B) of Claim \ref{Q0Q1CmNbrK0K1Triangle}, $K_1$ is a triangle, so $u=u_t$. Suppose toward a contradiction that $|L_{\phi}(q_i)|\neq 3$. Thus, $|L_{\phi}(q_0)|\geq 4$. Since $|L(p_1)|\geq 2$, there is a $c\in L_{\phi}(p_1)$. Note that $|L_{\phi}(q_1)|\geq 4$ as well, since $N(q_1)\cap\textnormal{dom}(\phi)=\{u\}$. Since $|L(z)|=5$, there is a $d\in L(z)$ with $|L_{\phi}(q_1)\setminus\{c,d\}|\geq 3$. Since $N(z)=\{q_0, q_1\}$, there is an extension of $\phi$ to an $L$-coloring $\phi'$ of $\{p_0, u, z, p_1\}$, where $\phi'$ uses $d, c$ on the respective vertices $z, p_1$. As $|L_{\phi}(q_0)|\geq 4$, each of $q_0, q_1$ has an $L_{\phi'}$-list of size at least three. Since $\phi\in\textnormal{End}(p_0q_0u, K_0)$,  it follows that $\phi'$ is $(P,G)$-sufficient, contradicting our assumption that $G$ is a counterexample.  \end{claimproof}\end{addmargin}

\vspace*{-8mm}
\begin{addmargin}[2em]{0em}
\begin{subclaim}\label{ExCl22CaseWhereEdgeQ0Q1SubSCl} For some $i\in\{0,1\}$, $|L(p_i)|\neq 2$. \end{subclaim}

\begin{claimproof} Suppose not. Thus, $|L(p_0)|=L(p_1)|=2$. It follows from B) of Claim \ref{Q0Q1CmNbrK0K1Triangle} that each of $K_0$ and $K_1$ is a triangle, and, in particular, $t=1$ and $V(G)=V(P)\cup\{u\}$. It also follows from B) of Claim \ref{Q0Q1CmNbrK0K1Triangle} that $L(p_0)\cup L(p_1)\subseteq L(u)$. For each $i=0,1$, since $K_i$ is a triangle, any $L$-coloring of $\{p_i, u\}$ is an element of $\textnormal{End}(Q_i, K_i)$, and furthermore, since $|L(p_i)|=2$ and $|L(u)|=3$, it follows that, for each $x\in\{p_i, u\}$ and $c\in L(x)$, there is an element of $\textnormal{End}(Q_i, K_i)$ using $c$ on $x$. Thus, by Subclaim \ref{ForEachI01AndAugBothEndColorinQi}, we have $L(u)\subseteq L(q_i)$ and $|L(q_i)|=5$ for each $i=0,1$. Now consider the following cases. 

\textbf{Case 1:} $L(u)\cap L(z)\neq\varnothing$

In this case, we let $a\in L(u)\cap L(z)$. Since $|L(p_0)|=|L(p_1)|=2$ and $z\not\in N(u)$, there is an $L$-coloring $\phi$ of $\{p_0, z, u, p_1\}$ using $a$ on each of $z$ and $u$. Each of $q_0$ and $q_1$ has an $L_{\phi}$-list of size at least three. Since $\phi$ is already an $L$-coloring of $V(G)\setminus\{q_0, q_1\}$, we contradict our assumption that $G$ is a counterexample. 

\textbf{Case 2:} $L(u)\cap L(z)=\varnothing$

Since $|L(q_0)|=|L(q_1)|=5$ and $L(u)\subseteq L(q_0)\cap L(q_1)$, we have $|L(z)\setminus L(q_i)|\geq 3$ for each $i=0,1$, as $|L(u)|=3$. Since $|L(z)|=5$, it follows that there is a $c\in L(z)$ with $c\not\in L(q_0)\cup L(q_1)$. Let $\phi$ be an arbitrary $L$-coloring of $\{p_0, z, u, p_1\}$ using $c$ on $z$. Each of $q_0$ and $q_1$ has an $L_{\phi}$-list of size at least three, and since $\phi$ is already an $L$-coloring of $V(G)\setminus\{q_0, q_1\}$, we contradict our assumption that $G$ is a counterexample. \end{claimproof}\end{addmargin}

Applying Subclaim \ref{ExCl22CaseWhereEdgeQ0Q1SubSCl}, we suppose without loss of generality that $|L(p_0)|=1$ and $|L(p_1)|=3$. By B) of Claim \ref{Q0Q1CmNbrK0K1Triangle}, $K_1$ is a triangle, and, since $|L(p_1)|=3$, we have $L(u)=L(p_1)$. Let $a$ be the lone color of $L(p_0)$. Let $L(p_1)=\{b_0, b_1, b_2\}$. Applying Claim \ref{AnyLColMidEndExtPCL11}, for each $k=0,1,2$, there is an $L$-coloring $\psi_k$ of $V(P)$ which does not extend to an $L$-coloring of $G$, where $\psi_k(p_1)=b_k$. Since $N(z)=\{q_0, q_1\}$, it follows that, for each $k=0,1,2$, $\Lambda_{K_0}(a, \psi_k(q_0), \bullet)=\{b_k\}$. Since $\{\psi(q_0): k=0,1,2\}|=3$, this contradicts 2ii) of Corollary \ref{CorMainEitherBWheelAtM1ColCor}. \end{claimproof}

\subsection{Ruling out the edge $q_0q_1$}\label{RuleOutEdgeQ0Q1}

With the intermediate result of Claim \ref{NoCommNbrQ0Q1OnPathu1Tout} in hand, we prove that $q_0$ and $q_1$ are not adjacent. 

\begin{Claim}\label{Q0AndQ1AreNotAdjacentCL}  $q_0q_1\not\in E(G)$.\end{Claim}

\begin{claimproof} Suppose toward a contradiction that $q_0q_1\in E(G)$. We first note the following

\vspace*{-8mm}
\begin{addmargin}[2em]{0em}
\begin{subclaim}\label{NeithK0K1EdgeSubV0V1} Neither $K_0$ nor $K_1$ is an edge. \end{subclaim}

\begin{claimproof} Suppose that at least one of $K_0, K_1$ is an edge, say $K_0$ without loss of generality. Let $G'$ be the subgraph of $G$ bounded by outer cycle $(p_0(C\setminus\mathring{P})v_1q_1q_0$. That is $G'=(G\setminus\{z\})\setminus (K_1\setminus\{q_1, p_1\})$. Note that the outer cycle of $G'$ contains the 3-path $P'=p_0q_0q_1v_1$. By Theorem \ref{ThmFirstLink3PathForUseInHolepunch}, there exist $|L(p_0)|$ different elements of $\textnormal{Crown}(P', G')$, each of which uses a different color on $v_1$. By Corollary \ref{GlueAugFromKHCor}, there exists a $\phi\in\textnormal{Crown}(P', G')$ and a $(q_1, K_1)$-sufficient $L$-coloring $\psi$ of $Q_1-q_1$ with $\phi(v_1)=\psi(v_1)$. Since $K_0$ is an edge, we have $N(q_0)\cap\textnormal{dom}(\phi\cup\psi)=\{p_0\}$, so $q_0$ has an $L_{\phi\cup\psi}$-list of size at least four. Since $q_0$ has an $L_{\phi\cup\psi}$-list of size at least three and $N(w)=\{q_0, q_1\}$, it follows that $\phi\cup\psi$ extends to an $L$-coloring $\sigma$ of $\textnormal{dom}(\phi\cup\psi)\cup\{q_0, q_1\}$, where each of $q_0, q_1$ has an $L_{\sigma}$-list of size at least three. It follows from our choice of $\phi$ and  $\psi$ that $\sigma$ is $(P, G)$-sufficient, contradicting our assumption that $G$ is a counterexample. \end{claimproof}\end{addmargin}

For each $i=0,1$, let $G^{\ast}_i$ be the subgraph of $G$ bounded by outer cycle $p_i(C\setminus\mathring{P})v_{1-i}q_{1-i}q_i$. Note that the outer cycle $G^{\ast}_i$ contains the 3-path $R^{\ast}_i=p_iq_iq_{1-i}v_{1-i}$, and no chord of the outer cycle of $G^{\ast}_i$ is incident to $q_{1-i}$. 

\vspace*{-8mm}
\begin{addmargin}[2em]{0em}
\begin{subclaim}\label{NoAugAugMatchRi} For each $i\in\{0,1\}$ and $\pi\in\textnormal{End}(R^{\ast}_i, G^{\ast}_i)$, there is no element of $\textnormal{End}(Q_{1-i}, K_{1-i})$ which uses $\pi(v_{1-i})$ on $v_{1-i}$. \end{subclaim}

\begin{claimproof} Suppose for the sake of definiteness that $i=0$ and let $\pi\in\textnormal{End}(R^{\ast}_0, G^{\ast}_0)$. By Subclaim \ref{NeithK0K1EdgeSubV0V1}, $Q_1$ is a 2-path. Suppose toward a contradiction that there is a $\phi\in\textnormal{End}(Q_1, K_1)$ with $\phi(v_1)=\pi(v_1)$. Thus, $\pi\cup\phi$ is a proper $L$-coloring of $\{p_0, v_1, p_1\}$. Since $|L_{\pi\cup\phi}(q_1)|\geq 3$, there is an $s\in L(z)$ with $|L_{\pi\cup\phi}(q_1)\setminus\{s\}|\geq 3$. Let $\sigma$ be an extension of $\pi\cup\phi$ to an $L$-coloring of $\{p_0, v_1, z, p_1\}$ using $s$ on $z$. By Claim \ref{NoCommNbrQ0Q1OnPathu1Tout}, $v_0\neq v_1$, so we have $v_1q_0\not\in E(G)$ and thus each of $q_0, q_1$ has an $L_{\sigma}$-list of size at least three.. Since $G$ is a counterexample, $\sigma$ extends to an $L$-coloring $\sigma'$ of $V(P)\cup\{v_1\}$ which does not extend to an $L$-coloring of $G$, yet it follows from our choice of $\phi, \pi$ that $\sigma'$ extends to an $L$-coloring of $G$, a contradiction.  \end{claimproof}\end{addmargin}

\vspace*{-8mm}
\begin{addmargin}[2em]{0em}
\begin{subclaim} $|L(p_0)|=|L(p_1)|=2$. \end{subclaim}

\begin{claimproof} Suppose not, and suppose without loss of generality that $|L(p_0)|=1$ and $|L(p_1)|=3$. Since $|L(v_1)|=3$ and no chord of the outer face of $G^{\ast}_0$ is incident to $q_1$, it follows from Theorem \ref{3ChordVersionMainThm1} that there is a $\pi\in\textnormal{End}(R^{\ast}_0, G^{\ast}_0)$. Since $|L(p_1)|=3$, it follows from Theorem \ref{SumTo4For2PathColorEnds} that there is a $\phi\in\textnormal{End}(Q_1, K_1)$ with $\phi(v_1)=\pi(v_1)$, contradicting Subclaim \ref{NoAugAugMatchRi}. \end{claimproof}\end{addmargin}

\vspace*{-8mm}
\begin{addmargin}[2em]{0em}
\begin{subclaim} For some $i\in\{0,1\}$, $K_i$ is a triangle.  \end{subclaim}

\begin{claimproof} Suppose not. By Subclaim \ref{NeithK0K1EdgeSubV0V1}, neither $K_0$ nor $K_1$ is an edge, and since $|L(p_0)|=|L(p_1)|=2$ it follows from 2) of Claim \ref{KiSizeAtMost4UnivColorBlock} that, for each $i\in\{0,1\}$, there is an $a_i\in L(p_i)$ which is almost $K_i$-universal. By Claim \ref{AnyLColMidEndExtPCL11}, there is an $L$-coloring $\sigma$ of $V(P)$ which does not extend to an $L$-coloring of $G$, where $\sigma$ uses $a_0, a_1$ on the respective vertices $p_0, p_1$. Since $\sigma(q_0)\not\in\Lambda_{K_0}(a_0, \sigma(q_0), \bullet)$ and $\sigma(q_1)\not\in\Lambda_{K_1}(\bullet, \sigma(q_1), a_1)$, it follows from Corollary \ref{2ListsNextToPrecEdgeCor} that the $L$-coloring $(\sigma(q_0), \sigma(q_1))$ of $q_0q_1$ extends to an $L$-coloring of  $G^{\ast}_0\cap G^{\ast}_1$ which uses a color of the respective sets $\Lambda_{K_0}(a_0, \sigma(q_0), \bullet)$ and $\Lambda_{K_1}(\bullet, \sigma(q_1), a_1)$ on the respective vertices $q_0, q_1$.  Thus, $\sigma$ extends to an $L$-coloring of $G$, a contradiction. \end{claimproof}\end{addmargin}

Since $|L(p_0)|=|L(p_1)|=2$, suppose without loss of generality that $K_1$ is a triangle, so $v_1=u_t$ and $R^{\ast}_0=p_0q_0q_1v_1$. By Theorem \ref{3ChordVersionMainThm1}, since $|L(u_t)|=3$, there is a $\pi\in\textnormal{End}(R^{\ast}_0, G^{\ast}_0)$. Since $|L(p_1)|=2$, there is a $c\in L(p_1)\setminus\{\pi(u_t)\}$. As $K_1$ is a triangle, there is an element of $\textnormal{End}(Q_1, K_1)$ using $\pi(u_t), c$ on the respective vertices $u_t, p_1$, contradicting Subclaim \ref{NoAugAugMatchRi}. \end{claimproof}

\subsection{Dealing with chords and 2-chords of $C$ incident to $z$: notation and setup}\label{PrelimNotatSetup}

As indicated in the introduction to the proof of Theorem \ref{MainHolepunchPaperResulThm}, Subsections \ref{CommNbrCChordInterm}-\ref{MTriangleInterSectAtLeast2X0X1} consist of the proofs of Claims \ref{2ChordIncidentZVI} and \ref{noChordzOrAtLeast2Rule1Out}. To prove these results, we first introduce the following notation. 

\begin{defn}\label{SubgraphsH0H1MForEdgezOfC}
\emph{We define subgraphs $H_0, H_1, M$ of $G$, vertices $x_0, x_1$ and paths $P^{H_0}$ and $P^{H_1}$ as follows. }
\begin{enumerate}[label=\emph{\arabic*)}] 
\itemsep-0.1em
\item\emph{If $z$ is incident to a chord of $C$ then, for each $i\in\{0,1\}$, $x_i$ is the unique vertex of $N(z)\cap V(C\setminus\{q_0, q_1\})$ which is farthest from $p_i$ on the path $p_0u_1\ldots u_tp_1$. Otherwise, then, for each $i\in\{0,1\}$, $x_i=p_i$.}
\item\emph{For each $i\in\{0,1\}$, $H_i$ is the subgraph of $G$ bounded by outer face $C^{H_i}=(x_i(C\setminus\mathring{P})p_i)q_iz$. In particular, if no chord of $C$ is incident to $z$, then $H_i$ is just the 2-path $p_iq_iz$, and otherwise the outer face of $H_i$ is a cycle, as $z$ is not adjacent to either of $p_0, p_1$.} 
\item\emph{$M$ is the subgraph of $G$ induced by the vertex set $\{z, x_0, x_1\}\cup V(G\setminus (H_0\cup H_1))$.}
\item\emph{For each $i=0,1$, we let $P^{H_i}$ be the graph $(P\cap H_i)+x_iz$. That is, if there is a chord of $C$ incident to $z$, then, for each $i=0,1$, $P^{H_i}$ is the 3-path $p_iq_izx_i$.} 
\end{enumerate}
\end{defn}

Before proceeding to Subsection \ref{SubSecZEndPointP0P1CommNbr}, we have the following two useful results.

\begin{Claim}\label{zPreciselyOneMPiQiNotSize3} If $z$ is incident to at least one chord $uz$ of $C$ and $i\in\{0,1\}$ with $|L(p_i)|=1$, then $v_i=x_i$. \end{Claim}

\begin{claimproof} Suppose without loss of generality that $|L(p_0)|=1$ and $|L(p_1)|=3$, and that $z$ is incident to at least one chord of $C$. Suppose toward a contradiction that $v_0\neq x_0$. Since $C$ is incident to at least one chord of $C$, $P^{H_0}$ is a 3-path. Furthermore, no chord of the outer cycle of $H_0$ is incident to $z$, so, applying Theorem \ref{3ChordVersionMainThm1}, we fix an $L$-coloring $\phi$ of $\{p_0, x_0\}$ which is $(P^{H_0}, H_0)$-sufficient.

\vspace*{-8mm}
\begin{addmargin}[2em]{0em}
\begin{subclaim}\label{zIncAtLeastTwoSubX0V0} $z$ is incident to at least two chords of $C$. \end{subclaim}

\begin{claimproof} Suppose not. Thus, $z$ is incident to precisely one chord of $C$. Since $p_0, p_1\not\in N(z)$, there is a $u\in V(C\setminus P)$ such that $x_0=x_1=u$ and $M=uz$. Furthermore, $H_0\cup H_1=G$. Since $|L(p_1)|=3$, it follows from 1) of Theorem \ref{ThmFirstLink3PathForUseInHolepunch} that there is a $\psi\in\textnormal{Crown}(P^{H_1}, H_1)$ with $\psi(u)=\phi(u)$. Thus, $\phi\cup\psi$ is a proper $L$-coloring of its domain, which is $\{p_0\}\cup\textnormal{dom}(\psi)$. As $uz$ is the only chord of $C$ incident to $z$, we have $|L_{\phi\cup\psi}(z)|\geq 4$. Since $p_0\not\in N(q_1)$ and $\psi\in\textnormal{Crown}(P^{H_1}, H_1)$, we have $|L_{\phi\cup\psi}(q_1)|\geq 3$, so there is an extension of $\phi\cup\psi$ to an $L$-coloring $\sigma$ of $\textnormal{dom}(\psi)\cup\{p_0, z\}$ with $|L_{\sigma}(q_1)|\geq 3$. Since $v_0\neq x_0$, we have $uq_0\not\in E(G)$, so $N(q_0)\cap\textnormal{dom}(\sigma)=\{p_0, z\}$. Thus, $|L_{\sigma}(q_0)|\geq 3$ as well. By our choice of $\phi$ and $\psi$, it follows that $\sigma$ is $(P,G)$-sufficient, contradicting our assumption that $G$ is a counterexample. \end{claimproof}\end{addmargin}

By Subclaim \ref{zIncAtLeastTwoSubX0V0}, $x_0\neq x_1$, Now we apply Theorem \ref{CornerColoringMainRes} to the graph $M\cup H_1$, which contains the 3-path $x_0zq_1p_1$. By 1) of Theorem \ref{CornerColoringMainRes}, since $|L(p_1)|=3$, there is $\pi\in\textnormal{End}(z, x_0zq_1p_1, M\cup H_1)$ with $\pi(x_0)=\phi(x_0)$. Since $z$ is not adjacent to $p_0$, $\phi\cup\pi$ is a proper $L$-coloring of $\{p_0, z, x_0, p_1\}$. Possibly $x_1=v_1$, but since $z$ is incident to at least two chords of $C$, $x_0$ is not adjacent to $q_1$, so $|L_{\phi\cup\pi}(q_1)|\geq 3$. Likewise, since $q_0\not\in N(x_0)$, $|L_{\phi\cup\pi}(q_0)|\geq 3$ as well. Since $\phi\cup\pi$ is $(P,G)$-sufficient, we contradict our assumption that $G$ is a counterexample. \end{claimproof}

\begin{Claim}\label{EachAEachEllColoringH1PathFails} If there is at least one chord of $C$ incident to $z$ and $i\in\{0,1\}$ is an index with $v_i\neq x_i$, then $P^{H_i}$ is an induced path in $G$ and, for each $\phi\in\textnormal{End}(p_{1-i}q_{1-i}z, H_{1-i}\cup M)$ and each $b\in L(p_i)$, there is an $L$-coloring of $V(P^{H_i})$ which uses $\phi(z), b$ on the respective vertices $z, p_i$ and does not extend to an $L$-coloring of $H_i$.  \end{Claim}

\begin{claimproof} For the sake of definiteness, we let $i=1$ and $v_1\neq x_1$, so $q_1x_1\not\in E(G)$. Since $zp_1\not\in E(G)$ and $G$ contains no induced cycles of length four, we have $v_1\neq u_t$. If $P^{H_1}$ is not an induced path in $G$, then $zp_1\not\in E(G)$, we have $v_1=u_t$ and $u_tp_1$ is the lone chord of $P^{H_1}$. But then $u_tp_1q_1z$ is an induced 4-cycle, which is false since $G$ contains no induced 4-cycles. Thus, $P^{H_1}$ is an induced path in $G$. Now let $\phi\in\textnormal{Emd}(p_0q_0z, H_0\cup M)$ and$b\in L(p_1)$. By Claim \ref{AnyLColMidEndExtPCL11}, there is an $L$-coloring $\phi'$ of $V(P)$ which does not extend to an $L$-coloring of $G$, where $\phi'$ uses $\phi(p_0), \phi(z), b$ on the respective vertices $p_0, z, p_1$. Since $\phi\in\textnormal{End}(p_0q_0z, H_0\cup M)$, the $L$-coloring $(\phi'(p_0), \phi'(q_0), \phi'(z))$ of $p_0q_0z$ extends to an $L$-coloring $\psi$ of $H_0\cup M$, so $(\psi(x_1), a, \phi'(q_1), b)$ is an $L$-coloring of $x_1zq_1p_1$ which does not extend to an $L$-coloring of $H_1$. Since $P^{H_1}$ is an induced path in $G$, this is a proper $L$-coloring of $V(P^{H_1})$.  \end{claimproof}

\subsection{Ruling out a common neighbor in $G\setminus C$ of $z$ with either endpoint of $P$}\label{SubSecZEndPointP0P1CommNbr}

\begin{Claim}\label{piAndzHavenoNbrOutsideOuterCycleHlpCLMa} For each $i\in\{0,1\}$, $p_i$ and $z$ have no common neighbor in $G\setminus C$. \end{Claim}

\begin{claimproof} Let $i\in\{0,1\}$, say $i=1$ without loss of generality, and suppose toward a contradiction that $p_i, z$ have a common neighbor $w\in V(G\setminus C)$. Since $p_1z\not\in E(G)$ and $G$ contains no induced 4-cycles, we have $N(q_1)=\{z, w, p_1\}$, and $G-q_1$ is bounded by outer cycle $C^{G-q_1}=(C-q_1)+zwp_1$. 

\vspace*{-8mm}
\begin{addmargin}[2em]{0em}
\begin{subclaim}\label{UniqueUCommNbrwq1Z} There is a unique $u\in\{u_1, \ldots, u_t\}$ with $q_0, z, w\in N(u)$. \end{subclaim}

\begin{claimproof} Suppose not. Note that $C^{G-q_1}$ contains the 4-path $R=p_0q_0zwp_1$, and. by assumption, the three internal vertices of $R$ have no common neighbor in $C^{G-q_1}\setminus R$. Since $|V(G-q_1)|<|V(G)|$ and $|L(w)|\geq 5$, it follows from the minimality of $G$ that there is a partial $L$-coloring $\psi$ of $V(C^{G-q_1})\setminus\{q_0, w\}$ with $p_0, p_1\in\textnormal{dom}(\psi)$, where each of $q_0$ and $w$ has an $L_{\psi}$-list of size at least three any extension of $\psi$ to an $L$-coloring of $\textnormal{dom}(\psi)\cup\{q_0, w\}$ extends to an $L$-coloring of $G-q_1$. Now, since $N(q_1)\cap\textnormal{dom}(\psi)=\{z, p_1\}$, $q_1$ also has an $L_{\psi}$-list of size at least three. Since $V(C^{G-q_1}\setminus R)=V(C\setminus P)$ and $G$ is a counterexample, $\psi$ extends to an $L$-coloring $\psi^*$ of $\textnormal{dom}(\psi)\cup\{q_0, q_1\}$, where $\psi^*$ does not extend to an $L$-coloring of $G$.  Since $L_{\psi}(w)|\geq 3$ and $L_{\psi^*}(w)=L_{\psi}(w)\setminus\{\psi^*(q_0)\}$, we have $L_{\psi^*}(w)\neq\varnothing$, so $\psi^*$ extends to an $L$-coloring of $G$, a contradiction. \end{claimproof}\end{addmargin}

Let $u$ be as in Subclaim \ref{UniqueUCommNbrwq1Z}. In the notation of Definition \ref{Defnu1.utKiTi}, we have $u=v_0$. In the notation of \ref{SubgraphsH0H1MForEdgezOfC}, $M$ is an edge, i.e $M=uz$, and furthermore, $K_0=H_0-z$. Let $J=H_1\setminus\{z, q_1\}$. Note that $J$ is bounded by outer cycle $(u\ldots u_t)p_1wu$. Let $A_0=\textnormal{Col}(\textnormal{End}(p_0q_0u, K_0)\mid u)$ and let $A_1=\textnormal{Col}(\textnormal{End}(uq_1p_1, J)\mid u)$. For each $i\in\{0,1\}$, since $|L(u)|=3$ and $1\leq |L(p_i)|\leq 3$, it follows from Theorem \ref{SumTo4For2PathColorEnds} that $|A_i|\geq |L(p_i)|$. Since $|L(p_0)|+|L(p_1)|=4$, we have $A_0\cap A_1\neq\varnothing$, so there is an $L$-coloring $\phi$ of $\{p_0, u, p_1\}$, where $\phi$ restricts to an element of $\textnormal{End}(p_0q_0u, K_0)$ and also restricts to an element of $\textnormal{End}(uwp_1, J)$. Now, $|L_{\phi}(q_0)|\geq 3$ and $|L_{\phi}(z)|\geq 4$, so there is an extension of $\phi$ to an $L$-coloring $\phi'$ of $\{p_0, u, z, p_1\}$ such that $L_{\phi'}(q_1)|\geq 3$. We have $|L_{\phi'}(q_0)|\geq 3$ as well, since $\textnormal{dom}(\phi')\cap N(q_0)=\{p_0, z\}$. Since $G$ is a counterexample, there is an extension of $\phi'$ to an $L$-coloring $\psi$ of $V(P)\cup\{u\}$, where $\psi$ does not extend to an $L$-coloring of $G$. But since $|L_{\psi}(w)|\geq 1$ and $\phi$ both restricts an element of $\textnormal{End}(p_0q_0u, K_0)$ and also restricts to an element of $\textnormal{End}(uwp_1, J)$, we get that $\psi$ extends to an $L$-coloring of $G$, a contradiction.  \end{claimproof}

\subsection{Dealing with 2-chords of $C$ incident to $z$: Part I}\label{CommNbrCChordInterm}

As indicated in the introduction to Section \ref{4ChordAnalogueFor2ChSec}, over Subsections \ref{CommNbrCChordInterm}-\ref{Part22ChordSubsecR},  we show that, for each $i\in\{0,1\}$ and for any 2-chord of $C$ with $z$ as an endpoint and the other endpoint in $C\setminus P$, if the midpoint of this 2-chord is adjacent to $q_i$, then each endpoint of the 2-chord is also adjacent to $q_i$. We break the proof of this into three claims, which are Claims \ref{CommNbRWSharesNoNBrOnCStrengthen}, \ref{ThreeNeighborForViAndZWChord}, and \ref{2ChordIncidentZVI}, where we complete the proof of the result indicated above in Claim \ref{2ChordIncidentZVI}. 

\begin{Claim}\label{CommNbRWSharesNoNBrOnCStrengthen} Let $i\in\{0,1\}$ and suppose that $z$ and $q_i$ have a common neighbor $w$ in $G\setminus C$. Then the three vertices of $\{w, z, q_{1-i}\}$ do not have a common neighbor in $C\setminus P$. \end{Claim}

\begin{claimproof} Suppose without loss of generality that $i=1$ and that $z$ and $q_1$ have a common neighbor $w\in V(G\setminus C)$. Suppose toward a contradiction that that there is a common neighbor of $w, z$ and $q_0$ in $C\setminus P$ and let $m\in\{1, \ldots, t\}$, where $u_m$ is a common neighbor of $w,z$ and $q_0$. Since $wq_1\in E(G)$, $u$ is the necessarily the unique chord of $C$ incident to $z$. In the notation of Definition \ref{SubgraphsH0H1MForEdgezOfC}, we have $M=u_mz$ and $G=H_0\cup H_1$, and, since $G$ is short-separation-free, $K_0=H_0-z$, and $N(z)=\{q_0, q_1, w, u_m\}$. Furthermore, $u_m=x_0=x_1$. Since $v_1\neq u_m$, it follows from Claim \ref{EachAEachEllColoringH1PathFails} that $P^{H_1}=u_mzq_1p_1$ is an induced path in $G$.

We let $u_n$ be the neighbor of $w$ of maximal index among $\{u_m, \ldots, u_t\}$. Possibly $m=n$. We now let $R$ be the subgraph of $G$ bounded by outer face $(u_m\ldots u_n)w$ . If $m=n$, then $R$ is just an edge. Note that $v_1\neq u_m$, or else $G$ contains a triangle separating $w$ from $p_0$, contradicting short-separation-freeness. Furthermore, by Claim \ref{piAndzHavenoNbrOutsideOuterCycleHlpCLMa}, $p_1\not\in N(w)$, so it follows from our choice of $n$ that the outer cycle of $J\cup K_1$ has no chord incident to $w$. We now let $J$ be the subgraph of $H_1-z$ bounded by outer cycle $(u_n(C\setminus \mathring{P})v_1)q_1w$. Possibly $u_n=v_1$, in which case, $J$ is a triangle. In any case, since every chord of the outer face of $C$ is incident to a vertex of $\mathring{P}$, it follows from the definition of $u_n, v_1$ that the outer cycle of $J$ is an induced subgraph of $G$. This is illustrated in Figure \ref{FigureRJHSubgraphsForCommNb}. By Claim \ref{piAndzHavenoNbrOutsideOuterCycleHlpCLMa}, $p_1\not\in N(w)$. Note that the outer cycle of $J\cup K_1$ contains the 3-path $u_nwq_1p_1$, and no chord of the outer face of $J\cup K_1$ is incident to $w$. Let $X$ be the set of $L$-colorings $\psi$ of $\{u_n, p_1\}$ such that $\psi$ extends to at least $|L(w)\setminus\{\psi(w)\}|-1$ different elements of $\textnormal{End}(w, u_nwq_1p_1, J\cup K_1)$. Note that, for any $L$-coloring $\psi$ of $\{u_n, p_1\}$, we have $L_{\psi}(w)=L(w)\setminus\{\psi(w)\}$, as $p_1w\not\in E(G)$. As indicated above, no chord of the outer face of $J\cup K_1$ is incident to $w$. Since $L(u_n)|=3$ and $2\leq |L(p_1)|\leq 3$, we immediately have the following by Theorem \ref{CornerColoringMainRes}.

\vspace*{-8mm}
\begin{addmargin}[2em]{0em}
\begin{subclaim}\label{ColSizeXunLp1} $|\textnormal{Col}(X\mid u_n)|\geq |L(p_1)|$. \end{subclaim}\end{addmargin}

For each $\psi\in X$, we let $\Delta[\psi]$ be the set of $f\in L(w)\setminus\{\psi(w)\}$ such that $\psi$ extends to an element of $\textnormal{End}(w, u_nwq_1p_1, J\cup K_1)$ using $f$ on $w$. By definition of $X$, $\Delta[\psi]$ is a subset of $L(w)\setminus\{\psi(w)\}$ consisting of all but at most one color of $L(w)\setminus\{\psi(w)\}$, and, in particular, $|\Delta[\psi]|\geq 3$.  

\begin{center}\begin{tikzpicture}
\node[shape=circle,draw=black] (p0) at (-4,0) {$p_0$};
\node[shape=circle,draw=black] (u1) at (-3, 0) {$u_1$};
\node[shape=circle,draw=white] (u1+) at (-2, 0) {$\ldots$};
\node[shape=circle,draw=black] (um) at (-1, 0) {$u_m$};
\node[shape=circle,draw=white] (umid) at (0, 0) {$\ldots$};
\node[shape=circle,draw=black] (un) at (1, 0) {$u_n$};
\node[shape=circle,draw=white] (un+) at (2., 0) {$\ldots$};
\node[shape=circle,draw=black] (v1) at (3, 0) {$v_1$};
\node[shape=circle,draw=white] (v1+) at (4, 0) {$\ldots$};
\node[shape=circle,draw=black] (ut) at (5, 0) {$u_t$};
\node[shape=circle,draw=black] (p1) at (6, 0) {$p_1$};
\node[shape=circle,draw=black] (q0) at (-2,2) {$q_0$};
\node[shape=circle,draw=black] (q1) at (6,2) {$q_1$};
\node[shape=circle,draw=black] (z) at (-1,3) {$z$};
\node[shape=circle,draw=black] (w) at (0,2) {$w$};
\node[shape=circle,draw=white] (K0) at (-2.1,0.9) {$K_0$};
\node[shape=circle,draw=white] (J) at (2.8,0.9) {$J$};
\node[shape=circle,draw=white] (R) at (0,0.9) {$R$};
\node[shape=circle,draw=white] (K1) at (5.5, 0.9) {$K_1$};
 \draw[-] (p1) to (ut) to (v1+) to (v1) to (un+) to (un) to (umid) to (um) to (u1+) to (u1) to (p0) to (q0) to (z) to (q1) to (p1);
 \draw[-] (q0) to (um);
 \draw[-] (z) to (um);
 \draw[-] (w) to (z);
 \draw[-] (w) to (un);
 \draw[-] (w) to (q1);
 \draw[-] (w) to (um);
\draw[-] (q1) to (v1);

\end{tikzpicture}\captionof{figure}{}\label{FigureRJHSubgraphsForCommNb}\end{center}

\vspace*{-8mm}
\begin{addmargin}[2em]{0em}
\begin{subclaim} $m\neq n$, i.e $|E(R)|>1$. \end{subclaim}

\begin{claimproof} Suppose toward a contradiction that $R$ is an edge. It follows from Subclaim \ref{ColSizeXunLp1} and Corollary \ref{GlueAugFromKHCor} that there is a $\phi\in\textnormal{End}(p_0q_0u_m, K_0)$ and a $\psi\in X$ with $\phi(u_m)=\psi(u_m)$, so $\phi\cup\psi$ is a proper $L$-coloring of its domain, which is $\{p_0, u_m, p_1\}$. Since $u_mz$ is the only chord of $C$ incident to $z$, we have $|L_{\phi\cup\psi}(z)|\geq 4$. Since $|L_{\phi\cup\psi}(q_0)|\geq 3$, there is an extension of $\psi\cup\phi$ to an $L$-coloring $\sigma$ of $\{p_0, u_m, z, p_1\}$ with $|L_{\sigma}(q_0)|\geq 3$. Since $q_1\not\in N(u_m)$, we have $|L_{\sigma}(q_1)|\geq 3$ as well. Since $G$ is a counterexample, $\sigma$ extends to an $L$-coloring $\sigma'$ of $V(P)\cup\{u_m\}$ which does not extend to $L$-color $G$. Yet we have $L_{\sigma'}(w)=L(w)\setminus\{\sigma'(z), \sigma'(q_1), \psi(w)\}$, which is a subset of $L(w)\setminus\{\psi(w)\}$ of size at least two, so there is a $c\in\Delta[\psi]$ left over in $L_{\sigma'}(w)$. Since $\phi\in\textnormal{End}(p_0q_0u_m, K_0)$, it follows that $\sigma'$ extends to an $L$-coloring of $G$, a contradiction. \end{claimproof}\end{addmargin}

Since $|E(R)|>1$, $u_mwu_n$ is a 2-path. Now, by Claim \ref{zPreciselyOneMPiQiNotSize3}, we have $|L(p_0)|\neq 3$. Thus, $|L(p_1)|\geq 2$. We now show over the course of Subclaims \ref{RTriangleOneColorUmWorks}-\ref{RTriangleRuleOutCase} that $R$ is not a triangle. 

\vspace*{-8mm}
\begin{addmargin}[2em]{0em}
\begin{subclaim}\label{RTriangleOneColorUmWorks} If $R$ is a triangle, then $u_n\not\in N(q_1)$ and furthermore, $\textnormal{Col}(\textnormal{End}(p_0q_0u_m, K_0)\mid u_m)=\textnormal{Col}(\textnormal{End}(u_{m+1}wq_1p_1, J\cup K_1)\mid u_{m+1})$ and $|\textnormal{Col}(\textnormal{End}(p_0q_0u_m, K_0)|=|\textnormal{Col}(\textnormal{End}(P^{H_1}, H_1)\mid u_m)|=1$. \end{subclaim}

\begin{claimproof} Suppose that $R$ is a triangle. Thus, $n=m+1$. Furthermore, $u_{m+1}\not\in N(q_1)$, or else there is a 4-cycle separating $w$ from $p_0$. Since $|L(u_{m+1})|=3$ and no chord of the outer face of $J\cup K_1$ is incident to $w$, it follows from Theorem \ref{3ChordVersionMainThm1} that $\textnormal{End}(u_{m+1}wq_1p_1, J\cup K_1)\neq\varnothing$. Likewise, by Theorem \ref{SumTo4For2PathColorEnds}, $\textnormal{End}(p_0q_0u_m, K_0)\neq\varnothing$. Suppose toward a contradiction that Subclaim \ref{RTriangleOneColorUmWorks} does not hold. Thus, there exist a $\phi\in\textnormal{End}(p_0q_0u_m, K_0)$ and a $\psi\in\textnormal{End}(u_{m+1}wq_1p_1, J\cup K_1)$ with $\phi(u_m)\neq\psi(u_{m+1})$, so $\phi\cup\psi$ is a proper $L$-coloring of its domain, which is $\{p_0, u_{m}, u_{m+1}, p_1\}$. Since $u_mz$ is the only chord of $C$ incident to $z$, we have $|L_{\phi\cup\psi}(z)|\geq 4$, so $\phi\cup\psi$ extends to an $L$-coloring $\sigma$ of $\{p_0, u_{m}, z, u_{m+1}, p_1\}$ with $|L_{\sigma}(q_0)|\geq 3$. We also have $|L_{\sigma}(q_1)|\geq 3$, since $u_{m+1}\not\in N(q_1)$. Since $G$ is a counterexample, $\sigma$ extends to an $L$-coloring $\sigma'$ of $V(P)\cup\{u_m, u_{m+1}\}$ which does not extend to an $L$-coloring of $G$. We have $|N(w)\cap\textnormal{dom}(\sigma')|=4$, so there is a color left over for $w$. It follows from our choice of $\phi$ and $\psi$ that $\sigma'$ extends to an $L$-coloring of $G$, a contradiction. \end{claimproof}\end{addmargin}

\vspace*{-8mm}
\begin{addmargin}[2em]{0em}
\begin{subclaim}\label{ColorAInUm+1WorksForallP1} If $R$ is a triangle, then both of the following hold.
\begin{enumerate}[label=\arabic*)] 
\itemsep-0.1em
\item $|L(p_0)|=1$ and $|L(p_1)|=3$; AND
\item There is an $a\in L(u_m)\cap L(u_{m+1})$ such that any $L$-coloring of $u_{m+1}wq_1p_1$ using $a$ on $u_{m+1}$ extends to $L$-color all of $J\cup K_1$.
\end{enumerate}
\end{subclaim}

\begin{claimproof} Suppose that $R$ is a triangle. By Subclaim \ref{RTriangleOneColorUmWorks}, $|\textnormal{Col}(\textnormal{End}(p_0q_0u_m, K_0)\mid u_m)|=1$, so it follows from Theorem \ref{SumTo4For2PathColorEnds} that $|L(p_0)|=1$, and thus $|L(p_1)|=3$. Let $a$ be the lone color of $\textnormal{Col}(\textnormal{End}(p_0q_0u_m, K_0)\mid u_m)$.  By Theorem \ref{3ChordVersionMainThm1}, since $|L(u_{m+1})|=3$, it follows that, for each $b\in L(p_1)$, there is a $\phi\in\textnormal{End}(u_{m+1}wq_1p_1, J\cup K_1)$ with $\phi(p_1)=b$. By Subclaim \ref{RTriangleOneColorUmWorks}, we have $\phi(p_1)=a$ for each such $\phi$. That is, any $L$-coloring of $u_{m+1}wq_1p_1$ using $a$ on $u_{m+1}$ extends to $L$-color all of $J\cup K_1$. \end{claimproof}\end{addmargin}

Now we let $A=\textnormal{Col}(\textnormal{End}(p_0q_0z, H_0)\mid z)$. Since $1\leq |L(p_0)|\leq 3$ and $|L(z)|=5$, it follows from Theorem \ref{SumTo4For2PathColorEnds} that $|A|\geq |L(p_0)|+2$. In particular, $|A|\geq 3$.

\vspace*{-8mm}
\begin{addmargin}[2em]{0em}
\begin{subclaim}\label{RTriangleRuleOutCase} $R$ is not a triangle. \end{subclaim}

\begin{claimproof} Suppose that $R$ is a triangle. Thus, $n=m+1$, and, by Subclaim \ref{RTriangleOneColorUmWorks}, $u_{m+1}\not\in N(q_1)$. Let $a\in L(u_m)\cap L(u_{m+1})$ as in 2) of  Subclaim \ref{ColorAInUm+1WorksForallP1}. By 1) of Subclaim \ref{ColorAInUm+1WorksForallP1}, $|L(p_0)|=|L(p_1)|=2$. By Subclaim \ref{ColSizeXunLp1}, there is a $\psi\in X$ with $\psi(u_{m+1})\neq a$. Since $|A|\geq |L(p_0)|+2$, we have $|A|\geq 4$, and since $|\Delta[\psi]|\geq 3$, there is a $c\in A$ with $|\Delta[\psi]\setminus\{c\}|\geq 3$. By Claim \ref{AnyLColMidEndExtPCL11}, there is an $L$-coloring $\tau$ of $V(P)$ which uses $c, \psi(p_1)$ on the respective vertices $z, p_1$ and does not extend to $L$-color $G$. Note that we have not colored $u_m$. Since $c\in A$, the $L$-coloring $(\tau(p_0), \tau(q_0), \tau(z))$ of $p_0q_0z$ extends to an $L$-coloring $\tau_0$ of $H_0$. Consider the following cases.

\textbf{Case 1:} $\tau_0(u_m)=a$

In this case, the color $\psi(u_{m+1})$ is left over for $u_{m+1}$. By our choice of color for $z$, $|\Delta[\psi]\setminus\{\tau_0(u_m), \tau(z), \tau(q_1)\}|\geq 1$, and since $p_1w\not\in E(G)$, there is a color of $\Delta[\psi]$ left over for $w$, and thus $\tau\cup\tau_0$ extends to an $L$-coloring of $G$, a contradiction. 

\textbf{Case 2:} $\tau_0(u_m)\neq a$

In this case, the color $a$ is left over for $u_n$. Possibly $\tau(q_1)=a$, but $q_1u_{m+1}\not\in E(G)$. Since, $p_1\not\in N(w)$, and $|L(w)\setminus\{a, \tau_0(u_m), \tau(q_1), \tau_0(q_1)\}|\geq 1$, it follows from our choice of $a$ that the union $\tau\cup\tau_0$ extends to an $L$-coloring of $G$, a contradiction. \end{claimproof}\end{addmargin}

We now let $L(p_1)=\{b_0, \ldots, b_{|L(p_1)|-1}\}$. Note that, since there is precisely one chord of $G$ incident to $z$, we have $H_0\cup M=H_0$, and since $v_1\neq u_m$, it follows from Claim \ref{EachAEachEllColoringH1PathFails} that, for each $a\in A$ and $\ell\in\{0, \ldots, |L(p_1)|-1\}$, there is an $L$-coloring $\pi_{a}^{\ell}$ of $V(P^{H_1})$, where $\pi_a^{\ell}$ uses $a, b_{\ell}$ on the respective vertices $z, p_1$ and $\pi_a^{\ell}$ does not extend to $L$-color $H_1$. We now show that $J\cup K_1$ is a broken wheel with principal path $wq_1p_1$, and, in particular $u_n=u_t=v_1$. 

\vspace*{-8mm}
\begin{addmargin}[2em]{0em}
\begin{subclaim}\label{JK1BWheelPPathJTriangle} $J\cup K_1$ is a broken wheel with principal path $wq_1p_1$. In particular,  $u_n=v_1$. \end{subclaim}

\begin{claimproof} Suppose $J\cup K_1$ is not a broken wheel with principal path $wq_1p_1$. Since $|L(p_1)|>1$ and every chord of the outer face of $J\cup K_1$ is incident to $q_1$, it follows from 1) of Theorem \ref{EitherBWheelOrAtMostOneColThm} that there is an $\ell\in\{0, \ldots, |L(p_1)|-1\}$ such that any $L$-coloring of $wq_1p_1$ using $b_{\ell}$ on $p_1$ extends to an $L$-coloring of $J\cup K_1$. Say for the sake of definiteness that $\ell=0$. For each $a\in A$ and $s\in L_{\pi^0_a}(w)$, the $L$-coloring $(s, \pi^0_a(q_1), a)$ of $wq_1p_1$ extends to an $L$-coloring of $J\cup K_1$, and since $\pi^0_a$ does not extend to an $L$-coloring of $H_1$, it follows that $\Lambda_R(\pi^0_a(u_m), s, \bullet)$ is a proper subset of $L(u_n)\setminus\{s\}$. By Subclaim \ref{RTriangleRuleOutCase}, $R$ is not a triangle. Since $|L_{\pi^0_a}(w)|\geq 2$, it follows from 1) of Theorem \ref{EitherBWheelOrAtMostOneColThm} that $R$ is a broken wheel with principal path $u_mwu_n$, and $L(u_{m+1})=\{\pi^0_a(u_m)\}\cup L_{\pi^0_a}(w)$ as a disjoint union, and, in particular, $|L_{\pi^0_a}(w)|=2$. Since $|L_{\pi^0_a}(w)|=2$, the set $\{a, \pi^0_a(u_m), \pi^0_a(q_1)\}$ is a subset of $L(w)$. Thus, $L(u_{m+1})\subseteq L(w)$ and $A\subseteq L(w)$. Since $|A|\geq |L(p_0)|+2|\geq 3$, there is a $c\in A\cap L(u_{m+1})$, so we consider $\pi^0_c$. We have $L(u_{m+1})=\{\pi^0_c(u_m)\}\cup L_{\pi^0_c}(w)$ as a disjoint union. Since $\pi^0_c(z)=c$, we have $\pi^0_c(u_m)\neq c$ and $c\not\in L_{\pi^0_c}(w)$, contradicting the fact that $c\in L(u_m)$. \end{claimproof}\end{addmargin}

\vspace*{-8mm}
\begin{addmargin}[2em]{0em}
\begin{subclaim}\label{AtLeastOneColorFromAWith2inLambda} There exists an $a\in A$ such that, for each $\ell\in\{0, \ldots, |L(p_1)|-1\}$, there exists an $s\in L_{\pi^{\ell}_a}(w)$ with $|\Lambda_R(\pi^{\ell}_a(u_m), s, \bullet)|\geq 2$. \end{subclaim}

\begin{claimproof} Since $|A|\geq |L(p_0)|+2$, we have $|A|\geq 3$. Since $|L(w)\setminus L(u_{m+1})|\geq 2$, there is an $a\in A$ such that $|L(w)\setminus (L(u_{m+1})\cup\{a\})|\geq 2$. Let $\ell\in\{0, \ldots, |L(p_1)|-1\}$ and consider the $L$-coloring $\pi:=\pi^{\ell}_a$ of $P^{H_1}$. It suffices to show that there is an $s\in L_{\pi}(w)$ with $|\Lambda_R(\pi(u), s, \bullet)|\geq 2$. Suppose not. Since $L_{\pi}(w)\neq\varnothing$, it follows from 2i) of Corollary \ref{CorMainEitherBWheelAtM1ColCor} that $R$ is a broken wheel with principal path $uwu_n$. Since $|\Lambda_R(\pi(u_m), s, \bullet)|=1$ for each $s\in L_{\pi}(w)$, we have $L_{\pi}(w)\subseteq L(u_{m+1})$. As $p_1w\not\in E(G)$, we have $|L_{\pi}(w)|\geq 2$, so $L(u_{m+1})=\{\pi(u_m)\}\cup L_{\pi}(w)$ as a disjoint union, and $(L(w)\setminus L(u_{m+1}))\cap L_{\pi}(w)=\varnothing$. Since $L(w)\setminus L(u_{m+1})$ is a subset of $L(w)$ of size at least two and $p_1w\not\in E(G)$, it follows from our choice of $a$ that $L(w)\setminus L(u_{m+1})=\{\pi(u), \pi(q_0)\}$, so $\pi(u)\not\in L(u_{m+1})$, contradicting the fact that $L(u_{m+1})=\{\pi(u)\}\cup L_{\pi}(w)$. \end{claimproof}\end{addmargin}

\vspace*{-8mm}
\begin{addmargin}[2em]{0em}
\begin{subclaim}\label{AtLeastOnecolorBReakK1PrinPath} $K_1$ is a triangle with $L(p_1)\subseteq L(u_t)$. \end{subclaim}

\begin{claimproof} Suppose not. Thus, by 2) of Claim \ref{KiSizeAtMost4UnivColorBlock}, there is a $b\in L(p_1)$ which is almost $K_1$-universal, say $b=b_0$. Let $a\in A$ be as in Subclaim \ref{AtLeastOneColorFromAWith2inLambda} and consider the $L$-coloring $\pi:=\pi^0_a$ of $P^{H_1}$. By our choice of $\pi$, there is an $s\in L_{\pi}(w)$ with $\Lambda_R(\pi(u_m), s, \bullet)|\geq 2$. Since $b_0$ is almost $K_1$-universal we have $\Lambda_{K_1}(\bullet, \pi(q_1), b)|\geq 2$ as well. By Subclaim \ref{JK1BWheelPPathJTriangle}, $u_n=v_1$, and since $\pi$ does not extend to an $L$-coloring of $G$, the sets $\Lambda_R(\pi(u_m), s, \bullet)$ and $\Lambda_{K_1}(\bullet, \pi(q_1), b)$ are disjoint subsets of $L(v_1)$ of size at least two, which is false, as $|L(v_1)|=3$. \end{claimproof}\end{addmargin}

Since $v_1\neq u$, we have $m\in\{1, \ldots, t-1\}$. We note that $uu_t\not\in E(G)$, or else there is a 4-cycle separating $p_0$ from $w$. 

\vspace*{-8mm}
\begin{addmargin}[2em]{0em}
\begin{subclaim}\label{ColorUtZP13ListFails} For any $L$-coloring $\psi$ of $\{u_t, p_1, z\}$ with $\psi(z)\in A$ and $|L_{\psi}(q_1)|\geq 3$, $\psi$ extends to an $L$-coloring of $\{u_m, z, u_t, q_1, p_1\}$ which does not extend to an $L$-coloring of $H_1$. \end{subclaim}

\begin{claimproof} Since $\psi(z)\in A$, $\psi$ extends to an $L$-coloring $\psi'$ of of $\{p_0, z, u_t, p_1\}$ whose restriction to $\{p_0, z\}$ is an element of $\textnormal{End}(p_0q_0z, H_0)$. Note that each of $q_0$ and $q_1$ has an $L_{\psi'}$-list of size at least three. Since $G$ is a counterexample, $\psi'$ extends to an $L$-coloring $\psi''$ of $V(P)\cup\{u_t\}$, where $\psi''$ does not extend to an $L$-coloring of $G$. Since $\psi'$ restricts to an element of $\textnormal{End}(p_0q_0z, H_0)$, the $L$-coloring $(\psi''(p_0), \psi''(q_0), \psi''(z))$ of $p_0q_0z$ extends to an $L$-coloring $\psi_0$ of $H_0$. Since $P^{H_1}$ is an induced path in $G$ and $uv_1\not\in E(G)$, the union $\psi''\cup\psi_0$ does not extend to $L$-color $H_1$, so we are done. \end{claimproof}\end{addmargin}

\vspace*{-8mm}
\begin{addmargin}[2em]{0em}
\begin{subclaim}\label{ColoringQ1FailsListUtSubCL} For any $L$-coloring $\phi$ of $V(P^{H_1})$ which does not extend to an $L$-coloring of $P^{H_1}$, and any $r\in L_{\phi}(u_t)$, we have $|L_{\phi}(w)\setminus\{r\}|=1$. \end{subclaim}

\begin{claimproof} Suppose there is an $L$-coloring $\phi$ of $V(P^{H_1})$ and an $r\in L_{\phi}(u_t)$ such that $\phi$ does not extend to an $L$-coloring of $H_1$ and $|L_{\phi}(w)\setminus\{r\}|\neq 1$. Since $p_1w\not\in E(G)$, we have $|L_{\phi}(w)\setminus\{r\}|\geq 2$. Let $S=L_{\phi}(w)\setminus\{r\}$. Since $\phi$ does not extend to an $L$-coloring of $H_1$, we have $S\cap\Lambda_R(\phi(u), \bullet, r)=\varnothing$, and as $|S|\geq 2$ by assumption, it follows from 3) of Corollary \ref{CorMainEitherBWheelAtM1ColCor} that $|S|=2$ and $R$ is a broken wheel with principal path $u_mwu_t$, where $R-w$ has odd length. By Subclaim \ref{RTriangleRuleOutCase}, $R$ is a not a triangle, so $L(u_{m+1})=S\cup\{\phi(u)\}$ as a disjoint union and $L(u_{t-1})=S\cup\{r\}$ as a disjoint union. Recall that $|A|\geq |L(p_0)|+2$, so $|A|\geq 3$. We have $|L(w)\setminus L(u_{m+1})|\geq 2$, so there is an $a\in A$ with $|L(w)\setminus (L(u_{m+1})\cup\{a\})|\geq 2$. Thus, let $c, c'$ be two distinct colors of $L(w)\setminus L(u_{m+1})$. Since $|L(p_1)|\geq 2$, we consider the $L$-coloring $\pi_a^{\ell}$ of $V(P^{H_1})$ for each $\ell=0,1$. Consider the following cases. 

\textbf{Case 1:} For some $\ell\in\{0,1\}$, $L_{\pi^{\ell}_a}(u_t)\neq\{r\}$

Since $uu_t\not\in E(G)$, we have $|L_{\pi_a^{\ell}}(u_t)|\geq 1$ for each $\ell=0,1$, so, in this case, we suppose without loss of generality that there is an $f\in L_{\pi^0_a}(u_t)$ with $f\neq r$. Let $\pi=\pi_a^0$. Since $\pi$ does not extend to an $L$-coloring of $H_1$, we have $L_{\pi}(w)\cap\Lambda_{R}(\pi(u_m), \bullet, f)=\varnothing$. Since $|L_{\pi}(w)|\geq 2$, there is an $s\in L_{\pi}(w)\setminus\{f\}$. Since the $L$-coloring $(\pi(u_m), s, f)$ of $uwu_t$ does not extend to an $L$-coloring of $J$, it follows that $f\in L(u_{t-1})$ and $\pi(u_m)\in L(u_{m+1})$. Since $f\neq r$ and $L(u_{t-1})=S\cup\{r\}$, we have $f\in S$, and it follows from 3) of Corollary \ref{CorMainEitherBWheelAtM1ColCor} that $\pi(u_m)=f$. That is, $L_{\pi}(w)\setminus\{f\}=L_{\pi}(w)$, and $L_{\pi}(w)\cap\Lambda_R(f, \bullet, f)=\varnothing$. But since $f\not\in L_{\pi}(w)$ and at least one of $c, c'$ is distinct from $\pi(q_1)$, it follows from our choice of $a$ that at least one of $c, c'$ lies in $L_{\pi}(w)$. Since this color also lies outside of $L(u_{m+1})$, we have $L_{\pi}(w)\cap\Lambda_{R}(\pi(u_m), \bullet, f)\neq\varnothing$, a contradiction.

\textbf{Case 2:} For each $\ell\in\{0,1\}$, $L_{\pi^{\ell}_a}(u_t)=\{r\}$

In this case, for each $\ell=0,1$, since $\pi^{\ell}_a$ does not extend to an $L$-coloring of $H_1$, we have $L_{\pi^a_{\ell}}(w)\cap\Lambda_R(\pi^a_{\ell}(u_m), \bullet, r)=\varnothing$. By 3) of Corollary \ref{CorMainEitherBWheelAtM1ColCor}, we have $\pi^0_a(u_m)=\pi^1_a(u_m)=\phi(u_m)$.Thus, $L_{\pi^{\ell}_a}(w)\cap\Lambda_R(\phi(u_m), \bullet, r)=\varnothing$ for each $\ell=0,1$. Now, at least one of $c, c'$ is distinct from $r$, say $c\neq r$ for the sake of definiteness. Since $\phi(u)\in L(u_{m+1})$, we have $c\neq\phi(u)$ as well. Now, since $K_1$ is a triangle and $L_{\pi^{\ell}_a}(u_t)=\{r\}$ for each $\ell=0,1$, we have $\pi^0_a(q_1)\neq\pi^1_a(q_1)$, so suppose for the sake of definiteness that $\pi^0_a(q_1)\neq c$. But then, since $\phi(u)\neq c$, it follows from our choice of $a$ that $c\in L_{\pi^0_a}(w)$, and since $c\not\in L(u_{m+1})$ and $c\not\in\{\phi(u_m), r\}$, we have $c\in L_{\pi^0_a}(w)\cap\Lambda_R(\phi(u_m), \bullet, r)$, a contradiction. This proves Subclaim \ref{ColoringQ1FailsListUtSubCL}. \end{claimproof}\end{addmargin}

Combining Subclaims \ref{ColorUtZP13ListFails} and \ref{ColoringQ1FailsListUtSubCL}, we have the following. 

\vspace*{-8mm}
\begin{addmargin}[2em]{0em}
\begin{subclaim}\label{ForBreakEither2Listor3List} For any $L$-coloring $\sigma$ of $\{p_1, u_t, z\}$ with $\sigma(z)\in A$, either $|L_{\sigma}(w)|=3$ or $|L_{\sigma}(q_1)|=2$.\end{subclaim}

\begin{claimproof}  Suppose toward a contradiction that there is an $L$-coloring $\sigma$ of $\{p_1, u_t, z\}$ with $\sigma(z)\in A$, where $\sigma$ does not satisfy Subclaim \ref{ForBreakEither2Listor3List}. Since $p_1w\not\in E(G)$, it follows that $|L_{\sigma}(w)|\geq 4$ and $|L_{\sigma}(q_1)|\geq 3$. By Subclaim \ref{ColorUtZP13ListFails}, since $\sigma(z)\in A$, $\sigma$ extends to an $L$-coloring $\sigma'$ of $V(P^{H_1})\cup\{u_t\}$ which does not extend to an $L$-coloring of $H_1$. Since $|L_{\sigma}(w)|\geq 4$, we have $|L_{\sigma'}(w)|\geq 2$, contradicting Subclaim \ref{ColoringQ1FailsListUtSubCL}. \end{claimproof}\end{addmargin}

\vspace*{-8mm}
\begin{addmargin}[2em]{0em}
\begin{subclaim}\label{ADisjointLUTSubCLMW} $A$ is disjoint to $L(u_t)$, and furthermore, $L(u_t)\subseteq L(w)$. \end{subclaim}

\begin{claimproof} Suppose there is an $a\in A\cap L(u_t)$. Since $|L(p_1)|\geq 2$, we have $|L(p_1)\setminus\{a\}|\geq 1$. Since $zu_t\not\in E(G)$, there is an $L$-coloring of $\{p_1, u_t, z\}$ violating Subclaim \ref{ForBreakEither2Listor3List}. Thus, $A$ is disjoint to $L(u_t)$. Now suppose there is an $a\in L(u_t)\setminus L(w)$. Recall that $|A|\geq |L(p_0)|+2$. Since $|L(p_0)|+|L(p_1)|=4$, we have $|A|+|L(p_1)\setminus\{a\}|\geq 5$. As $|L(q_1)\setminus\{a\}|\geq 4$ and $p_1z\not\in E(G)$, there is an $L$-coloring of $\{p_1, u_t, z\}$ violating Subclaim \ref{ForBreakEither2Listor3List}. \end{claimproof}\end{addmargin}

Let $a\in A$ and consider the $L$-coloring $\pi=\pi^0_a$ of $V(P^{H_1})$. Since $|L_{\pi}(u_t)|\geq 1$, it follows from Subclaim \ref{ColoringQ1FailsListUtSubCL} that $a\in L(w)$. Thus, $A\subseteq L(w)$. By Subclaim \ref{ADisjointLUTSubCLMW}, $L(u_t)\subseteq L(w)$ and $A\cap L(u_t)=\varnothing$. Since $|A|\geq |L(p_0)|+2\geq 3$ and $|L(u_t)|=3$, we contradict the fact that $|L(w)|=5$. This completes the proof of Claim \ref{CommNbRWSharesNoNBrOnCStrengthen}. \end{claimproof}

\subsection{Dealing with 2-chords of $C$ with $z$ as an endpoint: part II}\label{SeqResultsfor2Chords4Path}

With Claim \ref{CommNbRWSharesNoNBrOnCStrengthen}. in hand, we prove the lone result which makes up Subsection \ref{SeqResultsfor2Chords4Path}. 

\begin{Claim}\label{ThreeNeighborForViAndZWChord} Let $i\in\{0,1\}$, where $v_i, z, q_i$ have a common neighbor $w\in V(G\setminus C)$. Then $N(w)\cap V(C)\subseteq V(P)\cup\{v_i\}$. \end{Claim}

\begin{claimproof} Suppose without loss of generality that $i=1$ and that $z, q_1$ and $v_1$ have a common neighbor $w\in V(G\setminus C)$. By Claim \ref{piAndzHavenoNbrOutsideOuterCycleHlpCLMa}, $v_1\neq p_1$, so $K_1$ is not just an edge, and $v_1$ lies on the path $C\setminus P$. We let $v_1=u_n$ for some $n\in\{1, \ldots, t\}$. Suppose toward a contradiction that $w$ has a neighbor on $C\setminus P$ other than $u_n$, and let $u_m$ be the neighbor of $w$ of maximal index among $\{u_1, \ldots, u_{n-1}\}$. Let $J$ be the subgraph of $G$ bounded by outer cycle $C^J=(p_0u_1\ldots u_m)wzq_0$. Note that $C^J$ contains the 4-path $P^J=p_0q_0zwu_m$. Let $R$ be the subgraph of $G$ with outer cycle $(u_m\ldots u_n)w$. As $m<n$, $R$ is not just an edge, and the outer face of $R$ contains the 2-path $u_mwu_n$. This is illustrated in Figure \ref{FigureRJHKSubgraphsCommForViQ1Z}. 

\begin{center}\begin{tikzpicture}
\node[shape=circle,draw=black] (p0) at (-4,0) {$p_0$};
\node[shape=circle,draw=black] (u1) at (-3, 0) {$u_1$};
\node[shape=circle,draw=white] (u1+) at (-2, 0) {$\ldots$};
\node[shape=circle,draw=black] (um) at (-1, 0) {$u_m$};
\node[shape=circle,draw=white] (umid) at (0, 0) {$\ldots$};
\node[shape=circle,draw=black] (un) at (1, 0) {$u_n$};
\node[shape=circle,draw=white] (v1+) at (2, 0) {$\ldots$};
\node[shape=circle,draw=black] (ut) at (3, 0) {$u_t$};
\node[shape=circle,draw=black] (p1) at (4, 0) {$p_1$};
\node[shape=circle,draw=black] (q0) at (-2,2) {$q_0$};
\node[shape=circle,draw=black] (q1) at (2,2) {$q_1$};
\node[shape=circle,draw=black] (z) at (-1,3) {$z$};
\node[shape=circle,draw=black] (w) at (0,2) {$w$};
\node[shape=circle,draw=white] (R) at (0,0.9) {$R$};
\node[shape=circle,draw=white] (J) at (-2,0.9) {$J$};
\node[shape=circle,draw=white] (K1) at (2.2, 0.9) {$K_1$};
 \draw[-] (p1) to (ut) to (v1+) to (un) to (umid) to (um) to (u1+) to (u1) to (p0) to (q0) to (z) to (q1) to (p1);

 \draw[-] (w) to (z);
 \draw[-] (w) to (un);
 \draw[-] (w) to (q1);
 \draw[-] (w) to (um);
\draw[-] (q1) to (un);

\end{tikzpicture}\captionof{figure}{}\label{FigureRJHKSubgraphsCommForViQ1Z}\end{center}

\vspace*{-8mm}
\begin{addmargin}[2em]{0em}
\begin{subclaim}\label{CrownForEachSideSubCL1} $\textnormal{Crown}_L(P^J, J)$ contains $|L(p_0)|$ distinct partial $L$-colorings of $V(C^J)$, each of which uses a different color on $u_m$. In particular, $\textnormal{Crown}_L(P^J, J)\neq\varnothing$. \end{subclaim}

\begin{claimproof}  Firstly, we have $|V(J)|<|V(G)|$, and $V(C^J\setminus P^J)\subseteq V(C\setminus P)$. It follows from Claim \ref{CommNbRWSharesNoNBrOnCStrengthen} that the three internal vertices of $P^J$ do not have a common neighbor in $C^J\setminus P^J$. Since $|L(u_m)|=3$, it immediately follows from the minimality of $G$ that $\textnormal{Crown}_L(P^J, J)$ contains $|L(p_0)|$ distinct partial $L$-colorings of $V(C^J)$, each of which uses a different color on $u_m$. \end{claimproof}\end{addmargin}

\vspace*{-8mm}
\begin{addmargin}[2em]{0em}
\begin{subclaim}\label{EachPiEachBExtensionSubCl5} For each $\pi\in\textnormal{Crown}_L(P^J, J)$ and $b\in L(p_1)$, there is an extension of $\pi$ to an $L$-coloring $\pi^b$ of $\textnormal{dom}(\pi)\cup\{p_1, q_0, q_1\}$ which does not extend to an $L$-coloring of $G$. \end{subclaim}

\begin{claimproof} Let $\pi'$ be an extension of $\pi$ to an $L$-coloring of $\textnormal{dom}(\pi)\cup\{p_1\}$ obtained by coloring $p1$ with $b$.  Possibly $b\in\{\phi(u), \phi(z)\}$, but $G$ contains neither of the edges $up_1, zp_1$. Note that $q_0$ has an $L_{\pi'}$-list of size at least three, since $\pi\in\textnormal{Crown}_L(P^J, J)$ and $p_1q_0\not\in E(G)$, and likewise, $q_1$ also has an $L_{\pi'}$-list of size at least three, since $u_m\not\in N(q_1)$ and $\textnormal{dom}(\pi')\cap N(q_1)=\{z, p_1\}$. Since $G$ is a counterexample, it follows tha  there is an extension of $\pi'$ to an $L$-coloring of $\pi^b$ of $\textnormal{dom}(\pi)\cup\{p_1, q_0, q_1\}$ which does not extend to an $L$-coloring of $G$. \end{claimproof}\end{addmargin}

For each $\pi\in\textnormal{Crown}_L(P^J, J)$ and $b\in L(p_1)$, we let $\pi^b$ be as in Subclaim \ref{EachPiEachBExtensionSubCl5}. 

\vspace*{-8mm}
\begin{addmargin}[2em]{0em}
\begin{subclaim}\label{ObstructionColoringBetweenJStarAndK1} For each $\pi\in\textnormal{Crown}_L(P^J, J)$ and $b\in L(p_1)$, we have $|L_{\pi^b}(w)|\geq 2$, and furthermore, for each $s\in L_{\pi^b}(w)$, we have $\Lambda_R(\pi(u_m), s, \bullet)\cap\Lambda_{K_1}(\bullet, \pi^b(q_1), b)=\varnothing$. \end{subclaim}

\begin{claimproof} Since $\pi\in\textnormal{Crown}_L(P^J, J)$ in, we have $|L_{\pi}(w)|\geq 3$, and thus $|L_{\pi^b}(w)|\geq 2$, as $p_1w\not\in E(G)$. Now suppose toward a contradiction that there exist an $s\in L_{\pi^b}(w)$ which violates the disjointness condition in Subclaim \ref{ObstructionColoringBetweenJStarAndK1}. Since $\pi\in\textnormal{Crown}_L(P^J, J)$, the $L$-coloring $(\pi(p_0), \pi^b(q_0), \pi(z), s, \pi(u_m))$ of the 4-path $p_0q_0zwu_m$ extends to an $L$-coloring of $J$, and since Subclaim \ref{ObstructionColoringBetweenJStarAndK1} is violated, it follows that $\pi^b$ extends to an $L$-coloring of $G$, which is false. \end{claimproof}\end{addmargin}

\vspace*{-8mm}
\begin{addmargin}[2em]{0em}
\begin{subclaim}\label{EitherLp01OrRTrianForSubMBReak55} Either $|L(p_0)|=1$ or $R$ is a triangle. \end{subclaim}

\begin{claimproof} Suppose that $R$ is not a triangle and suppose toward a contradiction that $|L(p_0)|>1$. Since $R$ is not a triangle, it follows from our choice of index $m$ that $R$ is not a broken wheel with principal path $u_mwu_n$, and since every chord of $C$ is incident to a vertex of $\mathring{P}$, it follows that $u_mu_n\not\in E(G)$. Furthermore, since $|L(p_0)|\geq 2$, it follows from Subclaim \ref{CrownForEachSideSubCL1} that there are two elements of $\textnormal{Crown}_L(P^J, J)$ using different colors on $u_m$. By 1) of Theorem \ref{EitherBWheelOrAtMostOneColThm}, there is a $\pi\in\textnormal{Crown}_L(P^J, J)$ such that any $L$-coloring of $\{u_m, w, u_n\}$ using $\pi(u_m)$ on $u_m$ extends to $L$-color $R$. Now we simply choose an $r\in\Lambda_{K_1}(\bullet, \pi^b(q_1), b) $. Possibly $r=\pi(u_m)$, but $u_mu_n\not\in E(G)$, and since $|L_{\pi^b}(w)\setminus\{r\}|\geq 1$, $\pi^b$ extends to an $L$-coloring of $G$, which is false.  \end{claimproof}\end{addmargin}

We now rule out the possibility that $|L(p_1)|=1$.

\vspace*{-8mm}
\begin{addmargin}[2em]{0em}
\begin{subclaim}\label{Lp1SizeAtLeastTwoSubCL55} $|L(p_1)|\geq 2$. \end{subclaim}

\begin{claimproof} Suppose toward a contradiction that $|L(p_1)|=1$ and let $b$ be the lone color of $L(p_1)$. Thus, $|L(p_0)|=3$. Applying Subclaim \ref{CrownForEachSideSubCL1}, let $\pi_0, \pi_1, \pi_2$ be three distinct elements of $\textnormal{Crown}_L(P^J, J)$, each of which uses a different color on $u_m$. Note that, for each $\ell=0,1,2$, $|L_{\pi^b_{\ell}}(w)|\geq 2$. By Subclaim \ref{EitherLp01OrRTrianForSubMBReak55}, $R$ is a triangle, so $n=m+1$. It follows that, for each $\ell=0,1,2$, we have $\Lambda_{K_1}(\bullet, \pi^b_{\ell}(q_1), b)=\{\pi_{\ell}(u_m)\}$, or else, if there is an $\ell$ for which this does not hold, then we choose an $r\in\Lambda_{K_1}(\bullet, \pi^b_{ell}(q_1), b)$ with $r\neq\pi_{\ell}(u_m)$, and there is a color left in $L_{\pi_{\ell}^b}(w)\setminus\{r\}$, so $\pi^{\ell}$ extends to an $L$-coloring of $G$ which is false. Since $\Lambda_{K_1}(\bullet, \pi^b_{\ell}(q_1), b)=\{\pi_{\ell}(u_m)\}$ for each $\ell=0,1,2$ and each $\pi_{\ell}$ uses a different color on $u_m$, we contradict 2ii) of Corollary \ref{CorMainEitherBWheelAtM1ColCor}.  \end{claimproof}\end{addmargin}

\vspace*{-8mm}
\begin{addmargin}[2em]{0em}
\begin{subclaim}\label{SubClMJStarTriangleNexttoK1} $R$ is a triangle. \end{subclaim}

\begin{claimproof} Suppose not. Thus, it follows from the maximality of $m$ that $R$ is not a broken wheel with principal path $u_mwu_n$, and the the outer face of $R$ is an induced cycle of $G$. Since $R$ is not a triangle, $u_mu_n\not\in E(G)$. By Subclaim \ref{EitherLp01OrRTrianForSubMBReak55}, $|L(p_0)|=1$, so $|L(p_1)|=3$. Applying Subclaim \ref{CrownForEachSideSubCL1}, we fix a $\pi\in\textnormal{Crown}_L(P^J, J)$. By 2i) of Corollary \ref{CorMainEitherBWheelAtM1ColCor}, for each $b\in L(p_1)$ and each $s\in L_{\pi^b}(w)$, we have $|\Lambda_R(\pi(u_m), s, \bullet)|\geq 2$. Applying Subclaim \ref{ObstructionColoringBetweenJStarAndK1}, it follows that, for each $b\in L(p_1)$, we have $|\Lambda_{K_1}(\bullet, \pi^b(q_1), b)|=1$.  Thus, no color of $L(p_1)$ is almost $K_1$-universal, so it follows from 2) of Claim \ref{KiSizeAtMost4UnivColorBlock} that $K_1$ is a triangle. Since $|L_{\pi}(w)|\geq 3$ and $R$ is not a broken wheel with principal path $u_mwu_n$, it follows from 1) of Theorem \ref{EitherBWheelOrAtMostOneColThm} that there exists a set $U\subseteq L_{\pi}(w)$ with $|U|=2$ such that any $L$-coloring of $u_mwu_n$ using a color of $U$ on $w$ extends to an $L$-coloring of $R$. In particular, since $u_mu_n\not\in E(G)$, we have $\Lambda_R(\pi(u_m), c, \bullet)=L(u_n)\setminus\{c\}$ for each $c\in U$. Since $|L(p_1)|=3$, there is a  $b'\in L(p_1)\setminus U$. Since at least one color of $U$ is distinct from $\pi^{b'}(q_1)$, we contradict Subclaim \ref{ObstructionColoringBetweenJStarAndK1}. \end{claimproof}\end{addmargin}

Applying Subclaim \ref{ObstructionColoringBetweenJStarAndK1} again, we immediately have the following.

\vspace*{-8mm}
\begin{addmargin}[2em]{0em}
\begin{subclaim}\label{EachMapstoPi+Ell}  For each $\pi\in\textnormal{Crown}_L(P^J, J)$ and $b\in L(p_1)$, $\Lambda_{K_1}(\bullet, \pi^b(q_1), b)=\{\pi(u_m)\}$. \end{subclaim}\end{addmargin}

By Subclaim \ref{EachMapstoPi+Ell}, there is no almost $K_1$-universal color of $L(p_1)$. By Subclaim \ref{Lp1SizeAtLeastTwoSubCL55}, $|L(p_1)|\geq 2$, so it follows from 2) of Claim \ref{KiSizeAtMost4UnivColorBlock} that $K_1$ is a triangle, i,e $n=t$ and $u_t=v_1$. Since $R$ is a triangle, and since $|L(u_t)|=3$ and $|L(p_0)|+|L(p_1)|=4$, it follows from Subclaim \ref{CrownForEachSideSubCL1} that there is a $\pi\in\textnormal{Crown}_L(P^J, J)$ and an $b\in L(p_1)$ with $L(u_t)\setminus\{\pi(u_{t-1}), b\}|\geq 2$, contradicting Subclaim \ref{EachMapstoPi+Ell}. This completes the proof of Claim \ref{ThreeNeighborForViAndZWChord}. \end{claimproof}

\subsection{Dealing with 2-chords of $C$ with $z$ as an endpoint: part III}\label{Part22ChordSubsecR}

With the facts of Subsections \ref{CommNbrCChordInterm}-\ref{SeqResultsfor2Chords4Path} in hand, we prove the following result, which makes up Subsection \ref{Part22ChordSubsecR}. 

\begin{Claim}\label{2ChordIncidentZVI} For each $i\in\{0,1\}$, if $w$ is a vertex of $ V(G\setminus C)$ which is a common neighbor of $z$ and $q_i$, then $w$ has no neighbors in $C\setminus\mathring{P}$, except possibly $v_i$. \end{Claim}

\begin{claimproof} Suppose toward a contradiction that there is an $\in\{0,1\}$ violating Claim \ref{2ChordIncidentZVI}, say $i=1$ without loss of generality. Thus, there is a $w\in V(G\setminus C)$, where $w$ is adjacent to each of $q_1$ and $z$, and $w$ has a neighbor in $V(C\setminus P)\setminus\{v_1\}$. Let $m_0, m_1\in\{1, \ldots, t\}$, where $u_{m_0}$ is the neigbor of $w$ of minimal index on $C\setminus P$ and $u_{m_1}$ is the neighbor of maximal index on $C\setminus P$. Let $J_0$ be the subgraph of $G$ bounded by outer cycle $p_0u_1\ldots u_{m_0}wzq_0$ and let $J_1$ be the subgraph of $G$ bounded by outer cycle $p_1u_t\ldots u_{m_1}wzq_1$. Let $R$ be the subgraph of $G$ bounded by outer walk $(u_{m_0}\ldots u_{m_1})w$ (possibly $m_0=m_1$ and $R$ is an edge). This is illustrated in Figure \ref{FigureFor2ChordCase}. where $wq_1$ is a chord of the outer cycle of $J_1$.

\begin{center}\begin{tikzpicture}
\node[shape=circle,draw=black] (p0) at (-5,0) {$p_0$};
\node[shape=circle,draw=black] (u1) at (-4, 0) {$u_1$};
\node[shape=circle,draw=white] (u1+) at (-3, 0) {$\ldots$};
\node[shape=circle,draw=black] (um) at (-1.5, 0) {$u_{m_0}$};
\node[shape=circle,draw=white] (umid) at (0, 0) {$\ldots$};
\node[shape=circle,draw=black] (un) at (1.5, 0) {$u_{m_1}$};
\node[shape=circle,draw=white] (un+) at (3, 0) {$\ldots$};
\node[shape=circle,draw=black] (ut) at (4, 0) {$u_t$};
\node[shape=circle,draw=black] (p1) at (5, 0) {$p_1$};
\node[shape=circle,draw=black] (q0) at (-5,2) {$q_0$};
\node[shape=circle,draw=black] (q1) at (5,2) {$q_1$};
\node[shape=circle,draw=black] (z) at (0,3) {$z$};
\node[shape=circle,draw=black] (w) at (0,2) {$w$};
\node[shape=circle,draw=white] (J0) at (-2.8,1.1) {$J_0$};
\node[shape=circle,draw=white] (J1) at (2.8,1.1) {$J_1$};
\node[shape=circle,draw=white] (R) at (0,1.1) {$R$};

 \draw[-] (p1) to (ut) to (un+) to (un) to (umid) to (um) to (u1+) to (u1) to (p0) to (q0) to (z) to (q1) to (p1);
 \draw[-] (w) to (z);
 \draw[-] (w) to (un);
 \draw[-] (w) to (q1);
 \draw[-] (w) to (um);

\end{tikzpicture}\captionof{figure}{}\label{FigureFor2ChordCase}\end{center}

\vspace*{-8mm}
\begin{addmargin}[2em]{0em}
\begin{subclaim}\label{NwNeighborsAmongUmToUn} $N(w)\cap V(C)\subseteq\{z\}\cup\{u_{m_0}, \ldots, u_{m_1}\}$, and $u_{m_1}\not\in N(q_1)$. 
\end{subclaim}

\begin{claimproof} By Claim \ref{piAndzHavenoNbrOutsideOuterCycleHlpCLMa}, $w$ is adjacent to neither $p_0$ nor $p_1$, so it is immediate by our choice of $m_0, m_1$ that $N(w)\cap V(C)\subseteq\{z\}\cup\{u_{m_0}, \ldots, u_{m_1}\}$. By assumption, $w$ has a neighbor in $V(C\setminus P)\setminus\{v_1\}$, so, in particular $u_{m_0}\neq v_1$. Suppose toward a contradiction that $u_{m_1}$ is adjacent to $q_1$, that is, $u_{m_1}=v_1$. In that case, since $u_{m_0}\neq v_1$, we contradict Claim \ref{ThreeNeighborForViAndZWChord}. \end{claimproof}\end{addmargin}

Now we apply Theorem \ref{CornerColoringMainRes}. Since $G$ is short-separation-free, $J_1-z$ is bounded by outer cycle $wu_{m_1}\ldots u_tp_1q_1$. Let $X$ be the set of $L$-colorings $\sigma$ of $\{u_{m_1}, p_1\}$ such that $\sigma$ extends to at least $|L(w)\setminus\{\sigma(u_n)\}|-1$ different elements of $\textnormal{End}(w, u_{m_1}wq_1p_1, J_1-z)$. For any $L$-coloring $\phi$ of $\{p_1, u_{m_1}\}$, we have $L_{\phi}(w)=L(w)\setminus\{\phi(u_{m_1})\}$, since $p_1\not\in N(w)$. Note that no chord of the outer face of $J_1-z$ is incident to $w$. Since $L(u_{m_1})|=3$ and $2\leq |L(p_1)|\leq 3$, we immediately have the following by Theorem \ref{CornerColoringMainRes}.

\vspace*{-8mm}
\begin{addmargin}[2em]{0em}
\begin{subclaim}\label{X1AtLeastLP1Color} $|\textnormal{Col}(X\mid u_{m_1})|\geq |L(p_1)|$. \end{subclaim}\end{addmargin}

For each $\phi\in X$, we let $\Delta[\phi]$ be the set of $c\in L(u_n)\setminus\{\phi(u_{m_1})\}$ such that there is an extension of $\phi$ to an element of $\textnormal{End}(w, u_{m_1}wq_1p_1, J_1-z,)$ which uses $c$ on $u_{m_1}$. We now note that the outer cycle of $J_0\cup R$ contains the 4-path $P'=p_0q_0zwu_{m_1}$. 

\vspace*{-8mm}
\begin{addmargin}[2em]{0em}
\begin{subclaim}\label{CrownForEachSideSubCL1} $\textnormal{Crown}_{L}(P', J_0\cup R)$ contains $|L(p_0)|$ distinct partial $L$-colorings of $V(C^{J_0\cup R})$, each of which uses a different color on $u_{m_1}$. \end{subclaim}

\begin{claimproof}  Firstly, we have $|V(J_0\cup R)|<|V(G)|$, since $p_1\not\in V(J_0\cup R)$, By Claim \ref{CommNbRWSharesNoNBrOnCStrengthen}, the three internal vertices of $P'$ do not have a common neighbor in $C^{J_0\cup R}\setminus P'$. Since $|L(u_n)|=3$, it immediately follows from the minimality of $G$ that $\textnormal{Crown}_{L}(P', J_0\cup R)$ contains $|L(p_0)|$ distinct partial $L$-colorings of $V(C^{J_0\cup R})$, each of which uses a different color on $u_{m_1}$. \end{claimproof}\end{addmargin}

\vspace*{-8mm}
\begin{addmargin}[2em]{0em}
\begin{subclaim}\label{wNotAdjQ0SubCLFor2Chord} $w\in N(q_0)$. \end{subclaim}

\begin{claimproof} Suppose $w\not\in N(q_0)$. By Subclaim \ref{X1AtLeastLP1Color}, $|\textnormal{Col}(X\mid u_{m_1})|\geq |L(p_1)|$. Since $|L(p_0)+|L(p_1)|=4=|L(u_{m_1})|+1$, it follows from Subclaim \ref{CrownForEachSideSubCL1} that there is a $\psi\in\textnormal{Crown}_{L}(P', J_0\cup R)$, where $\psi(u_{m_1})\in\textnormal{Col}(X\mid u_{m_1})$. Let $c=\psi(u_{m_1})$. Since $c\in\textnormal{Col}(X\mid u_{m_1})$, there is an $L$-coloring $\phi$ of $\{u_{m_1}, p_1\}$, where $\phi(u_{m_1})=c$ and $\phi$ extends to $|L(w)\setminus\{c\}|-1$ elements of $\textnormal{End}(w, u_{m_1}wq_1p_1, J_1-z)$. Since $\psi$ is an $L$-coloring of $\{u_{m_1}, p_1\}$ and $p_1\not\in N(z)$, the union $\psi\cup\phi$ is a proper $L$-coloring of its domain, which is $\textnormal{dom}(\psi)\cup\{p_1\}$. Note that $\textnormal{dom}(\psi)\subseteq V(C^J)\setminus\{q_1, w\}\subseteq V(C)\subseteq\{q_0, q_1\}$, and $p_0, u_{m_1}, z\in\textnormal{dom})(\psi)$. Since $\psi\in\textnormal{Crown}_{L}(P', J_0\cup R)$ and $q_0\not\in N(p_1)$, we have $|L_{\psi\cup\phi}(q_0)|\geq 3$. Since $u_n\not\in N(q_1)$, we have $N(q_1)\cap\textnormal{dom}(\psi\cup\phi)=\{z, p_1\}$, so $|L_{\psi\cup\phi}(q_1)|\geq 3$ as well. Since $G$ is a counterexample, $\psi\cup\phi$ extends to an $L$-coloring $\sigma$ of $\textnormal{dom}(\psi\cup\phi)\cup\{q_0, q_1\}$, where $\sigma$ does not extend to $L$-color $G$. By our choice of $\phi$, $|\Delta[\phi]|\geq |L(w)\setminus\{c\}|-1$. That is, since $\Delta[\phi])$ is a subset of $L(w)\setminus\{c\}$, this set contains all but at most one color of $L(w)\setminus\{c\}$. Since $\psi\in\textnormal{Crown}_{L}(P', J_0\cup R)$, we have $|L_{\psi}(w)|\geq 3$. By assumption, $q_0\not\in N(w)$, and since $p_1\not\in N(w)$ as well, we have $L_{\sigma}(w)=L_{\psi}(w)\setminus\{\sigma(q_0)\}$. Thus, $L_{\sigma}(w)$ is a subset of $L(w)\setminus\{c\}$ of size at least two, so there is a $d\in L_{\psi}(w)\cap \Delta[\phi]$. Since $\psi\in\textnormal{Crown}_{L}(P', J_0\cup R)$ and there is an extension of $\phi$ to an element of $\textnormal{End}(w, u_{m_1}wq_1p_1, J_1-z)$ using $d$ on $w$, it follows that $\sigma$ extends to an $L$-coloring of $G$, a contradiction.\end{claimproof}\end{addmargin}

Since each of $q_0$ and $q_1$ is adjacent to $w$, we have $N(z)=\{q_0, w, q_1\}$.  We now introduce the following definition. For each $i\in\{0,1\}$, we say that $q_i$ is \emph{covered} if $N(q_i)\cap\{u_{m_0}, u_{m_1}\}=\{u_{m_i}\}$. Otherwise we say that $q_i$ is \emph{uncovered}. Note that if $q_i$ is uncovered, then $q_i$ has no neighbors in $N(w)\cap V(C)$. By Subclaim \ref{NwNeighborsAmongUmToUn}, $q_1\not\in N(u_{m_1})$, so $q_1$ is uncovered. In the subclaims below, we prove some facts about an arbitrary $i\in\{0,1\}$ such that $q_i$ is uncovered (in particular, we eventually show that $q_0$ is covered). We introduce the terms covered and uncovered to make explicit that, since $w$ is adjacent to each of $q_0, q_1$, we don't lose any generality when we prove facts of this form by choosing an $i\in\{0,1\}$ for which $q_i$ is uncovered, even though we started by choosing the index $i=1$ to violate Claim \ref{2ChordIncidentZVI}. 

\vspace*{-8mm}
\begin{addmargin}[2em]{0em}
\begin{subclaim}\label{EitherREdgeOrNoNbrUmUnQ0Q1} Either $R$ is an edge or both of $q_0, q_1$ are uncovered. \end{subclaim}

\begin{claimproof} Suppose that $R$ is not an edge. Since $q_1$ is uncovered, we just need to show that $q_0$ is uncovered. Since $R$ is not an edge, we have $N(w)\cap V(C\setminus P)\not\subseteq V(P)\cup\{v_0\}$. If $q_0u_{m_0}\in E(G)$, then $w$ is adjacent to all three of $v_0, z, q_0$, contradicting Claim \ref{ThreeNeighborForViAndZWChord}. Thus, $q_0u_{m_0}\not\in E(G)$, so $q_0$ is uncovered, as required.  \end{claimproof}\end{addmargin}

For each $i=0,1$, it follows from Subclaim \ref{NwNeighborsAmongUmToUn} that $w$ is incident to no chords of the outer face of $J_i-z$. Since $|L(u_{m_0})|=|L(u_{m_1})|=3$, we have the following by Theorem \ref{3ChordVersionMainThm1}.

\vspace*{-8mm}
\begin{addmargin}[2em]{0em}
\begin{subclaim}\label{EachUmAug0EachUnAug1} For each $i=0,1$ and each $a\in L(p_i)$, there is a $\pi\in\textnormal{End}(p_iq_iwu_{m_i}, J_i-z)$ using $a$ on $p_i$.\end{subclaim}\end{addmargin}

For each $i=0,1$, we now let $J_i^{\dagger}$ be the subgraph of $G$ bounded by outer cycle $wq_i(v_i(C\setminus\mathring{P})u_{m_{1-i}})$. Note that $J_0^{\dagger}\cap J_1^{\dagger}=R$.

\vspace*{-8mm}
\begin{addmargin}[2em]{0em}
\begin{subclaim}\label{EKEdgeThenColorPhiSubFor556} Let $i\in\{0,1\}$, where $q_i$ is uncovered and $|E(K_i)|>1$. Then for any $L$-coloring $\phi$ of $\{u_{m_{1-i}}, q_i, p_i\}$, where $\phi(p_i)$ is almost $K_i$-universal, and any $S\subseteq L_{\phi}(w)$ of size at least two, $\phi$ extends to an $L$-coloring of $(R\cup J_i)-z$ which uses a color of $S$ on $w$. \end{subclaim}

\begin{claimproof} Suppose without loss of generality that $i=1$, and suppose  that $|E(K_1)|>1$. Note that the outer cycle of $J_1^{\dagger}$ contains the 2-path $u_{m_0}wq_1$, and since every chord of $C$ has a neighbor in $\mathring{P}$, it follows from the definition of $v_1$ that every chord of the outer face of $J_1^{\dagger}$ is incident to $w$. Furthermore, since $w\not\in N(v_1)$ by Subclaim \ref{NwNeighborsAmongUmToUn}, $J_1^{\dagger}$ is not a broken wheel with principal path $u_mwq_1$. By assumption, $\phi(p_1)$ is almost $K_1$-universal, so $|\Lambda_{K_1}(\bullet, \phi(q_1), \phi(p_1))|\geq 2$. Let $L^{\dagger}$ be a list-assignment for $V(J^{\dagger})$ where $L^{\dagger}(v_1)=\{\phi(q_1)\}\cup\Lambda_{K_1}(\bullet, \phi(q_1), \phi(p_1))$, and otherwise $L^{\dagger}=L$. Since $\phi(q_1)\not\in\Lambda_{K_1}(\bullet, \phi(q_1), \phi(p_1))$, we have $|L^{\dagger}(v_1)|\geq 3$. By 1) of Theorem \ref{EitherBWheelOrAtMostOneColThm}, at most one $L^{\dagger}$-coloring of $u_mwq_1$ does not extend to an $L^{\dagger}$-coloring of $J_1^{\dagger}$, so there is an $s\in S$ such that the $L$-coloring $(\phi(u_m), s, \phi(q_1)$ of $u_{m_0}wq_1$ extends to an $L^{\dagger}$-coloring of $J_1^{\dagger}$. It follows that $\phi$ extends to an $L$-coloring of $(R\cup J_1)-z$ using $s$ on $w$. \end{claimproof}\end{addmargin}

We also have the following observation

\vspace*{-8mm}
\begin{addmargin}[2em]{0em}
\begin{subclaim}\label{QiUncoveredNotAdj} For each $i=0,1$, if $q_i$ is uncovered, then neither vertex of $\{u_{m_0}, u_{m_1}\}$ is adjacent to $p_i$ \end{subclaim}

\begin{claimproof} It suffices to show that $u_{m_i}\not\in N(p_i)$, as $u_{m_{1-i}}\in N(p_i)$ only if $R$ is an edge. Suppose  that $u_{m_i}\in N(p_i)$. Thus, $G$ contains the 4-cycle $u_{m_i}p_iq_iw$. By Subclaim \ref{NwNeighborsAmongUmToUn}, $w\not\in N(p_i)$. Since $G$ contains no induced 4-cycles, we have $u_{m_i}\in N(q_i)$, contradicting our assumption that $q_i$ is uncovered. \end{claimproof}\end{addmargin}

Applying Subclaim \ref{EKEdgeThenColorPhiSubFor556}, we have the following. 

\vspace*{-8mm}
\begin{addmargin}[2em]{0em} 
\begin{subclaim}\label{IfBothUncoveredK0K1Edge} For each $i\in\{0,1\}$, if $|L(p_i)|\geq 2$ and both of $q_0, q_1$ are uncovered, then $K_i$ is either an edge or a triangle, and, in particular, if $K_i$ is atriangle, then $p_i$ is predictable. \end{subclaim}

\begin{claimproof} Suppose toward a contradiction that there is an $i\in\{0,1\}$ violating Subclaim \ref{IfBothUncoveredK0K1Edge}, say $i=1$ without loss of generality (no generality is lost in this assumption, since each of $q_0, q_1$ is uncovered). Thus, $|L(p_1)|\geq 2$, and $K_1$ is not an edge, and, since Subclaim \ref{IfBothUncoveredK0K1Edge} is violated, it follows from 2) of Claim \ref{KiSizeAtMost4UnivColorBlock} that there is an $a\in L(p_1)$ which is almost $K_1$-universal. Applying Subclaim \ref{EachUmAug0EachUnAug1}, we fix a $\pi\in\textnormal{End}(p_0q_0wu_{m_0}, J_0-z)$. Since $q_0$ is uncovered, we have $q_0u_{m_0}\not\in E(G)$, so $|L_{\pi}(q_1)|\geq 4$. Since $|L(z)|\geq 5$ and $q_0p_1\not\in E(G)$, it follows that there is an extension of $\pi$ to an $L$-coloring $\pi'$ of $\{p_0, u_{m_0}, z\}$ such that $|L_{\pi'}(q_0)|\geq 3$. By Subclaim \ref{QiUncoveredNotAdj}, $p_1$ is not adjacent to $u_{m_0}$, so the color $a$ is left over for $p_1$ and $\pi'$ extends to an $L$-coloring $\pi''$ of $\{p_0, u_{m_0}, z, p_1\}$ with $\pi''(p_1)=a$. Possibly $R$ is an edge, but since $q_1$ is uncovered, $u_{m_0}q_1\not\in E(G)$, so $|L_{\pi''}(q_1)|\geq 3$ as well. Since $G$ is a counterexample, $\pi''$ extends to an $L$-coloring $\pi^*$ of $V(P)\cup\{u_{m_0}\}$, where $\pi^*$ does not extend to an $L$-coloring of $G$. We have $|N(w)\cap\textnormal{dom}(\pi^*)|=4$, so $|L_{\pi^*}(w)|\geq 2$ by our choice of color for $z$. Since $a$ is an almost $K_1$-universal color of $L(p_1)$, it follows from Subclaim \ref{EKEdgeThenColorPhiSubFor556}, there is an $s\in L_{\pi^*}(w)$ such that the $L$-coloring $(\pi(u_{m_0}), s, \pi^*(q_1), a)$ of $u_{m_0}wq_1p_1$ extends to $L$-color all of $(R\cup J_1)-z$, and since $\pi\in\textnormal{End}(p_0q_0wu_{m_0}, J_0-z)$, it follows that $\pi^*$ extends to an $L$-coloring of $G$, which is false.  \end{claimproof}\end{addmargin}

With Subclaim \ref{IfBothUncoveredK0K1Edge} in hand, we can now prove the following slightly stronger statement. 

\vspace*{-8mm}
\begin{addmargin}[2em]{0em}
\begin{subclaim}\label{KTriangleForRNotEdgeSub6} For each $i\in\{0,1\}$, if $|L(p_i)|\geq 2$ and $q_i$ is uncovered, then $K_i$ is either an edge or a triangle, and, in particular, if $K_i$ is a triangle, then $p_i$ is predictable. \end{subclaim}

\begin{claimproof} Suppose toward a contradiction that there is an $i\in\{0,1\}$ violating Subclaim \ref{KTriangleForRNotEdgeSub6}. Thus, $|L(p_i)|\geq 2$ and $q_i$ is uncovered. Furthermore, $K_i$ is not an edge. As Subclaim \ref{KTriangleForRNotEdgeSub6} is violated, it follows from Subclaim \ref{IfBothUncoveredK0K1Edge} that $q_{1-i}$ is covered, so we have $i=1$, and $q_0u_{m_0}\in E(G)$. As $K_1$ is not just an edge, it follows from 2) of Claim \ref{KiSizeAtMost4UnivColorBlock} that there is an $a\in L(p_1)$ which is almost $K_1$-universal. Consider the following cases.

\textbf{Case 1:} $L(z)\neq L(w)$

In this case, since $|L(w)|=5$, we fix a $c\in L(z)\setminus L(w)$. The trick is to leave $u_{m_0}$ uncolored. By Claim \ref{AnyLColMidEndExtPCL11}, there is an $L$-coloring $\sigma$ of $V(P)$ which does not extend to an $L$-coloring of $G$, where $\sigma$ uses $c,a$ on the respective vertices $w, p_1$. Since $q_0u_{m_0}\in E(G)$, $K_0$ is not an edge, and $v_0=u_{m_0}$. In particular, by Theorem \ref{thomassen5ChooseThm}, the $L$-coloring $(\sigma(p_0), \sigma(q_0))$ of $p_0q_0$ extends to an $L$-coloring $\sigma_0$ of $K_0$. Furthermore, since $G$ is short-separation-free, $N(w)\cap V(J_0)=\{q_0, z, u_{m_0}\}$. Let $S=L_{\sigma_0\cup\sigma}(w)$. By our choice of color for $z$, we have $|S|\geq 2$, and Since $a$ is an almost $K_1$-universal color of $L(p_1)$, it follows from Subclaim \ref{EKEdgeThenColorPhiSubFor556} that there is an $s\in S$ such that the $L$-coloring $(\sigma_0(u_{m_0}), s, \sigma(q_1), a)$ of $u_{m_0}wq_1p_1$ extends to $L$-color all of $(R\cup J_1)-z$, so $\sigma_0\cup\sigma$ extends to an $L$-coloring of $G$, which is false. This proves Subclaim \ref{KTriangleForRNotEdgeSub6}.

\textbf{Case 2:} $L(z)=L(w)$ 

In this case, applying Subclaim \ref{EachUmAug0EachUnAug1}, we fix a $\pi\in\textnormal{End}(p_0q_0wu_m, J_0-z)$. We first note that there is an extension of $\pi$ to an $L$-coloring $\pi'$ of $\{p_0, u_{m_0}, z\}$, where $|L_{\pi'}(q_0)|\geq 3$ and $|L_{\pi'}(w)|\geq 4$. This is immediate if $\pi(u_{m_0})\in L(z)$, since $zu_m\not\in E(G)$ and we can color $z$ with the same color as $u_m$, so now suppose that $\pi(u_m)\not\in L(z)$. Thus, $\pi(u_m)\not\in L(w)$ by the assumption of Case 2, and, since $|L_{\pi}(q_0)|\geq 3$ we just choose an arbitrary $s\in L(z)$ such that $|L_{\pi}(q_0)\setminus\{s, \pi(u_{m_0})\}|\geq 4$, and use that color on $z$ instead. In any case, we let $\pi'$ be as above. Since $q_1$ is uncovered, it follows from Subclaim \ref{QiUncoveredNotAdj} that $p_1$ has no neighbors in $\textnormal{dom}(\pi')$, so $\pi'$ extends to an $L$-coloring $\pi''$ of $\{p_0, u_{m_0},z, p_1\}$ with $\pi''(p_1)=a$. We have $|L_{\pi''}(q_1)|\geq 3$, as $q_1$ is uncovered. Since $G$ is a counterexample, $\pi''$ extends to an $L$-coloring $\pi^*$ of $V(P)\cup\{u_{m_0}\}$, where $\pi^*$ does not extend to an $L$-coloring of $G$. Since $G$ is a counterexample, $\pi'$ extends to an $L$-coloring $\pi^*$ of $V(P)\cup\{u_{m_0}\}$, where $\pi^*$ does not extend to an $L$-coloring of $G$. By our construction of $\pi^*$, we have $|L_{\pi^*}(w)|\geq 2$. Since $a$ is an almost $K_1$-universal color of $L(p_1)$, it follows from Subclaim \ref{EKEdgeThenColorPhiSubFor556}, there is an $s\in L_{\pi^*}(w)$ such that the $L$-coloring $(\pi(u_{m_0}), s, \pi^*(q_1), a)$ of $u_{m_0}wq_1p_1$ extends to $L$-color all of $(R\cup J_1)-z$, and since $\pi\in\textnormal{End}(p_0q_0wu_{m_0}, J_0-z)$, it follows that $\pi^*$ extends to an $L$-coloring of $G$, which is false. \end{claimproof}\end{addmargin}

We now have the following.

\vspace*{-8mm}
\begin{addmargin}[2em]{0em}
\begin{subclaim}\label{SigmaPiSigma'3ListsLeft} Let $i\in\{0,1\}$, where $|L(p_i)|\geq 2$ and $q_i$ is uncovered. Let $\sigma\in\textnormal{End}(u_{m_{1-i}}wq_iv_i, J_i^{\dagger})$ and $\pi\in\textnormal{Crown}(p_{1-i}q_{1-i}wu_{m_{1-i}}, J_{1-i}-z)$, where $\pi(u_{m_{1-i}})=\sigma(u_{m_{1-i}})$. Let $\sigma'$ be an extension of $\pi\cup\sigma$ to an $L$-coloring of $\textnormal{dom}(\pi\cup\sigma)\cup\{p_i\}$. Then each of $q_0, q_1$ has an $L_{\sigma'}$-list of size precisely three. \end{subclaim}

\begin{claimproof} Note that possibly $v_i=p_i$ and $\sigma'=\pi\cup\sigma$. In any case, since $w$ is not adjacent to $p_1$, and since $J_{1-i}$ has no chords of its outer cycle which are incident to $w$, we have $N(w)\cap\textnormal{dom}(\sigma')=\{u_{m_{1-i}}\}$. Since $q_i$ is uncovered, we have $N(q_i)\cap\textnormal{dom}(\sigma')=\{v_i, p_i\}$. Thus, $q_i$ has an $L_{\sigma'}$-list of size at least three. Possibly $q_{1-i}$ is covered, but, in any case, we have $N(q_{1-i})\cap\textnormal{dom}(\sigma')=N(q_{1-i})\cap\textnormal{dom}(\pi)$, so $|L_{\sigma'}(q_{1-i})|\geq 3$ by definition of $\textnormal{Crown}(p_{1-i}q_{1-i}u_{m_{1-i}}, J_{1-i}-z)$, and each of $q_0, q_1$ has an $L_{\sigma'}$-list of size at least three. Suppose toward a contradiction that one of them has an $L_{\sigma'}$-list of size greater than three. Since $z$ has no neighbors in $C-\{q_0, q_1\}$, it follows that $\sigma'$ extends to an $L$-coloring $\sigma^*$ of $\textnormal{dom}(\sigma')\cup\{z\}$, where each of $q_0, q_1$ has an $L_{\sigma^*}$-list of size at least three. Since $G$ is a counterexample, $\sigma^*$ extends to an $L$-coloring $\tau$ of $\textnormal{dom}(\sigma^*)\cup\{q_0, q_1\}$ which does not extend to $L$-color $G$. Since $N(w)\cap\textnormal{dom}(\sigma')=\{u_{m_{1-i}}\}$, we have $N(w)\cap\textnormal{dom}(\tau)=\{q_0, z, q_1, u_{m_{1-i}}\}$, so there is a color left over for $w$ in $L_{\tau}(w)$. By Subclaim \ref{KTriangleForRNotEdgeSub6}, $K_i$ is either an edge or a triangle, so we get $V(J_i\cup R)=V(J_i^{\dagger})\cup\{p_i\}$. Thus, it follows from our choice of $\pi$ and $\sigma$ that $\tau$ extends to an $L$-coloring of $G$, a contradiction.  \end{claimproof}\end{addmargin}

Applying Subclaim \ref{KTriangleForRNotEdgeSub6}, we have the following.

\vspace*{-8mm}
\begin{addmargin}[2em]{0em}
\begin{subclaim} $q_0$ is covered. \end{subclaim} 

\begin{claimproof} Suppose not. Thus, neither $q_0$ nor $q_1$ is covered, so there is an $i\in\{0,1\}$ such that $q_i$ is uncovered and $|L(p_i)|\geq 2$, and, in particular, since each of $q_0, q_1$ is uncovered, no generality is lost in supposing that $i=1$. Consider the following cases.

\textbf{Case 1:} $K_1$ is not an edge

In this case, by Subclaim  \ref{KTriangleForRNotEdgeSub6}, $K_1$ is a a triangle, since $|L(p_1)|\geq 2$. Applying Subclaim \ref{EachUmAug0EachUnAug1}, we fix a $\pi\in\textnormal{End}(p_0q_0wu_{m_0}, J_0-z)$. Since $K_1$ is a triangle, we have $v_1=u_t$. By definition of $v_1$,  no chord of the outer cycle of $J_1^{\dagger}$ is incident to $q_1$, and since $|L(u_t)|=3$, it follows from Theorem \ref{3ChordVersionMainThm1} that there is a $\sigma\in\textnormal{End}(u_{m_0}wq_1v_1, J_1^{\dagger})$ with $\sigma(u_{m_0})=\pi(u_{m_0})$. Since $|L(p_1)|\geq 2$, $\pi\cup\sigma$ extends to an $L$-coloring $\sigma'$ of $\textnormal{dom}(\pi\cup\sigma)\cup$. Since $q_0$ is uncovered, we have $|L_{\sigma'}(q_1)|\geq 4$. Finally, since $\textnormal{End}(p_0q_0wu_{m_0}, J_0-z)\subseteq \textnormal{Crown}(p_0q_0wu_{m_0}, J_0-z)$, we contradict Subclaim \ref{SigmaPiSigma'3ListsLeft}. 

\textbf{Case 2:} $K_1$ is an edge

In this case, since $K_1$ is an edge, we have $v_1=p_1$ and $J_1^{\dagger}=(R\cup J_1)-z$. In particular, the outer cycle of $J_1^{\dagger}$ contains the 3-path $u_{m_0}wq_1p_1$. Consider the following subcases.

\textbf{Subcase 2.1} $|L(p_0)|=1$

In this case, we have $|L(p_1)|=3$, and, applying Subclaim \ref{EachUmAug0EachUnAug1}, we again fix a $\pi\in\textnormal{End}(p_0q_0wu_{m_0}, J_0-z)$. Since $|L(p_1)|=3$ and no chord of the outer cycle of $J_1^{\dagger}$ is incident to $q_1$, it follows from Theorem \ref{3ChordVersionMainThm1} that there is a $\sigma\in\textnormal{End}(u_{m_0}wq_1p_1, J_1^{\dagger})$ with $\pi(u_{m_0})=\sigma(u_{m_0})$, and since $q_0$ is uncovered, $|L_{\pi\cup\sigma}(q_0)|\geq 4$. Since $\pi\cup\sigma$ already colors $p_1$ and $\textnormal{End}(p_0q_0wu_{m_0}, J_0-z)\subseteq \textnormal{Crown}(p_0q_0wu_{m_0}, J_0-z)$, we contradict Subclaim \ref{SigmaPiSigma'3ListsLeft}. 

\textbf{Subcase 2.2} $|L(p_1)|=2$

In this case, we have $|L(p_0)|=2$. Since $|L(q_0)|=2$ and $q_0$ is uncovered by assumption, we suppose that $K_0$ is an edge as well, or else we are back to Case 1 with the roles of $q_0, q_1$ interchanged. By 1) of Theorem \ref{ThmFirstLink3PathForUseInHolepunch}, there exists a set of two elements of $\textnormal{Crown}(p_0q_0wu_{m_0}, J_0-z)$, each using a different color on $u_{m_0}$. Likewise, there exists a set of two elements of $\textnormal{Crown}(u_{m_0}wq_1p_1, J_1^{\dagger})$, each using a different color on $u_{m_0}$, so there exists a $\pi\in\textnormal{Crown}(p_0q_0wu_{m_0}, J_0-z)$ and a $\sigma\in\textnormal{Crown}(u_{m_0}wq_1p_1, J_1^{\dagger})$ with $\pi(u_{m_0})=\sigma(u_{m_0})$. Thus, $\pi\cup\sigma$ is a proper $L$-coloring of its domain. Since no chord of the outer face of $J_0-z$ is incident to $w$, we have $N(w)\cap\textnormal{dom}(\pi\cup\sigma)\subseteq\textnormal{dom}(\sigma)$, so $|L_{\pi\cup\sigma}(w)|\geq 3$ by definition of $\textnormal{Crown}(u_{m_0}wq_1p_1, J_1^{\dagger})$. Since $|L(z)|\geq 5$ and $z$ has no neighbors in $C-\{q_0, q_1\}$, it follows that $\pi\cup\sigma$ extends to an $L$-coloring $\sigma^*$ of $\textnormal{dom}(\pi\cup\sigma)\cup\{z\}$ with $|L_{\sigma^*}(w)|\geq 3$. Each of $K_0, K_1$ is an edge, so each of $q_0, q_1$ has an $L_{\pi\cup\sigma}$-list of size at least four and thus an $L_{\sigma^*}$-list of size at least three. Since $G$ is a counterexample, $\sigma^*$ extends to an $L$-coloring $\tau$ of $\textnormal{dom}(\sigma^*)\cup\{q_0, q_1\}$, where $\tau$ does not extend to $L$-color $G$. By our choice of color for $z$, we have $|L_{\tau}(w)|\geq 1$. As there is a color left over for $w$, it follows from our choice of $\pi$ and $\sigma$ that $\tau$ extends to an $L$-coloring of $G$, a contradiction. \end{claimproof}\end{addmargin}

Since $u_{m_0}q_0\in E(G)$, it follows from Subclaim \ref{EitherREdgeOrNoNbrUmUnQ0Q1} that $R$ is an edge, i.e $m_0=m_1=m$ for some index $m\in\{1,\ldots, t\}$. Furthermore, $K_0$ contains the 2-path $Q_0=p_0q_0u_m$, i.e $K_0$ is not just an edge. In particular, $K_0=J_0\setminus\{w, z\}$. This is illustrated in Figure \ref{FigureSecond22ChordCaseDelta}. Recalling the notation introduced at the start of the proof of Claim \ref{2ChordIncidentZVI}, we now have the following.

\vspace*{-8mm}
\begin{addmargin}[2em]{0em}
\begin{subclaim}\label{LzLwNotSameList} $L(z)\setminus L(w)\neq\varnothing$.\end{subclaim}

\begin{claimproof} It follows from Subclaim \ref{X1AtLeastLP1Color}, together with Corollary \ref{GlueAugFromKHCor}, that there exist a $\phi\in\textnormal{End}(Q_0, K_0)$ and a $\psi\in X$ such that $\phi(u_m)=\psi(u_m)=c$ for some color $c$. We just need to show that $c\in L(w)\setminus L(z)$. If that holds, then $L(z)\setminus L(w)\neq\varnothing$, since $|L(z)|=|L(w)=5$. Suppose that $c\not\in L(w)\setminus L(z)$. Thus, either $c\in L(z)$ or $c\not\in L(w)$. Recall that $|\Delta[\psi]|\geq |L(w)\setminus\{c\}|-1\geq 3$. Note that $c\not\in\Delta[\psi]$. We note now that $\phi\cup\psi$ extends to an $L$-coloring $\sigma$ of $\{p_0, u_m, z, p_1\}$ such that $|\Delta[\psi]\setminus\{\sigma(z)\}|\geq 3$ and $|L_{\sigma}(q_0)|\geq 3$. This is immediate if $c\in L(z)$, since, in that case, we color $z$ with $c$, and then we are done since $c\not\in\Delta[\psi]$ and two neighbors of $q_0$ are using the same color. On the other hand, if $c\not\in L(z)$, then $c\not\in L(w)$ by assumption, so $|\Delta[\psi]|\geq 4$. In that case, since $|L(z)|\geq 5$ and $|L_{\phi}(q_0)|\geq 3$, we just choose an $s\in L(z)$ such that $|L_{\phi}(w)\setminus\{s\}|\geq 3$, and color $z$ with that color instead. In any case, such a $\sigma$ exists. Furthermore, since $u_m$ is not adjacent to $q_1$, we have $|L_{\sigma}(q_1)|\geq 3$ as well, so each of $q_0, q_1$ has an $L$-list of size at least three. Since $G$ is a counterexample, $\sigma$ extends to an $L$-coloring $\sigma'$ of $V(P)\cup\{u_m\}$, where $\sigma'$ does not extend to $L$-color $G$. But it follows from our choice of $\sigma(z)$ that there is at least one color left in $\Delta[\psi]\cap L_{\sigma'}(w)$, so $\sigma'$ extends to an $L$-coloring of $G$, a contradiction. \end{claimproof}\end{addmargin}

\begin{center}\begin{tikzpicture}
\node[shape=circle,draw=black] (p0) at (-3,0) {$p_0$};
\node[shape=circle,draw=black] (u1) at (-2, 0) {$u_1$};
\node[shape=circle,draw=white] (u1+) at (-1, 0) {$\ldots$};
\node[shape=circle,draw=black] (um) at (0, 0) {$u_m$};
\node[shape=circle,draw=white] (ut+) at (1, 0) {$\ldots$};
\node[shape=circle,draw=black] (ut) at (2, 0) {$u_t$};
\node[shape=circle,draw=black] (p1) at (3, 0) {$p_1$};
\node[shape=circle,draw=black] (q0) at (-3,2) {$q_0$};
\node[shape=circle,draw=black] (q1) at (3,2) {$q_1$};
\node[shape=circle,draw=black] (z) at (0,3) {$z$};
\node[shape=circle,draw=black] (w) at (0,2) {$w$};
\node[shape=circle,draw=white] (K0) at (-2.3,0.8) {$K_0$};
\node[shape=circle,draw=white] (J1) at (1.8,0.8) {$J_1$};

 \draw[-] (p1) to (ut) to (ut+) to (um) to (u1+) to (u1) to (p0) to (q0) to (z) to (q1) to (p1);
 \draw[-] (w) to (z);
 \draw[-] (w) to (q1);
\draw[-] (w) to (um);
\draw[-] (q0) to (um);
\draw[-] (w) to (q0);
\end{tikzpicture}\captionof{figure}{}\label{FigureSecond22ChordCaseDelta}\end{center}

Applying Subclaim \ref{LzLwNotSameList}, we have the following. 

\vspace*{-8mm}
\begin{addmargin}[2em]{0em}
\begin{subclaim}\label{K0ColorsNotAlmostUniversal} There is no almost $K_0$-universal color of $L(p_0)$. \end{subclaim}

\begin{claimproof} Suppose toward a contradiction that there is such an $a\in L(p_0)$. Applying Subclaim \ref{LzLwNotSameList}, we fix $d\in L(z)\setminus L(w)$. By Claim \ref{AnyLColMidEndExtPCL11}, there is an $L$-coloring $\tau$ of $V(P)$ which does not extend to an $L$-coloring of $G$, where $\tau$ uses $a,d$ on the respective vertices $p_0, z$. We now fix a color $r\in L(v_1)$, where $r=\tau(p_1)$ if $K_1$ is an edge, and otherwise $r$ is a color of $\Lambda_{K_1}(\bullet, \tau(q_1), \tau(p_1))$. Let $H$ be the subgraph of $G$ with outer cycle $C^H=u_mq_0zq_1(v_1(C\setminus\mathring{P})u_m)$ and let $L'$ be a list-assignment for $V(H)$ in which $L'(u_m)=\{\tau(q_0)\}\cup\Lambda_{K_0}(\tau(p_0), \tau(q_0), \bullet)$ and otherwise $L'=L$. By our assumption on $a$, $|\Lambda_{K_0}(\tau(p_0), \tau(q_0), \bullet)|\geq 2$, so $|L'(u_m)|\geq 3$. Now, the $L'$-coloring $(\tau(q_0), \tau(z), \tau(q_1), r)$ of the 3-path $q_0zq_1v_1$ does not extend to $L'$-color $H$, or else $\tau$ extends to $L$-color $G$. Note that no chord of the outer face of $H$ is incident to any vertex of $\{q_0, z, q_1\}$. Furthermore, $w$ is the unique vertex of $H\setminus C^H$ with more than two neighbors on $q_0zq_1v_1$. Since $q_1$ is uncovered, and since $|L'(w)\setminus\{\tau(q_0), \tau(z), \tau(v_1)\}|\geq 3$ by our choice of color for $z$, we contradict Lemma \ref{PartialPathColoringExtCL0}. \end{claimproof}\end{addmargin}

\vspace*{-8mm}
\begin{addmargin}[2em]{0em}
\begin{subclaim}\label{TwoSetsUmColorsetsDisjoint}  $\textnormal{Col}(\textnormal{End}(Q_0, K_0)\mid u_m)\cap\textnormal{Col}(\textnormal{End}(u_mwq_1p_1, J_1-z)\mid u_m)=\varnothing$. \end{subclaim}

\begin{claimproof}  Suppose not. Thus, there is a $\phi\in\textnormal{End}(Q_0, K_0)$ and a $\psi\in\textnormal{End}(u_mwq_1p_1, J_1-z)$ with $\phi(u_m)=\psi(u_m)$. Since $|L_{\phi\cup\psi}(q_0)|\geq 3$, $\phi\cup\psi$ extends to an $L$-coloring $\sigma$ of $\{p_0, u_m, z, p_1\}$ with $|L_{\sigma}(q_0)|\geq 3$. Since $q_1$ is uncovered, $|L_{\sigma}(q_1)|\geq 3$ as well, and since $G$ is a counterexample, $\sigma$ extends to an $L$-coloring $\sigma'$ of $V(P)\cup\{u_m\}$, where $\sigma'$ does not extend to an $L$-coloring of $G$. But since $|L_{\sigma'}(w)|\geq 1$, there is a color left for $w$ and it follows from our choice of $\phi, \psi$ that $\sigma'$ extends to an $L$-coloring of $G$, a contradiction. \end{claimproof}\end{addmargin}

\vspace*{-8mm}
\begin{addmargin}[2em]{0em}
\begin{subclaim} $|L(p_0)|=1$ \end{subclaim}

\begin{claimproof} Suppose toward a contradiction that $|L(p_0)|\geq 2$. By Subclaim \ref{K0ColorsNotAlmostUniversal}, there is no almost $K_0$-universal color of $L(p_0)$, and, by 2) of Claim \ref{KiSizeAtMost4UnivColorBlock}, $K_0$ is a triangle, so $m=1$. For any $c\in L(p_0)$, there is an $L$-coloring of $p_0u_1$ using $c$ on $u_1$, since $|L(p_0)\setminus\{c\}|\geq 1$. Since $K_0$ is a triangle, it follows that $\textnormal{Col}(\textnormal{End}(Q_0, K_0)\mid u_m)=L(u_m)$. Since $\textnormal{End}(u_mwq_1p_1, J_1-z)\neq\varnothing$ by Subclaim \ref{EachUmAug0EachUnAug1}, this contradicts Subclaim \ref{TwoSetsUmColorsetsDisjoint}. \end{claimproof}\end{addmargin}

Since $|L(p_0)|=1$, we have $|L(p_1)|=3$. Applying Theorem \ref{SumTo4For2PathColorEnds}, we now fix a $\phi\in\textnormal{End}(Q_0, K_0)$. 

\vspace*{-8mm}
\begin{addmargin}[2em]{0em}
\begin{subclaim}\label{K1TriUmUTnotEdge} $K_1$ is a triangle, and furthermore, $u_mu_t\not\in E(G)$  \end{subclaim}

\begin{claimproof} Suppose that $K_1$ is not a triangle. Since $|L(p_1)|=3$ and $q_1$ is uncovered, it follows from Subclaim \ref{KTriangleForRNotEdgeSub6} that $K_1$ is an edge, so no chord of the outer face of $J_1-z$ is incident to $q_1$. By Theorem \ref{3ChordVersionMainThm1}, since $|L(p_1)|=3$ there is a $\pi\in\textnormal{End}(u_mwq_1p_1, J_1-z)$ with $\pi(u_m)=\phi(u_m)$, contradicting Subclaim \ref{TwoSetsUmColorsetsDisjoint}. Now suppose toward a contradiction that $u_mu_t\in E(G)$. Since no chord of $C$ is incident to either of $u_m, u_t$, we have $m=t-1$, and, since $K_1$ is a triangle, $G$ contains the 4-cycle $wu_{t-1}u_tq_1$. Since $q_1\not\in N(u_{t-1})$ and $G$ contains no induced 4-cycles, we have $w\in N(u_t)$, contradicting Subclaim \ref{NwNeighborsAmongUmToUn} \end{claimproof}\end{addmargin}

Since $K_1$ is a triangle, $J_1^{\dagger}=J_1\setminus\{p_1,z\}$, and $v_1=u_t$. Since $|L(p_1)|=3$, it follows from Subclaim \ref{KTriangleForRNotEdgeSub6} that $L(p_1)=L(u_t)$, and it follows from  Subclaim \ref{X1AtLeastLP1Color} that, there is a $\psi\in X$ with $\psi(u_m)=\phi(u_m)$. We now let $T:=\{s\in L(z): |L_{\phi}(q_0)\setminus\{s\}|\geq 3\}$. Note that $|T|\geq 2$. 

\vspace*{-8mm}
\begin{addmargin}[2em]{0em}
\begin{subclaim}\label{EachTColorZUsesSonZ} For each $s\in T$, there is an extension of $\phi\cup\psi$ to an $L$-coloring $\tau^s$ of $V(P)\cup\{u_m\}$ which colors $z$ with $s$ and does not extend to an $L$-coloring of $G$. \end{subclaim}

\begin{claimproof} Let $s\in T$. There is an extension of $\phi\cup\psi$ to an $L$-coloring $\tau$ of $\{p_0, u_m, p_1, z\}$ onbtained by coloring $z$ with $s$. Since $s\in T$ and $u_m\not\in N(q_1)$, each of $q_0, q_1$ has an $L_{\tau}$-list of size at least three, and since $G$ is a couterexample, $\tau$ extends to an $L$-coloring $\tau^s$ of $V(P)\cup\{u_m\}$ which does not extend to an $L$-coloring of $G$. \end{claimproof}\end{addmargin}

For each $s\in T$, let $\tau^s$ be as in Subclaim \ref{EachTColorZUsesSonZ}. For each $s\in T$, since $\tau^s$ does not extend to an $L$-coloring of $G$, we have $\Delta[\psi]\cap L_{\tau^s}(w)=\varnothing$, and since $\Delta[\psi]$ is a subset of $L(w)\setminus\{\psi(u_m)\}$ of size at least three, we have $\Delta[\psi]=\{s, \tau^s(q_0), \tau^s(q_1\}$. Since this hold for each $s\in T$, we have $2\leq |T|\leq 3$. In particular, $T\neq L(z)$, so it follows that $|L_{\phi}(q_1)|=3$ and $T=L(z)\setminus L_{\phi}(q_1)$. For each $s\in T$, we have $\tau^s(q_1)\in L_{\phi}(q_1)$, so $\tau^s(q_1)\not\in T$. Thus, there exists a set of two colors $s_0, s_1$ and a color $f\in L_{\phi}(q_0)$ such that $T=\{s_0, s_1\}$, and, for each $k=0,1$, we have $\tau^{s_k}(q_1)=s_{1-k}$ and $\tau^{s_k}(q_0)=f$. In particular, $T\subseteq L(q_1)\setminus\{\psi(p_1)\}$. 

\vspace*{-8mm}
\begin{addmargin}[2em]{0em}
\begin{subclaim}\label{TAndUTShareNoColorsSubCL1} $T\cap L(u_t)=\varnothing$. \end{subclaim}

\begin{claimproof} Suppose not, and suppose without loss of generality that $s_0\in T\cap L(u_t)$. By Theorem \ref{3ChordVersionMainThm1}, there is a $\pi\in\textnormal{End}(u_mwq_1u_t, J_1^{\dagger})$ using $\phi(u_m)$ on $u_m$. Since $L(u_t)=L(p_1)$, we have either $\pi(u_t)=s_0$ or $s_0\in L(p_1)\setminus\{\pi(u_t)\}$. In any case, since $z$ has no neighbors in $C-\{q_0, q_1\}$, there is an extension of $\phi\cup\pi$ to an $L$-coloring $\sigma$ of $\{p_0, u_m z, u_t, p_1\}$ which uses $s_0$ on $z$ and uses $s_0$ on precisely one endpoint of the edge $u_tp_1$. In particular, $|L_{\sigma}(q_1)|\geq 3$, and $|L_{\sigma}(q_0)|\geq 3$ as well, since $s_0\in T$. Since $G$ is a counterexample, $\sigma$ extends to an $L$-coloring $\sigma'$ of $V(P)\cup\{u_m, u_t\}$ which does not extend to $L$-color $G$. But since $w$ only has 4 neighbors in $\textnormal{dom}(\sigma')$, there is a color left over for $w$, so it follows from our choice of $\phi$ and $\pi$ that $\sigma'$ extends to an $L$-coloring of $G$, a contradiction. \end{claimproof}\end{addmargin}

\vspace*{-8mm}
\begin{addmargin}[2em]{0em}
\begin{subclaim}\label{SubCLwStarAdjUmWQ15Cycle} There is a $w^*\in V(G\setminus C)$ adjacent to each of $u_m, w, q_1$, and $N(w)=V(\mathring{P})\cup\{u_m, w^*\}$. \end{subclaim}

\begin{claimproof} Choose an arbitrary $k\in\{0,1\}$ and let $L'$ be a list-assignment for $V(J_1^{\dagger})$ such that $L'(u_t)=(L(u_t)\setminus\{\psi(p_1)\})\cup\{s_{1-k}\}$. By Subclaim \ref{TAndUTShareNoColorsSubCL1}, $T\cap L(u_t)=\varnothing$, and since $L(u_t)=L(p_1)$, we have $|L'(u_t)|=3$. Since $\textnormal{dom}(\tau^{s_k})$ contains precisely four neighbors of $w$, we choose an $r\in L_{\tau^{s_k}}(w)$. Recalling that $\tau^{s_k}(q_1)=s_{1-k}$, it follows that the $L$-coloring $(\phi(u_m), r, s_{1-k}, \psi(p_1)$ of $u_mwq_1p_1$ does not extend to $L$-color $J_1$, or else $\tau^{s_k}$ extends to $L$-color $G$. Thus, the $L'$-coloring  $(\phi(p_1), r, s_{1-k}, \psi(p_1))$ of $u_mwq_1$ does not extend to $L'$-color $J_1^{\dagger}$, since any such extension would use a color other than $s_{1-k}$ on $u_t$ (and thus be an $L$-coloring of $J_1^{\dagger}$) and also use a color other than $\psi(p_1)$ on $u_t$. Since the outer cycle of $J_1^{\dagger}$ has no chords, it follows from Lemma \ref{PartialPathColoringExtCL0} that there is a vertex of $J_1^{\dagger}$ which does not lie on the outer face of $J_1^{\dagger}$ and is adjacent to all three of $u_m, w, q_1$.   \end{claimproof}\end{addmargin}

Letting $w^*$ be as in \ref{SubCLwStarAdjUmWQ15Cycle}, we now have the following.  

\vspace*{-8mm}
\begin{addmargin}[2em]{0em}
\begin{subclaim}\label{AnLColumutp1Ext} Any $L$-coloring of $\{u_m, q_1, p_1\}$ extends to $L$-color all of $J_1\setminus\{w, z\}$ \end{subclaim}

\begin{claimproof} Suppose toward a contradiction that there is an $L$-coloring $\pi$ of $\{u_m, q_1, p_1\}$ which does not extend to $L$-color all of $J_1\setminus\{w, z\}$. Note that, since the outer cycle of $J_1^{\dagger}$ is induced and $G$ has no cycles of length four which are induced, we have $u_mu_t\not\in E(G)$, so there is an $r\in L_{\pi}(u_t)$. In particular, there is no $L$-coloring of $J_1^{\dagger}$ using $\pi(u_m), \pi(q_1), r$ on the respective vertices $u_m, q_1, u_t$, and since $|L(w)|=5$ and the outer cycle of $J_1^{\dagger}$ is induced, it follows from Lemma \ref{PartialPathColoringExtCL0} that there is a vertex of $J_1^{\dagger}$ which does not lie on the outer cycle of $J_1^{\dagger}$ and is adjacent to all three of $u_m, u_t, q_1$. Since $G$ is short-separation-free, this vertex is precisely $w^*$, and, letting $H=J_1\setminus\{z, w, q_1, p_1\}$, $H$ is bounded by outer cycle $(u_m\ldots u_t)w^*$.

Now we recall that, for each $k=0,1$, $\tau^{s_k}$ uses $s_k, s_{1-k}$ on the respective vertices $z, q_1$. We fix a color $r\in L(w)\setminus\{\phi(u_m), f, s_0, s_1\}$. For each $k=0,1$, since $\tau^{s_k}$ does not extend to an $L$-coloring of $G$, it follows that, for each $a\in L(w^*)\setminus\{r, \phi(u_m), s_{1-k}\}$, we have $\Lambda_H^{u_mw^*u_t}\cap L_{\tau^{s_k}}(u_t)=\varnothing$. Since $T\cap L(u_t)=\varnothing$, we have $L_{\tau^{s_k}}(u_t)=L(u_t)\setminus\{\psi(p_1)\}$, so we conclude that, for each $a\in L(w^*)\setminus\{r, \phi(u_m), s_{1-k}\}$, we have $\Lambda_H(\phi(u_m), a, \bullet)=\{\psi(p_1)\}$. Since this holds for each $k=0,1$ and  since $s_0\neq s_1$, there exist at least three colors $a\in L(w^*)$ such that $\Lambda_H(\phi(u_m), a, \bullet)=\{\psi(p_1)\}$, contradicting 2) of Theorem \ref{BWheelMainRevListThm2}. \end{claimproof}\end{addmargin}

We now have enough to finish the proof of Claim \ref{2ChordIncidentZVI}. Applying Subclaim \ref{LzLwNotSameList}, we have $(z)\setminus L(w)\neq\varnothing$. By Claim \ref{AnyLColMidEndExtPCL11}, there is an $L$-coloring $\pi$ of of $V(P)$ which does not extend to an $L$-coloring of $G$, where $\pi(z)\not\in L(w)$. Let $r\in\Lambda_{K_0}(\pi(p_0), \pi(q_0), \bullet)$. Since $u_m$ is not adjacent to either of $p_1, q_1$, it follows from Subclaim \ref{AnLColumutp1Ext} that there is an $L$-coloring of $J_1\setminus\{w,z\}$ using $r, \pi(q_1), \pi(p_1)$ on the respective vertices $u_m, q_1, p_1$, so $\pi$ extends to an $L$-coloring of $G-w$. Since $w$ only has five neighbors, it follows from our choice of color for $z$ that $\pi$ extends to an $L$-coloring of $G$, a contradiction. This completes the proof of Claim \ref{2ChordIncidentZVI}. \end{claimproof}

We now show in Subsections \ref{FirstPrelimForChordsZ}-\ref{MTriangleInterSectAtLeast2X0X1} that there is no chord of $C$ incident to $z$. The structure of the proof that there is no chord of $C$ incident to $z$ is as follows. 

\begin{enumerate}[label=\roman*)] 
\itemsep-0.1em
\item In Subsection \ref{FirstPrelimForChordsZ}, we show that, if $z$ is incident to at least one chord of $C$, then, for each $i\in\{0,1\}$, $v_i=x_i$.
\item In Subsection \ref{SubMNotTrianForCHORD}, we show that if $z$ is incident to a chord of $C$, then $M$ is a triangle. 
\item In Subsections \ref{TriangleAndLx0x1InserAtMost1} and \ref{MTriangleInterSectAtLeast2X0X1}, we deal with the case where $M$ is a triangle. More precisely, we show in Subsection \ref{TriangleAndLx0x1InserAtMost1} that, if $M$ is a triangle, then $|L(x_0)\cap L(x_1)|\geq 2$. We then show that in Subsection \ref{MTriangleInterSectAtLeast2X0X1} that $M$ is not a triangle and conclude that there are no chords of $C$ incident to $z$. 
\end{enumerate}

\subsection{Dealing with the case where $M$ is an edge}\label{FirstPrelimForChordsZ}

Subsection \ref{FirstPrelimForChordsZ} consists of two claims, the first of which is as follows.

\begin{Claim}\label{CompressClaimsFromSec912} Suppose that $z$ is incident to at least one chord of $C$. Then, for each $i\in\{0,1\}$, $x_i=v_i$, (and, in particular, $K_i$ is not an edge), and furthermore, $z$ is incident to at least two chords of $C$.
 \end{Claim}

\begin{claimproof} Suppose that $z$ is incident to at least one chord of $C$ . Suppose toward a contradiction that there is an $i\in\{0,1\}$ with $x_i\neq v_i$, say $i=1$. By Claim \ref{zPreciselyOneMPiQiNotSize3}, $|L(p_1)|\geq 2$. Let $A=\textnormal{Col}(\textnormal{End}(p_0q_0z, H_0\cup M)\mid z)$. Since $|L(z)|=5$ and $1\leq |L(p_0)|\leq 3$, we have $|A|\geq |L(p_0)|+2$ by Theorem \ref{SumTo4For2PathColorEnds}. By Claim \ref{EachAEachEllColoringH1PathFails}, $P^{H_1}$ is an induced path in $G$, and, for each $a\in A$ and $b\in L(p_1)$, there is an $L$-coloring $\pi^a_b$ of $V(P^{H_1})$ which does not extend to an $L$-coloring of $H_1$. 

\vspace*{-8mm}
\begin{addmargin}[2em]{0em}
\begin{subclaim} $K_1$ is not an edge. \end{subclaim}

\begin{claimproof} Suppose that $K_1$ is an edge. Choose an arbitrary $a\in A$ and $b\in L(p_1)$ and let $\pi=\pi^a_b$. Since every chord of the outer face of $H_1$ is incident to $q_1$, there are no chords of $C^{H_1}$, so it follows from Lemma \ref{PartialPathColoringExtCL0} that there is a vertex $w\in V(H_1\setminus C^{H_1})$ with at least three neighbors in $\{x_1, z, q_1, p_1\}$. If $w$ is adjacent to both of $z, p_1$, then we contradict Claim \ref{piAndzHavenoNbrOutsideOuterCycleHlpCLMa}, so $w$ is adjacent to both of $x_1, q_1$ and precisely one of $z, p_1$. If $w$ is adjacent to $p_1$, then $zq_1wx_1$ is an induced 4-cycle of $G$, which is false, as there are no induced 4-cycles of $G$. Thus, $w$ is adjacent to all three of $x_1, q_1, z$. Since $x_1\neq v_1$, this contradicts Claim \ref{2ChordIncidentZVI}.  \end{claimproof}\end{addmargin}

\vspace*{-8mm}
\begin{addmargin}[2em]{0em}
\begin{subclaim} $K_1$ is a triangle and $L(p_1)\subseteq L(u_t)$. \end{subclaim}

\begin{claimproof} Suppose not. Since $|L(p_1)|\geq 2$ and $K_1$ is not an edge, it follows from 2) of Claim \ref{KiSizeAtMost4UnivColorBlock} that there is a $b\in L(p_1)$ which is almost $K_1$-universal. We no choose an arbitrary $a\in A$ and let $\pi:=\pi^a_b$. Let $L'$ be a list-assignment for $H_1'$ where $L'(v_1)=\Lambda_{K_1}(\bullet, \pi(q_1), b)\cup\{\pi(q_1)\}$ and otherwise $L'=L$. Since $pi(q_1)\not\in\Lambda_{K_1}(\bullet, \pi(q_1), b)$, we have $|L'(v_1)|\geq 3$. Since $\pi$ does not extend to an $L$-coloring of $H_1$, the $L'$-coloring $(\pi(x_1), a, \pi(q_1))$ of $x_1zq_1$ which does not extend to $L'$-color $H_1$, as any such extension uses a color other than $\pi(q_1)$ on $v_1$. Since the outer cycle of $H_1'$ has no chords, it follows from Lemma \ref{PartialPathColoringExtCL0} that $x_1, z, q_1$ have a common neighbor in $H_1'$ which is not on the outer face of $H_1'$. This contradicts Claim \ref{2ChordIncidentZVI}. \end{claimproof}\end{addmargin}

\vspace*{-8mm}
\begin{addmargin}[2em]{0em}
\begin{subclaim}\label{WAdjAllThreeV1Q1ZAnd2ColLef} There is a unique $w\in V(H_1)\setminus V(C^{H_1})$ which is adjacent to all three of $v_1, q_1, z$. Furthermore, for each $a\in A$ and $b\in L(p_1)$ and each $c\in L(u_t)\setminus\{b, \pi_b^a(q_1)\}$, we have $|L(w)\setminus\{a, c, \pi_b^a(q_1)\}|=2$ \end{subclaim}

\begin{claimproof} Choose an arbitrary $a\in A$ and $b\in B$ and let $\pi:=\pi_b^a$ and $c\in L(u_t)\setminus\{b, \pi(q_1)\}$. Since $G$ contains no induced 4-cycles, we have $x_1\not\in N(u_t)$, and in particular, $x_1\neq u_{t-1}$. Note that, since $v_1=u_t$, there are no chords of the outer cycle of $H_1-p_1$, and since the $L$-coloring $\pi'=(\pi(x_1), a, \pi(q_1), c)$ of $x_1zq_1u_t$ does not extend to $L$-color $H_1-p_1$, it follows from Lemma \ref{PartialPathColoringExtCL0} that there is a vertex $w\in V(H_1-p_1)$ which does not lie on the outer face of $H_1-p_1$, where $|L_{\pi'}(w)|\leq 2$.  If $w$ is adjacent to each of $x_1, q_1$, then, since there are no induced 4-cycles in $G$, we have $w\in N(z)$, contradicting Claim \ref{2ChordIncidentZVI}, so $w$ is adjacent to precisely one of $x_1, q_1$ and both of $z, u_t$. Since there are no induced 4-cycles in $G$, we have $q_1\in N(w)$, and thus $x_1\not\in N(w)$. Thus, $|L_{\pi'}(w)|=2$ and $L_{\pi'}(w)=L(w)\setminus\{a, c, \pi(q_1)\}$. \end{claimproof}\end{addmargin}

We now let $w$ be as in Subclaim \ref{WAdjAllThreeV1Q1ZAnd2ColLef}. 

\vspace*{-8mm}
\begin{addmargin}[2em]{0em}
\begin{subclaim}\label{AUtDisjointCovering} $A\cap L(u_t)=\varnothing$. \end{subclaim}

\begin{claimproof} Suppose there is an $A\in A\cap L(u_t)$. Since $|L(p_1)|\geq 2$, there is a $b\in L(p_1)\setminus\{a\}$. Since $\pi^a_b$ uses $a$ on $z$, we have $\pi^a_b(q_1)\neq a$ and $a\in L(u_t)\setminus\{a, \pi^a_b(q_1)\}$, contradicting Subclaim \ref{WAdjAllThreeV1Q1ZAnd2ColLef}. \end{claimproof}\end{addmargin}

We now have enough to finish the proof of  Claim \ref{CompressClaimsFromSec912}. It immediately follows from Subclaim \ref{WAdjAllThreeV1Q1ZAnd2ColLef} that $A\subseteq L(w)$. Recall that $|A|\geq |L(p_0)|+2\geq 3$, so it follows from Subclaim \ref{AUtDisjointCovering} that there is a $c\in L(u_t)\setminus L(w)$. Since $|L(p_1)|\geq 2$, there is a $b\in L(p_1)\setminus\{c\}$. Now choose an arbitrary $a\in A$. We have either $\pi^a_b(q_1)=c$ or $c\in L(u_t)\setminus\{b, \pi^a_b(q_1)\}$. In either case, we contradict Subclaim \ref{WAdjAllThreeV1Q1ZAnd2ColLef}. We conclude that $x_i=v_i$ for each $i=0,1$. In particular, since $z$ is adajcent to neither $p_0$ nor $p_1$, neither $K_0$ nor $K_1$ is an edge. If $z$ is incident to precisely one chord of $C$, then there is a $u\in V(C\setminus P)$ such that $u=x_0=x_1$, and $u$ is adjacent to all three vertices of $\mathring{P}$, contradicting the assumption in the statement of Theorem \ref{MainHolepunchPaperResulThm}. Thus, $z$ is incident to at least two chords of $C$. \end{claimproof}

We now prove the second of the two results which make up Subsection \ref{FirstPrelimForChordsZ}. 

\begin{Claim}\label{UnivAndAlmostUnivColorClaims} Suppose that $z$ is incident to at least one chord of $C$ and let $i\in\{0,1\}$. Then both of the following hold.
\begin{enumerate}[label=\arabic*)]
\itemsep-0.1em
\item Either $M$ is a triangle or there is no $a\in L(p_i)$ which is almost $K_i$-universal; AND
\item No color of $L(p_i)$ is $K_i$ universal. In particular, $p_i$ is predictable, and, if $|L(p_i)|\geq 2$, then $K_i$ is a broken wheel with principal path $Q_i$ and $|V(K_i)|\leq 4$. 
\end{enumerate} \end{Claim}

\begin{claimproof} By Claim \ref{CompressClaimsFromSec912}, $v_i=x_i$ for each $i=0,1$, and, in particular, neither $K_0$ nor $K_1$ is an edge. We first prove that each $i\in\{0,1\}$ satisfies 1). Suppose that $M$ is not a triangle and suppose toward a contradiction that there is an $i\in\{0,1\}$ and an $a\in L(p_i)$ which is almost $K_i$-universal, say $i=1$ for the sake of definiteness. 

\vspace*{-8mm}
\begin{addmargin}[2em]{0em}
\begin{subclaim}\label{AnyLColorObs33} Any $L$-coloring of $zq_1p_1$ using $a$ on $p_1$ extends to $L$-color all of $H_1$. \end{subclaim}

\begin{claimproof} Let $\phi$ be an $L$-coloring of $zq_1p_1$ with $\phi(p_1)=a$. Since $a$ is almost $K_1$-universal, we have $|\Lambda_{K_1}(\bullet, \phi(q_1), \phi(p_1)|\geq 2$, so there is at least one color left in $\Lambda_{K_1}(\bullet, \phi(q_1), \phi(p_1))$ which is different to $\phi(z)$, and thus $\phi$ extends to $L$-color $H_1$. \end{claimproof}\end{addmargin}

\vspace*{-8mm}
\begin{addmargin}[2em]{0em}
\begin{subclaim}\label{EitherY0Y1EmptyOrNotBWheel}
$M$ is a broken wheel with principal path $x_0zx_1$.
\end{subclaim}

\begin{claimproof} Suppose not. Since $C\setminus\mathring{P}$ is an induced subgraph of $G$ and $M$ is not a triangle, we have $x_0x_1\not\in E(G)$, as $G$ is short-separation-free. Since $|L(z)\setminus L(x_0)|\geq 2$ and $M$ is not a broken wheel with principal path $x_0zx_1$, it follows from 1) of Theorem \ref{EitherBWheelOrAtMostOneColThm} that there is a $d\in L(z)\setminus L(x_0)$ such that any $L$-coloring of $x_0zx_1$ using $d$ on $z$ extends to an $L$-coloring of $M$. By Claim \ref{AnyLColMidEndExtPCL11}, there is an $L$-coloring $\phi$ of $V(P)$ using $d, a$ on the respective vertices $z, p_1$, where $\phi$ does not extend to an $L$-coloring of $G$. Since $\phi(z)\not\in L(x_0)$, it follows from 2) of Theorem \ref{EitherBWheelOrAtMostOneColThm} that the $L$-coloring $(\phi(p_0), \phi(q_0), \phi(z))$ extends to an $L$-coloring of $H_0$. By Subclaim \ref{AnyLColorObs33}, the $L$-coloring $(\phi(z), \phi(q_1), \phi(p_1))$ of $zq_1p_1$ extends to an $L$-coloring of $H_1$. Since $x_0x_1\not\in E(G)$, $H_0\cup H_1$ is an induced subgraph of $G$, so it follows that $\phi$ extends to a proper $L$-coloring $\psi$ of $V(H_0\cup H_1)$. By our choice of color for $z$, $\psi$ extends to $L$-color $M$ as well, so $\phi$ extends to an $L$-coloring of $G$, a contradiction. \end{claimproof}\end{addmargin}

Applying Subclaim \ref{EitherY0Y1EmptyOrNotBWheel}, $M$ is a broken wheel with principal path $x_0zx_1$, and, by assumption, $M$ is not a triangle, so $x_0x_1\not\in E(G)$. Since $L(p_0)$ is nonempty, we fix a $c\in L(p_0)$. Let $\ell_0, \ell_1\in\{1, \ldots, t\}$, where $x_0=u_{\ell_0}$ and $x_1=u_{\ell_1}$. Since $M$ is not a triangle, we have $\ell_0<\ell_1-1$. By Claim \ref{AnyLColMidEndExtPCL11}, For each $s\in L(z)\setminus L(u_{\ell_0+1})$, there is an $L$-coloring $\sigma^s$ of $V(P)$ which does not extend to an $L$-coloring of $G$, where $\sigma^s$ uses $c, s, a$ on the respective vertices $p_0, z, p_1$. 

\vspace*{-8mm}
\begin{addmargin}[2em]{0em}
\begin{subclaim} $H_0$ is a broken wheel with principal path $p_0q_0z$. \end{subclaim}

\begin{claimproof} Suppose not. Since every chord of the outer face of $G_0$ is incident to $q_0$, and since $|L(z)\setminus L(u_{\ell_0+1})|\geq 2$, it follows from 1) of Theorem \ref{EitherBWheelOrAtMostOneColThm} that there is an $s\in L(z)\setminus L(u_{\ell_0+1})$ such that any $L$-coloring of $p_0q_0z$ using $s$ on $z$ extends to an $L$-coloring of $H_0$. Since $u_{\ell_0}u_{\ell_1}\not\in E(G)$, it follows from Subclaim \ref{AnyLColorObs33} that $\sigma^s$ extends to an $L$-coloring $\sigma^s_*$ of $V(H_0\cup H_1)$, and since $s\not\in L(u_{\ell_0+1})$, it follows that $\sigma^s_*$ extends to $L$-color $M$ as well, so $\sigma^s$ extends to an $L$-coloring of $G$, a contradiction.  \end{claimproof}\end{addmargin}

Since $H_0$ is a broken wheel with principal path $p_0q_0z$, $K_0$ is a broken wheel with principal path $p_0q_0u_{\ell_0}$. It follows from Subclaim \ref{AnyLColorObs33} that, for each $s\in L(z)\setminus L(u_{\ell_0+1})$, $\sigma^s$ extends to an $L$-coloring $\tau^s$ of $V(P\cup H_1)$. Let $s, s_*$ be distinct colors of $L(z)\setminus L(u_{\ell_0+1})$. Since $M$ is not a triangle, and since $s, s_*\not\in L(u_{\ell_0+1})$, it follows that neither the $L$-coloring $(c, \tau^s(q_0), s)$ nor the $L$-coloring $(c, \tau^{s_*}(q_0), s')$ of $p_0q_0z$ extends to an $L$-coloring of $H_0$. Thus, we have $\Lambda_{K_0}(c, \tau^s(q_0), \bullet)=\{s\}$ and $\Lambda_{K_0}(c, \tau^{s_*}(q_0), \bullet)=\{s_*\}$. In particular, since $s\neq s_*$, we have $\tau^s(q_0)\neq\tau^{s_*}(q_0)$. By 1b) of Theorem \ref{BWheelMainRevListThm2} applied to the broken wheel $K_0$, we have $\tau^s(q_0)=s$ and $\tau^{s_*}(q_0)=s_*$, which is false, since $q_0z\in E(G)$ and each of $\tau^s$ and $\tau^{s_*}$ is a proper $L$-coloring of its domain. This proves 1) of Claim \ref{UnivAndAlmostUnivColorClaims}.

Now we prove 2). Suppose toward a contradiction that there is an $i\in\{0,1\}$ and an $a\in L(p_i)$ which is $K_i$-universal, say $i=1$ without loss of generality. Since every $K_1$-universal color of $L(p_1)$ is also almost $K_1$-universal color, it follows from 1) that $M$ is a triangle. Let $A=\textnormal{Col}(\textnormal{End}(p_0q_0z, H_0)\mid z)$. Since $|L(z)|=5$ and $1\leq |L(p_0)|\leq 3$, we have $|A|\geq |L(p_0)|+2\geq 3$ by Theorem \ref{SumTo4For2PathColorEnds}. Let $B=L(z)\setminus L(x_1)$.

\vspace*{-8mm}
\begin{addmargin}[2em]{0em}
\begin{subclaim}\label{ABDisjointSubM} $A\cap B=\varnothing$. In particular, $L(z)=A\cup B$ as a disjoint union. \end{subclaim}

\begin{claimproof} Suppose toward a contradiction that there is a $b\in A\cap B$. By Claim \ref{AnyLColMidEndExtPCL11}, there is an $L$-coloring $\tau$ of $V(P)$ which does not extend to an $L$-coloring of $G$, where $\tau$ does not extend to an $L$-coloring of $G$. Since $b\in A$, the $L$-coloring $(\tau(p_0), \tau(q_0), \tau(z))$ of $p_0q_0z$ extends to an $L$-coloring $\tau_0$ of $H_0$. Since $b\in B$, we have $L(x_1)\setminus\{\tau(z), \tau_0(x_0), \tau(q_1)\}\neq\varnothing$. Since $a$ is $K_1$-universal and $M$ is a triangle, the union $\tau\cup\tau_0$ extends to an $L$-coloring of $G$, which is false. \end{claimproof}\end{addmargin}

Since $|B|\geq 2$, it follows from Subclaim \ref{ABDisjointSubM} that $|B|=2$ and $|A|=3$, so $|L(p_0)|=1$. Let $c$ be the lone color of $p_0$ and let $B=\{b_0, b_1\}$. 

\vspace*{-8mm}
\begin{addmargin}[2em]{0em}
\begin{subclaim}\label{BColorFlipOneToOtherK0} $B\subseteq (L(x_0)\cap L(q_0))\setminus\{c\}$ and, for each $k=0,1$, $\Lambda_{K_0}(c, b_k, \bullet)=\{b_{1-k}\}$. In particular, $L(u_1)=\{c, b_0, b_1\}$.
\end{subclaim}

\begin{claimproof} Since $A\cap B=\varnothing$, it follows from the definition of $A$ that, for each $k\in\{0,1\}$, there is an $L$-coloring $\sigma_k$ of $p_0q_0z$ which does not extend to an $L$-coloring of $H_0$, where $\sigma_k(z)=b_k$. Thus, for each $k=0,1$, we have $\Lambda_{K_0}(c, \sigma_k(q_0), \bullet)=\{b_k\}$. In particular, $B\subseteq L(x_0)$. Since $|\Lambda_{K_0}(c, \sigma_k(q_0), \bullet)|=1$ for each $k\in\{0,1\}$, it follows from 2i) of Corollary \ref{CorMainEitherBWheelAtM1ColCor} that $K_0$ is a broken wheel with principal path $p_0q_0x_0$, so $H_0$ is a broken wheel with principal path $p_0q_0z$.  Since $b_0\neq b_1$, it follows that $\sigma_0(q_0)\neq\sigma_1(q_0)$, and, by 1b) of Theorem \ref{BWheelMainRevListThm2}, we have $\sigma_k(q_0)=b_{1-k}$ for each $k=0,1$. Thus, $B\subseteq (L(q_0)\cap L(x_0))\setminus\{c\}$, and furthermore, since $|\Lambda_{K_0}(c, b_k, \bullet)|=1$ for each $k=0,1$, we have $L(u_1)=\{c, b_0, b_1\}$. \end{claimproof}\end{addmargin}

By definition, $b_0\not\in L(x_1)$. By Subclaim \ref{BColorFlipOneToOtherK0}, $b_0\in (L(x_0)\cap L(q_0))\setminus\{c\}$. Since $|L(z)\setminus\{b_0\}|=4$ and $b_0\in L(q_0)$, there is a $d\in L(z)\setminus\{b_0\}$ such that $|L(q_0)\setminus\{c,d\}|\geq 4$. Now, there is an $L$-coloring $\psi$ of $\{p_0, x_0, z, p_1\}$, where $\psi$ uses the colors $b_0, d, a$ on the respective vertices $x_0, z, p_1$. Since $c\not\in B$ and $z$ is adjacent to neither of $p_0, p_1$, $\psi$ is a proper $L$-coloring of its domain, even if $K_0$ is a triangle. By our choice of color for $z$, we have $|L_{\psi}(q_0)|\geq 3$. Since $x_1$ is uncolored, we have $|L_{\psi}(q_1)|\geq 3$. Since $G$ is a counterexample to Theorem \ref{MainHolepunchPaperResulThm}, $\psi$ extends to an $L$-coloring $\psi'$ of $V(P)\cup\{x_0\}$ such that $\psi'$ does not extend to an $L$-coloring of $G$. Possibly $K_1$ is a triangle, but since $b_0\not\in L(x_1)$, we have $|L_{\psi'}(x_1)|\geq 1$, so it follows from our choice of color for $p_1$ that, the $L$-coloring $(\psi'(z), \psi'(q_1), \psi'(p_1))$ of $zq_1p_1$ extends to an $L$-coloring $\sigma$ of $H_1$, and the union $\sigma\cup\psi'$ is a proper $L$-coloring of its domain. Since $\sigma\cup\psi'$ does not extend to an $L$-coloring of $G$, we have $b_0\not\in\Lambda_{K_0}(c, \psi'(q_0), \bullet)$. Since $\psi'(q_0)\neq\psi(x_0)$, it follows that $K_0$ is not a triangle. Furthermore, $\psi'(q_0)\not\in B$, or else, since $\psi(x_0)=b_0$, we have $\psi'(q_0)=b_1$ and we contradict Subclaim \ref{BColorFlipOneToOtherK0}. Since $\psi'(q_0)\neq c$, we have $\psi'(q_0)\not\in L(u_1)$ by Subclaim \ref{BColorFlipOneToOtherK0}. Since $K_0$ is not a triangle and $\psi'(q_0)\not\in L(u_1)$, we have $\Lambda_{K_0}(c, \psi'(q_0), \bullet)=L(x_0)\setminus\{\psi'(q_0)\}$, as $K_0$ is a broken wheel with principal path $p_0q_0x_0$. Thus, $b_0\in\Lambda_{K_0}(c, \psi'(q_0), \bullet)$, which is false, as indicated above.  

We conclude that, for each $i\in\{0,1\}$, no color of $L(p_i)$ is $K_i$-universal. We have $L(p_0)\subseteq L(u_1)$ and $L(p_1)\subseteq L(u_t)$ by 2) of Theorem \ref{EitherBWheelOrAtMostOneColThm}. By Claim \ref{KiSizeAtMost4UnivColorBlock}, for each $i=0,1$, with $|L(p_i)|\geq 2$, $K_i$ is a broken wheel with principal path $Q_i$, where $|V(K_i)|\leq 4$. \end{claimproof}

With Claims \ref{CompressClaimsFromSec912}-\ref{UnivAndAlmostUnivColorClaims} in hand, we now deal with the case where $M$ is not a triangle. 

\subsection{Dealing with the case where $|V(M)|>3$}\label{SubMNotTrianForCHORD}

Subsection \ref{SubMNotTrianForCHORD} consists of the following result. 

\begin{Claim}\label{MTriangleClaimRuleOutZChord} If $z$ is incident to at least one chord of $C$, then $M$ is a triangle. \end{Claim}

\begin{claimproof} Suppose that $z$ is incident to at least one chord of $C$. By Claim \ref{CompressClaimsFromSec912}, $x_i=v_i$ for each $i=0,1$, and, in particular, neither $K_0$ nor $K_1$ is an edge. Furthermore, $z$ is incident to at least two chords of $C$, so $x_0\neq x_1$. Suppose toward a contradiction that $M$ is not a triangle. By 2) of Claim \ref{UnivAndAlmostUnivColorClaims}, we have $L(p_0)\subseteq L(u_1)$ and $L(p_1)\subseteq L(u_t)$, and furthermore, by 1) of Claim \ref{UnivAndAlmostUnivColorClaims}, for each $i\in\{0,1\}$, there is no almost $K_i$-universal color of $L(p_i)$. It follows from 2) of Claim \ref{KiSizeAtMost4UnivColorBlock} that, for each $i\in\{0,1\}$ with $|L(p_i)|\geq 2$, $K_i$ is a triangle. 

\vspace*{-8mm}
\begin{addmargin}[2em]{0em}
\begin{subclaim}\label{MBWheelButNotTriangle} $M$ is a broken wheel with principal path $x_0zx_1$. \end{subclaim}

\begin{claimproof} Suppose not. By 1) of Theorem \ref{EitherBWheelOrAtMostOneColThm} that there is at most one $L$-coloring of $\{x_0, z, x_1\}$ which does not extend to $L$-color $M$. Consider the following cases:

\textbf{Case 1:} For some $i\in\{0,1\}$, $|L(p_i)|\neq 2$

In this case, we suppose without loss of generality that $|L(p_0)|=1$ and $|L(p_1)|=3$. Thus, $K_1$ is a triangle, so $x_1=u_t$. Since $|L(z)\setminus L(x_0)|\geq 2$, there is an $s\in L(z)\setminus L(x_0)$ such that any $L$-coloring of $\{x_0, z, x_1\}$ using $s$ on $z$ extends to an $L$-coloring of $M$. Since $|L(p_1)|=3$ and $|L(u_t)\setminus\{s\}|\geq 2$, there is an $s_1\in L(p_1)$ such that $|L(u_t)\setminus\{s_1, s\}|\geq 2$. Possibly $s=s_1$. In any case, by Claim \ref{AnyLColMidEndExtPCL11} there is an $L$-coloring $\tau$ of $V(P)$ which does not extend to an $L$-coloring of $G$, where $\tau$ uses $s, s_1$ on the respective vertices $z, p_1$. Since $s\not\in L(x_0)$, it follows from 2) of Theorem \ref{EitherBWheelOrAtMostOneColThm} that the $L$-coloring $(\tau(p_0), \tau(q_0), \tau(z))$ of $p_0q_0z$ extends to an $L$-coloring of $H_0$, so $\tau$ extends to an $L$-coloring $\tau'$ of $V(P\cup H_0)$. Since $x_0x_1\not\in E(G)$, it follows from our choice of color for $p_1$ that there is an $r\in L_{\tau'}(u_t)$ and that $(\tau'(x_0), s, r)$ is a proper $L$-coloring of $x_0zx_1$. By our choice of color for $z$, this $L$-coloring of $x_0zx_1$ extends to an $L$-coloring of $M$, and thus $\tau'$ extends to an $L$-coloring of $G$, a contradiction. 

\textbf{Case 2:} $|L(p_0)|=|L(p_1)|=2$

In this case, each of $K_0$ and $K_1$ is a triangle, so $x_0=u_1$ and $x_1=u_t$. Now, for each $i\in\{0,1\}$, we let $S_i$ be the set of colors $s\in L(z)$ such that $L(x_i)\setminus\{s\}\neq L(p_i)$. For each $I=0,1$, we have $|S_i|\geq 4$, ands since $|L(z)|=5$, we have $|S_0\cap S_1|\geq 3$, so there is an $s\in S_0\cap S_1$ such that any $L$-coloring of $x_0zx_1$ using $s$ on $z$ extends to an $L$-coloring of $G$. Since $|L(p_0)|=|L(p_1)|=2$, and since $G$ contains neither of the edges $zp_0, zp_1$, it follows from our choice of color for $z$ that there is an $L$-coloring $\sigma$ of $\{p_0, z, p_1\}$ with $\phi(z)=s$, where, for each $i=0,1$, $|L_{\phi}(x_i)|\geq 2$. By Claim \ref{AnyLColMidEndExtPCL11}, $\phi$ extends to an $L$-coloring $\phi'$ of $V(P)$ such that $\phi'$ does not extend to an $L$-coloring of $G$. But since each of $x_0, x_1$ has an $L_{\phi'}$-list of size at least one, there exist colors $r_0\in L_{\phi'}(x_0)$ and $r_1\in L_{\phi'}(x_1)$. Possibly $r_0=r_1$, but, in any case, since $x_0x_1\not\in E(G)$, it follows from our choice of color for $z$ that the $L$-coloring $(r_0, s, r_1)$ of $x_0zx_1$ extends to an $L$-coloring of $M$. Since each of $K_0$ and $K_1$ is a triangle, $\phi'$ extends to an $L$-coloring of $G$, a contradiction.  \end{claimproof}\end{addmargin}

We now let $\ell_0, \ell_1$ be indices in $\{1,\ldots, t\}$, where $x_0=u_{\ell_0}$ and $x_1=u_{\ell_1}$. As $M$ is not a triangle, $\ell_0<\ell_1-1$. 

\vspace*{-8mm}
\begin{addmargin}[2em]{0em}
\begin{subclaim}\label{EachIndexNeighborColorSubCLMTriangle} For each index $k\in\{\ell_0+1, \ldots, \ell_1-1\}$, both of the following hold.

\begin{enumerate}[label=\arabic*)]
\itemsep-0.1em
\item If $|L(p_1)|\geq 2$, then $|L(u_k)\cap L(u_{k+1})|\geq |L(p_0)|$; AND
\item If $|L(p_0)|\geq 2$, then $|L(u_{k-1})\cap L(u_k)|\geq |L(p_1)|$. 
\end{enumerate}

 \end{subclaim}

\begin{claimproof}  Let $k\in\{\ell_0, \ldots, \ell_1-1\}$. 1) and 2) are symmetric, so it suffices to prove that $k$ satisfies 1). Suppose that $|L(p_1)|\geq 2$  and suppose toward a contradiction that $|L(u_k)\cap L(u_{k+1})|<|L(p_0)|$. Let $\overleftarrow{G}$ be the subgraph of $G$ bounded by outer cycle $p_0u_1\ldots u_kzq_0$ and let $\overrightarrow{G}$ be the subgraph of $G$ bounded by outer cycle $u_k\ldots u_1p_1q_1z$. Note that $G=\overleftarrow{G}\cup\overrightarrow{G}$ and $\overleftarrow{G}\cap\overrightarrow{G}=u_kz$. Since $|L(u_k)\cap L(u_{k+1})|<|L(p_0)|$ and $|L(u_k)|=3$, we have $|L(u_k)\setminus L(u_{k+1})|>3-|L(p_0)|$, so $|L(u_k)\setminus L(u_{k+1})|\geq 4-|L(p_0)|$. Thus, we get $|L(u_k)\setminus L(u_{k+1})|+|L(p_0)|\geq 4$. The outer cycle of $\overrightarrow{G}$ contains the 3-path $R=p_0q_0zu_k$. Since $|L(u_k)\setminus L(u_{k+1})|+|L(p_0)|\geq 4$ and $|L(z)|\geq 5$, it follows from Theorem \ref{CornerColoringMainRes} that there exist a $c\in L(p_0)$, a $d\in L(u_k)\setminus L(u_{k+1})$, and a pair of distinct $L$-colorings $\phi_0, \phi_1$ of $\{p_0, u_k, z\}$ such that both of the following hold:
\begin{enumerate}[label=\roman*)]
\item $\phi_0, \phi_1$ both use $c, d$ on the respective vertices $p_0, u_k$, but use different colors on $z$; \emph{AND}
\item For each $i\in\{0,1\}$, any extension of $\phi_i$ to an $L$-coloring of $V(R)$ also extends to $L$-color all of $\overleftarrow{G}$. 
\end{enumerate}
Since $|L(p_1)|\geq 2$, $K_1$ is a triangle, so $\ell_1=t$ and $x_1=u_t$. Since $\{\phi_0(z), \phi_1(z)\}|\geq 2$, there exists a $f\in L(p_0)$ and a $k\in\{0,1\}$ such that $L(u_t)\setminus\{f, \phi_k(z)\}|\geq 2$. Thus, $\phi_k$ extends to an $L$-coloring $\psi$ of $\{p_0, z, u_k, p_1\}$ with $|L_{\psi}(u_t)|\geq 2$. Since $u_k$ is an internal vertex of the path $M-z$, $u_k$ is not adjacent to either of $q_0, q_1$, so each of $q_0, q_1$ has an $L_{\psi}$-list of size at least three. Since $G$ is a counterexample, there is an extension of $\psi$ to an $L$-coloring $\psi^*$ of $V(P)\cup\{u_k\}$ such that $\psi^*$ does not extend to an $L$-coloring of $G$. Since the $L$-coloring $(c, \psi^*(q_0), \psi^*(z), d)$ of the 3-path $p_0q_0zu_k$ extends to an $L$-coloring of $H_0$, it follows that the $L$-coloring $(d, \psi^*(z), \psi^*(q_1), \psi^*(p_1))$ of $u_kzq_1p_1$ does not extend to an $L$-coloring of $H_1$. Possibly $k=t-1$, but, in any case, since $d\not\in L(u_{k+1})$, it follows from our chice of colors for $z, p_1$ that $|L_{\psi^*}(u_t)|\geq 1$, so there is an $r\in L_{\psi^*}(u_t)$, and the $L$-coloring $(c, \psi^*(z), r)$ of the 2-path $u_kzu_t$ is a proper $L$-coloring of its domain. Since $c\not\in L(u_k)$, this $L$-coloring extends to an $L$-coloring of the broken wheel $M\setminus\{u_1, \ldots, u_{k-1}\}$ (in particular, if $k=t-1$, then it is already an $L$-coloring of this broken wheel). But then, the $L$-coloring $(d, \psi^*(z), \psi^*(q_1), \psi^*(p_1))$ of  $u_kzq_1p_1$ extends to an $L$-coloring of $H_1$, a contradiction.  \end{claimproof}\end{addmargin}

We now deal with the case where each of $p_0, p_1$ has a list of size two. 

\vspace*{-8mm}
\begin{addmargin}[2em]{0em}
\begin{subclaim}\label{1+3CaseLeftAfter2+2Case} For some $i\in\{0,1\}$, $|L(p_i)|\neq 2$ \end{subclaim}

\begin{claimproof} Suppose not. Thus, $|L(p_0)|=|L(p_1)|=2$, so each of $K_0$ and $K_1$ is a triangle. In particular, $\ell_0=1$ and $\ell_1=t$. We apply a similar argument to that of Subclaim \ref{MBWheelButNotTriangle}. For each $I=0,1$, we let $S_i$ be the set of $c\in L(z)$ such that $L(z)\setminus\{c\}\neq L(p_i)$. Each of $S_0, S_1$ has size at least four, and, since $|L(z)|=5$, $|S_0\cap S_1|\geq 4$. Consider the following cases. 

\textbf{Case 1:} There exists a $k\in\{2, \ldots, t-1\}$ such that $L(u_k)\neq S_0\cap S_1$

In this case, since $|S_0\cap S_1|\geq 3$, there is a $c\in S_0\cap S_1$ with $c\not\in L(u_k)$. Since $c\in S_0\cap S_1$ and $|L(p_0)|=|L(p_1)|=2$, there is an $L$-coloring $\phi$ of $\{p_0, z, p_1\}$ with $\phi(z)=c$, where $|L_{\phi}(u_1)|\geq 2$ and $|L_{\phi}(u_t)|\geq 2$. By Claim \ref{AnyLColMidEndExtPCL11}, $\phi$ extends to an $L$-coloring $\phi'$ of $V(P)$, where $\phi'$ does not extend to an $L$-coloring of $G$. By our choice of $\phi$, we have $|L_{\phi'}(u_1)|\geq 1$ and $|L_{\phi'}(u_t)|\geq 1$. Since $x_0x_1\not\in E(G)$, it follows that there is an extension of $\phi'$ to an $L$-coloring $\phi''$ of $V(P)\cup\{u_1, u_t\}$. Since $c\not\in L(u_k)$, $\phi''$ extends to $L$-color $M$ as well, so $\phi$ extends to an $L$-coloring of $G$, a contradiction.

\textbf{Case 2:} For each $k\in\{2, \ldots, t-1\}$, $L(u_k)=S_0\cap S_1$

In this case, $|S_0\cap S_1|=3$ and, since $|S_i|\geq 4$ for each $i\in\{0,1\}$, it follows that $|S_0|=|S_1|=4$ and $S_0\neq S_1$. Thus, there is a color $r\in L(z)\setminus S_1$. Since $r\not\in S_1$, we have $L(u_t)\setminus\{r\}=L(p_1)$, so $L(u_t)=L(p_1)\cup\{r\}$ as a disjoint union. By Subclaim \ref{EachIndexNeighborColorSubCLMTriangle}, since $|L(p_0)|=|L(p_1)|=2$, we have $|L(u_{t-1})\cap L(u_t)|\geq 2$.  Since $L(u_{t-1})=S_0\cap S_1$, we have $r\not\in L(u_{t-1})$, so $L(u_{t-1})\cap L(u_t)=L(p_1)$. In particular, there is an $s\in L(u_{t-1})\cap L(p_1)$, and we have $s\in S_0\cap S_1$. The trick now is to color $u_t$. Since $s\in S_0$ and $|L(p_0)|=2$, and since $u_1u_t, zp_1\not\in E(G)$, it follows there is an $L$-coloring $\phi$ of $\{p_0, z, u_t, p_1\}$ with $|L_{\phi}(u_1)|\geq 2$, where $\phi$ uses the colors $s,r,s$ on the respective vertices $z, u_t, p_1$. Since $\phi$ uses the same color on $z, p_1$, we have $|L_{\phi}(q_1)|\geq 3$. Since $\textnormal{dom}(\phi)\cap N(q_0)=\{z, p_0\}$, we have $|L_{\phi}(q_0)|\geq 3$ as well. Since $G$ is a counterexample, there is an extension of $\phi$ to an $L$-coloring $\phi'$ of $V(P)\cup\{u_t\}$, where $\phi'$ does not extend to an $L$-coloring of $G$. Since $|L_{\phi'}(u_1)|\geq 1$, there is an $r'\in L_{\phi'}(u_1)$. Possibly $r=r'$, but in any case, since $M$ is not a triangle, $(r', s, r)$ is a proper $L$-coloring of $u_1zu_t$. Since $L(u_{t-1})=S_0\cap S_1$, $r\not\in L(u_{t-1})$, so this $L$-coloring of $u_1zu_t$ extends to $L$-color $M$. Thus, $\phi'$ extends to an $L$-coloring of $G$, a contradiction. \end{claimproof}\end{addmargin}

Applying Subclaim \ref{1+3CaseLeftAfter2+2Case}, we suppose without loss of generality that $|L(p_0)|=1$ and $|L(p_1)|=3$. Thus, $K_1$ is a triangle, so $\ell_1=t$ and $x_1=u_t$. Let $a$ be the lone color of $L(p_0)$. Let $A\subseteq L(z)$ be the set of colors $c$ such that the $L$-coloring of $\{p_0, z\}$ using $a,c$ on the respective vertices $p_0, z$ ins an element of $\textnormal{End}(p_0q_0z, H_0)$. Since $|L(z)|=5$, it follows from Theorem \ref{SumTo4For2PathColorEnds} that $|A|\geq 3$. 

\vspace*{-8mm}
\begin{addmargin}[2em]{0em}
\begin{subclaim}\label{SubCLMainAllListsSameA} For each internal vertex $u$ of $M-z$, $L(u)=A$. \end{subclaim}

\begin{claimproof} Suppose there is an internal vertex $u$ of $M-z$ such that $L(u)\neq A$. Since $|L(u)|=3$, there is a $c\in A\setminus L(u)$. Since $|L(p_1)|=3$, there is an $f\in L(p_1)$ such that $L(u_t)\setminus\{c, f\}|\geq 2$. By Claim \ref{AnyLColMidEndExtPCL11}, there is an $L$-coloring $\sigma$ of $V(P)$ which does not extend to an $L$-coloring of $G$, where $\sigma$ uses $a,c$ on the respective vertices $p_0, z$. Since $c\in A$, $\sigma$ extends to an $L$-coloring $\sigma'$ of $V(P\cup H_0)$. Since $M$ is not a triangle, $x_0u_t\not\in E(G)$, so $|L_{\sigma'}(u_t)|\geq 1$ by our choice of color for $p_1$. Letting $r\in L_{\sigma'(u_t)}$, $(\sigma'(x_{\ell_0}), c, r)$ is a proper $L$-coloring of the 2-path $x_{\ell_0}zu_t$, and since $c\not\in L(u)$, this $L$-coloring extends to $L$-color $M$ as well. Thus, $\sigma'$ extends to an $L$-coloring of $G$, so $\sigma$ extends to an $L$-coloring of $G$, a contradiction. \end{claimproof}\end{addmargin}

Now consider the set $L(z)\setminus L(u_{\ell_0})$. For any $d\in L(z)\setminus L(u_{\ell_0})$, there is an $L$-coloring $\phi$ of $\{p_0, z\}$ using $a,d$ on the respective vertices $p_0, z$, including $d=a$, since $zp_0\not\in E(G)$. It follows from 2) of Theorem \ref{EitherBWheelOrAtMostOneColThm} that, for any such $\phi$ and any extension of $\phi$ to an $L$-coloring $\phi'$ of $\{p_0, q_0, z\}$, $\phi'$ extends to an $L$-coloring of $G$. That is, we have $L(z)\setminus L(u_{\ell_0})\subseteq A$.  By Subclaim \ref{SubCLMainAllListsSameA}, $L(u_{\ell_0+1})|=A$. Since $|L(z)\setminus L(u_{\ell_0})|\geq 2$, it follows that $|L(u_{\ell_0})\cap L(u_{\ell_0+1})|\leq 1$, so there is an $r\in A$ such that $L(u_{\ell_0})\cap L(u_{\ell_0+1})\subseteq\{r\}$. Possibly $L(u_{\ell_0})\cap L(u_{\ell_0+1})=\varnothing$. As $|L(p_1)|\geq 3$, there is an $f\in L(p_1)$ with $|L(u_t)\setminus\{r, f\}|\geq 2$. By Claim \ref{AnyLColMidEndExtPCL11}, there is an $L$-coloring $\tau$ of $V(P)$ which does not extend to an $L$-coloring of $G$, where $\tau$ uses $a, r, f$ on the respective vertices $p_0, z, p_1$. Since $r\in A$, the $L$-coloring $(a, \tau(q_0), r)$ of $p_0q_0z$ extends to an $L$-coloring of $H_0$, so $\tau$ extends to an $L$-coloring $\tau'$ of $V(P\cup H_0)$. Since $x_0u_t\not\in E(G)$, it follows from our choice of $f$ that $|L_{\tau'}(u_t)|\geq 1$, so there is an $r'\in L_{\tau'}(u_t)$, and $(\tau'(u_{\ell_0}), r, r')$ is a proper $L$-coloring of $u_{\ell_0}zu_t$. Since $L(u_{\ell_0})\cap L(u_{\ell_0+1})\subseteq\{r\}$, we have $\tau'(u_{\ell_0})\not\in L(u_{\ell_0+1})$, so $\tau'$ extends to $L$-color $M$ as well, and thus $\tau$ extends to an $L$-coloring of $G$, a contradiction. This proves Claim \ref{MTriangleClaimRuleOutZChord}. \end{claimproof}

\subsection{Dealing with the case where $M$ is a triangle and $|L(x_0)\cap L(x_1)|\leq 1$}\label{TriangleAndLx0x1InserAtMost1}

In this subsection, we show that, if $M$ is a triangle, then $|L(x_0)\cap L(x_1)|\geq 2$. In Subsection \ref{MTriangleInterSectAtLeast2X0X1}, we then rule out the possibility that $M$ is a triangle and conclude that $z$ is incident to at most one chord of $C$.

\begin{Claim}\label{MTriangleIntersectionX0X1AtLeast2} If $z$ is incident to at least one chord of $C$, then either $|L(p_0)|=|L(p_1)|=2$ or $|L(x_0)\cap L(x_1)|\geq 2$. \end{Claim}

\begin{claimproof} Suppose $z$ is incident to at least one chord of $C$. By Claim \ref{MTriangleClaimRuleOutZChord}, $M$ is a triangle, so $z$ is incident to precisely two chords of $C$ and $x_0x_1$ is an edge of the path $u_1\ldots u_t$. By Claim \ref{CompressClaimsFromSec912}, $G$ contains each of the edges $q_0x_0$ and $q_1x_1$. Suppose toward a contradiction that $|L(x_0)\cap L(x_1)|<2$ and there is an endpoint of $P$ with an $L$-list of size not equal to two. Suppose without loss of generality that $|L(p_0)|=1$ and $|L(p_1)|=3$. By 2) of Claim \ref{UnivAndAlmostUnivColorClaims}, $L(p_0)\subseteq L(u_1)$ and $L(p_1)\subseteq L(u_t)$, and furthermore, $K_1$ is a broken wheel with principal path $Q_1$, where $|V(K_1)|\leq 4$. Let $a$ be the lone color of $p_0$ and $L(p_1)=\{b_0, b_1, b_2\}$. We now fix an $L$-coloring $\psi$ of $\{p_0, x_0\}$ with $\psi(p_0)=a$, where any extension of $\psi$ to an $L$-coloring of the 2-path $p_0q_0x_0$ extends to $L$-color all of $K_0$. By Theorem \ref{SumTo4For2PathColorEnds}, such a $\psi$ exists, since $|\{a\}|+|L(x_0)|=4$. We also fix an $s\in L(z)$ such that $L(q_0)\setminus\{a,s\}|\geq 4$. Such an $s$ exists, as $|L(z)|=5$. 

\vspace*{-8mm}
\begin{addmargin}[2em]{0em}
\begin{subclaim}\label{AugColorForX0DistinctS}  $\psi(x_0)\neq s$. \end{subclaim}

\begin{claimproof} Suppose toward a contradiction that $\psi(x_0)=s$. Now we simply choose a color $r\in L(z)\setminus (L(x_1)\cup\{s\})$. Such an $r$ exists, since $|L(x_1)\cup\{s\}|\leq 4$. Since $p_0z p_1z\not\in E(G)$, it follows that, for each $j=0,1,2$, there is an extension of $\psi$ to an $L$-coloring $\phi^j$ of $\{p_0, x_0, z, p_1\}$, where $\phi^j$ uses $r, b_j$ on the respective vertices $z, p_1$. Since $x_0$ is colored with $s$ in each case, we have $L_{\phi^j}(q_0)|\geq 3$ for each $j=0,1,2$. Since $x_1$ is uncolored, we have $|L_{\phi^j}(q_1)|\geq 3$ for each $j=0,1,2$. Since $G$ is a counterexample, it follows that, for each $j=0,1,2$, $\phi^j$ extends to an $L$-coloring $\tau^j$ of $V(P)\cup\{x_0\}$, where $\tau^j$ dos not extend to an $L$-coloring of $G$. For each $j=0,1,2$, since $p_0, x_0$ are already colored by $\phi^j$ and $\phi^j$ restricts to an element of $\textnormal{End}(p_0q_0x_0, K_0)$, it follows that $\Lambda_{K_1}(\bullet, \tau^j(q_0), b_j)\subseteq\{s, r\}$, or else $\tau^j$ extends to an $L$-coloring of $G$. Since $r\not\in L(x_1)$, it follows that, for each $j=0,1,2$, we have $\Lambda^{x_1q_1p_1}_{K_1}(\bullet, \tau^j(q_0), b_j)=\{s\}$. This contradicts 1c) of Theorem \ref{BWheelMainRevListThm2}. \end{claimproof}\end{addmargin}

\vspace*{-8mm}
\begin{addmargin}[2em]{0em}
\begin{subclaim}\label{ContainmentX1sNotinx0} $\{\psi(x_0), s\}\subseteq L(x_1)$ and $s\not\in L(x_0)$. In particular, $L(x_0)\cap L(x_1)=\{\psi(x_0)\}$. 
\end{subclaim}

\begin{claimproof} By Subclaim \ref{AugColorForX0DistinctS}, $\psi(x_0)\neq s$. Since $p_0z, p_1z\not\in E(G)$, it follows that, for each $j=0,1,2$, there is an extension of $\psi$ to an $L$-coloring $\sigma^j$ of $\{p_0, x_0, z, p_1\}$, where $\sigma^j$ uses $s, b_j$ on the respective vertices $z, p_1$. As above, for each $j=0,1,2$, since $s$ is used by $z$, we have $|L_{\sigma^j}(q_0)|\geq 3$, and, since $x_1$ is uncolored, we have $|L_{\sigma^j}(q_1)|\geq 3$. Since $G$ is a counterexample, it follows that, for each $j=0,1,2$, $\sigma^j$ extends to an $L$-coloring $\pi^j$ of $V(P)\cup\{x_0\}$, where $\pi^j$ does not extend to an $L$-coloring of $G$. For each $j=0,1,2$, since $p_0, x_0$ are already colored and $\pi^j$ restricts to an element of $\textnormal{End}(p_0q_0x_0, K_0)$, it follows that $\Lambda_{K_1}(\bullet, \pi^j(p_1), )\subseteq\{\psi(x_0), s\}$, or else $\pi^j$ extends to an $L$-coloring of $G$. If $s\not\in L(x_1)$, then, for each $j=01,2,$, we have $\Lambda_{K_1}(\bullet, \pi^j(q_1), b_j)=\{\psi(x_0)\}$. Since $b_0, b_1, b_2$ are distinct, this contradicts 1c) of Theorem \ref{BWheelMainRevListThm2}, as in Case 1 above. Thus, $s\in L(x_1)$. Likewise, if $\psi(x_0)\not\in L(x_1)$, then, for each $j=0,1,2$, we have $\Lambda_{K_1}(\bullet, \pi^j(q_1), b_j)=\{s\}$, which again contradicts 1c) of Theorem \ref{BWheelMainRevListThm2}. We conclude that $\{s, \psi(x_0)\}\subseteq L(x_1)$. By assumption, $|L(x_0)\cap L(x_1)|\leq 1$, so we have $L(x_0)\cap L(x_1)=\{\psi(x_0)\}$, and $s\not\in L(x_0)$.\end{claimproof}\end{addmargin}

\vspace*{-8mm}
\begin{addmargin}[2em]{0em}
\begin{subclaim} For any $\sigma\in\textnormal{End}(p_1q_1x_1, K_1)$ with $\sigma(x_1)\neq\psi(x_0)$, we have $L(q_1)\setminus\{\sigma(x_1), \sigma(p_1)\}|=3$. \end{subclaim}

\begin{claimproof} Let $\sigma\in\textnormal{End}(p_1q_1x_1, K_1)$, where $\sigma(x_1)\neq\psi(x_0)$ and suppose toward a contradiction that $L(q_1)\setminus\{\sigma(x_1), \sigma(p_1)\}|\neq 3$. Thus, $L(q_1)\setminus\{\sigma(x_1), \sigma(p_1)\}|\geq 4$. The idea now is to simply leave $x_0$ uncolored. Since $|L(z)|\geq 5$, there is an $r\in L(z)\setminus (L(x_0)\cup\{\sigma(x_1)\})$, and since $p_1x_1\not\in E(G)$, $\sigma$ extends to an $L$-coloring $\sigma'$ of $\{p_0, z, x_1, p_0\}$, where $\sigma'(z)=r$. Since $L(q_1)\setminus\{\sigma(x_1), \sigma(p_1)\}|\geq 4$, we have $|L_{\sigma'}(q_1)|\geq 3$. Since $x_0$ is uncolored, we have $|L_{\sigma'}(q_0)|\geq 3$ as well. Since $G$ is a counterexample, $\sigma'$ extends to an $L$-coloring $\tau$ of $V(P)\cup\{x_1\}$, where $\tau$ does not extend to an $L$-coloring of $G$. Since $r\not\in L(x_0)$, it follows from 2) of Theorem \ref{EitherBWheelOrAtMostOneColThm} that the $L$-coloring $(\tau(p_0), \tau(q_0), r)$ of $p_0q_0z$ extends to an $L$-coloring of $H_0$. By assumption, $\tau(x_1)=\sigma(x_1)\neq\psi(x_0)$, and since $\{\psi(x_0)\}=L(x_0)\cap L(x_1)$ by Subclaim \ref{ContainmentX1sNotinx0}, we have $\tau(x_1)\not\in L(x_0)$. Thus, $\tau$ extends ot an $L$-coloring $\tau'$ of $V(P\cup H_0)\cup\{x_1\}$. Since $\tau'$ restricts to an element of $\textnormal{End}(x_1q_1p_1, K_1)$, the $L$-coloring $(\tau'(x_1), \tau'(q_1), \tau'(p_1))$ of $x_1q_1p_1$ extends to $L$-color $K_1$, so $\tau'$ extends to an $L$-coloring of $G$. Thus, $\tau$ extends to an $L$-coloring of $G$, a contradiction. \end{claimproof}\end{addmargin}

We now let $A=\textnormal{Col}(\textnormal{End}(p_0q_0z, H_0)\mid z)$. Each element of $\textnormal{End}(p_0q_0z, H_0)$ uses $a$ on $p_0$, since $L(p_0)=\{a\}$. By Theorem \ref{SumTo4For2PathColorEnds}, we have $|A|\geq 3$. 

\vspace*{-8mm}
\begin{addmargin}[2em]{0em}
\begin{subclaim}\label{DistinctPsiX0AndASub} $\psi(x_0)\not\in A$. In particular, $A\setminus L(x_1)\neq\varnothing$. \end{subclaim}

\begin{claimproof} Suppose that $\psi(x_0)\in A$. Since $|L(p_0)|=3$, it follows from Theorem \ref{SumTo4For2PathColorEnds} that there is a $d\in L(p_1)$ such that any $L$-coloring of $\{z, p_1\}$ using $\psi(x_0), d$ on the respective vertices $z, p_1$ extends to an $L$-coloring of $H_1$. By Claim \ref{AnyLColMidEndExtPCL11}, there is an $L$-coloring $\tau$ of $V(P)$ which does not extend to an $L$-coloring of $G$, where $\tau$ uses $a, \psi(x_0), d$ on the respective vertices $p_0, z, p_1$. Since $\psi(x_0)\in A$, the $L$-coloring $(\tau(p_0), \tau(q_0), \tau(z))$ of $p_0q_0z$ extends to an $L$-coloring $\tau_0$ of $V(H_0)$. Likewise, by our choice of color for $p_1$, the $L$-coloring $(\tau(z), \tau(q_1), \tau(p_1))$ of $zq_1p_1$ extend to an $L$-coloring $\tau_1$ of $V(H_1)$. By Subclaim \ref{ContainmentX1sNotinx0}, we have $L(x_0)\cap L(x_1)=\{\psi(x_0)\}$. Since $z$ is colored by $\psi(x_0)$, it follows that $\tau_0(x_0)\neq\tau_1(x_1)$, so the union $\tau_0\cup\tau_1$ is an extension of $\tau$ to a proper $L$-coloring of $G$, contradicting the fact that $\tau$ does not extend to an $L$-coloring of $G$. We conclude that $\psi(x_0)\not\in A$. Since $\psi(x_0)\in L(x_1)$, we have $A\neq L(x_1)$. Since $|L(x_1)|=3$ and $|A|\geq 3$, we have $A\setminus L(x_1)\neq\varnothing$. \end{claimproof}\end{addmargin}

\vspace*{-8mm}
\begin{addmargin}[2em]{0em}
\begin{subclaim}\label{H1NotTriangleMTriangleSubClS}  $|V(K_1)|=4$. \end{subclaim}

\begin{claimproof} Suppose not. Thus, $K_1$ is a triangle, and $x_1=u_t$. By Subclaim \ref{ContainmentX1sNotinx0}, $s\in L(x_1)$. Now consider the following cases:

\textbf{Case 1:} $s\in L(p_1)$

In this case, we let $f$ be the lone color of $L(x_1)\setminus\{s, \psi(x_0)\}$. Since $p_1z\not\in E(G)$, there is an $L$-coloring $\sigma$ of $\{p_0, z, x_0, x_1, p_1\}$ which uses $s$ on both of $z, p_1$ and colors the respective vertices $p_0, x_0, x_1$ with $a, \psi(x_0), f$. Since the same color is used on $z, p_1$, we have $|L_{\sigma}(q_1)|\geq 3$. By our choice of $s$, we have $|L_{\sigma}(q_0)|\geq 3$ as well. Since $\sigma$ restricts to an element of $\textnormal{End}(p_0q_0z, K_0)$, it follows that $\sigma$ is $(P, G)$-sufficient, contradicting our assumption that $G$ is a counterexample. 

\textbf{Case 2:} $s\not\in L(p_1)$

Since $s\not\in L(p_1)$ by assumption, and $|L(p_1)|=3$, it follows that there is a $d\in L(p_1)\setminus L(u_t)$. By Subclaim \ref{DistinctPsiX0AndASub}, there is an $r\in A\setminus L(x_1)$. By Claim \ref{AnyLColMidEndExtPCL11}, there is an $L$-coloring $\tau$ of $V(P)$ which does not extend to an $L$-coloring of $G$, where $\tau$ uses the colors $a, r, d$ on the respective vertices $p_0, z, p_1$. Since $r\in A$ and $q_1, p_1$ have no neighbors in $H_0-z$, $\tau$ extends to an $L$-coloring $\tau'$ of $V(P)\cup\{q_1, p_1\}$. Since $H_1$ is a triangle and $r,d\not\in L(u_t)$, we have $|L_{\tau'}(u_t)|\geq 1$, so $\tau'$ extends to an $L$-coloring of $G$, a contradiction.  \end{claimproof}\end{addmargin}

\vspace*{-8mm}
\begin{addmargin}[2em]{0em}
\begin{subclaim}\label{InAddForSizeK134Sub} $|L(x_1)\cap L(p_1)|\leq 1$ \end{subclaim}

\begin{claimproof} Suppose toward a contradiction that $|L(x_1)\cap L(p_1)|\geq 2$. By Subclaim \ref{H1NotTriangleMTriangleSubClS}, $|V(K_1)|=4$ and $K_1-q_1=u_{t-1}u_tp_1$. In particular, $x_1=u_{t-1}$ and $p_1x_1\not\in E(G)$. Since $|L(x_1)\cap L(p_1)|\geq 2$, there is an $r\in L(x_1)\cap L(p_1)$ with $r\neq\psi(x_0)$. By Subclaim \ref{DistinctPsiX0AndASub}, there is an $r'\in A\setminus L(x_1)$. In particular, $r'\neq r$, and, since $p_1x_1\not\in E(G)$, there is an $L$-coloring $\sigma$ of $\{p_0, z, x_1, p_1\}$, where $\sigma$ uses $a, r'$ on the respective vertices $p_0, z$ and uses $r$ on both of $x_1, p_1$. Since $x_0$ is uncolored, we have $|L_{\sigma}(q_0)|\geq 3$, and since $\sigma$ uses the same color on $x_1, p_1$, we have $|L_{\sigma}(q_1)|\geq 3$. Since $G$ is a counterexample, $\sigma$ extends to an $L$-coloring $\sigma'$ of $V(P)\cup\{x_1\}$, where $\sigma'$ does not extend to an $L$-coloring of $G$. Since $K_1-q_1$ is a 2-path and $\sigma$ uses the same color on $p_1, x_1$, it follows that $\sigma'$ extends to an $L$-coloring $\sigma'_1$ of $V(P\cup H_1)$. Likewise, since $r'\in A$, the $L$-coloring $(\sigma'(p_0), \sigma'(q_0), \sigma'(z))$ of $p_0q_0z$ extends to an $L$-coloring $\sigma'_0$ of $H_0$. By Subclaim \ref{ContainmentX1sNotinx0}, $L(x_0)\cap L(x_1)=\{\psi(x_0)\}$. Since $x_1$ is colored with $r$ and $r\neq\psi(x_0)$, it follows that $\sigma'_0(x_0)\neq r$, so $\sigma'_0\cup\sigma'_1$ is a proper $L$-coloring of its domain, which is $G$, contradicting the fact that $\sigma'$ does not extend to an $L$-coloring of $G$. \end{claimproof}\end{addmargin}

Recall now that, since $|L(p_1)|=3$ and $L(u_t)\subseteq L(p_1)$, we have $L(p_1)=L(u_t)$. Since $|V(K_1)|=4$, so $x_1=u_{t-1}$, and, by Subclaim \ref{InAddForSizeK134Sub}, $|L(x_1)\cap L(p_1)|\leq 1$. As $L(p_1)=L(u_t)$, we have $|L(u_t)\cap L(u_{t-1})|\leq 1$, and there is a $c\in L(p_1)$ with $(L(u_t)\setminus\{c\})\cap L(u_{t-1})=\varnothing$. Thus, any $L$-coloring of $\{x_1, q_1, p_1\}$ using $c$ on $p_1$ extends to an $L$-coloring of $K_1$. Since $K_1$ is not a triangle, and $p_1x_1\not\in E(G)$, $c$ is a $K_1$-universal color of $L(p_1)$, contradicting 2) of Claim \ref{UnivAndAlmostUnivColorClaims}. This completes the proof of Claim \ref{MTriangleIntersectionX0X1AtLeast2}. \end{claimproof}

With Claim \ref{MTriangleIntersectionX0X1AtLeast2} in hand, we now entirely rule out the possibility that $L(x_0)$ and $L(x_1)$ share fewer than two colors. We just need to deal with the case where $|L(p_0)|=|L(p_1)|=2$. This is easier than the case above.

\begin{Claim}\label{22CaseForp0p1x0x1InsersectAtLeast2} If $z$ is incident to at least one chord of $C$, then, $|L(x_0)\cap L(x_1)|\geq 2$. \end{Claim}

\begin{claimproof} Suppose that $z$ is incident to at least one chord of $C$. By Claim \ref{MTriangleClaimRuleOutZChord}, $M$ is a triangle, so $z$ is incident to precisely two chords of $C$ and $x_0x_1$ is an edge of the path $u_1\ldots u_t$. Suppose toward a contradiction that $|L(x_0)\cap L(x_1)|<2$. By Claim \ref{MTriangleIntersectionX0X1AtLeast2}, we have $|L(x_0)|=|L(x_1)|=2$. By Claim \ref{CompressClaimsFromSec912}, $G$ contains each of the edges $q_0x_0$ and $q_1x_1$. By 2) of Claim \ref{UnivAndAlmostUnivColorClaims}, we have $L(p_0)\subseteq L(u_1)$ and $L(p_1)\subseteq L(u_t)$, and for each $i=0,1$, $K_i$ is a  broken wheel with principal path $Q_i$, where $|V(K_i)|\leq 4$, so $H_i$ is also broken wheel. In particular, $V(G)=V(C)$, i.e $G$ is outerplanar, and $|V(G)|\leq 9$. Since $|L(p_0)|=|L(p_1)|=2$, we let $L(p_0)=\{a_0, a_1\}$ and $L(p_1)=\{b_0, b_1\}$.

\vspace*{-8mm}
\begin{addmargin}[2em]{0em}
\begin{subclaim}\label{K0K1TriangleAndS0S1InX0X1}
 For each $i=0,1$, $K_i$ is a triangle and $L(p_i)=L(x_i)\setminus L(x_{1-i})$.
\end{subclaim}

\begin{claimproof} Since $|L(x_0)\cap L(x_1)|<2$, there exist sets $S_0, S_1$ where $S_0\cap S_1=\varnothing$ and, for each $i=0,1$, $S_i\subseteq L(x_i)$ and $|S_i|=2$. Since $|S_0|=|S_1|=2$, there exists an $s\in L(z)\setminus (S_0\cup S_1)$. Since $p_0z, p_1z\not\in E(G)$, it follows from Claim \ref{AnyLColMidEndExtPCL11} that, for each pair of indices $0\leq j,k\leq 1$, there is an $L$-coloring $\phi^{jk}$ of $V(P)$ which does not extend to an $L$-coloring of $G$, where $\phi^{jk}$ uses $a_j, s, b_k$ on the respective vertices $p_0, z, p_1$. Now,, $M$ is a triangle and $|L(x_0)\cap L(x_1)|<2$. Since $s\not\in S_0\cup S_1$ and $|L(x_0)|=|L(x_1)|=3$, it follows that there is a lone color $r\in L(x_0)\cap L(x_1)$ (where possibly $r=s$) such that the following holds.
\begin{equation}\tag{$\dagger$}\Lambda_{K_0}(a_j, \phi^{jk}(q_0), \bullet)=\Lambda_{K_1}(\bullet, \phi^{jk}(q_1), b_k)=\{r\}\ \textnormal{for each}\ 0\leq j,k\leq 1\end{equation}
It now follows from 1c) of Theorem \ref{BWheelMainRevListThm2} applied to each of the broken wheels $K_0$ and $K_1$ that, for each pair $0\leq j,k\leq 1$, we have $\phi^{jk}(q)=a_{1-j}$ and $\phi^{jk}(q_1)=b_{1-k}$. In particular, letting $\ell\in\{1, \ldots, t\}$ be the unique index such that $x_0=u_{\ell}$ and $x_1=u_{\ell+1}$, it follows from $(\dagger)$ that $|\Lambda_{K_0}(a_j, a_{1-j}, \bullet)|=|\Lambda_{K_1}(\bullet, b_{1-k}, b_k)|=1$ for each $0\leq j,k\leq 1$, and, in particular, we get both of the following containments.
\begin{center}
$\{a_0, a_1\}\subseteq L(u_1)\cap\ldots\cap L(u_{\ell-1})\cap (L(u_{\ell})\setminus\{r\})$\\
$\{b_0, b_1\}\subseteq (L(u_{\ell+1})\setminus\{r\})\cap L(u_{\ell_2})\cap\ldots L(u_t)$
\end{center}
Since $\{a_0, a_1\}=L(p_0)$ and $\{b_0, b_1\}=L(p_1)$, it follows that each of $K_0$ and $K_1$ is a triangle, or else we contradict Claim \ref{KiBWheelSizeG}. Furthermore, for each $i=0,1$, we have $L(x_i)=L(p_i)\cup\{r\}$ as a disjoint union, and $S_i=L(p_i)$. In particular, since $|L(x_0)\cap L(x_1)|\leq 1$, we have $L(x_0)\cap L(x_1)=\{r\}$, so $L(p_0)\cap L(p_1)=\varnothing$ and, for each $i=0,1$, we have $L(p_i)=L(x_i)\setminus L(x_{1-i})$. This proves Subclaim \ref{K0K1TriangleAndS0S1InX0X1}. \end{claimproof}\end{addmargin}

As $M$ is a triangle, Subclaim \ref{K0K1TriangleAndS0S1InX0X1} implies that $t=2$ and $|V(G)|=7$, so $G$ consists of the graph in Figure \ref{FigureFor22CaseMK0K1Triangles}. 

\begin{center}\begin{tikzpicture}
\node[shape=circle,draw=black] (p0) at (0,0) {$p_0$};
\node[shape=circle,draw=black] (u1) at (2, 0) {$u_1$};
\node[shape=circle,draw=black]  (u2) at (4, 0) {$u_2$};
\node[shape=circle,draw=black]  (p1) at (6, 0) {$p_1$};
\node[shape=circle,draw=black] (q0) at (0,2) {$q_0$};
\node[shape=circle,draw=black] (q1) at (6,2) {$q_1$};
\node[shape=circle,draw=black] (z) at (3,3) {$z$};

 \draw[-] (p0) to (q0) to (z) to (q1) to (p1) to (u2) to (u1) to (p0);
\draw[-] (q0) to (u1) to (z) to (u2) to (q1);
\end{tikzpicture}\captionof{figure}{}\label{FigureFor22CaseMK0K1Triangles}\end{center}

\vspace*{-8mm}
\begin{addmargin}[2em]{0em}
\begin{subclaim}\label{P0P1ConShareNoColorsZSub} For each $i=0,1$, $L(z)\cap L(p_i)=\varnothing$. \end{subclaim}

\begin{claimproof} Suppose not, and suppose toward a contradiction that $L(z)\cap L(p_0)\neq\varnothing$, and, in particular, $a_0\in L(z)\cap L(p_0)$. Let $b\in L(p_1)$. By Subclaim \ref{K0K1TriangleAndS0S1InX0X1}, $a_1\in L(u_1)$. Let $\phi$ be an $L$-coloring of $V(G)\setminus\{q_0, q_1\}$, where $\phi$ uses $a_1, b$ on the respective vertices $u_1, p_1$ and $\phi$ colors both of $p_0$ and $z$ with $a_0$. Since $u_2$ is uncolored, we have $|L_{\phi}(q_1)|\geq 3$. Since $\phi$ uses the same color on $z$ and $p_0$, we have $|L_{\phi}(q_0)|\geq 3$. By Subclaim \ref{K0K1TriangleAndS0S1InX0X1}, neither $a_0$ nor $a_1$ lies in $L(u_2)$, so $|L_{\phi}(u_2)|\geq 2$. Thus, any extension of $\phi$ to an $L$-coloring of $V(P)\cup\{u_1\}$ extends to an $L$-coloring of $G$, contradicting our assumption that $G$ is a counterexample.  \end{claimproof}\end{addmargin}

\vspace*{-8mm}
\begin{addmargin}[2em]{0em}
\begin{subclaim}\label{Q0Q1ContainColorsWithZSubCL} For each $i=0,1$,  $L(p_i)\subseteq L(q_i)$ and, in particular, $|L(z)\setminus L(q_i)|\geq 2$. \end{subclaim}

\begin{claimproof} Suppose there is an $i\in\{0,1\}$ and an $a\in L(p_i)$ with $|L(q_i)\setminus\{a\}|\geq 5$, say $i=0$ without loss of generality, and suppose (again without loss of generality) that $|L(q_0)\setminus\{a_0\}|\geq 5$. By Subclaim \ref{K0K1TriangleAndS0S1InX0X1}, $a_1\in L(u_1)\setminus L(u_2)$. Since $|L(z)=5$, there is an $s\in L(z)\setminus (\{a_1\}\cup L(u_2))$. Let $\phi$ be an $L$-coloring of $\{p_0, u_1, z, p_1\}$, where $\phi$ uses $a_0, a_1, s$ on the respective vertices $p_0, u_1, z$. Since $|L(q_0)\setminus\{a_0\}|\geq 5$, we have $|L_{\phi}(q_0)|\geq 3$. Since $u_2$ is uncolored, we have $|L_{\phi}(q_1)|\geq 3$. Since $a_1\not\in L(u_2)$,  it follows from our choice of $s$ that $|L_{\phi}(u_2)|\geq 2$. Thus, any extension of $\phi$ to an $L$-coloring of $V(P)\cup\{u_1\}$ extends to an $L$-coloring of $G$, contradicting our assumption that $G$ is a counterexample. We conclude that, for each $i=0,1$, $L(p_i)\subseteq L(q_i)$ and $|L(q_i)|=5$. Since $L(p_i)\cap L(z)=\varnothing$ by Subclaim \ref{P0P1ConShareNoColorsZSub}, we have $|L(z)\setminus L(q_i)|\geq 2$ for each $i=0,1$. \end{claimproof}\end{addmargin}

By Subclaim \ref{P0P1ConShareNoColorsZSub}, we have $|L(z)\setminus L(q_0)|\geq 2$. Since $|L(x_0)\cap L(x_1)|\leq 1$ by assumption, there is an $s\in L(z)\setminus L(q_0)$ with $s\not\in L(x_0)\cap L(x_1)$. By Subclaim \ref{K0K1TriangleAndS0S1InX0X1}, $L(p_1)=L(x_1)\setminus L(x_0)$ and, by Subclaim \ref{P0P1ConShareNoColorsZSub}, $L(p_1)\cap L(z)=\varnothing$, so $s\not\in L(x_1)$. Likewise, since $s\not\in L(q_0)$, we have $s\not\in L(p_0)$ by Subclaim \ref{Q0Q1ContainColorsWithZSubCL}, so $s\not\in\{a_0, a_1\}$. Now we simply let $\phi$ be an $L$-coloring of $\{p_0, u_1, z, p_1\}$, where $\phi$ uses $a_0, a_1, s$ on the respective vertices $p_0, u_1, s$. By our choice of color for $z$, we have $|L_{\phi}(q_0)|\geq 3$. Since $u_2$ is uncolored, we have $|L_{\phi}(q_1)|\geq 3$ as well. Since neither $a_1$ nor $s$ lies in $u_2$ ($=x_1$), we have $|L_{\phi}(u_2)|\geq 2$, so any extension of $\phi$ to an $L$-coloring of $V(P)\cup\{u_1\}$ extend to an $L$-coloring of $G$, contradicting our assumption that $G$ is a counterexample. \end{claimproof}

\subsection{Dealing with the case where $M$ is a triangle and $|L(x_0)\cap L(x_1)|\geq 2$}\label{MTriangleInterSectAtLeast2X0X1}

With Claim \ref{22CaseForp0p1x0x1InsersectAtLeast2} in hand, we can now rule out the possibility that there are any chords of $C$ incident to $z$. 

\begin{Claim}\label{IfZTwoChordsContainmentBothTriangleK0K1} If $z$ is incident to at least one chord of $C$, then all three of A), B), and C) hold.
\begin{enumerate}[label=\Alph*)] 
\itemsep-0.1em
\item For each $i\in\{0,1\}$ with $|L(p_i)|\geq 2$, $K_i$ is a triangle; AND
\item For each $i\in\{0,1\}$, $L(p_i)\cap L(z)\subseteq L(q_{1-i})\setminus L(p_{1-i})$; 
\item $|L(p_0)|=|L(p_1)|=2$. 
\end{enumerate}
\end{Claim}

\begin{claimproof} By Claim \ref{CompressClaimsFromSec912}, $G$ contains each of the edges $q_0x_0$ and $q_1x_1$. By Claim \ref{MTriangleClaimRuleOutZChord}, $M$ is a triangle, so $z$ is incident to precisely two chords of $C$ and $x_0x_1$ is an edge of the path $u_1\ldots u_t$. By Claim \ref{22CaseForp0p1x0x1InsersectAtLeast2}, we have $|L(x_0)\cap L(x_1)|\geq 2$. Since $|L(z)|=5$ and $|L(x_0)\cap L(x_1)|\geq 2$, we have $L(z)\setminus (L(x_0)\cup L(x_1))\neq\varnothing$. By 2) of Claim \ref{UnivAndAlmostUnivColorClaims}, for each $i\in\{0,1\}$, no color is $K_i$-universal and $L(p_0)\subseteq L(u_1)$ and $L(p_1)\subseteq L(u_t)$.  

Now we prove A). Let $i\in\{0,1\}$ with $|L(p_i)|\geq 2$, say $i=1$ without loss of generality. Again by 2) of Claim \ref{UnivAndAlmostUnivColorClaims}, $K_i$ is a broken wheel with principal path $Q_i$ and $|V(K_i)\leq 4$. We just need to show that $|V(K_1)|\neq 4$. Suppose toward a contradiction that $|V(K_1)|=4$. By Claim \ref{KiBWheelSizeG}, we have $L(p_1)\not\subseteq L(u_{t-1})$. Since $L(p_1)\subseteq L(u_t)$ and $|L(p_1)|\geq 2$, there is a $c\in L(p_1)$ with $L(u_t)\setminus\{c\}\not\subseteq L(u_{t-1})$. Since $L(z)\setminus (L(x_0)\cup L(x_1))\neq\varnothing$, there is an $s\in L(z)$ with $s\not\in L(x_0)\cup L(x_1)$. By Claim \ref{AnyLColMidEndExtPCL11}, there is an $L$-coloring $\tau$ of $V(P)$ which does not extend to an $L$-coloring of $G$, where $\tau$ uses $s, c$ on the respective vertices $z, p_1$. Since $s\not\in L(x_0)$, it follows from 2) of Theorem \ref{EitherBWheelOrAtMostOneColThm} that the $L$-coloring $(\tau(p_0), \tau(q_0), \tau(z))$ of $p_0q_0z$ extends to an $L$-coloring $\tau_0(z)$ of $H_0$. Let $f=\tau_0(x_0)$. Now, $s\not\in L(x_1)$, and since $M$ is a triangle and $\tau$ does not extend to an $L$-coloring of $H_1$, we have $\Lambda_{K_1}(\bullet, \tau(q_0), c)=\{f\}$. Since $|V(K_1)|=4$, we have $\tau(q_0)\in L(u_{t-1})\cap L(u_t)$ and $f\in L(u_{t-1})$. Thus, $L(u_t)=\{c, f, \tau(q_0)\}$. But since $L(u_t)\setminus\{c\}\not\subseteq L(u_{t-1})$ and $\tau(q_0)\in L(u_{t-1})$, we have $f\not\in L(u_{t-1})$, which is false, since $\Lambda_{K_1}(\bullet, \tau(q_0), c)=\{f\}$. This proves A) of Claim \ref{IfZTwoChordsContainmentBothTriangleK0K1}. Now we prove B) and C) together. We first prove the following intermediate result.

\vspace*{-8mm}
\begin{addmargin}[2em]{0em}
\begin{subclaim}\label{ForRepBSize7Intermed} Either $|V(G)|>7$ or, for each $i\in\{0,1\}$, $L(p_i)\cap L(z)\subseteq L(q_{1-i})\setminus L(p_{1-i})$ \end{subclaim}

\begin{claimproof} Suppose toward a contradiction that Subclaim \ref{ForRepBSize7Intermed} does not hold. In particular, $|V(G)|\leq 7$. Since $|V(G)|\leq 7$, each of $K_0$ and $K_1$ is a triangle, and $|V(G)|=7$. Thus, $x_0=u_1$ and $x_1=u_t=u_2$, and we have the same graph as in Figure \ref{FigureFor22CaseMK0K1Triangles}, and $V(C)=V(G)$. Consider the following cases.

\textbf{Case 1:}  $L(p_0)\cap L(z)\cap L(p_1)\neq\varnothing$ 

In this case, we let $a$ be a color of $L(p_0)\cap L(z)\cap L(p_1)$. Since $\{p_0, z, p_1\}$ is an independent subset of $V(G)$, there is an $L$-coloring $\phi$ of $\{p_0, z, p_1\}$ using $a$ on all three of $p_0, z, p_1$. Furthermore, $|L_{\phi}(u_1)|\geq 2$ and $|L_{\phi}(u_2)|\geq 2$, since the same color is used on all of $p_0, z, p_1$. Thus, $\phi$ extends to an $L$-coloring $\phi^*$ of $V(C)\setminus\{q_0, q_1\}$. Since the same color is used on all of $p_0, z, p_1$, we have $|L_{\phi^*}(q_0)|\geq 3$ and $|L_{\phi^*}(q_1)|\geq 3$. Since $V(G)=V(C)$ and $\phi^*$ already colors all of $V(G)\setminus\{q_0, q_1\}$, we contradict our assumption that $G$ is a counterexample.

\textbf{Case 2:}  $L(p_0)\cap L(z)\cap L(p_1)=\varnothing$

In this case, since Subclaim \ref{ForRepBSize7Intermed} does not hold, there is an $i\in\{0,1\}$ and an $a\in L(p_i)\cap L(z)$ with $a\not\in L(q_{1-i})$, say $i=0$ without loss of generality. Since $p_0z\not\in E(G)$, there is an $L$-coloring $\phi$ of $V(C)\setminus\{q_0, q_1\}$ which uses $a$ on both of the $p_0, z$ (note that this is true even if $|L(p_1)|=1$). Since $p_0, z$ are colored with the same color, we have $|L_{\phi}(q_0)|\geq 3$, and since $a\not\in L(q_1)$, we have $|L_{\phi}(q_1)|\geq 3$ as well. Since $V(G)=V(C)$ and $\phi$ already colors all of $V(G)\setminus\{q_0, q_1\}$, we contradict our assumption that $G$ is a counterexample. \end{claimproof}\end{addmargin}

Now, if $|L(p_0)|=|L(p_1)|=2$, then, by A), each of $K_0$ and $K_1$ is a triangle and thus $|V(G)|=7$. If we prove C), then, by Subclaim \ref{ForRepBSize7Intermed}, B) also holds, so to finish the proof of Claim \ref{IfZTwoChordsContainmentBothTriangleK0K1}, it suffices to prove that C) holds. Suppose toward a contradiction that C) does not hold, and suppose without loss of generality that $|L(p_0)|=1$ and $|L(p_1)|=3$. By A), $K_1$ is a triangle, and, in particular, $x_0=u_{t-1}$ and $x_1=u_t$ and $L(p_1)=L(u_t)$. Let $a$ be the lone color of $p_0$ and let $L(p_1)=\{b_0, b_1, b_2\}$ for some colors $b_0, b_1, b_2$. Since $L(z)\setminus (L(x_0)\cup L(x_1))\neq\varnothing$, there is an $s\in L(z)\setminus (L(x_0)\cup L(x_1))$. By Claim \ref{AnyLColMidEndExtPCL11}, for each $k=0,1,2$, there is an $L$-coloring $\psi^k$ of $V(P)$ which does not extend to an $L$-coloring of $G$, where $\psi^k$ uses $a, s, b_k$ on the respective vertices $p_0, z, p_1$. Since each of $M$ and $K_1$ is a triangle and $s\not\in L(x_0)\cup L(x_1)$, we immediately have the following.

\vspace*{-8mm}
\begin{addmargin}[2em]{0em}
\begin{subclaim}\label{ForEachKColorMaptoOneLambda} For each $k\in\{0,1,2\}$, $|\Lambda_{K_0}(a, \psi^k(q_0), \bullet)|=1$ and furthermore, $\Lambda_{K_0}(a, \psi^k(q_0), \bullet)=L(u_t)\setminus\{b_k, \psi^k(p_1)\}$. \end{subclaim}\end{addmargin}

By 2) of Corollary \ref{CorMainEitherBWheelAtM1ColCor}, $K_0$ is a broken wheen with principal path $p_0q_0x_0$. Since each of $\psi^0, \psi^1, \psi^2$ uses the same color on $p_0$, it follows from 2) of Theorem \ref{BWheelMainRevListThm2} that there exist distinct $j,k\in\{0,1,2\}$ with $\psi^j(q_0)=\psi^k(p)$, or else there exists a $k\in\{0,1,2\}$ such that $|\Lambda_{K_0}(a, \psi^k(q_0), \bullet)|>1$ and we contradict Subclaim \ref{ForEachKColorMaptoOneLambda} above. Thus, we suppose without loss of generality that $\psi^0(q_0)=\psi^1(q_0)=r$ for some color $r$ and let $r'$ be the lone color of $\Lambda_{K_0}(a, r, \bullet)$. We have $\{r'\}=L(u_t)\setminus\{b_0, \psi^0(q_1)\}=\{b_1, \psi^1(q_0)\}$, and since $L(u_t)=L(p_1)$, we have $r'=b_2$, and furthermore, $\psi^0(q_1)=b_1$ and $\psi^1(q_1)=b_0$. 

\vspace*{-8mm}
\begin{addmargin}[2em]{0em}
\begin{subclaim}\label{ColorContainmentU1Ut-1SubCLMTri} $L(u_1)=\{a, b_0, b_1\}$ and $L(u_{t-1})=\{b_0, b_1, b_2\}$. \end{subclaim}

\begin{claimproof} As shown above, for each $k=0,1$, we have $\psi^k(q_0)=b_{1-k}$, and since $|\Lambda_{K_0}(a, \psi^k(q_0), \bullet)|=1$ for each $k=0,1$, it follows that $L(u_1)=\{a, b_0, b_1\}$ and, in particular, $\{b_0, b_1\}\subseteq (L(u_1)\setminus\{a\})\cap L(u_2)\cap\ldots\cap L(u_{t-1})$. Since $r'=b_2$, we have $L(u_{t-1})=\{b_0, b_1, b_2\}$. \end{claimproof}\end{addmargin}

Now consider $\psi^2$. By Subclaim \ref{ColorContainmentU1Ut-1SubCLMTri}, we have $L(u_{t-1})=\{b_0, b_1, b_2\}$, i.e $L(u_{t-1})=L(u_t)=L(p_1)$. Since $\psi^2(p_1)=b_2$, it follows from Subclaim \ref{ForEachKColorMaptoOneLambda} that there is a $k\in\{0,1\}$ such that $\Lambda_{K_0}(a, \psi^2(q_0), \bullet)=\{b_k\}$. Since $r'=b_2$, we have $\Lambda_{K_0}(a, r, \bullet)=\{b_2\}$, so $r\neq\psi^2(q_0)$. Applying 1b) of Theorem \ref{BWheelMainRevListThm2} to the two $L$-colorings $(a, r)$ and $(a, \psi^2(q_0))$ of the edge $p_0q_0$, it follows that $\psi^2(q_0)=b_2$. Since $\psi^2(q_0)=b_2$ and $\psi^2(q_0)\neq a$, it follows from Subclaim \ref{ColorContainmentU1Ut-1SubCLMTri} that $\psi^2(q_0)\not\in L(u_1)$, and thus $|\Lambda_{K_0}(a, \psi^2(q_0), \bullet)|\geq 2$, which is false, since $\Lambda_{K_0}(a, \psi^2(q_0), \bullet)=\{b_k\}$, as indicated above. This proves C) and completes the proof of Claim \ref{IfZTwoChordsContainmentBothTriangleK0K1}.  \end{claimproof}

With Claim \ref{IfZTwoChordsContainmentBothTriangleK0K1} in hand, we now finally rule out the possibility that $z$ is incident to a chord of $C$. 

\begin{Claim}\label{noChordzOrAtLeast2Rule1Out} $z$ is incident to no chords of $C$. \end{Claim}

\begin{claimproof} Suppose toward a contradiction that $z$ is incident to a chord of $C$. By Claim \ref{CompressClaimsFromSec912}, $G$ contains each of the edges $q_0x_0$ and $x_1q_1$. By Claim \ref{MTriangleClaimRuleOutZChord}, $M$ is a triangle, so $z$ is incident to precisely two chords of $C$ and $x_0x_1$ is an edge of $u_1\ldots u_t$. By Claim \ref{22CaseForp0p1x0x1InsersectAtLeast2}, $|L(x_0)\cap L(x_1)|\geq 2$. Since $|L(z)|=5$ and $|L(x_0)\cap L(x_1)|\geq 2$, we have $L(z)\setminus (L(x_0)\cup L(x_1))\neq\varnothing$. By C) of Claim \ref{IfZTwoChordsContainmentBothTriangleK0K1}, $|L(p_0)|=|L(p_1)|=2$. By A) of Claim \ref{IfZTwoChordsContainmentBothTriangleK0K1}, each of $K_0, K_1$ is a triangle. In particular, $|V(G)|=7$, and we again have the same graph as in Figure \ref{FigureFor22CaseMK0K1Triangles}, so $x_0=u_1$ and $x_1=u_2$.

\vspace*{-8mm}
\begin{addmargin}[2em]{0em}
\begin{subclaim}\label{EitherContainFromPtoQOrListSize3Sub} For each $i=0,1$ $L(p_i)\subseteq L(q_i)$ and $|L(q_i)|=5$. \end{subclaim}

\begin{claimproof} Suppose not, and suppose without loss of generality that there is a $c\in L(p_0)$ with $|L(q_0)\setminus\{c\}|\geq 5$. No generality is lost in this assumption, since $|L(p_0)|=|L(p_1)|=2$. Since $|L(u_2)|=3$, there is an $f\in L(u_2)\setminus L(p_1)$. Let $\phi$ be an arbitrary $L$-coloring of the 2-path $p_0x_0x_1$, where $\phi(p_0)=c$ and $\phi(x_1)=f$. Such a $\phi$ exists, since $|L(x_0)|=3$. By our choice of $f$, we have $|L_{\phi}(p_1)|\geq 2$, and we also have $|L_{\phi}(z)|\geq 3$. Since $|L_{\phi}(q_1)|\geq 4$ and $|L_{\phi}(z)|+|L_{\phi}(p_1)|\geq 5$. Thus, $\phi$ extends to an $L$-coloring $\phi^*$ of $V(C)\setminus\{q_0, q_1\}$ such that $|L_{\phi^*}(q_1)|\geq 3$. By our choice of color $c$, we also have $|L_{\phi^*}(q_1)|\geq 3$. Since $\phi^*$ already colors $V(G)\setminus\{q_0, q_1\}$, this contradicts our assumption that $G$ is a counterexample. \end{claimproof}\end{addmargin}

\vspace*{-8mm}
\begin{addmargin}[2em]{0em}
\begin{subclaim}\label{xicontainedinqiForEachi=01} For each $i=0,1$, $L(x_i)\subseteq L(q_i)$. \end{subclaim}

\begin{claimproof} Suppose not and suppose without loss of generality that there is an $s\in L(x_0)\setminus L(q_0)$. Now consider the following three cases.

\textbf{Case 1:} $L(z)\cap L(p_1)=\varnothing$

By Subclaim \ref{EitherContainFromPtoQOrListSize3Sub}, $|L(q_1)|=|L(z)|=5$, and, since $L(p_1)\subseteq L(q_1)$, we have $|L(z)\setminus L(q_1)|\geq 2$, so there is an $r\in L(z)\setminus L(q_1)$ with $r\neq s$. Since $|L(x_1)|=3$, there is an $L$-coloring $\phi$ of the triangle $M$ using $r,s$ on the vertices $x_0, z$. Since $|L(p_0)|=|L(p_1)|=2$ and $\phi(x_1)\neq r$, there is an extension of $\phi$ to an $L$-coloring $\phi^*$ of $V(C)\setminus\{q_0, q_1\}$, where $\phi^*(p_1)=r$. By our choice of $s$, we have $|L_{\phi^*}(p_1)|\geq 3$. By our choice of $r$, we have $|L_{\phi^*}(q_1)|\geq 3$ as well. Since $\phi^*$ is already an $L$-coloring of $G\setminus\{q_0, q_1\}$, we contradict our assumption that $G$ is a counterexample.  

\textbf{Case 2:} $L(z)\cap L(p_1)=\{s\}$

In this case, we color $z$ and $p_1$ with $s$. Since $|L(p_0)|=2$, there is an $L$-coloring $\psi$ of $V(C)\setminus\{q_0, q_1\}$ using $s$ on each of $z, p_1$. Thus, $|L_{\psi}(q_1)|\geq 3$. By our choice of $s$, $|L_{\psi}(q_0)|\geq 3$ as well. Since $\psi$ is already an $L$-coloring of $G\setminus\{q_0, q_1\}$, we contradict our assumption that $G$ is a counterexample.  

\textbf{Case 3:} $L(z)\cap L(p_1)\not\subseteq\{s\}$

In this case, we let $r\in L(z)\cap L(p_1)$, where $r\neq s$. Since $|L(p_0)|=2$ and $|L(x_1)\setminus\{r, s\}|\geq 1$, there is an $L$-coloring $\psi$ of $V(C)\setminus\{q_0, q_1\}$ which uses $r$ on each of $z, p_1$ and uses $s$ on $x_0$. As above, since $p_1, z$ are colored with the same color, we have $|L_{\psi}(q_1)|\geq 3$, and, by our choice of $s$, we have $|L_{\psi}(q_0)|\geq 3$ as well.  Since $\psi$ is already an $L$-coloring of $G\setminus\{q_0, q_1\}$, we contradict our assumption that $G$ is a counterexample.  \end{claimproof}\end{addmargin}

\vspace*{-8mm}
\begin{addmargin}[2em]{0em}
\begin{subclaim}\label{IntersectPreciselyTwoP0P1SubCLMT} $L(p_0)=L(p_1)=L(x_0)\cap L(x_1)$. \end{subclaim}

\begin{claimproof} Recall that $L(p_0)\subseteq L(x_0)$ and $L(p_1)\subseteq L(x_1)$. Since $|L(x_0)\cap L(x_1)|\geq 2$, we have $L(p_0)\cap L(p_1)\neq\varnothing$. Thus, there exists a color $b$ such that $L(p_0)=\{a_0, b\}$ and $L(p_1)=\{a_1, b\}$.  By Subclaim \ref{xicontainedinqiForEachi=01}, we have $L(p_0)\subseteq L(x_0)\subseteq L(q_0)$. We have $b\not\in L(z)$ by B) of Claim \ref{IfZTwoChordsContainmentBothTriangleK0K1}. By Subclaim \ref{EitherContainFromPtoQOrListSize3Sub}, $|L(q_0)|=|L(z)|=5$. Thus, $|L(z)\setminus L(q_0)|\geq 1$, so let $s$ be a color of $L(z)\setminus L(q_0)$. Since $L(x_0)\subseteq L(q_0)$, we have $s\not\in L(x_0)$. Now, in order to show that Subclaim \ref{IntersectPreciselyTwoP0P1SubCLMT} holds, it suffices to show the following.

\begin{enumerate}[label=\roman*)]
\itemsep-0.1em
\item $s\in L(x_1)$; \emph{AND}
\item $a_0=a_1$
\end{enumerate}

If i) and ii) hold, then, since $s\not\in L(x_0)$, we have $L(p_0)=L(p_1)=L(x_0)\cap L(x_1)$. We first show that $s\in L(x_1)$. Suppose not. Since $L(p_1)\subseteq L(x_1)$, we have $b\in L(x_1)$, so $s\neq b$. Since $b\in L(x_0)$ as well, there is an $L$-coloring $\psi$ of $V(C)\setminus\{q_0, q_1, x_1\}$, where $\psi$ uses $a_0, s$ on the respective vertices $p_0, z$ and $\psi$ colors both of $x_0, p_1$ with $b$. Since $x_1$ is uncolored, we have $|L_{\psi}(q_1)|\geq 3$, and, by our choice of $s$, $|L_{\psi}(q_0)|\geq 3$ as well. Since $s\not\in L(x_1)$ and $x_0, p_1$ are colored with the same color, we have $|L_{\psi}(x_1)|\geq 2$. Thus, any extension of $\psi$ to an $L$-coloring of $\textnormal{dom}(\psi)\cup\{q_0, q_1\}$ (which is $V(G-x_1)$) extends to $x_1$ as well and thus extends to an $L$-coloring of $G$, contradicting our assumption that $G$ is a counterexample. We conclude that $s\in L(x_1)$. This proves i).

We now prove ii). We first note that $s\not\in L(p_1)$. To see this, recall that $s$ has been chosen so that $s\in L(z)\setminus L(q_0)$, so if $s\in L(p_1)$, then we contradict B) of Claim \ref{IfZTwoChordsContainmentBothTriangleK0K1}. Thus, $s\not\in L(p_1)$. Since $s\in L(x_1)$, we have $L(x_1)=L(p_1)\cup\{s\}$ as a disjoint union. Since $s\not\in L(x_0)$ and $|L(x_0)\cap L(x_1)|\geq 2$, we have $L(x_0)\cap L(x_1)=L(p_1)=\{a_1, b\}$. Now suppose toward a contradiction that $a_0\neq a_1$. Since $a_0$ is also distinct from each of $s, b$, we have $a_0\not\in L(x_1)$, and $L(x_0)=\{a_0, a_1, b\}$. Since $L(x_0)\subseteq L(q_0)$, there is an $L$-coloring $\sigma$ of $V(C-x_1)$, where $\sigma$ colors $p_0$ and $z$ with $a_0$ and $\sigma$ colors $x_0, p_1$ with $a_1$. Since $\sigma$ uses the same color on $p_0, z$, we have $|L_{\sigma}(q_0)|\geq 3$. Since $x_1$ is uncolored, we also have $|L_{\sigma}(q_1)|\geq 3$.  Since $a_0\not\in L(x_1)$ and $\sigma$ uses the same color on $x_0, p_1$, we have $|L_{\sigma}(x_1)|\geq 2$.  Thus, any extension of $\sigma$ to an $L$-coloring of $\textnormal{dom}(\sigma)\cup\{q_0, q_1\}$ (which is $V(G-x_1)$) extends to $x_1$ as well and thus extends to an $L$-coloring of $G$, contradicting our assumption that $G$ is a counterexample. We conclude that $a_0=a_1$. This completes the proof of Subclaim \ref{IntersectPreciselyTwoP0P1SubCLMT}. 
\end{claimproof}\end{addmargin}

By Subclaim \ref{IntersectPreciselyTwoP0P1SubCLMT}, there is a set $S$ with $|S|=2$ such that $L(p_0)=L(p_1)=S$ and $L(x_0)\cap L(x_1)=S$. We have $L(z)\cap S=\varnothing$ by B) of Claim \ref{IfZTwoChordsContainmentBothTriangleK0K1}. Let $S'=L(z)\setminus L(q_0)$. Recall that $S\subseteq L(q_0)$, so $|S'|\geq 2$ and $S\cap S'=\varnothing$. Since $|S|=2$ and $L(x_0)\cap L(x_1)=S$, there is an $r\in L(x_0)\setminus L(x_1)$. By Subclaim \ref{xicontainedinqiForEachi=01}, we also have $r\in L(q_0)$, so $r\not\in S'$. Since $|S'|\geq 2$ and $S\cap S'=\varnothing$, there is an $s'\in S'\setminus L(x_1)$. Since $r\not\in S'$, we have $r\neq s'$. Thus, there is an $L$-coloring $\psi$ of $V(C)\setminus\{q_0, q_1, x_1\}$, where $\psi$ uses $r, s'$ on the respective vertices $x_0, z$. Since $s'\in S'$, we have $|L_{\psi}(q_0)|\geq 3$. Since $x_1$ is uncolored, we have $|L_{\psi}(q_1)|\geq 3$ as well. By our choice of $r, s'$, we have $|L_{\psi}(x_1)|\geq 2$, so any extension of $\psi$ to an $L$-coloring of $\textnormal{dom}(\psi)\cup\{q_0, q_1\}$ extends to $x_1$ as well and thus extends to an $L$-coloring of $G$, contradicting our assumption that $G$ is a counterexample. This completes the proof of Claim \ref{noChordzOrAtLeast2Rule1Out}. \end{claimproof}

\subsection{Ruling Out A common neighbor of $v_0, v_1$ in $G\setminus C$}\label{RuleOutCommNbrToV0V1InG-C}

With Claims \ref{2ChordIncidentZVI} and \ref{noChordzOrAtLeast2Rule1Out} in hand, we can now dispense with the notation of Definition \ref{SubgraphsH0H1MForEdgezOfC}. To deal with the last few cases which make up this paper, we introduce the following definition.

\begin{defn} \emph{We let $G_{\pentagon}$ be the subgraph of $G$ bounded by outer cycle $C_{\pentagon}=(v_0(C\setminus\mathring{P})v_1)q_1zq_0$, and we let $P_{\pentagon}$ be the 4-path $v_0q_0zq_1v_1$.} \end{defn}

By Claim \ref{NoCommNbrQ0Q1OnPathu1Tout}, $v_0\neq v_1$, so $P_{\pentagon}$ is indeed a 4-path. Since $z$ is incident to no chords of $C$, every chord of $C$ is incident to one of $q_0, q_1$. By Claim \ref{Q0AndQ1AreNotAdjacentCL}, $q_0q_1\not\in E(G)$, so the outer cycle of $G_{\pentagon}$ is induced. 

\begin{Claim}\label{EdgeV0V1NotinEG} $v_0v_1\not\in E(G)$. In particular, $C_{\pentagon}$ has length at least six. \end{Claim}

\begin{claimproof} Suppose toward a contradiction that $v_0v_1\in E(G)$. Since the outer cycle of $G_{\pentagon}$ has no chords, it follows that $v_0v_1$ is an edge of $C\setminus\mathring{P}$, and $C_{\pentagon}$ has length precisely five. 

\vspace*{-8mm}
\begin{addmargin}[2em]{0em}
\begin{subclaim}\label{CPentColExtGPent} Any $L$-coloring of $V(C_{\pentagon})$ extends to $L$-color all of $G_{\pentagon}$ \end{subclaim}

\begin{claimproof} Suppose there is an $L$-coloring $\phi$ of $V(C_{\pentagon})$ which does not extend to $L$-color $G_{\pentagon}$. It follows from Theorem \ref{BohmePaper5CycleCorList}, there is a vertex adjacent to all five vertices of $\{q_0, z, q_1\}\cup\{v_0, v_1\}$, contradicting Claim \ref{2ChordIncidentZVI}. \end{claimproof}\end{addmargin}

\vspace*{-8mm}
\begin{addmargin}[2em]{0em}
\begin{subclaim} $|L(p_0)|=|L(p_1)|=2$. \end{subclaim}

\begin{claimproof} Suppose not, and suppose without loss of generality that $|L(p_0)|=1$ and $|L(p_1)|=3$. Note that there is a $(q_0, K_0)$-sufficient $L$-coloring $\phi$ of $Q_0-q_0$. This is immediate if $K_0$ is an edge, and otherwise it just follows from Theorem \ref{SumTo4For2PathColorEnds}. Since $|L_{\phi}(q_0)|\geq 3$ and no chord of $C$ is incident to $z$, it follows that, for each $c\in L(p_1)$, there is an extension of $\phi$ to an $L$-coloring $\sigma_c$ of $\{p_0, v_0, z, p_1\}$ with $|L_{\sigma_c}(q_0)|\geq 3$. Since $v_0q_1\not\in E(G)$, $|L_{\sigma_c}(q_1)|\geq 3$ as well and, since $G$ is a counterexample, $\sigma_c$ extends to an $L$-coloring $\sigma_c^*$ of $V(P)\cup\{v_0\}$ which does not extend to $L$-color $G$. If $K_1$ is an edge, then we just choose a $c\in L(p_1)\setminus\{\phi(v_0)\}$, and then it follows from Subclaim \ref{CPentColExtGPent} that $\sigma_c^*$ extends to an $L$-coloring of $G$, a contradiction. Thus, $K_1$ is not just an edge. For each $c\in L(p_1)$, it follows from Subclaim \ref{CPentColExtGPent} that $\sigma_*^c$ does not extend to an $L$-color $G\setminus V(G_{\pentagon}\setminus C_{\pentagon})$, and thus $\Lambda_{K_1}(\bullet, \sigma_c^*(q_1), c)=\{\phi(v_0)\}$. As $|L(p_1)|=3$, we contradict 1c) of Theorem \ref{BWheelMainRevListThm2}.  \end{claimproof}\end{addmargin}

\vspace*{-8mm}
\begin{addmargin}[2em]{0em}
\begin{subclaim}\label{K0K1EdgeCaseRuleSb} Neither $K_0$ nor $K_1$ is an edge. \end{subclaim}

\begin{claimproof} Suppose not. Since $|L(p_0)|=|L(p_1)|=2$, we suppose without loss of generality that $K_0$ is an edge. Thus, $K_1$ is not just an edge, or else we contradict Claim \ref{VCoutFaceLenGr5}. In particular, $u_1=v_1$. Applying Theorem \ref{SumTo4For2PathColorEnds}, we fix a $\phi\in\textnormal{End}(Q_1, K_1)$. Since $|L(p_0)\setminus\{\phi(u_1)\}|\geq 1$ and $z$ is incident to no chords of $C$, $\phi$ extends to an $L$-coloring $\phi'$ of $\{p_0, u_1, p_1, z\}$ with $|L_{\phi'}(q_1)|\geq 3$. Since $K_0$ is an edge, $q_0$ also has an $L_{phi'}$-list of size at least three, and since $G$ is a counterexample, $\phi'$ extends to an $L$-coloring $\psi$ of $V(P)\cup\{u_1\}$ which does not extend to $L$-color $G$. But since $\psi$ restricts to a proper $L$-coloring of $V(C_{\pentagon})$ and also restricts to an element of $\textnormal{End}(Q_1, K_1)$, it follows from Subclaim \ref{CPentColExtGPent} that $\psi$ extends to an $L$-coloring of $G$, a contradiction. \end{claimproof}\end{addmargin}

\vspace*{-8mm}
\begin{addmargin}[2em]{0em}
\begin{subclaim} Each of $K_0, K_1$ is a triangle. \end{subclaim}

\begin{claimproof} Suppose not. Since $|L(p_0)|=|L(p_1)|=2$, we suppose without loss of generality that $K_0$ is not a triangle. By 2) of Claim \ref{KiSizeAtMost4UnivColorBlock}, there is an $a\in L(p_0)$ which is almost $K_0$-universal. By Claim \ref{AnyLColMidEndExtPCL11}, there is an $L$-coloring $\sigma$ of $V(P)$ which does not extend to an $L$-coloring of $G$, where $\sigma(p_0)=a$. We now fix a $b\in\Lambda_{K_1}(\bullet, \sigma(q_1), \sigma(p_1))$. Since $a$ is almost $K_0$-universal, there is a color of $\Lambda_{K_0}(a, \sigma(q_0), \bullet)$ which is distinct from $b$, and since there are no chords of $C_{\pentagon}$, it follows from Subclaim \ref{CPentColExtGPent} that $\sigma$ extends to an $L$-coloring of $G$, a contradiction. \end{claimproof}\end{addmargin}

Since each of $K_0, K_1$ is a triangle, we have $v_0=u_1$ and $v_1=u_2$, and $C$ is a cycle of length seven. Since $|L(p_0)|=|L(p_1)|=2$, there is an $L$-coloring $\phi$ of $\{p_0, u_2, p_1\}$ with $|L_{\phi}(u_1)|\geq 2$. Since $|L_{\phi}(q_1)|\geq 3$ and no chord of $C$ is incident to $z$, there is an extension of $\phi$ to an $L$-coloring $\phi'$ of $\textnormal{dom}(\phi)\cup\{z\}$ with $|L_{\phi'}(q_1)|\geq 3$. Since $u_1$ is uncolored, $q_0$ also has an $L_{\phi'}$-list of size at least three. Since $G$ is a counterexample $\phi'$ extends to an $L$-coloring $\psi$ of $V(P)\cup\{u_2\}$ which does not extend to $L$-color $G$. But since there is a color left over for $u_1$, it follows from Subclaim \ref{CPentColExtGPent} that $\psi$ extends to an $L$-coloring of $G$, a contradiction. \end{claimproof}

Before proving the main result of this subsection, we prove the following two useful observations.

\begin{Claim}\label{AnLColPK0K1Ext} For any $i\in\{0,1\}$ and any $a\in L(p_{1-i})$,  
\begin{enumerate}[label=\arabic*)] 
\itemsep-0.1em
\item any $L$-coloring of either $V(P)$ or $V(P\cup K_i)$ extends to an $L$-coloring of $V(P\cup K_0\cup K_1)$ which uses $a$ on $p_{1-i}$; AND
\item For any $(q_i, K_i)$-sufficient $L$-coloring $\phi$ of $Q-q_i$ and any $s\in L(z)$ with $|L_{\phi}(q_i)\setminus\{s\}|\geq 3$, $\phi$ extends to an $L$-coloring of $V(P\cup K_0\cup K_1)$ which uses $a,s$ on the respective vertices $p_{1-i}, z$ and does not extend to an $L$-coloring of $G$. 
\end{enumerate}
 \end{Claim}

\begin{claimproof} By Claim \ref{EdgeV0V1NotinEG}, $v_0v_1\not\in E(G)$, and, since every chord of $C$ is incident to one of $q_0, q_1$, Claim \ref{EdgeV0V1NotinEG} also implies that, for each $i\in\{0,1\}$, $p_iv_{1-i}\not\in E(G)$, so 1) is immediate. Now we prove 2). Let $i\in\{0,1\}$ and $a\in L(p_{1-i})$ and let $\phi, s$ be as in 2). Since $p_{1-i}$ is not adjacent to either of $p_i, v_i, z$, $\phi$ extends to an $L$-coloring $\phi'$ of $\{p_{1-i}, z, v_i, p_i\}$ using $a,s$ on the respective vertices $p_{1-i}, z$. Since $v_0\neq v_1$, each of $q_0, q_1$ has an $L_{\phi'}$-list of size at least three. Since $G$ is a counterexample and $\phi$ is a $(q_i, K_i)$-sufficient $L$-coloring $\phi$ of $Q-q_i$, it follows that $\phi'$ extends to an $L$-coloring of $V(P\cup K_i)$ which does not extend to an $L$-coloring of $G$. By 1), $\phi'$ extends to an $L$-coloring of $V(P\cup K_0\cup K_1)$ which does not extend to an $L$-coloring of $G$. \end{claimproof}

\begin{Claim}\label{YCommNeighborReplace} For each $i\in\{0,1\}$, there exists an $L$-coloring $\psi$ of $V(P\cup K_0\cup K_1)$, where $\psi$ does not extend to an $L$-coloring of $G$ such that, for any $y\in V(G\setminus C)$ which is a common neighbor to $z, v_i$, $|L_{\psi}(y)|\geq 3$.  \end{Claim}

\begin{claimproof} Suppose without loss of generality that $i=1$. If there is no $y\in V(G\setminus C)$ adjacent to both of $z, v_i$, then, since $G$ is a counterexample, we just choose an arbitrary $L$-coloring $\psi$ of $V(P)$ which does not extend to an $L$-coloring of $G$. By 1) of Claim \ref{AnLColPK0K1Ext}, $\psi$ extends to an $L$-coloring of $V(P\cup K_0\cup K_1)$, so we are done in that case. Now suppose that there is such a $y$. Since no chords of $C$ are incident to $z$, and $G$ has no induced 4-cycles, we have $yq_1\in E(G)$. Furthermore, since $K_{2,3}$-free and $q_1\in N(z)\cap N(v_1)$, $y$ is the unique common neighbor of $z, p_1$ in $G\setminus C$. Recall that, by Claim \ref{NoCommNbrQ0Q1OnPathu1Tout}, $v_0\neq v_1$, so, by Claim \ref{2ChordIncidentZVI}, $y$ is not adjacent to either $q_0$ or $v_0$. By Claim \ref{piAndzHavenoNbrOutsideOuterCycleHlpCLMa}, $K_1$ is not an edge. Applying Theorem \ref{SumTo4For2PathColorEnds}, we fix a $\phi\in\textnormal{End}(Q_1, K_1)$. Let $T=\{s\in L(z): |L_{\phi}(q_1)\setminus\{s\}|\geq 3\}$. Note that $|T|\geq 2$. As indicated above, $p_0$ is not adjacent to either of $v_1, p_1$. Now consider the following cases. 

\textbf{Case 1:} Either $\phi(v_1)\in L(z)$ or $\phi(v_1)\not\in L(y)$

In this case, $\phi$ extends to an $L$-coloring $\phi'$ of $\{z, v_1, p_1\}$ with $|L_{\phi}(q_1)|\geq 3$ and $|L_{\phi}(y)|\geq 4$, since we either color $z, v_1$ with the same color, or, if $\phi(v_1)\not\in L(y)$, then we just choose any $s\in T$ and use that color on $z$ instead. By 2) of Claim \ref{AnLColPK0K1Ext}, $\phi'$ extends to an $L$-coloring $\psi$ of $V(P\cup K_0\cup K_1)$ which does not extend to $L$-color $G$. Note that $|L_{\psi}(y)|\geq 3$, as $y$ is not adjacent to either $q_0$ or $v_0$, so we are done. 

\textbf{Case 2:} $\phi(v_1)\in L(y)\setminus L(z)$

In this case, we avoid precoloring $v_1$. Since $|L(z)|=|L(y)|=5$, there is a $c\in L(z)\setminus L(y)$. By Claim \ref{AnyLColMidEndExtPCL11}, there is an $L$-coloring $\sigma$ of $V(P)$ with $\phi(z)=c$, where $\sigma$ does not extend to an $L$-coloring of $G$. By 1) of Claim \ref{AnLColPK0K1Ext}, $\sigma$ extends to an $L$-coloring $\psi$ of $V(P\cup K_0\cup K_1)$. As$y$ is not adjacent to either $q_0$ or $v_0$, $|L_{\psi}(y)|\geq 3$, we are done. \end{claimproof}

With the intermediate results above in hand, we prove the main result of this subsection.

\begin{Claim}\label{Eachi01IndexV0V1QINotCom} $v_0, v_1$ have no common neighbor in $G\setminus C$. \end{Claim}

\begin{claimproof} Suppose toward a contradiction that $v_0, v_1$ have a common neighbor $w\in V(G\setminus C)$. Thus $G$ contains the 6-cycle $D:=zq_1v_1wv_0q_0$. Let $H^{\downarrow}=\textnormal{Ext}_G(D)\cap G_{\pentagon}$ and $H_{\uparrow}=\textnormal{Int}_G(D)$. This is illustrated in Figure \ref{V0V1CommonNbrHUpFigure}, where $D$ is in bold. It follows from Claim \ref{EdgeV0V1NotinEG} that any chord of $D$ lies in $H^{\uparrow}$. We now show that $H^{\uparrow}$ contains a chord of $D$.

\begin{center}\begin{tikzpicture}
\node[shape=circle,draw=black] (p0) at (-4,0) {$p_0$};
\node[shape=circle,draw=black] (u1) at (-3, 0) {$u_1$};
\node[shape=circle,draw=white] (u1+) at (-2, 0) {$\ldots$};
\node[shape=circle,draw=black] (v0) at (-1, 0) {$v_0$};
\node[shape=circle,draw=white] (mid) at (0, 0) {$\ldots$};
\node[shape=circle,draw=black] (v1) at (1, 0) {$v_1$};
\node[shape=circle,draw=white] (un+) at (2, 0) {$\ldots$};
\node[shape=circle,draw=black] (ut) at (3, 0) {$u_t$};
\node[shape=circle,draw=black] (p1) at (4, 0) {$p_1$};
\node[shape=circle,draw=black] (q0) at (-3,2) {$q_0$};
\node[shape=circle,draw=black] (q1) at (3,2) {$q_1$};
\node[shape=circle,draw=black] (z) at (0,4) {$z$};
\node[shape=circle,draw=black] (w) at (0, 1.5) {$w$};
\node[shape=circle,draw=white] (K0) at (-2.8, 1) {$K_0$};
\node[shape=circle,draw=white] (K1) at (2.8, 1) {$K_1$};
\node[shape=circle,draw=white] (Hdown) at (0, 0.6) {$H_{\downarrow}$};
\node[shape=circle,draw=white] (Hdown) at (0, 2.5) {$H^{\uparrow}$};
 \draw[-] (p1) to (ut);
 \draw[-, line width=1.8pt] (v0) to (q0) to (z) to (q1) to (v1);
\draw[-] (q1) to (p1);
\draw[-] (p0) to (u1) to (u1+) to (v0) to (mid) to (v1) to (un+) to (ut);
\draw[-] (p0) to (q0);
\draw[-, line width=1.8pt] (v0) to (w) to (v1);

\end{tikzpicture}\captionof{figure}{}\label{V0V1CommonNbrHUpFigure}\end{center}

\vspace*{-8mm}
\begin{addmargin}[2em]{0em}
\begin{subclaim}\label{DInducedOrPK0K1Col} Either $D$ is not an induced cycle, or any $L$-coloring of $V(P\cup K_0\cup K_1)$ extends to an $L$-coloring of $\textnormal{Ext}_G(D)$. \end{subclaim}

\begin{claimproof} Suppose that $D$ is induced and let $\tau$ be an $L$-coloring of $V(P\cup K_0\cup K_1)$. Since $D$ is induced, $|L_{\tau}(z)|\geq 3$, so it follows from 3) of Corollary \ref{CorMainEitherBWheelAtM1ColCor} that $L_{\tau}(z)\cap\Lambda_{H_{\downarrow}}^{v_0wv_1}(\tau(v_0), \bullet, \tau(v_1))\neq\varnothing$. Thus, $\tau$ extends to $L$-color $H_{\downarrow}$ as well, i.e $\tau$ extends to an $L$-coloring of $\textnormal{Ext}_G(D)$. \end{claimproof}\end{addmargin}

\vspace*{-8mm}
\begin{addmargin}[2em]{0em}
\begin{subclaim}\label{DIndThenHUpArrowTriang} If $D$ is an induced cycle, then
\begin{enumerate}[label=\arabic*)] 
\itemsep-0.1em
\item For each $s\in L(z)$, there is an $L$-coloring of $D$ using $s$ on $z$ which does not extend to $L$-color $H^{\uparrow}$; AND
\item $H^{\uparrow}\setminus D$ is a triangle in which each vertex is adjacent to a subpath of length precisely two, and one of these 2-paths has $z$ as its midpoint. 
\end{enumerate}
 \end{subclaim}

\begin{claimproof} Suppose that $D$ is induced. For each $s\in L(z)$, since $G$ is a counteexample, there is an $L$-coloring of $V(P)$ which uses $s$ on $z$ and does not extend to an $L$-coloring of $G$. Thus, it follows from 1) of Claim \ref{AnLColPK0K1Ext} that, for each $s\in L(z)$, there is an $L$-coloring of $V(P\cup K_0\cup K_1)$ which does not extend to $L$-color $G_{\pentagon}$. Applying Subclaim \ref{DInducedOrPK0K1Col}, we conclude that, for each $s\in L(z)$, there is an $L$-coloring $\phi_s$ of $V(D)$ which does not extend to $L$-color the interior of $D$.  This proves 1). Now we prove 2). Suppose toward a contradiction that 2) does not hold. We apply Theorem \ref{BohmePaper5CycleCorList}. If there is a lone vertex of $H^{\uparrow}\setminus D$ adjacent to all six vertices of $D$, then we contradict Claim \ref{2ChordIncidentZVI}, so $H^{\uparrow}\setminus D$ is either an edge or a triangle. Furthermore, if $H^{\uparrow}\setminus D$ is a triangle, then each vertex of $H^{\uparrow}\setminus D$ is adjacent to a subpath of $D$ of length two, and, if $H^{\uparrow}\setminus D$ is a triangle, then each vertex of $H^{\uparrow}\setminus D$ is adjacent to a subpath of $D$ of length three. Consider the following cases.

\textbf{Case 1:} There exists a $y\in V(H^{\uparrow}\setminus D)$ such that $z$ is an internal vertex of the path $G[N(y)\cap V(D)]$

In this case, since 2) does not hold, $H^{\uparrow}\setminus D$ is an edge, each endpoint of which is adjacent to a subpath of $D$ of length three. It follows from the assumption of Case 1 that there is an $i\in\{0,1\}$ such that $y$ is adjacent to the four consecutive vertices $q_izq_{1-i}v_{1-i}$ of $D$, contradicting Claim \ref{2ChordIncidentZVI}. 

\textbf{Case 2:} There is no $y\in V(H^{\uparrow}\setminus D)$ such that $z$ is an internal vertex of the path $G[N(y)\cap V(D)]$

In this case, $H^{\uparrow}\setminus D$ is either an edge or a triangle, and, in either case, there exist vertices $y_0, y_1\in V(H^{\uparrow}\setminus D)$ such that, for each $i=0,1$, $G[N(y_i)\cap V(D)]$ is a subpath of $D$ of length either two or three with $zq_iv_i$ as a subpath. By Claim \ref{YCommNeighborReplace}, there is an $L$-coloring $\psi$ of $V(P\cup K_0\cup K_1)$ which does not extend to an $L$-coloring of $G$, where $|L_{\psi}(y_1)|\geq 3$. By \ref{DInducedOrPK0K1Col} $\psi$ extends to an $L$-coloring $\psi'$ of $\textnormal{Ext}_G(D)$. Suppose now that $H^{\uparrow}\setminus D$ is an edge. In this case,  each vertex of $H^{\uparrow}\setminus D$ is adjacent to a subpath of $D$ of length precisely three. Since $|L_{\psi}(y_1)|\geq 3$, we have $|L_{\psi'}(y_1)|\geq 2$. Since $psi'$ does not extend to an $L$-coloring of $H^{\uparrow}$, the two vertices of $H^{\uparrow}\setminus D$ have the same $L_{\psi'}$-list of size one, a contradiction.  Now suppose that $H^{\uparrow}\setminus D$ is a triangle. Thus, each vertex of $H^{\uparrow}\setminus D$ is adjacent to a subpath of $D$ of length precisely two, and $L_{\psi'}(y_1)=L_{\psi}(y_1)$, so $|L_{\psi'}(y_1)|\geq 3$. But since $\psi'$ does not extend to $L$-color $H^{\uparrow}$, each vertex of $H^{\uparrow}\setminus D$ has the same 2-list, a contradiction. \end{claimproof}\end{addmargin}

\vspace*{-8mm}
\begin{addmargin}[2em]{0em}
\begin{subclaim}\label{DNotInducedSubCL} $D$ is not an induced cycle.  \end{subclaim}

\begin{claimproof} Suppose that $D$ is induced. By 2) of Subclaim \ref{DIndThenHUpArrowTriang}, $H^{\uparrow}\setminus D$ is a triangle $y_0y_1y_2$, each vertex of which is adjacent to a subpath of $D$ of length precisely two, where one of these 2-paths has $z$ as its midpoint, so, for the sake of definiteness, we suppose that $G[N(y_2)\cap V(D)]$ is the path $q_0zq_1$ and, for each $i=0,1$, $G[N(y_i)\cap V(D)]$ is the path $q_iv_iw$. By 1) of  Subclaim \ref{DIndThenHUpArrowTriang}, for each $s\in L(z)$, there is an $L$-coloring $\phi_s$ of $V(D)$ which does not extend to an $L$-coloring of $H^{\uparrow}$. In particular, for each $s\in L(z)$, the three vertices of $y_0y_1y_2$ have the same 2-list. It follows that $s\in L(y_2)$, and, for each $i\in\{0,1\}$, $\phi_s$ uses $s$ on a vertex of $N(y_i)\cap V(D)$. Since this holds for each $s\in L(z)$, we have $L(z)=L(y_0)=L(y_1)=L(y_2)$. Neither $K_0$ nor $K_1$ is an edge, or else we contradict Claim \ref{piAndzHavenoNbrOutsideOuterCycleHlpCLMa}. Applying Theorem \ref{SumTo4For2PathColorEnds}, we fix a $\sigma\in\textnormal{End}(Q_1, K_1)$. Consider the following cases.

\textbf{Case 1:} $\sigma(v_1)\not\in L(y_1)$

In this case, since $|L_{\sigma}(q_1)|\geq 3$, we just choose an arbitrary $s\in L(z)$ such that $|L_{\sigma}(q_1)|\geq 3$. By 2) of Claim \ref{AnLColPK0K1Ext}, $\sigma$ extends $L$-coloring $\sigma'$ of $V(P\cup K_0\cup K_1)$ which does not extend to an $L$-coloring of $G$, where $\sigma'(z)=s$. By Subclaim \ref{DInducedOrPK0K1Col}, $\sigma'$ extends to an $L$-coloring $\sigma^*$ of $\textnormal{Ext}_G(D)$. But then, since $|L_{\sigma^*}(y_1)|\geq 3$, $\sigma^*$ extends to an $L$-coloring of $G$, which is false.

\textbf{Case 2:} $\sigma(v_1)\in L(y_1)$.

In this case, we let $c=\sigma(v_1)$ and we color both of $z, v_1$ with $c$. By 2) of Claim \ref{AnLColPK0K1Ext}, $\sigma$ to an $L$-coloring $\sigma'$ of $V(P\cup K_0\cup K_1)$ which does not extend to an $L$-coloring of $G$, where $\sigma'$ uses $c$ on $z$ as well. By Subclaim \ref{DInducedOrPK0K1Col}, $\sigma'$ extends to an $L$-coloring $\sigma^*$ of $\textnormal{Ext}_G(D)$, and since $\sigma^*$ does not extend to an $L$-coloring of $G$, the three vertices of $H^{\uparrow}\setminus D$ have the same $L_{\sigma^*}$-list of size two. Since $L(y_k)=L(z)$ for all $k=0,1,2$, it follows that $\sigma'(v_0)=c$ as well. We now let $S=L_{\sigma'}(w)\cap\Lambda_{H_{\downarrow}}^{v_0wv_1}(c, \bullet, c)$. Since $D$ is an induced cycle and the same color is used on $v_0, v_1$, we have $|L_{\sigma'}(w)|\geq 4$. It follows from 3) of Corollary \ref{CorMainEitherBWheelAtM1ColCor} that at most two colors of $L_{\sigma'}(w)$ lie outside of $\Lambda_{H_{\downarrow}}^{v_0wv_1}(c, \bullet, c)$, so $|S|\geq 2$. Note that, since $\sigma'$ does not color $w$, we have $|L_{\sigma'}(y_1)|\geq 3$, so there is an $r\in L_{\sigma'}(y_1)$ with $|S\setminus\{r\}|\geq 2$. By first coloring $y_1$ with $r$, extend $\sigma'$ to an $L$-coloring of $\textnormal{Ext}_G(C^{H_{\downarrow}})$ which uses a color of $S$ on $w$, so $\sigma'$ extends to an $L$-coloring of $G$, which is false. \end{claimproof}\end{addmargin}

Applying Subclaim \ref{DNotInducedSubCL}, we have the following. 

\vspace*{-8mm}
\begin{addmargin}[2em]{0em}
\begin{subclaim}\label{wzEdgeNotInEGSubC} There is precisely one chord of $D$, and this chord is $q_iw$ for some $i\in\{0,1\}$. \end{subclaim}

\begin{claimproof} We first show that $wz\not\in E(G)$. Suppose that $wz\in E(G)$. Since $G$ contains no induced 4-cycles and there are no chords of $C$ incident to $z$, it follows that $w$ is adjacent to all three of $q_0, w, q_1$. Since $w$ is adjacent to each of $q_0, q_1$ and $v_0\neq v_1$, we contradict Claim \ref{2ChordIncidentZVI}. To finish, it suffices to prove that there is precisely one chord of $D$. Suppose not. By Subclaim \ref{DNotInducedSubCL}, there are at least two chords of $D$. Since $wz\not\in E(G)$ and $G$ contains no induced 4-cycles, it follows that all three of $q_0q_1$ and $q_0w, q_1w$ are chords of $D$, contradicting Claim \ref{Q0AndQ1AreNotAdjacentCL}. \end{claimproof}\end{addmargin}

\vspace*{-8mm}
\begin{addmargin}[2em]{0em}
\begin{subclaim}\label{AnyDCyclePrecolorToHup} Any $L$-coloring of $V(D)$ extends to $L$-color all of $H^{\uparrow}$. \end{subclaim}

\begin{claimproof} Suppose toward a contradiction that there is an $L$-coloring $\tau$ of $V(D)$ which does not extend to $L$-color $H^{\uparrow}$. Let $D'$ be the induced unique 5-cycle whose vertices lie in $V(D)$. Since $\tau$ does not extend to an $L$-coloring of $G$,  it follows from Theorem \ref{BohmePaper5CycleCorList} that there is a $y\in V(H^{\uparrow})\setminus D$ adjacent to all five vertices of $D'$. Since $D'=wq_1zq_0v_0$ and $y$ is adjacent to all five of these vertices, we contradict Claim \ref{2ChordIncidentZVI}.\end{claimproof}\end{addmargin}

Let $H^{\ast}=(H_{\downarrow}+K_1)+wq_1$. The outer cycle of $H^{\ast}$ contains the 3-path $P^{\ast}=v_0wq_1p_1$. Let $X$ be the set of $L$-colorings $\psi$ of $\{v_0, p_1\}$ such that $\psi$ extends to two different elements of $\textnormal{End}(w, P^{\ast}, H^{\ast})$. As $p_1w\not\in E(G)$, and $|L(v_1)|=3$, it follows from 1) of Claim \ref{CornerColoringMainRes} that there are $|L(p_1)|$ different elements of $X$, each using a different color on $L(v_1)$. By Corollary \ref{GlueAugFromKHCor}, there is a $(q_0, K_0)$-sufficient $L$-coloring $\phi$ of $Q-q_0$ and a $\psi\in X$ with $\phi(v_0)=\psi(v_0)$. Let $s\in L(z)$ with $|L_{\phi}(q_0)\setminus\{s\}|\geq 3$. Since $q_0v_1\not\in E(G)$ and $G$ is a counterexample, $\phi\cup\psi$ extends to an $L$-coloring of $V(P)\cup\{v_0\}$ which does not extend to $L$-color $G$. It follows from our choice of $\phi$ that $\phi\cup\psi$ extends to an $L$-coloring $\sigma$ of $V(P\cup K_0)$ which does not extend to an $L$-coloring of $G$. Since $\psi\in X$, there exist two colors $c_0, c_1\in L_{\psi}(w)$ such that, for each $k=0,1$, $\psi$ extends to an element of $\textnormal{End}(w, P^{\ast}, H^{\ast})$ using $c_k$ on $w$. Since $wq_1$ is the lone chord of $D$, we have $L_{\sigma}(w)\cap\{c_0, c_1\}\neq\varnothing$. By Subclaim \ref{AnyDCyclePrecolorToHup}, $\sigma$ extends to an $L$-coloring of $V(P\cup K_0\cup H^{\uparrow})$ using one of $c_0, c_1$ on $w$, so $\sigma$ extends to an $L$-coloring of $G$, a contradiction. This completes the proof of Claim \ref{Eachi01IndexV0V1QINotCom}. \end{claimproof}

\subsection{Completing the proof of Theorem \ref{MainHolepunchPaperResulThm}}\label{CompSubSecTh}

\begin{Claim}\label{NoCommToBothQ0Q1ExcZ} There is no vertex of $G$ adajcent to both of $q_0, q_1$, except for $z$. \end{Claim}

\begin{claimproof} Suppose toward a contradiction that there is a $w\in N(q_0)\cap N(q_1)$ with $w\neq z$. We first note the following. 

\vspace*{-8mm}
\begin{addmargin}[2em]{0em}
\begin{subclaim}\label{NeighborsOfWOnlyQ0ZQ1} $N(w)\cap V(C)=\{q_0, z, q_1\}$ \end{subclaim}

\begin{claimproof} Since $z$ is incident to no chords of $C$, $w\in V(G\setminus C)$. By Claim \ref{Q0AndQ1AreNotAdjacentCL}, we have $q_0q_1\not\in E(G)$, and since there are no induced 4-cycles in $G$, it follows that $z\in N(w)$. SInce $z$ is also adjacent to both of $q_0, q_1$, and $v_0\neq v_1$, it follows from Claim Claim \ref{2ChordIncidentZVI} that $w$ has no neighbors in $C\setminus\mathring{P}$, so $N(w)\cap V(C)=\{q_0, z, q_1\}$. \end{claimproof}\end{addmargin}

Let $G'$ be the subgraph of $G$ bounded by outer cycle $(p_0(C\setminus\mathring{P})v_1)q_1wq_0$. That is, $G'=(G\setminus\{w\})\setminus (K_1\setminus\{v_1, q_1\})$. Now, the outer cycle of $G'$ contains the 4-path $P'=p_0q_0wq_1v_1$. We now apply Lemma \ref{EndLinked4PathBoxLemmaState} to $G'$ and $P'$. Since $V(C^{G'})\subseteq V(C)$, it follows from Subclaim \ref{NeighborsOfWOnlyQ0ZQ1} that the outer cycle of $G'$ has no chords incident to $w$. Let $\mathcal{F}$ be the set of partial $L$-colorings $\phi$ of $V(C')\setminus\{q_0, w, q_1\}$ such that
\begin{enumerate}[label=\arabic*)] 
\itemsep-0.1em
\item $\{p_0, v_1\}\subseteq\textnormal{dom}(\phi)\subseteq N(q_0)\cup N(q_1)$, and furthermore, $N(q_0)\cap\textnormal{dom}(\phi)\subseteq\{p_0, v_1\}$; \emph{AND}
\item For any two (not necessarily distinct) extensions of $\phi$ to $L$-colorings $\psi, \psi'$ of $\textnormal{dom}(\phi)\cup\{q_0, q_1\}$, at least one of the following holds
\begin{enumerate}[label=\roman*)]
\itemsep-0.1em
\item For some $c\in L(w)$, each of $\psi, \psi'$ extends to an $L$-coloring of $G'$ using $c$ on $w$; \emph{OR}
\item There is at an element of $\{\psi, \psi'\}$ which extends to two $L$-colorings of $G'$ using different colors on $w$; \emph{OR}
\item There is an element of $\{\psi, \psi'\}$ which does not extend to an $L$-coloring of $G'$
\end{enumerate}
\end{enumerate}

Recall that  $v_0\neq v_1$, so $q_0v_1\not\in E(G)$ and $N(q_0)\cap\textnormal{dom}(\phi)=\{p_0\}$. Furthermore, $p_0\not\in N(q_1)$ and no chords of the outer cycle of $G'$ are incident to $q_1$, so $N(q_1)\cap\textnormal{dom}(\phi)=\{v_1\}$, i.e each element of $\mathcal{F}$ is an $L$-coloring of $p_0, v_1$. By Corollary \ref{GlueAugFromKHCor}, there is a $\phi\in\mathcal{F}$ and a $(q_1, K_1)$-sufficient $L$-coloring $\psi$ of $Q-q_1$ such that $\phi(v_1)=\psi(v_1)$. Thus, $\phi\cup\psi$ is an $L$-coloring of $\{p_0, v_1, q_1\}$. Note that $|L_{\phi\cup\psi}(q_0)|\geq 4$ and $|L_{\phi\cup\psi}(q_1)|\geq 3$. As $|L(z)|=5$, there exist distinct $s_0, s_1\in L(z)$, where $|L_{\phi\cup\psi}(q_1)\setminus\{s_k\}|\geq 3$ for each $k=0,1$. Since $|L_{\phi}(q_0)|\geq 4$ and no chords of $C$ are incident to $z$, and since $G$ is a counterexample, it follows that, for each $k\in\{0,1\}$, there is an extension of $\phi\cup\psi$ to an $L$-coloring $\sigma_k$ of $V(P)\cup\{v_1\}$ which does not extend to an $L$-coloring of $G$, where $\sigma_k(z)=s_k$. It follows from our choice of $\psi$ that, for each $k=0,1$, $\sigma_k$ extends to an $L$-coloring $\tau_k$ of $V(P\cup K_1)$. For each $k=0,1$, we let $\tau_k'$ be the restriction of $\tau_k$ to $\{p_0, q_0, q_1, v_1\}$, i.e the restriction of $\tau_k$ to $V(P'-w)$. 

\vspace*{-8mm}
\begin{addmargin}[2em]{0em}
\begin{subclaim}\label{RestrictExtendsSubForTauPrime} For each $k=0,1$, $\tau_k'$ extends to an $L$-coloring of $G'$. \end{subclaim}

\begin{claimproof} Suppose that there is a $k\in\{0,1\}$ such that $\tau_k'$ does not extend to $L$-color $G'-w$. It follows from 1) of Claim \ref{AnLColPK0K1Ext} that $\tau_k'$ extends to an $L$-coloring $\pi$ of $V(K_0\cup K_1)$, so this $L$-coloring of $V(K_0\cup K_1)$ does not extend to $L$-color $G_{\pentagon}-z$. Now, $G_{\pentagon}-z$ is bounded by outer cycle $(v_0(C\setminus\mathring{P})v_1)q_1wq_0$, and, by Subclaim \ref{NeighborsOfWOnlyQ0ZQ1}, no chord of the outer cycle of $G_{\pentagon}-z$ is incident to $w$. Since the outer cycle of $G_{\pentagon}$ is induced, the outer cycle of $G_{\pentagon}-z$ is also induced. In particular, $|L_{\pi}(w)|\geq 3$. Since $\pi$ does not extend to $L$-color $G_{\pentagon}-z$, it follows from Lemma \ref{PartialPathColoringExtCL0} that there is a vertex $x$ of $G_{\pentagon}-z$ which does not lie on the outer cycle of $G_{\pentagon}-z$ and is adjacent to at least three of $\{v_0, v_1, q_0, q_1\}$. If $x$ is adjacent to both of $q_0, q_1$, then, since $x\neq z,w$, we contradict the fact that $G$ is $K_{2,3}$-free, so $x$ is adjacent to both of $v_0, v_1$, contradicting Claim \ref{Eachi01IndexV0V1QINotCom}. \end{claimproof}\end{addmargin}

For each $k=0,1$, since $\tau_k$ does not extend to an $L$-coloring of $G$, it follows that any extension of $\tau_k'$ to an $L$-coloring of $G'$ uses $s_k$ on $w$. By Subclaim \ref{RestrictExtendsSubForTauPrime}, each of $\tau_0', \tau_1'$ extends to at least one $L$-coloring of $G'-w$, and since $\phi\in\mathcal{F}$ and $s_0\neq s_1$, there is at least one $k\in\{0,1\}$ such that $\tau_k'$ extends to an $L$-coloring of $G'-w$ using a color other than $s_k$ on $w$, a contradiction. \end{claimproof}

We now introduce one final definition. For each $i\in\{0,1\}$, we let $\textnormal{Ob}(v_i)$ be the set of $y\in V(G\setminus C)$ such that $N(y)\cap V(P_{\pentagon})=\{v_i, q_i, z\}$.

\begin{Claim}\label{EachObViNonempty} For each $i=0,1$, $\textnormal{Ob}(v_i)|=1$. In particular, neither $K_0$ nor $K_1$ is an edge, and furthermore, every vertex of $G_{\pentagon}\setminus C_{\pentagon}$ with at least three neighbors in $P_{\pentagon}$ lies in $\textnormal{Ob}(v_0)\cup\textnormal{Ob}(v_1)$. \end{Claim}

\begin{claimproof} Note that, for each $i\in\{0,1\}$, we have $|\textnormal{Ob}(v_i)|\leq 1$, as $G$ is $K_{2,3}$-free, and furthermore, if $\textnormal{Ob}(v_i)\neq\varnothing$, then $K_i$ is not just an edge, or else we contradict Claim \ref{piAndzHavenoNbrOutsideOuterCycleHlpCLMa}.

\vspace*{-8mm}
\begin{addmargin}[2em]{0em}
\begin{subclaim}\label{ObV0ObV1AllObstructions} For any $y\in V(G_{\pentagon}\setminus C_{\pentagon})$ with at least three neighbors in $P_{\pentagon}$, $w\in\textnormal{Ob}(v_0)\cup\textnormal{Ob}(v_1)$. \end{subclaim}

\begin{claimproof} Suppose that $y\not\in\textnormal{Ob}(v_0)\cup\textnormal{Ob}(v_1)$. By Claim \ref{NoCommToBothQ0Q1ExcZ}, there is an $i\in\{0,1\}$ with $y\not\in N(q_i)$, say $i=0$ without loss of generality. As $y\not\in\textnormal{Ob}(v_1)$, either $N(y)\cap V(P_{\pentagon})=\{z, q_1, v_0\}$ or $N(y)\cap V(P_{\pentagon})=\{q_0, v_0, v_1\}$. In the former case, as $v_0\neq v_1$, we contradict Claim \ref{2ChordIncidentZVI}. In the latter case, we contradict Claim \ref{Eachi01IndexV0V1QINotCom}. \end{claimproof}\end{addmargin}

Now suppose that Claim \ref{EachObViNonempty} does not hold. Thus, there is a $i\in\{0,1\}$ with $\textnormal{Ob}(v_i)=\varnothing$, say $i=0$ without loss of generality. By Subclaim \ref{ObV0ObV1AllObstructions}, every vertex of $G_{\pentagon}\setminus C_{\pentagon}$ with at least three neighbors on $P_{\pentagon}$ lies in $\textnormal{Ob}(v_1)$. By Claim \ref{YCommNeighborReplace}, there is an $L$-coloring $\psi$ of $V(P\cup K_0\cup K_1)$ which does not extend to $L$-color $G$, where every vertex of $\textnormal{Ob}(v_1)$ has an $L_{\psi}$-list of size at least three. In particular, $\psi$ does not extend to $L$-color $G_{\pentagon}$. The outer cycle of $G_{\pentagon}$ is induced, so, by Lemma \ref{PartialPathColoringExtCL0}, there is a $y\in V(G_{\pentagon}\setminus C_{\pentagon})$ with $|L_{\psi}(y)|<3$. Thus, $y\in\textnormal{Ob}(v_1)$, a contradiction. \end{claimproof}

Applying Claim \ref{EachObViNonempty}, for each $i=0,1$, we let $y_i$ be the lone vertex of $\textnormal{Ob}(v_i)$. This is illustrated in Figure \ref{Y0Y1ObstructionVertexFigure}, where the path $P_{\pentagon}$ on the outer cycle of $G_{\pentagon}$ is indicated in bold. By Claim \ref{YCommNeighborReplace}, there is an $L$-coloring $\psi$ of $V(P\cup K_0\cup K_1)$ which does not extend to an $L$-coloring of $G$, where $|L_{\psi}(y_0)|\geq 3$. Note that $|L_{\psi}(y_1)|\geq 2$. 

\begin{center}\begin{tikzpicture}
\node[shape=circle,draw=black] (p0) at (-4,0) {$p_0$};
\node[shape=circle,draw=black] (u1) at (-3, 0) {$u_1$};
\node[shape=circle,draw=white] (u1+) at (-2, 0) {$\ldots$};
\node[shape=circle,draw=black] (v0) at (-1, 0) {$v_0$};
\node[shape=circle,draw=white] (mid) at (0, 0) {$\ldots$};
\node[shape=circle,draw=black] (v1) at (1, 0) {$v_1$};
\node[shape=circle,draw=white] (un+) at (2, 0) {$\ldots$};
\node[shape=circle,draw=black] (ut) at (3, 0) {$u_t$};
\node[shape=circle,draw=black] (p1) at (4, 0) {$p_1$};
\node[shape=circle,draw=black] (q0) at (-3,2) {$q_0$};
\node[shape=circle,draw=black] (q1) at (3,2) {$q_1$};
\node[shape=circle,draw=black] (z) at (0,4) {$z$};
\node[shape=circle,draw=black] (y0) at (-1.5,2) {$y_0$};
\node[shape=circle,draw=black] (y1) at (1.5,2) {$y_1$};
\node[shape=circle,draw=white] (Gpen) at (0, 2) {$G_{\pentagon}$};
\node[shape=circle,draw=white] (K0) at (-2.8, 1) {$K_0$};
\node[shape=circle,draw=white] (K1) at (2.8, 1) {$K_1$};
 \draw[-] (p1) to (ut);
 \draw[-, line width=1.8pt] (v0) to (q0) to (z) to (q1) to (v1);
\draw[-] (q1) to (p1);
\draw[-] (p0) to (u1) to (u1+) to (v0) to (mid) to (v1) to (un+) to (ut);
\draw[-] (p0) to (q0);
\draw[-] (q0) to (y0);
\draw[-] (q1) to (y1);
\draw[-] (z) to (y1) to (v1);
\draw[-] (z) to (y0) to (v0);
\end{tikzpicture}\captionof{figure}{}\label{Y0Y1ObstructionVertexFigure}\end{center}

\begin{Claim}\label{EdgeY0Y1tPresent} $y_0y_1\in E(G)$. \end{Claim}

\begin{claimproof} Suppose not. Now, $G_{\pentagon}-q_1$ is bounded by outer cycle $(v_0(C\setminus P)v_1)y_1zq_0$. By Claim \ref{2ChordIncidentZVI}, $y_1$ has no neighbors on the the path $(v_0(C\setminus P)v_1)$, except for $v_1$. Since the outer cycle of $G_{\pentagon}$ is induced and $y_0y_1\not\in E(G)$, the outer cycle of $G_{\pentagon}-q_1$ is also induced. Since $|L_{\psi}(y_1)|\geq 2$, there is an extension of $\psi$ to an $L$-coloring $\psi^*$ of $\textnormal{dom}(\psi)\cup\{y_1\}$. Since $y_0y_1\not\in E(G)$, we have $|L_{\psi^*}(y_1)|\geq 3$. Since $\psi^*$ does not extend to an $L$-coloring of $G$, it follows from Lemma \ref{PartialPathColoringExtCL0} that there is a $w\in V(G_{\pentagon}-q_1)$ with at least three neighbors among $\{v_0, z, y_1, v_1\}$, where $w$ does not lie on the outer cycle of $G_{\pentagon}-q_1$ and $w\neq y_0$. Thus, $w$ is adjacent to at least one of $v_0, v_1$, so $w\not\in N(z)$, as $G$ is short-separation-free. It follows that $w$ is adajcent to all three of $v_0, v_1, y_1$, contradicting Claim \ref{Eachi01IndexV0V1QINotCom}.  \end{claimproof} 

Since $y_0y_1\in E(G)$, we let $H_{\pentagon}$ be the subgraph of $G_{\pentagon}$ bounded by outer cycle $(v_0(C\setminus\mathring{P})v_1)y_1y_0$. The outer cycle of $H_{\pentagon}$ contains the 3-path $R=v_0y_0y_0v_1$, and, since $G$ is short-separation-free, $V(G)=V(K_0\cup K_1\cup H_{\pentagon})\cup\{z\}$.  Since every chord of $C$ is incident to one of $q_0, q_1$, it follows from Claim \ref{2ChordIncidentZVI} that the outer cycle of $H_{\pentagon}$ is induced. Since $\psi$ extends to an $L$-coloring of $\textnormal{dom}(\psi)\cup\{y_0, y_1\}$, but $\psi$ does not extend to $L$-color $G$, it follows from Lemma \ref{PartialPathColoringExtCL0} that there is a vertex $V(H_{\pentagon})$ which does not lie on the outer face of $H_{\pentagon}$, where $w$ has at least three neighbors in $R$.  By Claim \ref{Eachi01IndexV0V1QINotCom}, $w$ is adjacent to at most one of $v_0, v_1$, so there exists a $k\in\{0,1\}$ such that $N(w)\cap V(R)=\{v_k, y_0, y_1\}$. Since the outer cycle of $H_{\pentagon}$ is induced, every chord of the outer cycle of $H_{\pentagon}-y_k$ is incident to $w$.

\begin{Claim} $k=1$. \end{Claim}

\begin{claimproof} Suppose not. Thus, $k=0$ and $N(w)\cap V(R)=\{v_0, y_0, y_1\}$, and $y_0$ is the central vertex of a wheel with six vertices, i.e $N(y_0)=\{v_0, q_0, z, w, y_1\}$.  Let $c$ be an arbitrary color of $L_{\psi}(y_1)$ and let $\sigma$ be an $L$-coloring of $\{y_1, v_0, v_1\}$ in which $y_1, v_0, v_1$ are colored with the respective colors $c, \psi(v_0), \psi(v_1)$. Since every chord of the outer face of $H_{\pentagon}-y_0$ is incident to $w$ and $w$ is not adjacent to $v_1$, it follows from Lemma \ref{PartialPathColoringExtCL0} that $\sigma$ extends to an $L$-coloring of $H_{\pentagon}-y_0$, so $\psi$ extends to an $L$-coloring of $G-y_0$. Since $|L_{\psi}(y_0)|\geq 3$ and $y_0$ only has five neighbors, there is a color left over for $y_0$, so $\psi$ extends to an $L$-coloring of $G$, which is false. \end{claimproof}

Since $k=1$, we have $N(w)\cap V(R)=\{v_1, y_0, y_1\}$, and $y_1$ is the central vertex of a wheel with six vertices, i.e $N(y_1)=\{v_1, q_1, z, w, y_0\}$. Since $|L_{\psi}(y_0)|\geq 3$, there is a $c\in L_{\psi}(y_0)$ with $|L_{\psi}(y_1)\setminus\{c\}|\geq 2$. Let $\sigma$ be an $L$-coloring of $\{y_0, v_0, v_1\}$ which uses $c, \psi(v_0), \psi(v_1)$ on the respective vertices $y_0, v_0, v_1$. As every chord of the outer face of $H_{\pentagon}-y_1$ is incident to $w$, and $w$ is not adjacent to $v_0$, it follows from Lemma \ref{PartialPathColoringExtCL0} that $\sigma$ extends to an $L$-coloring of $H_{\pentagon}-y_1$, so $\psi$ extends to an $L$-coloring of $G-y_1$ using $c$ on $y_0$. Since $y_1$ only has five neighbors, it follows from our choice of $c$ that $\psi$ extends to an $L$-coloring of $G$, which is false. This completes the proof of Theorem \ref{MainHolepunchPaperResulThm}.  \end{proof}

\end{document}